\newif\ifdebug
\newcommand{\printname}[1]{\smash{\makebox[0pt]{\hspace{-1.0in}\raisebox{8pt}{\tiny #1}}}}
\newcommand{\Label}[1]{\ifdebug{\label{#1}\printname{#1}}\else{\label{#1}}\fi}
\theoremstyle{plain}
\newtheorem{theo}{Theorem}[section]
\newtheorem{lemm}[theo]{Lemma}
\newtheorem{coro}[theo]{Corollary}
\newtheorem{prop}[theo]{Proposition}
\theoremstyle{remark}
\newtheorem{rema}[theo]{Remark}
\theoremstyle{definition}
\newtheorem{defn}[theo]{Definition}
\newtheorem{exam}[theo]{Example}
\newtheorem{prob}[theo]{Problem}
\def\C{\mathbb C}
\def\Z{\mathbb Z}
\def\R{\mathbb R}
\def\D{\mathbb D}
\def\co{\colon\thinspace}
\DeclareMathOperator{\Hom}{\mathrm{Hom}}
\DeclareMathOperator{\pos}{\mathrm{pos}}
\DeclareMathOperator{\coker}{\mathrm{coker}}
\DeclareMathOperator{\link}{\mathrm{link}}
\DeclareMathOperator{\id}{\mathrm{id}}
\DeclareMathOperator{\Aut}{\mathrm{Aut}}
\begin{document}
\title[Complex manifolds with maximal torus actions]{Complex manifolds with maximal torus actions}

\author[H.~Ishida]{Hiroaki Ishida}
\address{Research Institute for Mathematical Science, Kyoto University}
\email{ishida@kurims.kyoto-u.ac.jp}

\date{\today}
\thanks{The author was supported by JSPS Research Fellowships for Young 
Scientists}

\keywords{Torus action, complex manifold, toric variety, moment-angle manifold, LVM manifold, LVMB manifold, non-K\"{a}hler manifold}
\subjclass[2010]{Primary 14M25, Secondary 32M05, 57S25}

\begin{abstract}
	In this paper, we introduce the notion of maximal actions of compact tori on smooth manifolds and study compact connected complex manifolds equipped with maximal actions of compact tori. We give a complete classification of such manifolds, in terms of combinatorial objects, which are  triples $(\Delta, \mathfrak{h}, G)$ of nonsingular complete fan $\Delta$ in $\mathfrak{g}$, complex vector subspace $\mathfrak{h}$ of $\mathfrak{g}^\C$ and compact torus $G$ satisfying certain conditions. We also give an equivalence of categories with suitable definitions of morphisms in these families, like toric geometry.
	
	We obtain several results as applications of our equivalence of categories; complex structures on moment-angle manifolds, classification of holomorphic nondegenerate $\C^n$-actions on compact connected complex manifolds of complex dimension $n$, and construction of concrete examples of non-K\"{a}hler manifolds. 
\end{abstract}

\maketitle 
\tableofcontents 
\section{Introduction} \Label{sec:intro}
	Toric geometry was established around 1970 by Demazure, Miyake-Oda, Mumford etc. It provides examples of concrete algebraic varieties and finds many interesting connections with combinatorics, see \cite {Cox},\cite{Fulton} and \cite{Oda} for history. A toric variety is a normal algebraic variety over the complex numbers $\C$ with an effective algebraic action of an algebraic torus $G^\C$ having an open dense orbit. On the other hand, a fan is a collection of cones in a real vector space with the origin as vertex satisfying certain conditions. A fundamental theorem in toric geometry says 
	that the category of toric varieties is equivalent to the category of rational fans.
	
	In this paper, we study complex manifolds with torus actions, which are siblings of nonsingular complete toric varieties from the viewpoint of compact torus actions. 
	Let $M$ be a connected smooth manifold, equipped with an effective action of a compact torus $G$. Then $\dim G + \dim G_x \leq \dim M$ for any point $x \in M$, where $G_x$ is the isotropy subgroup at $x$ of $G$. This inequality motivates us to consider the case when the equality holds for some $x$. 
	
	We say that the action of $G$ on $M$ is \emph{maximal} if there exists a point $x \in M$ such that 
	\begin{equation*}
		\dim G + \dim G_x = \dim M.
	\end{equation*}
	The main concern in this paper is complex manifolds equipped with maximal actions of compact tori which preserve the complex structures. There are rich examples of such manifolds:
	\begin{exam}[compact complex torus $\C ^n/\Gamma$]\Label{exam:torus} 
		Let $\Gamma $ be a lattice of $\C^n$, that is, a cocompact discrete subgroup of $\C ^n$. Let $M = \C ^n/\Gamma$ and let $G=\C ^n/\Gamma \cong (S^1)^{2n}$ act on $M$ as translations. The $G$-action on $M$ preserves the complex structure on $M$. The isotropy subgroup is trivial at any point in $M$, so the action of $G$ on $M$ is maximal.
	\end{exam}
	\begin{exam}[nonsingular complete toric variety]\Label{exam:toric}
		Let $M$ be a nonsingular complete toric variety and let $G$ be the maximal compact torus of the algebraic torus acting on $M$. The $G$-action on $M$ preserves the complex structure and it is well-known that $M$ has a fixed point. Since $\dim G= \frac{1}{2}\dim M$ and $G_x = G$ at each fixed point $x \in M^G$, the action of $G$ on $M$ is maximal. 
	\end{exam}
	As is shown in \cite{Ishida-Karshon}, a compact connected complex manifold equipped with a maximal action of a compact torus with at least one fixed point is biholomorphic to a nonsingular complete toric variety.
	\begin{exam}[Hopf manifold]\Label{exam:Hopf}
		Take $\alpha \in \C ^*$ such that $|\alpha|\neq 1$.
		Define an equivalence relation $\sim$ on $\C ^n\setminus \{0\}$ so that
		\begin{equation*}
			z \sim w \text{ if and only if there is $k \in \Z$ such that } z=\alpha ^kw. 
		\end{equation*}
		The quotient $M= \C ^n\setminus \{0\}/\sim$ is a complex manifold which is diffeomorphic to $S^{2n-1}\times S^1$. We equip the $G = (S^1)^{n+1}$-action on $M$ as follows. Take an element $\widetilde{\alpha} \in \frac{1}{2\pi \sqrt{-1}}\log \alpha$. Define an $\R ^{n+1}$-action on $M$ by 
		\begin{equation*}
			(v_1,\dots ,v _n,v_{n+1})\cdot [z_1,\dots ,z_n] := [e^{2\pi \sqrt{-1}v_1+2\pi \sqrt{-1}\widetilde{\alpha}v_{n+1}}z_1,\dots ,e^{2\pi\sqrt{-1}v _n+2\pi \sqrt{-1}\widetilde{\alpha}v_{n+1}}z_n],
		\end{equation*}
		where $[z]$ denotes the equivalence class of $z \in \C ^n\setminus \{0\}$. Clearly, the $\R ^{n+1}$-action on $M$ preserves the complex structure and $\R ^{n+1}$ has the global stabilizers $\Z ^{n+1}$. Hence the $\R ^{n+1}$-action descends to an effective $G=(S^1)^{n+1}$-action which preserves the complex structure on $M$. At the point $x=[1,0,\dots, 0] \in M$, the isotropy subgroup is $\{1\} \times (S^1)^{n-1} \times \{1\}$ which is an $(n-1)$-dimensional torus. Therefore the action of $G$ on $M$ is maximal.
	\end{exam}
	
	\begin{exam}[Calabi-Eckmann manifold (see \cite{Calabi-Eckmann} for details)]\Label{exam:Calabi-Eckmann}
		Define a $(\C^*)^m$-action on $(\C^k \setminus \{0\}) \times (\C^{m-k} \setminus \{0\})$ by
		\begin{equation*}
			(g_1,\dots, g_m) \cdot (z_1,\dots, z_k,z_{k+1},\dots, z_m) :=(g_1z_k,\dots, g_mz_m). 
		\end{equation*}
		
		As in Example \ref{exam:Hopf}, take $\widetilde{\alpha} \in \C \setminus \R$. Define a subgroup 
		\begin{equation*}
			H := \left\{ (\underbrace{e^t,\dots, e^t}_k,\underbrace{e^{\widetilde\alpha t},\dots, e^{\widetilde{\alpha} t}}_{m-k}) \in (\C ^*)^m  \mid t \in \C \right\}. 
		\end{equation*}
		The action of $(\C ^*)^m$ restricted to $H$ on $(\C^k \setminus \{0\}) \times (\C^{m-k} \setminus \{0\})$ is free, proper and holomorphic. Therefore, the quotient space $M := (\C^k \setminus \{0\}) \times (\C^{m-k} \setminus \{0\})/H$ becomes a complex manifold. Thinking of $S^{2k-1}$ and $S^{2m-2k-1}$ as the unit spheres in $\C^k$ and $\C^{m-k}$ respectively, the inclusion $S^{2k-1} \times S^{2m-2k-1} \hookrightarrow (\C^k \setminus \{0\}) \times (\C^{m-k} \setminus \{0\})$ induces a diffeomorphism $S^{2k-1} \times S^{2m-2k-1} \to M$. In particular, $M$ is a compact connected complex manifold. We give an action of $G=(S^1)^m$ on $M$ as 
		\begin{equation*}
			(g_1,\dots, g_m) \cdot [z_1,\dots, z_k,z_{k+1},\dots, z_m]:= [g_1z_1,\dots ,g_mz_m]
		\end{equation*}
		for $(g_1,\dots, g_m) \in (S^1)^m$ and $[z_1,\dots, z_m]\in M$, where $[z_1,\dots, z_m]$ denotes the equivalence class of $(z_1,\dots, z_m) \in (\C ^k \setminus \{0\} ) \times (\C ^{m-k} \setminus \{0\})$. This $G$-action preserves the complex structure on $M$. At the point $x=[z_1, 0,\dots, 0, z_{k+1},0,\dots, 0]$ with $z_1,z_{k+1} \neq 0$, the isotropy subgroup $G_x$ is $\{1\} \times (S^1)^{k-1} \times \{1\} \times (S^1)^{m-k-1}$, that is, an $(m-2)$-dimensional torus. Therefore $\dim G+ \dim G_x = 2m-2 = \dim M$, so the action of $G$ on $M$ is maximal. 
	\end{exam}
	More interesting examples are \emph{LVM manifolds}, \emph{LVMB manifolds} and \emph{moment-angle manifolds} with complex structures. LVM manifolds are non-K\"{a}hler compact complex manifolds, constructed in \cite{Meersseman} and the acronyms stand for Lo\`{p}ez de Medrano-Verovsky \cite{LV} and Meersseman. LVMB manifolds are generalizations of LVM manifolds, constructed by Bosio in \cite{Bosio}. LVMB manifolds are obtained as the quotients of complements of coordinate subspace arrangements in projective spaces $\C P^n$ by free, proper, and holomorphic $\C ^m$-actions for some $n$. They are naturally equipped with actions of compact tori which preserve the complex structures and are maximal (see Proposition \ref{prop:LVMBmax}). Moment-angle manifolds are topological manifolds equipped with actions of compact tori, constructed from simplicial complexes. In \cite{Bosio-Meersseman}, the smooth manifolds underlying a large class of LVM manifolds are shown to be equivariantly homeomorphic to moment-angle manifolds coming from simplicial polytopal sphere. In \cite{Panov-Ustinovsky} and \cite{Tambour}, it is shown that even dimensional moment-angle manifolds coming from star-shaped simplicial spheres carry complex structures independently.  The actions of compact tori on them preserve the complex structures and are maximal. 

	We consider the class $\mathcal{C}_1$ which consists of all $(M,G,y)$ of a compact connected complex manifold $M$ equipped with a maximal action of a compact torus $G$ which preserves the complex structure on $M$ and a point $y \in M$ such that $G_y$ is trivial. As a combinatorial counterpart of $\mathcal{C}_1$, we also consider the family $\mathcal{C}_2$ which consists of all triples $(\Delta, \mathfrak{h}, G)$, where $\Delta$ is a nonsingular fan in the Lie algebra $\mathfrak{g}$ of the compact torus $G$ and $\mathfrak{h}$ is a complex vector subspace of $\mathfrak{g}^\C := \mathfrak{g} \otimes \C \cong \mathfrak{g} \oplus \sqrt{-1}\mathfrak{g}$ satisfying the following conditions:
	\begin{enumerate}
		\item the restriction $p|_{\mathfrak{h}}$ of the projection $p \co \mathfrak{g}^\C \to \mathfrak{g}$ is injective and 
		\item the quotient map $q \co \mathfrak{g} \to \mathfrak{g}/p(\mathfrak{h})$ sends the fan $\Delta$ to a complete fan $q(\Delta)$ in $\mathfrak{g}/p(\mathfrak{h})$. 
	\end{enumerate}	
	
	To each $(M,G,y)$ in $\mathcal{C}_1$, we may assign a triple $(\Delta,\mathfrak{h},G)$ which sits in $\mathcal{C}_2$ as follows. Each connected component of the fixed point set of a circle subgroup of $G$ is a closed complex submanifold of $M$. If such a submanifold has complex codimension one, then, in analogy with the toric topology literature, we call it a \emph{characteristic submanifold} of $M$ (cf.~\cite[p.240]{Masuda}). One can see that the number of characteristic submanifolds of $M$ is finite (possibly $0$). Let $N_1,\dots, N_k$ be the characteristic submanifolds of $M$. We define an abstract simplicial complex $\Sigma$ on the vertex set $\{1,\dots, k\}$ as 
	\begin{equation*}
		\Sigma := \left\{ I \subseteq \{1,\dots, k\} \mid \bigcap _{i \in I} N_i \neq \emptyset \right\}.
	\end{equation*}
	Let $G_i$ be the circle subgroup of $G$ which fixes $N_i$ pointwise. Define $\lambda_i \co S^1 \to G$ so that 
	\begin{equation*}
		(\lambda_i(g))_*(\xi)=g\xi \quad \text{for all $g \in S^1, \xi \in TM|_{N_i}/TN_i$}.
	\end{equation*}
	We may regard $\lambda_i$ as a vector in $\mathfrak{g}$ and put $C_I := \{\sum_{i \in I}a_i\lambda_i \mid a_i\geq 0\}$ for $I \in \Sigma$. Then, we have a collection $\Delta$ of cones $C_I$ in $\mathfrak{g}$. One can see that $\Delta$ is a nonsingular fan in $\mathfrak{g}$. Since the action of $G$ on $M$ preserves the complex structure on $M$, we may consider the complexified action $G^\C \times M \to M$. The vector subspace $\mathfrak{h}$ is the Lie subalgebra of the global stabilizers of this complexified action. One can see that $\Delta$ and $\mathfrak{h}$ satisfy conditions (1) and (2) above and hence we have a map $\mathcal{F}_1 \co \mathcal{C}_1 \to \mathcal{C}_2$. $\mathcal{F}_1$ does not depend on the choice of $y$, but we will use $y$ for defining the hom-sets on $\mathcal{C}_1$ later. We often denote by $\mathcal{F}_1(M,G)$ the value $\mathcal{F}_1(M,G,y)$ for $y \in M$ such that $G_y$ is trivial.
	
	Conversely, we can construct a compact connected complex manifold equipped with an action of a compact torus $G$ which is maximal and preserves the complex structure from each triple $(\Delta,\mathfrak{h},G)$  in $\mathcal{C}_2$. Let $X(\Delta)$ be the nonsingular toric variety associated with $\Delta$. One can see that the quotient $X(\Delta)/H$ is a compact connected complex manifold, where $H:= \exp (\mathfrak{h}) \subseteq G^\C$. The action of $G$ on $X(\Delta)$ descends to an action on $X(\Delta)/H$ which is maximal and preserves the complex structure. Since $X(\Delta)$ is a toric variety, by definition, the unit of $G^\C$ can be regarded as a point $1_{X(\Delta)} \in X(\Delta)$. So we have a map $\mathcal{F}_2 \co \mathcal{C}_2 \to \mathcal{C}_1$ given by $\mathcal{F}_2(\Delta,\mathfrak{h},G) := (X(\Delta)/H, G, [1_{X(\Delta)}])$.

	The first theorem in this paper is the following: 
	\begin{theo}[See also Theorem \ref{theo:maintheo}]\Label{intro:maintheo}
		Let $M$ be a compact connected complex manifold equipped with a maximal action of a compact torus $G$ which preserves the complex structure on $M$, and let $(\Delta, \mathfrak{h}, G) = \mathcal{F}_1(M,G)$. Then, $M$ is equivariantly biholomorphic to $X(\Delta)/H$. 
	\end{theo}
	Theorem \ref{intro:maintheo} tells us that a compact connected complex manifold $M$ equipped with a maximal action of a compact torus $G$ which preserves the complex structure on $M$ is equivariantly biholomorphic to the quotient of a nonsingular toric variety by a subgroup $H$ of $G^\C$. 
	\begin{exam}[compact complex torus $\C^n/\Gamma$ (continuation)]
		Let $\gamma_1,\dots, \gamma_{2n}$ be generators of the lattice $\Gamma$. Consider the $\C$-linear map 
		\begin{equation*}
			f \co \C^{2n} \to \C^n, \quad f(e_i) = \gamma_i
		\end{equation*}
		for all $i$, where $e_i$ denotes the $i$-th standard basis vector of $\C^{2n}$. The map $f$ descends to a surjective holomorphic homomorphism $\overline{f} \co \C^{2n}/\Z^{2n} \to \C^n /\Gamma$. Through the exponential function $t \mapsto e^{2\pi\sqrt{-1}t}$, we can identify $\C^{2n}/\Z^{2n}$ with the algebraic torus $(\C^*)^{2n}$. So we have a surjective holomorphic homomorphism $\overline{f}\co (\C^*)^{2n} \to \C^n/\Gamma$. The Lie algebra of $\ker \overline{f}=:H$ is $\ker f=:\mathfrak{h}$ and $\C^n/\Gamma$ is isomorphic to the quotient $(\C^*)^{2n}/H$. Remark that $(\C^*)^{2n}$ is the affine toric variety associated with the fan consisting of only the origin $\{0\}$ in $\R^{2n}$. 
	\end{exam}
	\begin{exam}[nonsingular complete toric variety (continuation)] 
		Let $(\Delta,\mathfrak{h},G) \in \mathcal{C}_2$. In the case when $\mathfrak{h}=\{0\}$, the condition (2) implies that $\Delta$ is a complete fan. Theorem \ref{intro:maintheo} generalizes the correspondence between nonsingular complete toric varieties and nonsingular complete fans through \cite[Theorem 1]{Ishida-Karshon}.
	\end{exam}
	\begin{exam}[Hopf manifold (continuation)]
		Let $\widetilde{\alpha} \in \C \setminus \R$ and let $\alpha = e^{2\pi \sqrt{-1}\widetilde{\alpha}}$. We may think of the Hopf manifold $M=\C^n \setminus \{0\}/\sim$ in Example \ref{exam:Hopf} as a Calabi-Eckmann manifold. In fact, 
		\begin{equation*}
			\C^n /\sim \to (\C^n \setminus \{0\}) \times \C^*/H, \quad [z_1,\dots, z_n] \mapsto [z_1,\dots, z_n,1]
		\end{equation*}
		is a biholomorphism, where $H$ is defined as 
		\begin{equation*}
			H := \left\{ (\underbrace{e^t,\dots, e^t}_n,e^{\widetilde{\alpha}t}) \in (\C^*)^{n+1} \mid t \in \C\right\}.
		\end{equation*}
		So Example \ref{exam:Calabi-Eckmann2} below contains the case of Hopf manifolds. 
	\end{exam}
	\begin{exam}[Calabi-Eckmann manifold (continuation)]\Label{exam:Calabi-Eckmann2}
		Example \ref{exam:Calabi-Eckmann} is suitable to explain Theorem \ref{intro:maintheo}. Consider the Calabi-Eckmann manifold $M = (\C^k \setminus \{0\}) \times (\C^{m-k}\setminus \{0\})/H$. There are $m$ characteristic submanifolds of $M$ and each of them is 
	\begin{equation*}
		N_i := \{ [z_1,\dots, z_m] \in M \mid z_i=0\} 
	\end{equation*}
	for $i=1,\dots, m$. The circle subgroup $G_i$ which fixes $N_i$ pointwise is the $i$-th coordinate $1$-dimensional subtorus of $G=(S^1)^m$. So one can see that the corresponding fan $\Delta$ is 
	\begin{equation*}
		\Delta := \{ \pos (e_i \mid i \in I \cup J) \mid I \subsetneq \{1,\dots, k\}, J \subsetneq \{k+1,\dots, m\}\}
	\end{equation*}
	and the associated toric variety $X(\Delta)$ is $(\C^k \setminus \{0\}) \times (\C^{m-k}\setminus \{0\})$, where $\pos(A)$ denotes the cone spanned by elements in $A$. The complexified action $(\C^*)^m \times M \to M$ is given by 
	\begin{equation*}
		(g_1,\dots, g_m)\cdot [z_1,\dots, z_m] = [g_1z_1,\dots, g_mz_m]
	\end{equation*}
	for $(g_1,\dots, g_m) \in (\C^*)^m$ and $[z_1,\dots, z_m] \in M$, so the global stabilizers coincide with $H$. Finally, we shall see that $\Delta$ and the Lie algebra $\mathfrak{h}$ of $H$ satisfies the conditions (1) and (2). By direct computation, $p(\mathfrak{h}) \subseteq \R^m$ is spanned by the vectors $v_1=(\underbrace{1,\dots, 1}_k,\underbrace{0,\dots, 0}_{m-k})$ and $v_2=(\underbrace{0,\dots, 0}_k,\underbrace{1,\dots, 1}_{m-k})$. So we may identify $\R^m/p(\mathfrak{h})$ with $\R^{m-2}$ by 
	\begin{equation*}
		[x_1,\dots, x_m] \mapsto (x_1-x_k,\dots, x_{k-1}-x_k, x_{k+1}-x_m,\dots, x_{m-1}-x_m).
	\end{equation*}
	Therefore the image of the fan $\Delta$ by the map $q \co \R^m \to \R^{m-2}$ is the product of the fans associated with projective spaces $\C P^{k-1}$ and $\C P^{m-k-1}$. In particular, $q(\Delta)$ is a complete fan in $\R^{m-2}$. 
	\end{exam}
	
	We also consider morphisms in both $\mathcal{C}_1$ and $\mathcal{C}_2$ and show that $\mathcal{C}_1$ and $\mathcal{C}_2$ are equivalent. For $(M_1,G_1,y_1), (M_2,G_2,y_2) \in \mathcal{C}_1$, a morphism $\psi \co (M_1,G_1,y_1) \to (M_2,G_2,y_2)$ is an $\alpha$-equivariant\footnote{A map $f \co X_1 \to X_2$ between a $G_1$-set $X_1$ and a $G_2$-set $X_2$ is said to be \emph{$\alpha$-equivariant} if $\alpha(g) \circ f = f \circ g$ for all $g \in G_1$.} holomorphic map $\psi \co M_1 \to M_2$ such that $\psi(y_1) = y_2$ for some group homomorphism $\alpha \co G_1 \to G_2$. Obviously $\mathcal{C}_1$ with the hom-sets $\Hom _{\mathcal{C}_1}((M_1,G_1,y_1),(M_2,G_2,y_2))$ becomes a category. 
	
	For $(\Delta_1,\mathfrak{h}_1, G_1), (\Delta_2,\mathfrak{h}_2,G_2) \in \mathcal{C}_2$, We define a hom-set $\Hom _{\mathcal{C}_2} ((\Delta_1,\mathfrak{h}_1, G_1), (\Delta_2,\mathfrak{h}_2,G_2))$ to be the set of all Lie group homomorphism $\alpha \co G_1 \to G_2$ such that the linear map $(d\alpha)_1 \co \mathfrak{g}_1 \to \mathfrak{g}_2$ induces a morphism from $\Delta_1$ to $\Delta_2$ and the linear map $(d\alpha)_1 \otimes \id_\C \co \mathfrak{g}_1^\C \to \mathfrak{g}_2^\C$ satisfies $(d\alpha)_1 \otimes \id_\C(\mathfrak{h}_1) \subseteq \mathfrak{h}_2$. For morphisms, the composition is defined by the composition of group homomorphisms. Then, we can see that $\mathcal{C}_2$ with the hom-sets $\Hom _{\mathcal{C}_2} ((\Delta_1,\mathfrak{h}_1, G_1), (\Delta_2,\mathfrak{h}_2,G_2))$ forms a category, too. 
	
	We define mapping functions and use the same notations $\mathcal{F}_1$ and $\mathcal{F}_2$. 
	For a morphism $\psi \in \Hom _{\mathcal{C}_1}((M_1,G_1,y_1), (M_2,G_2,y_2))$, we denote by $\mathcal{F}_1(\psi)$ the group homomorphism $\alpha \co G_1 \to G_2$ if $\psi$ is $\alpha$-equivariant. Conversely, for a morphism $\alpha \in \Hom _{\mathcal{C}_2}((\Delta_1,\mathfrak{h}_1, G_1), (\Delta_2,\mathfrak{h}_2,G_2))$, let $\widetilde{\psi_{\alpha}} \co X(\Delta_1) \to X(\Delta_2)$ denote the toric morphism associated with $(d\alpha)_1 \co \mathfrak{g}_1 \to \mathfrak{g}_2$. $\widetilde{\psi_\alpha}$ descends to an $\alpha$-equivariant holomorphic map $\psi _\alpha \co X(\Delta_1)/H_1 \to X(\Delta_2)/H_2$, where $H_i$ denotes the image of $\mathfrak{h}_i$ by the exponential map for $i=1,2$. We define $\mathcal{F}_2(\alpha)$ to be $\psi_\alpha$. We can see that both $\mathcal{F}_1$ and $\mathcal{F}_2$ are covariant functors. 
	\begin{theo}[See also Theorem \ref{theo:equivalence}]\Label{intro:equivalence}
		The covariant functors $\mathcal{F}_1 \co \mathcal{C}_1 \to \mathcal{C}_2$ and $\mathcal{F}_2 \co \mathcal{C}_2 \to \mathcal{C}_1$ are weak inverse to each other. In particular, the categories $\mathcal{C}_1$ and $\mathcal{C}_2$ are equivalent. 
	\end{theo}

	As an application of Theorem \ref{intro:equivalence}, we show that $\mathcal{F}_1$ gives a complete invariant as complex manifolds. Namely,
	\begin{theo}[See also Theorem \ref{theo:biholo}]\Label{intro:biholo}
		Let $M_1$ and $M_2$ be compact connected complex manifolds, equipped with maximal actions of compact tori $G_1$ and $G_2$ which preserve the complex structures on $M_1$ and $M_2$, respectively. Then, $M_1$ is biholomorphic to $M_2$ if and only if $\mathcal{F}_1(M_1,G_1)$ is isomorphic to $\mathcal{F}_1(M_2,G_2)$. 
	\end{theo}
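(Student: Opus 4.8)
The plan is to deduce the non-equivariant statement from the equivariant classification. The category isomorphism (Theorem~\ref{theo:functor}) says that $[M_1,G_1]$ and $[M_2,G_2]$ are isomorphic in $\mathcal{C}_1$ --- i.e.\ there is a biholomorphism $M_1\to M_2$ intertwining the actions via some isomorphism $G_1\to G_2$ --- exactly when the triples $\mathcal{F}_1([M_1,G_1])$ and $\mathcal{F}_1([M_2,G_2])$ are isomorphic in $\mathcal{C}_2$. The ``if'' direction is then immediate, since an isomorphism of triples yields an equivariant and in particular an ordinary biholomorphism. For the ``only if'' direction I would show that an \emph{arbitrary} biholomorphism can always be adjusted to an equivariant one, after which Theorem~\ref{theo:functor} finishes the argument.

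So, given a biholomorphism $\varphi\co M_1\to M_2$ that need not respect the actions, I would work inside the biholomorphism group of $M_2$. Since $M_2$ is compact, $\Aut(M_2)$ is a finite-dimensional complex Lie group by the theorem of Bochner and Montgomery; let $L:=\Aut(M_2)^\circ$ be its identity component. Then $G_2\subseteq L$ is a compact torus, and conjugation by $\varphi$ produces a second compact torus $G_1':=\varphi G_1\varphi^{-1}\subseteq L$. Since $\varphi$ is a biholomorphism, the $G_1'$-action on $M_2$ still preserves the complex structure and is still maximal, attaining $\dim G_1'+\dim (G_1')_x=\dim M_2$ at $x=\varphi(x_0)$, where $x_0$ realizes maximality for $G_1$. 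Thus I have two maximal torus actions, by $G_2$ and by $G_1'$, on the \emph{same} complex manifold $M_2$, and the goal becomes to conjugate one onto the other inside $L$.

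The key step is the observation that a torus acting maximally is automatically a maximal compact torus of $L$. Indeed, if a compact torus $T\subseteq\Aut(M_2)$ strictly contained $G_2$, then $\dim T>\dim G_2$ while $T_x\supseteq (G_2)_x$ at a maximal point $x$, giving
\begin{equation*}
\dim T+\dim T_x\ \geq\ \dim T+\dim (G_2)_x\ >\ \dim G_2+\dim (G_2)_x\ =\ \dim M_2,
\end{equation*}
which contradicts the general inequality $\dim T+\dim T_x\leq\dim M_2$ for effective compact torus actions; the same applies to $G_1'$. Hence $G_2$ and $G_1'$ are maximal compact tori of the connected Lie group $L$, and any two such are conjugate (all maximal compact subgroups of a connected Lie group are conjugate, and maximal tori within a compact group are conjugate). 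Picking $\psi\in L$ with $\psi G_1'\psi^{-1}=G_2$, the composite $\psi\circ\varphi\co M_1\to M_2$ conjugates $G_1$ onto $G_2$ and is therefore equivariant, so $[M_1,G_1]\cong[M_2,G_2]$ in $\mathcal{C}_1$; applying $\mathcal{F}_1$ yields the isomorphism of triples. The main obstacle is precisely this passage from a bare biholomorphism to an equivariant one: it rests on having $\Aut(M_2)$ be a genuine (finite-dimensional) Lie group and on recognizing the acting torus as a maximal compact torus, so that the Cartan--Iwasawa--Malcev conjugacy theorem can be brought to bear.
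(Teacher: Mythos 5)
Your proposal is correct and follows essentially the same route as the paper's proof of Theorem \ref{theo:biholo}: view the tori inside the automorphism group (a finite-dimensional complex Lie group), observe via the maximality inequality that a maximally acting torus is a maximal compact torus (this is the paper's Lemma \ref{lemm:maximal}), and then conjugate using the conjugacy of maximal compact subgroups together with the conjugacy of maximal tori inside them, reducing to the equivariant classification of Theorem \ref{theo:functor}. The only cosmetic difference is that the paper transports a maximal connected compact subgroup $K_1\subseteq\Aut(M_1)$ via $\psi_*$ before conjugating, whereas you conjugate $G_1$ directly; the content is identical.
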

	We also study on equivariant princiapal holomorphic bundles in complex manifolds with maximal torus actions. We show that LVMB manifolds appear as building blocks of complex manifolds with maximal torus actions.  
	\begin{theo}[See also Theorem \ref{theo:universal}]\Label{intro:universal}
		For any compact connected complex manifold $M$ equipped with a maximal action of a compact torus $G$ which preserves the complex structure on $M$, there exists an LVMB manifold $X_{n,m}$ and an equivariant principal holomorphic bundle $\psi \co X_{n,m} \to M$. In particular, every compact complex manifold with a maximal torus action is a quotient of an LVMB manifold. 
	\end{theo}
	
	As another application, we also give a complete classification of compact connected complex $n$-manifolds with holomorphic nondegenerate $\C^n$-actions. The notion of nondegenerate action is introduced by Zung and Minh in \cite{Zung-Minh}. The classification of compact connected complex manifolds which admit nondegenerate $\C^n$-actions can be obtained by Theorem \ref{intro:biholo} and by the following:
	\begin{theo}[See also Theorem \ref{theo:nondeg}]\Label{intro:nondeg}
		Let $M$ be a compact connected complex manifold of complex dimension $n$. Then, $M$ admits a holomorphic nondegenerate $\C^n$-action if and only if $M$ admits an action of a compact torus $G$ which is maximal and preserves the complex structure on $M$. 
	\end{theo}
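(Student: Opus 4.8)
The plan is to prove the two implications separately, using the classification of Theorem~\ref{intro:maintheo} for the forward construction and the local normal form of nondegenerate actions from \cite{Tien-Van} for the converse. Assume first that $M$ carries a maximal action of a compact torus $G$ preserving the complex structure. By Theorem~\ref{intro:maintheo} we may identify $M$ with $X(\Delta)/H$, where $H=\exp(\mathfrak{h})$ and $\mathfrak{h}\subseteq\mathfrak{g}^\C$. The algebraic torus $G^\C$ acts holomorphically on the nonsingular toric variety $X(\Delta)$, and this action descends to a holomorphic action of the complex abelian Lie group $G^\C/H$ on $M$. A dimension count gives $\dim_\C(G^\C/H)=\dim G-\dim_\C\mathfrak{h}=\dim_\C M=n$, so the abelian Lie algebra $\mathfrak{g}^\C/\mathfrak{h}$ has complex dimension $n$. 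Composing the exponential map $\C^n\cong\mathfrak{g}^\C/\mathfrak{h}\to G^\C/H$ with the action yields a holomorphic $\C^n$-action on $M$; the generating vector fields are complete because $M$ is compact and they integrate to the group action of $G^\C/H$. To exhibit a nondegenerate singularity, I would take the point $x$ witnessing maximality, $\dim G+\dim G_x=2n$, and work in the local toric chart of $X(\Delta)$ attached to a maximal cone $\sigma$ of $\Delta$. Since $\Delta$ is nonsingular, this chart is biholomorphic to a product of a coordinate space and an algebraic torus on which $G^\C$ acts diagonally; passing to the quotient by $H$, the induced linear action of the isotropy subalgebra on the normal space to the $\C^n$-orbit through $x$ is diagonal, hence spans a Cartan subalgebra. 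This is exactly the nondegeneracy condition, so the $\C^n$-action is nondegenerate.

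For the converse, suppose $M$ admits a holomorphic nondegenerate $\C^n$-action, encoded by commuting complete holomorphic vector fields and a homomorphism $\rho\co\C^n\to\Aut(M)$. Since $M$ is compact, $\Aut(M)$ is a complex Lie group, and I would let $A$ be the closure of $\rho(\C^n)$, a connected abelian complex Lie subgroup, necessarily of the form $\C^n/\Gamma$ for a discrete subgroup $\Gamma$. Its maximal compact subgroup $G$ is a compact torus acting on $M$ by biholomorphisms, so the $G$-action preserves the complex structure. To see that it is maximal, I would invoke the local normal form of \cite{Tien-Van} at a nondegenerate singular point $p$: there the action linearizes and the isotropy subalgebra maps isomorphically onto a Cartan subalgebra of the endomorphism algebra of the normal space to the orbit through $p$. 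The purely imaginary part of this Cartan exponentiates to a compact torus fixing $p$, and comparing its dimension with that of the orbit through $p$ yields the equality $\dim G+\dim G_p=2n$. Hence the $G$-action is maximal, which is what we want.

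The easy direction is the first one, where everything is pinned down by Theorem~\ref{intro:maintheo}. I expect the main obstacle to lie in the converse, and more precisely in two points. First, one must check that the closure $A=\overline{\rho(\C^n)}$ is well behaved---that its maximal compact torus $G$ has the correct dimension and that $\rho$ descends to an essentially effective action---which rests on the structure theory of abelian complex Lie groups together with the nondegeneracy hypothesis. Second, and most delicate, is the dimension bookkeeping at the nondegenerate singular point: one has to translate the Cartan-subalgebra condition of \cite{Tien-Van} into the precise equality $\dim G+\dim G_p=2n$, which requires identifying the compact directions of the local model with the isotropy $G_p$ and the remaining directions with the orbit through $p$. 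Once this identification is carried out, both implications close up and the equivalence follows.
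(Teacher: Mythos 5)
Your forward direction (maximal torus action $\Rightarrow$ nondegenerate $\C^n$-action) is essentially the paper's argument: the paper also uses the complexified action, observes that $\mathfrak{g}^\C/\mathfrak{h}\cong\C^n$ acts holomorphically with an open dense orbit, and reads off nondegeneracy from the local model $G^M\times_{(G_x)^\C}\bigoplus_j\C_{\alpha_j}$ of Lemma \ref{lemm:slicetheorem}, which is the same as your toric-chart description. One small imprecision: nondegeneracy is a condition on \emph{every} singular point, not just the point witnessing maximality, so you should say explicitly that every point of $M$ lies in some $N(G\cdot x)$ (equivalently, in some affine toric chart of $X(\Delta)$), where the same diagonal linearization applies; the paper does this by noting $N=M$.

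The converse contains a genuine gap, and it is exactly at the step you flag as ``delicate dimension bookkeeping.'' A nondegenerate singular point $p$ has an HE-invariant $(h,e)$: the Cartan subalgebra of $\mathfrak{gl}(T_pM/T_p(\C^n\cdot p))$ produced by the isotropy representation has a hyperbolic part of dimension $h$ and an elliptic part of dimension $e$, and only the elliptic part contributes to the compact torus $G$. With orbit $\R^{2n}\cdot p\cong\R^r\times(S^1)^t$ one gets $h+2e+r+t=2n$, $\dim G=e+t$ and $\dim G_p=e$, so $\dim G+\dim G_p=2e+t=2n$ holds if and only if $h=0$ \emph{and} $r=0$. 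Your sketch (take the ``purely imaginary part'' of the Cartan and compare dimensions) silently assumes $h=0$; without that, the count fails and the statement is false for merely smooth nondegenerate actions. The paper supplies the missing ingredient in Lemma \ref{lemm:holomorphic-elbolic}: a hyperbolic component would produce a holomorphic vector field whose zero locus has real codimension one, contradicting the fact that the zero set of a holomorphic vector field has even codimension; hence holomorphic nondegenerate actions are elbolic. It then invokes the existence of a compact orbit on a compact $M$ (\cite[Proposition 2.22]{Tien-Van}) to find a point with $r=0$, completing the count in Lemma \ref{lemm:elbolic-maximal}. You need both of these steps; neither follows from the structure theory of $\overline{\rho(\C^n)}\subseteq\Aut(M)$ alone.
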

	
	We give a necessary condition for a compact connected complex manifold  $M$ with a maximal action of a compact torus $G$ preserving the complex structure on $M$ to admit a K\"{a}hler metric in terms of $\mathcal{F}_1(M,G)$ (Theorem \ref{theo:kaehler}). It turns out that we can obtain concrete examples of non-K\"{a}hler manifolds from elements in $\mathcal{C}_2$ which do not satisfy the necessary condition. 

	This paper is organized as follows. In Section \ref{sec:maximal}, we see some properties of maximal torus actions. In Section \ref{sec:complexified}, we briefly recall how to construct complexified action and prepare some lemmas for later use. In Section \ref{sec:minimal}, we study the complex structure around minimal orbits. Especially, we give an explicit presentation of the complex structure around a minimal orbit. In Section \ref{sec:fan}, we assign a nonsingular fan $\Delta$ in $\mathfrak{g}$ and a complex vector subspace $\mathfrak{h}$ of $\mathfrak{g}^\C$ to a compact connected complex manifold $M$ equipped with a maximal torus action which preserves the complex structure. We also characterize the pair $(\Delta,\mathfrak{h})$ which comes from $(M,G)$. In Section \ref{sec:bundle}, we construct a principal bundle over an open subset of $M$ whose total space is a toric variety. This construction is the key of the proof of Theorem \ref{intro:maintheo}. In Section \ref{sec:maintheo}, we study the quotients of toric varieties by subgroups of algebraic tori, and prove Theorem \ref{intro:maintheo}. Throughout Sections \ref{sec:minimal} -- \ref{sec:maintheo}, we use arguments similar to \cite{Ishida-Karshon}. In Section \ref{sec:category}, we define morphisms in $\mathcal{C}_1$ and $\mathcal{C}_2$ and show that $\mathcal{F}_1$ and $\mathcal{F}_2$ are equivalence of categories. In Section \ref{sec:LVMBmam}, we clarify relationships between complex manifolds with maximal torus actions, LVMB manifolds and moment-angle manifolds with invariant complex structures. In Section \ref{sec:epb}, we study on the equivariant principal bundles in complex manifolds with maximal torus actions and show Theorem \ref{intro:universal}. 
	In Section \ref{sec:nondeg}, we clarify the relation with holomorphic nondegenerate $\C^n$-actions and give a complete classification of compact connected complex $n$-manifolds which admit holomorphic nondegenerate $\C^n$-actions. In Section \ref{sec:kaehler}, we give a necessary condition for a compact connected complex manifold equipped with a maximal action of a compact torus to admit a K\"{a}hler metric.

\section{Maximal torus actions and  minimal orbits} \Label{sec:maximal}
	Let $M$ be a connected manifold equipped with an effective action of a compact torus $G$. 
	Since $M$ is connected, $G$ is abelian and the action on $M$ of $G$ is effective, the normal subspace $T_xM/T_x(G\cdot x)$ must be a faithful representation of the isotropy subgroup $G_x$ at $x$. Since the identity component $G_x^0$ of $G_x$ is a subtorus of $G$, it follows from the faithfulness of $T_xM/T_x(G\cdot x)$ that 
	\begin{equation} \Label{eq:inequality1}
		\dim G_x \leq \frac{1}{2}(\dim M-(\dim G -\dim G_x))
	\end{equation}
	for all $x \in M$. The inequality \eqref{eq:inequality1} can be simplified to 
	\begin{equation} \Label{eq:inequality2}
		\dim G + \dim G_x \leq \dim M
	\end{equation}
	and this inequality \eqref{eq:inequality2} is equivalent to 
	\begin{equation} \Label{eq:inequality3}
		\dim G\cdot x \geq 2\dim G -\dim M. 
	\end{equation}
	
	\begin{defn}
		Let $M$ be a connected manifold equipped with an effective action of a compact torus $G$. We say that the action of $G$ on $M$ is \emph{maximal} if there is a point $x \in M$ such that 
		\begin{equation*} 
			\dim G + \dim G_x=\dim M,
		\end{equation*}
		that is, the equality of \eqref{eq:inequality2} holds.
		We say that an orbit $G\cdot x$ through $x$ is \emph{minimal} if 
		\begin{equation*}
			\dim G\cdot x = 2\dim G -\dim M,
		\end{equation*}
		that is, the equality of \eqref{eq:inequality3} holds. 
	\end{defn}
	It imediatly follows that the action of $G$ on $M$ is maximal if and only if there exists a minimal orbit $G \cdot x$. 
	
	Roughly speaking, the maximal action on $M$ means that there is no larger compact torus which acts on $M$ effectively. We shall state this for later use.
	\begin{lemm} \Label{lemm:maximal}
		Let $M$ be a connected manifold equipped with an effective action of a compact torus $G'$. Let $G$ be a subtorus of $G'$. Suppose that the action of $G'$ restricted to $G$ on $M$ is maximal. Then, $G=G'$. 
	\end{lemm}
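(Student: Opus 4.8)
The plan is to exploit the dimension equality \eqref{eq:maximal} for the $G$-action together with the general inequality \eqref{eq:inequality2} applied to the larger torus $G'$, and then to close the argument using connectedness of tori. Since the action of $G'$ restricted to $G$ is maximal, I first choose a point $x \in M$ for which $\dim G + \dim G_x = \dim M$, where $G_x$ denotes the isotropy subgroup of the $G$-action at $x$.

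The crucial observation is how the isotropy subgroups of $G$ and of $G'$ at this common point $x$ compare. Because $G$ is a subtorus of $G'$, the isotropy subgroup of the $G$-action at $x$ is exactly $G_x = G \cap G'_x$, where $G'_x$ is the isotropy subgroup of the $G'$-action at $x$. In particular $G_x \subseteq G'_x$, so $\dim G_x \leq \dim G'_x$, and of course $\dim G \leq \dim G'$ from the inclusion $G \subseteq G'$.

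Next I apply the basic inequality \eqref{eq:inequality2} to the effective $G'$-action at $x$, which gives $\dim G' + \dim G'_x \leq \dim M$. Chaining the estimates together yields
\begin{equation*}
	\dim M = \dim G + \dim G_x \leq \dim G' + \dim G'_x \leq \dim M.
\end{equation*}
Hence every inequality in this chain is forced to be an equality; combined with the two separate inequalities $\dim G \leq \dim G'$ and $\dim G_x \leq \dim G'_x$, this gives $\dim G = \dim G'$.

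Finally, a subtorus of full dimension must be the whole torus: since $G$ is a closed connected subgroup of $G'$ with $\dim G = \dim G'$, it is open in $G'$, and an open subgroup of the connected group $G'$ coincides with $G'$. Therefore $G = G'$. I do not anticipate a serious obstacle; the only point needing care is the identification $G_x = G \cap G'_x$ of the two isotropy groups, which is precisely what makes the dimension chain close up and pin down $\dim G = \dim G'$.
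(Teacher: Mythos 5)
Your proof is correct and follows essentially the same route as the paper: pick a point realizing the maximality equality for $G$, sandwich $\dim G + \dim G_x \leq \dim G' + \dim G'_x$ between two copies of $\dim M$ using inequality \eqref{eq:inequality2}, and conclude $\dim G = \dim G'$, hence $G = G'$ by connectedness. The only difference is that you spell out the inclusion $G_x = G \cap G'_x \subseteq G'_x$ and the final "full-dimensional subtorus is the whole torus" step, both of which the paper leaves implicit.
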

	\begin{proof}
		It suffices to show that $\dim G=\dim G'$. Since the action of $G$ on $M$ is maximal, there exists a point $x \in M$ such that $\dim G+\dim G_x = \dim M$. Since $\dim G \leq \dim G'$ and $\dim G_x \leq \dim G'_x$, we have that 
		\begin{equation*} 
				\dim M = \dim G + \dim G_x \leq \dim G'+ \dim G'_x \leq \dim M
		\end{equation*} by \eqref{eq:inequality2}. Therefore $\dim G+\dim G_x= \dim G'+\dim G_x'$. Using $\dim G \leq \dim G'$ and $\dim G_x \leq \dim G'_x$ again, we have that $\dim G= \dim G'$, proving the lemma.
	\end{proof}
	We shall state the following for later use, too. 
	\begin{lemm}\Label{lemm:cfi}
		Let $M$ be a connected manifold equipped with a maximal action of a compact torus $G$. Let $G\cdot x$ be a minimal orbit. Then, the followings hold: 
		\begin{enumerate}
			\item The isotropy subgroup $G_x$ of $G$ at $x$ is connected. 
			\item $G\cdot x$ is a connected component of the fixed point set of the action of $G$ restricted to $G_x$ on $M$.
			\item Each minimal orbit is isolated. In particular, there are finitely many minimal orbits if $M$ is compact. 
		\end{enumerate}
	\end{lemm}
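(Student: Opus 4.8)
The plan is to reduce all three statements to an analysis of the \emph{slice representation} at a point $x$ with minimal orbit, via the equivariant tubular neighborhood (slice) theorem. First I would record the local model. Let $V := T_xM/T_x(G\cdot x)$ be the normal representation of $G_x$; by the slice theorem a $G$-invariant neighborhood of $G\cdot x$ is $G$-equivariantly diffeomorphic to the associated bundle $G\times_{G_x}V$. As recalled before \eqref{eq:inequality1}, $V$ is a \emph{faithful} representation of $G_x$, and since $G\cdot x$ is minimal, \eqref{eq:minimal} and \eqref{eq:maximal} give $\dim V=\dim M-\dim(G\cdot x)=2\dim G_x$. I would then analyze this faithful real representation. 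Because $\ker(G_x^0\to GL(V))\subseteq\ker(G_x\to GL(V))=\{e\}$, the identity component $G_x^0$, a torus of dimension $a:=\dim G_x$, acts faithfully on $V\cong\R^{2a}$. Decomposing $V$ into its trivial summand and its two-dimensional weight planes, faithfulness forces the nonzero weights to span, so there are at least $a$ of them; the count $\dim V=2a$ then forces exactly $a$ weight planes, no trivial summand, and the weights $\beta_1,\dots,\beta_a$ to be a $\Z$-basis of the character lattice of $G_x^0$. Thus in suitable complex coordinates $V\cong\C^a$ and $G_x^0$ acts as the standard coordinatewise action of $(S^1)^a$.

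For (1), since $G_x$ is abelian it commutes with $G_x^0$ and hence preserves the weight-line decomposition $V=\bigoplus_i\C_{\beta_i}$; therefore $G_x$ acts diagonally, giving an injective homomorphism $\rho\co G_x\to(S^1)^a$. Its restriction to $G_x^0$ is already onto $(S^1)^a$, so $\rho(G_x)\supseteq\rho(G_x^0)=(S^1)^a\supseteq\rho(G_x)$, whence $\rho(G_x)=\rho(G_x^0)$. Injectivity of $\rho$ then yields $G_x=G_x^0$, i.e.\ $G_x$ is connected.

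For (2) and (3) I would compute inside the tube $G\times_{G_x}V$. Because $G$ is abelian, the isotropy group of a point $[g,v]$ equals the stabilizer $(G_x)_v$ of $v$ in $G_x$, and the $G_x$-fixed set meets the tube in $\{[g,v]\mid v\in V^{G_x}\}$. Since $G_x=G_x^0$ acts as the full standard torus on $\C^a$, we have $V^{G_x}=\{0\}$ and $(G_x)_v=G_x$ only when $v=0$. Hence the $G_x$-fixed set meets the tube in exactly $G\cdot x$; as $G\cdot x$ is compact and connected and open in the fixed submanifold $M^{G_x}$, it is a connected component of $M^{G_x}$, proving (2). Likewise $\dim(G_x)_v<\dim G_x$ for every $v\neq 0$, so by \eqref{eq:maximal} no orbit in the tube other than $G\cdot x$ is minimal; thus minimal orbits are isolated, and when $M$ is compact an upper-semicontinuity argument for the isotropy dimension together with compactness leaves only finitely many, proving (3).

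The main obstacle is the structural step in the first paragraph: extracting from the bare data ``faithful of real dimension $2\dim G_x$'' the rigid conclusion that the weights form a lattice basis and that $G_x^0$ acts as the \emph{entire} standard torus on $\C^a$. Once this normal form is in hand, parts (1)--(3) are formal consequences of the tube description, the only extra care being the elementary semicontinuity and compactness argument promoting ``each minimal orbit is isolated'' to ``finitely many.''
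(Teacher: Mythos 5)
Your proof is correct and follows essentially the same route as the paper: both arguments hinge on the fact that the faithful slice representation $V=T_xM/T_x(G\cdot x)$ of real dimension $2\dim G_x$ forces the image of $G_x$ in $GL(V)$ to be exactly a maximal compact torus $(S^1)^{\dim G_x}$ acting coordinatewise on $\C^{\dim G_x}$, which yields connectedness of $G_x$, $V^{G_x}=\{0\}$, and then (2) and (3) via the slice theorem. You merely spell out in more detail (via the weight decomposition and the explicit tube computation) what the paper compresses into the phrase ``the image of $G_x$ is a maximal compact torus of $GL(T_xM/T_x(G\cdot x))$.''
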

	\begin{proof}
		Since the action of $G$ is maximal, the representation $T_xM/T_x(G\cdot x)$ of $G_x$ is faithful and 
		\begin{equation} \Label{eq:equality1}
			\dim G_x = \frac{1}{2}\dim T_xM/T_x(G\cdot x). 
		\end{equation}
		Consider the injective homomorphism $G_x \to GL(T_xM/T_x(G\cdot x))$ associated with the representation $T_xM/T_x(G\cdot x)$. Since $G_x$ is compact and abelian, it follows from \eqref{eq:equality1} that the image of the whole $G_x$ by the homomorphism is a maximal compact torus of $GL(T_xM/T_x(G\cdot x))$. Therefore $G_x$ should be connected, proving Part (1).
		
		Since the image of $G_x$ by the homomorphism $G_x \to GL(T_xM/T_x(G\cdot x))$ is a maximal compact torus of $GL(T_xM/T_x(G\cdot x))$, we have that 
		\begin{equation*}
			(T_xM/T_x(G\cdot x))^{G_x}=\{0\}.
		\end{equation*}
		This shows that 
		\begin{equation*}
			T_xM^{G_x} = T_x(G\cdot x).
		\end{equation*}
		This together with the slice theorem shows Part (2).
		
		It follows from Part (2) and the slice theorem that each minimal orbit is isolated, proving Part (3). 
	\end{proof}
	
	\section{Complexified actions} \Label{sec:complexified}
	Let $M$ be a complex manifold and let a connected abelian group $G$ act on $M$ preserving the complex structure $J$. Then, under a certain condition, we can construct the \emph{complexified action} on $M$ of a complex Lie group $G^\C$ such that the action map is holomorphic. 
	
	For a vector $v$ in the Lie algebra $\mathfrak{g}$ of $G$, we denote by $X_v$ the fundamental vector field generated by $v$. For a vector field $X$ on $M$, we denote by $\exp (X)$ the flow at time $1$ which is a diffeomorphism from $M$ to $M$ itself when $\exp(X)$ can be defined. 
	
	Any element in $\mathfrak{g}^\C := \mathfrak{g}\otimes_\R \C$ can be represented as $u\otimes 1 + v\otimes{\sqrt{-1}}$ for unique $u,v \in \mathfrak{g}$ and hence it is isomorphic to $\mathfrak{g}\oplus \sqrt{-1}\mathfrak{g}$ as real vector spaces. For simplicity, we denote by $u+\sqrt{-1}v$ the element $u\otimes 1 + v\otimes{\sqrt{-1}}$. 
	
	Let $J$ be the complex structure on $M$. Suppose that $JX_v$ is complete for any $v \in \mathfrak{g}$. Then we define a map $\mathfrak{g}^\C \times M \to M$ to be $(u+\sqrt{-1}v,x)=\exp(X_u+JX_v)(x)$ for $u,v \in \mathfrak{g}$. We claim that the map $\mathfrak{g}^\C \times M \to M$ is a holomorphic action. 
	
	First, we see that the map $\mathfrak{g}^\C \times M \to M$ is an action. To see this, we see that the vector fields $X_u, X_v, JX_u, JX_v$ commute with each other. 
	Since  $\mathfrak{g}$ is commutative, $[X_u,X_v]=0$. Since the flow of $X_u$ preserves $J$ and $X_v$, it also preserves $JX_u$ and $JX_v$. So $[X_u, JX_u]=[X_u,JX_v]=0$. Since $J$ is the complex structure, its Nijenhuis tensor, $N(Z,W):=2([JZ,JW]-J[JZ,W]-J[Z,JW]-[Z,W])$ vanishes for any vector fields $Z,W$ on $M$. Setting $Z=X_u$ and $W=X_v$ we have that $[JX_u,JX_v]=J[JX_u,X_v]+J[X_u,JX_v]+[X_u,X_v]$ and each of the three terms on the right hand side is zero. So the vector fields $X_u,X_v,JX_u$ and $JX_v$ commute with each other. This shows that the map $\mathfrak{g}^\C \times M \to M$ is an action. 
	
	Now we see that the flow of $JX_v$ preserves the complex structure $J$ on $M$. 
	The flow of a vector field $Y$ preserves $J$ if and only if $[Y,JW]=J[Y,W]$ for each vector field $W$ (see \cite[Proposition 2.10 in Chapter IX]{Kobayashi-Nomizu}). Setting $Y=JX_u$ and $W$ arbitrary, it follows from the vanishing of the Nijenhuis tensor and the assumption that the flow of $X_u=-JY$ preserves $J$ that 
	\begin{equation*}
		\begin{split}
			0=N(JY,W)&=2([-Y,JW]-J[-Y,W]-J[JY,JW]-[JY,W])\\
			&=2([-Y,JW]-J[-Y,W]).
		\end{split}
	\end{equation*} 
	Hence the flow of $Y=JX_u$ preserves $J$. 
	
	Finally, the action map $\mathfrak{g}^\C \times M \to M$ is holomorphic, because its differential, which at the point $(X_u + \sqrt{-1}X_v, x)$ is the map $\mathfrak{g}^\C \times T_xM \to T_{\exp (X_u+JX_v)(x)}M$ that takes $(u'+\sqrt{-1}v', X')$ to $X_{u'}(\exp(X_u+JX_v)(x))+JX_{v'}(\exp(X_u+JX_v)(x))+d(\exp (X_u+JX_v))(X')$, is complex linear. 
	
	We remark that the exponential map $\mathfrak{g} \to G$ is a surjective homomorphism because $G$ is connected and abelian. So $G$ is isomorphic to $\mathfrak{g}/\Hom (S^1,G)$, where we think of the set $\Hom (S^1,G)$ of homomorphisms $\lambda \co S^1 \to G$ as a subset of $\mathfrak{g}$ via the image of $1$ by the differential of the composition $\lambda \circ e^{2\pi \sqrt{-1}\bullet} \co \R \to S^1 \to G$. The action of $\mathfrak{g}^\C$ on $M$ descends to an action of $G^\C := \mathfrak{g}^\C/\Hom (S^1, G)$. The $G^\C$-action on $M$ is said to be the complexified action. 
	\begin{exam}\Label{exam:rep}
		Let $G=S^1$ act on a vector space $\C$ as a representation and let $\alpha \in \Hom (G,\C^*)$ be its character. Namely, the action $G \times \C \to \C$ is given as 
		\begin{equation*}
			g\cdot v = \alpha (g)v = g^kv \quad \text{for some $k \in \Z$}.
		\end{equation*}
		The complexified action $G^\C \times \C \to \C$ is given by the same formulation 
		\begin{equation*}
			g \cdot v = g^kv \quad \text{for $g \in G^\C = \C^*$}.
		\end{equation*}
		We also denote by $\alpha$ the character of $G^\C \times \C \to \C$. 
	\end{exam}
	\begin{rema}
		Let $G_1$ and $G_2$ be connected abelian Lie groups and let $\alpha \co G_1 \to G_2$ be a Lie group homomorphism. The differential of $\alpha$ at the unit $1$ of $G_1$, the linear map $(d\alpha)_1 \co \mathfrak{g}_1 \to \mathfrak{g}_2$, sends elements in $\Hom (S^1,G_1)$ to elements in $\Hom (S^1,G_2)$. So the $\C$-linear map $(d\alpha)_1^\C := (d\alpha)_1 \otimes \id _\C \co \mathfrak{g}_1^\C \to \mathfrak{g}_2^\C$ induces a complex Lie group homomorphism $G_1^\C \to G_2^\C$. We also denote by $\alpha$ this complex Lie group homomorphism $G_1^\C \to G_2^\C$. 
		
		Let $M_1$ and $M_2$ be complex manifolds equipped with actions of $G_1$ and $G_2$ preserving the complex structures on $M_1$ and $M_2$, respectively. Let $\psi \co M_1 \to M_2$ be an $\alpha$-equivariant holomorphic map, that is, the diagram 
		\begin{equation*}
			\xymatrix{
				G_1 \times M_1 \ar[r]^{\alpha \times \psi} \ar[d] & G_2 \times M_2 \ar[d]\\
				M_1 \ar[r]^\psi & M_2
			}
		\end{equation*}
		commutes, where the vertical arrows are the action maps. Suppose that the complexified actions $G_i^\C \times M_i \to M_i$ can be defined for both $i =1,2$. Then, it follows from the definition of the complexified actions and the holomorphicity of $\psi$ that the diagram 
		\begin{equation*}
			\xymatrix{
				G_1^\C \times M_1 \ar[r]^{\alpha \times \psi} \ar[d] & G_2^\C \times M_2 \ar[d]\\
				M_1 \ar[r]^\psi & M_2
			}
		\end{equation*}
		commutes. So $\psi$ is an $\alpha$-equivariant holomorphic map with respect to the complexified homomorphism $\alpha \co G_1^\C \to G_2^\C$. 
	\end{rema}
	
	The complexified action $G^\C \times M \to M$ might not be effective. We denote by $Z_{G^\C}$ the subgroup which consists of global stabilizers, that is, 
	\begin{equation*}
		Z_{G^\C} := \{ g \in G^\C \mid g\cdot x = x \text{ for all $x \in M$}\}.
	\end{equation*}
	We denote by $G^M$ the quotient complex Lie group $G^\C/Z_{G^\C}$. We  regard $G$ as a subgroup of $G^\C$ and of $G^M$ when $G$ acts on $M$ effectively. 
	The dimension of $G^M$ should be at most the dimension of the manifold $M$. This follows from the following lemma and its corollary.
	\begin{lemm}\Label{lemm:opendense}
		Let $M$ be a connected complex manifold of complex dimension $n$ and let $Y_1,\dots, Y_n$ be holomorphic vector fields such that $Y_1\wedge \dots \wedge Y_n$ is not zero, that is, there exists a point $x \in M$ such that $Y_1(x),\dots ,Y_n(x)$ are linearly independent over $\C$. Then, the open subset
		\begin{equation*}
			S:=\{ x \in M \mid Y_1(x),\dots, Y_n(x) \text{ are linearly independent over $\C$}\}
		\end{equation*}
		is dense in $M$ and connected. 
	\end{lemm}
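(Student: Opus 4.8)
The plan is to interpret the wedge product $\sigma := Y_1 \wedge \dots \wedge Y_n$ as a holomorphic section of the line bundle $L := \Lambda^n TM$, where $TM$ denotes the holomorphic tangent bundle; this is genuinely a line bundle since $M$ has complex dimension $n$. By construction $S$ is exactly the locus where $\sigma$ does not vanish, so $S$ is open, and in any holomorphic chart with coordinates $z_1,\dots,z_n$, writing $Y_i = \sum_j f_{ij}\,\partial/\partial z_j$, we have $\sigma = \det(f_{ij})\,(\partial/\partial z_1 \wedge \dots \wedge \partial/\partial z_n)$. Hence the complement $A := M \setminus S$ is, locally, exactly the zero set of the single holomorphic function $F := \det(f_{ij})$, and the hypothesis says $\sigma$ is not identically zero.

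For density I would first show $A$ has empty interior. Let $W$ be the set of points near which $\sigma$ vanishes identically. Then $W$ is open by definition, and it is closed: if $x \in \overline{W}$, choose a connected chart $V \ni x$ over which $L$ is trivialized, so that $\sigma|_V$ is given by a holomorphic function vanishing on the nonempty open set $V \cap W$; by the identity theorem this function vanishes on all of $V$, whence $x \in W$. Since $M$ is connected and $\sigma$ is nonzero at some point, $W = \emptyset$. Thus $\sigma$ vanishes on no open set, so $A$ has empty interior and $S = M \setminus A$ is dense.

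For connectivity the key local input is: if $f$ is holomorphic and not identically zero on a polydisc $D$, then $D \setminus f^{-1}(0)$ is connected. Granting this, suppose $S$ were disconnected and write $S = S_1 \sqcup S'$, where $S_1$ is one connected component and $S'$ is the union of the remaining ones, both nonempty and open. Since $S$ is dense, $M = \overline{S_1} \cup \overline{S'}$; as $M$ is connected and both sets are closed, there is a point $p \in \overline{S_1} \cap \overline{S'}$. Because $S_1$ and $S'$ are each closed in $S$ and disjoint, this $p$ must lie in $A$. Choose a polydisc chart $D \ni p$; then $A \cap D = \{F = 0\}$ for the local defining function $F$, so $D \setminus A = D \setminus \{F = 0\}$ is connected by the local lemma. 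But $D \setminus A \subseteq S$ is a connected set, and it meets both $S_1$ and $S'$ (since $p$ lies in the closure of each, the neighborhood $D$ meets $S_1 \cap D \subseteq D \setminus A$ and $S' \cap D \subseteq D \setminus A$), contradicting $S = S_1 \sqcup S'$. Therefore $S$ is connected.

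The main obstacle is the local lemma. I would prove it by induction on $n$: for $n = 1$ the set $f^{-1}(0)$ is discrete in $D \subseteq \C$, and the complement of a discrete set in a connected open subset of $\R^2$ is connected; for the inductive step, after a linear change of coordinates making $f$ regular in $z_n$, the Weierstrass preparation theorem exhibits $\{f=0\}$ as a finite branched cover over a domain in $\C^{n-1}$, and one combines the inductive hypothesis with connectivity in the last variable. Alternatively, and perhaps most cleanly, one notes that $\{F = 0\}$ is an analytic set admitting a locally finite stratification by complex submanifolds of real codimension at least $2$; since a path is one-dimensional and $1 < 2$, transversality lets one perturb any smooth path between two points of $D \setminus \{F=0\}$, with endpoints fixed, off every stratum, so the two points lie in the same component. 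Either route establishes the local lemma and hence the whole statement.
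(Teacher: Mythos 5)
Your proof is correct. Both you and the paper reduce the statement to the same local fact: in a coordinate chart, $Y_1\wedge\dots\wedge Y_n = F\,\partial/\partial z_1\wedge\dots\wedge\partial/\partial z_n$ for a holomorphic function $F$, and the complement of the zero set of a not-identically-zero holomorphic function on a polydisc is dense and connected. Where you differ is in the global bookkeeping and in the level of detail on the local input. The paper chains charts together: it picks a chart containing the point where the fields are independent, connects an arbitrary chart to it by a finite sequence of pairwise-overlapping charts, and propagates ``$U_\alpha\cap S$ is dense and connected'' along the chain by induction, handling density and connectedness in one pass. You instead run two separate global arguments --- a clopen argument on the set where the section vanishes identically to get density, and a two-closed-sets/boundary-point argument to get connectedness --- and, unlike the paper, you actually sketch a proof of the local lemma (via Weierstrass preparation or via the real-codimension-two transversality argument), which the paper simply asserts with the phrase ``since $f$ is holomorphic.'' Your connectedness argument is slightly more delicate (one must check, as you implicitly do, that the chosen boundary point $p$ lies in $A$ and that $F$ is not identically zero on the polydisc around $p$ because $D$ meets $S_1$), but it is sound; the paper's chart-chaining is more elementary to verify but assumes the same local fact. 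Net effect: your write-up is more self-contained on the analytic input and more abstract on the topology, while the paper's is the reverse.
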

	\begin{proof}
		Let $\{\varphi _\alpha \co U_\alpha \to \D^n\}_\alpha$ be local holomorphic coordinate systems, that is, $U_\alpha$'s are open subsets of $M$ such that $\bigcup_{\alpha}U_\alpha=M$, $\D^n$ is the polydisc in $\C^n$ and each $\varphi _\alpha$ is a biholomorphism. It suffices to show that $U_\alpha \cap S$ is dense in $U_\alpha$ and connected for each index $\alpha$. Let $z_i$ for $i=1,\dots ,n$ be the $i$-th factor of $\varphi_\alpha$. Using these coordinates we can represent $Y_1\wedge\dots  \wedge Y_n$ as 
		\begin{equation*}
			Y_1\wedge \dots \wedge Y_n = f\frac{\partial}{\partial z_1}\wedge \dots \wedge \frac{\partial}{\partial z_n},
		\end{equation*}
		where $f$ is a holomorphic function defined on $U_\alpha$. For $y \in U_\alpha$, $Y_1(y), \dots ,Y_n(y)$ are linearly independent over $\C$ if and only if $f(y)\neq 0$. Since $f$ is holomorphic, the open subset $\{ y \in U_\alpha \mid f(y) \neq 0\}=U_\alpha \cap S$ is dense and connected unless empty. Suppose $U_{\alpha _0}$ contains $x \in M$ such that $Y_1(x),\dots ,Y_n(x)$ are linearly independent over $\C$. For any index $\alpha$, we can find a sequence of indices $\alpha _1,\dots ,\alpha _k$ such that $U_{\alpha _j}\cap U_{\alpha _{j+1}} \neq \emptyset$ for $j=0,\dots ,k-1$ and $U_{\alpha _k}=U_{\alpha}$ because $M$ is connected. By induction on $k$, we can show that $U_\alpha \cap S$ is dense and connected. By the choice of $\alpha_0$, $U_{\alpha _0} \cap S$ is not empty and hence $U_{\alpha _0} \cap S$ is dense and connected. Suppose that $U_{\alpha _k}\cap S$ is dense in $U_{\alpha _k}$ and connected and $U_{\alpha _k}\cap U_{\alpha _{k+1}}$ is not empty. Then, $U_{\alpha _k}\cap U_{\alpha _{k+1}}\cap S$ is not empty. So $U_{\alpha _{k+1}} \cap S$ is not empty and hence $U_{\alpha _{k+1}} \cap S$ is dense in $U_{\alpha _{k+1}}$ and connected.  
		
		Therefore, for any index $\alpha$, $U_\alpha \cap S$ is dense in $U_\alpha$ and connected, proving the lemma.
	\end{proof}
	\begin{coro}\Label{coro:opendense}
		Let $M$ be a compact connected complex manifold of complex dimension $n$. Let an abelian Lie group $G$ act on $M$ preserving the complex structure and let $\mathfrak{g}^\C$ be as above. 
		Suppose that there exist a point $x \in M$ and $v_1,\dots ,v_n \in \mathfrak{g}^\C$ such that $X_{v_1}(x),\dots ,X_{v_n}(x)$ are linearly independent over $\C$. Then, $G^\C \cdot x$ is an open dense orbit. 
	\end{coro}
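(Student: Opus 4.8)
The plan is to apply Lemma \ref{lemm:opendense} to the fundamental vector fields of the complexified action, and then to use the connectedness of the non-degeneracy locus to promote "open orbit'' to "open dense orbit.''

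First I would note that each $X_{v_i}$ with $v_i \in \mathfrak{g}^\C$ is a fundamental vector field of the complexified $G^\C$-action, hence a holomorphic vector field: by the computation in Section \ref{sec:complexified}, the flow of $X_v$ preserves the complex structure $J$ for every $v \in \mathfrak{g}^\C$, so under the $\C$-linear isomorphism $(T_yM,J)\cong T_y^{1,0}M$, $Y\mapsto \tfrac12(Y-\sqrt{-1}JY)$, the vector field $X_{v_i}$ corresponds to a holomorphic section of the holomorphic tangent bundle. This identification preserves linear independence over $\C$, so the hypothesis of the corollary says exactly that $Y_1\wedge\dots\wedge Y_n\neq 0$ for $Y_i:=X_{v_i}$. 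Lemma \ref{lemm:opendense} then applies and gives that the set $S:=\{\,y\in M \mid X_{v_1}(y),\dots,X_{v_n}(y)\text{ are linearly independent over }\C\,\}$ is open, dense, and \emph{connected}.

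Next I would observe that every point of $S$ lies in an open $G^\C$-orbit. Indeed, for $y\in S$ the vectors $X_{v_1}(y),\dots,X_{v_n}(y)$ are tangent to $G^\C\cdot y$ and span $T_yM$ over $\C$, hence span the real tangent space $T_yM$; therefore the orbit map is a submersion at $y$ and $G^\C\cdot y$ is open. In particular $O:=G^\C\cdot x$ is open. To obtain density I would show that $O\cap S$ is both open and closed in $S$. It is open since $O$ is open. For closedness, let $y\in S$ lie in the closure in $S$ of $O\cap S$; since $G^\C\cdot y$ is an open neighborhood of $y$ it must meet $O$, and as two orbits are either equal or disjoint we get $G^\C\cdot y=O$, whence $y\in O\cap S$. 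Thus $O\cap S$ is a nonempty clopen subset of the connected set $S$, so $O\cap S=S$, i.e.\ $S\subseteq O$. Since $S$ is dense in $M$, the open orbit $O=G^\C\cdot x$ is dense, which is the claim.

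The step requiring the most care is precisely the passage from "$G^\C\cdot x$ is open'' to "$G^\C\cdot x$ is dense'': this is where the connectedness of $S$ furnished by Lemma \ref{lemm:opendense} is indispensable, since without it one could not exclude the presence of several disjoint open orbits filling out the dense set $S$. The clopen argument inside $S$ is exactly what rules this out.
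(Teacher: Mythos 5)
Your proof is correct and is essentially the argument the paper intends: the paper's own proof reads only ``follows from Lemma \ref{lemm:opendense} immediately,'' and your write-up supplies exactly the missing details (the $X_{v_i}$ are holomorphic vector fields, points of $S$ have open $G^\C$-orbits, and the connectedness of $S$ from Lemma \ref{lemm:opendense} forces $S$ to be a single orbit via the clopen argument). One phrase to tighten: $n$ vectors cannot $\R$-span the $2n$-real-dimensional space $T_yM$; what makes the orbit open is that $T_y(G^\C\cdot y)$ is the image of the $\C$-linear map $v\mapsto X_v(y)$, hence a $J$-invariant subspace containing a $\C$-basis of $T_yM$ and therefore equal to $T_yM$.
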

	\begin{proof}
		The corollary follows from Lemma \ref{lemm:opendense} immediately. 
	\end{proof}
	For later use, we state the fact that the dimension of $G^M$ is at most the dimension of $M$ as a corollary.
	\begin{coro}\Label{coro:opendense2}
		Let $M$ and $G$ be as above. Then, the dimension of $G^M$ is at most the dimension of $M$.
	\end{coro}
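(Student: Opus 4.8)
The plan is to bound the complex dimension of $G^M$ by that of a generic $G^M$-orbit. Since the $G^M$-action is holomorphic, each orbit $G^M\cdot x$ is an immersed complex submanifold of $M$, so $\dim_\C (G^M\cdot x)\le n$; and $\dim_\C(G^M\cdot x)=\dim_\C G^M-\dim_\C (G^M)_x$, where $(G^M)_x$ denotes the isotropy subgroup. Hence it suffices to exhibit one point $x$ at which $(G^M)_x$ is finite, equivalently at which the fundamental holomorphic vector fields $X_{v_1},\dots,X_{v_m}$ associated with a basis of $\mathfrak{g}^M=\mathfrak{g}^\C/\mathfrak{z}$ (here $\mathfrak{z}=\{v\in\mathfrak{g}^\C\mid X_v\equiv 0\}$ is the Lie algebra of $Z_{G^\C}$, and $m=\dim_\C G^M$) are linearly independent over $\C$ at $x$. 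Such an $x$ gives $\dim_\C G^M=\dim_\C(G^M\cdot x)\le n$; in the extremal case $m=n$ this is precisely the situation of Corollary \ref{coro:opendense}, which then identifies $G^\C\cdot x$ as an open dense orbit.

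To produce such a point I would pass to the locus $U\subseteq M$ on which the rank of the evaluation $\mathfrak{g}^M\to T_x^{1,0}M$, $v\mapsto X_v(x)$, is maximal; equivalently $\dim_\C (G^M)_x$ attains its minimum $r$ there. Its complement is a proper analytic subset (a determinantal locus), so $U$ is open, dense, and---since analytic subsets of a connected complex manifold have real codimension at least two---connected. The claim is $r=0$. If the assignment $x\mapsto\mathfrak{s}_x:=\{v\in\mathfrak{g}^M\mid X_v(x)=0\}$ were locally constant on $U$, then by connectedness it would equal a fixed subalgebra $\mathfrak{s}$; every $v\in\mathfrak{s}$ would have $X_v$ vanishing on the dense set $U$, hence $X_v\equiv 0$ on $M$ by holomorphicity (cf.\ Lemma \ref{lemm:opendense}), forcing $v\in\mathfrak{z}$ and therefore $\mathfrak{s}=0$, i.e.\ $r=0$.

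The main obstacle is precisely this local constancy of the generic isotropy, which fails for a general abelian $G$ (for instance when $G^\C$ has unipotent factors the isotropy subalgebras genuinely rotate with $x$). Here it is rescued by the standing hypothesis that $G$ is a compact torus, so that $G^\C\cong(\C^*)^{\dim G}$ is an algebraic torus. For $x\in U$ the identity component $(G^\C)_x^0$ is then a subtorus of fixed dimension; as an algebraic torus possesses only countably many subtori, the continuously varying family $x\mapsto (G^\C)_x^0$ over the connected set $U$ must be a single subtorus $T_0$. This $T_0$ fixes every point of $U$, so its fixed-point set---closed and containing the dense set $U$---is all of $M$; thus $T_0\subseteq Z_{G^\C}$, the isotropy in $G^M$ is finite on $U$, and $r=0$. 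I expect this discreteness-of-subtori step to be the heart of the argument, since it is exactly what distinguishes the compact-torus case from general abelian group actions.
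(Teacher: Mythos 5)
Your reduction to finding a point with discrete $G^M$-isotropy is sound, but the way you close the argument has a genuine gap, and it begins with a misreading of the hypotheses. In this corollary ``as above'' carries over the supposition of Corollary \ref{coro:opendense} --- that there exist $x$ and $v_1,\dots,v_n\in\mathfrak{g}^\C$ with $X_{v_1}(x),\dots,X_{v_n}(x)$ linearly independent over $\C$ --- and the paper's one-line proof uses exactly that: by Corollary \ref{coro:opendense} there is an open dense orbit $G^\C\cdot x$; since $G^\C$ is abelian, $(G^\C)_x$ fixes that orbit pointwise, hence (fixed-point sets being closed) all of $M$, so $(G^\C)_x=Z_{G^\C}$ and $G^M\cong G^\C\cdot x$ is an open subset of $M$, giving $\dim G^M\leq\dim M$. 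You never invoke this hypothesis and instead try to prove the bound for an arbitrary abelian action; in that generality the statement is false. For instance, the $4$-dimensional abelian unipotent group $\bigl\{\left(\begin{smallmatrix} I & A\\ 0 & I\end{smallmatrix}\right) \mid A\in M_2(\C)\bigr\}\cong\C^4$ acts effectively and holomorphically on $\C P^3$ with every orbit of complex dimension at most $2$; here $\mathfrak{g}^M\cong\C^4$, so $\dim_\C G^M=4>3=\dim_\C M$. This example also confirms that your worry about the generic isotropy algebra rotating with the point is exactly right: $\mathfrak{s}_{[u:v]}=\{A\mid Av=0\}$ genuinely varies with $[v]$ and has trivial common intersection.

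The step you call the heart of your argument --- that $(G^\C)_x^0$ is an algebraic subtorus, of which there are only countably many --- does not hold. First, $G$ is only assumed here to be an abelian Lie group; the compact-torus hypothesis enters only later, from Section \ref{sec:minimal} on. Second, and more seriously, even when $G$ is a compact torus the action of $G^\C\cong(\C^*)^m$ is merely holomorphic, not algebraic, so isotropy groups are closed complex subgroups that need not be algebraic: in the Hopf manifold of Example \ref{exam:Hopf} the identity component of the isotropy of $(\C^*)^{n+1}$ at a generic point is the non-algebraic one-parameter subgroup $\{(e^{-2\pi\sqrt{-1}\widetilde{\alpha}t},\dots,e^{-2\pi\sqrt{-1}\widetilde{\alpha}t},e^{2\pi\sqrt{-1}t})\mid t\in\C\}$, and such ``irrational'' subgroups form a continuum. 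So countability cannot rescue the local constancy of the isotropy. Once you restrict to the actual hypothesis, none of this machinery is needed: the maximal-rank locus is a single open dense orbit, the isotropy is literally constant along it by commutativity, and your own density argument then identifies it with $Z_{G^\C}$.
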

	\begin{proof}
		The corollary follows from Corollary \ref{coro:opendense} immediately.
	\end{proof}

\section{Structure around minimal orbits}\Label{sec:minimal}
	Let $M$ be a compact connected complex manifold equipped with a maximal action of a compact torus $G$. We assume that each element of $G$ preserves the complex structure $J$ on $M$. As we saw in Section \ref{sec:complexified}, the action of $G$ on $M$ extends to an action of $G^\C$ and of $G^M$ because any vector fields are complete. 
	\begin{lemm}\Label{lemm:minimal}
		Let $M$ be a connected complex manifold equipped with a maximal action of a compact torus $G$. Suppose that each element of $G$ preserves the complex structure on $M$. Then, each minimal orbit is a complex submanifold of $M$. 
	\end{lemm}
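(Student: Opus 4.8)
The plan is to reduce the statement to a linear-algebra fact about the complex structure on the tangent space of the orbit. A real submanifold $N$ of a complex manifold $(M,J)$ is a complex submanifold precisely when $T_pN$ is a $J$-invariant (equivalently, complex) subspace of $T_pM$ for every $p \in N$; and since $G$ is a compact torus, the orbit $G\cdot x\cong G/G_x$ is automatically a compact embedded submanifold. Thus it suffices to show that $T_p(G\cdot x)$ is $J$-invariant at every point $p$ of a minimal orbit, and by equivariance it will be enough to check this at $x$ itself.

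The key step is carried out at the base point $x$. Recall from the proof of Lemma \ref{lemm:cfi} that for a minimal orbit the linear isotropy representation of $G_x$ on $T_xM$ satisfies $(T_xM/T_x(G\cdot x))^{G_x}=\{0\}$, so that the $G_x$-fixed subspace is exactly the orbit tangent space, $T_xM^{G_x}=T_x(G\cdot x)$. Now I would use that each element of $G$ preserves $J$: for $g\in G_x$ we have $g\cdot x=x$, so the linear isotropy map $d(g)_x\co T_xM\to T_xM$ commutes with $J_x$ (indeed $d(g)_x\circ J_x=J_{g\cdot x}\circ d(g)_x=J_x\circ d(g)_x$). Consequently $J_x$ preserves the fixed subspace: if $w\in T_xM^{G_x}$ then $d(g)_x(J_xw)=J_x(d(g)_xw)=J_xw$ for all $g\in G_x$, so $J_xw\in T_xM^{G_x}$. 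Hence $T_x(G\cdot x)=T_xM^{G_x}$ is a $J$-invariant, i.e.~complex, subspace of $T_xM$.

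It then remains to propagate $J$-invariance over the whole orbit. For $g\in G$, the action map $g\co M\to M$ is a biholomorphism carrying $G\cdot x$ onto itself, so $d(g)_x$ maps $T_x(G\cdot x)$ onto $T_{g\cdot x}(G\cdot x)$ while commuting with $J$; since the source is $J$-invariant, so is the image. Therefore $T_p(G\cdot x)$ is a complex subspace of $T_pM$ for every $p\in G\cdot x$, and the minimal orbit is a complex submanifold. I do not expect a serious obstacle here: the only points requiring care are the identification $T_x(G\cdot x)=T_xM^{G_x}$, which is supplied by Lemma \ref{lemm:cfi}, and the observation that $J$ commutes with the isotropy representation because $G$ preserves $J$; the rest is the standard equivariant propagation and the embeddedness of compact-group orbits.
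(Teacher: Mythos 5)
Your proof is correct and follows essentially the same route as the paper: both hinge on the identity $T_xM^{G_x}=T_x(G\cdot x)$ from Lemma \ref{lemm:cfi} and on the fact that the isotropy group $G_x$ acts by biholomorphisms, hence commutes with $J$. The only difference is cosmetic: the paper quotes the standard fact that connected components of the fixed point set of a group of biholomorphisms are complex submanifolds and invokes Lemma \ref{lemm:cfi}(2), whereas you inline the proof of that fact by checking $J$-invariance of the tangent spaces directly and propagating it over the orbit by equivariance.
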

	\begin{proof}
		Let $G\cdot x$ be a minimal orbit. Since each element of $G$ preserves the complex structure $J$ on $M$, each element of $G_x$ also preserves $J$. Hence each connected component of fixed points of the action of $G_x$ is a complex submanifold. This together with Lemma \ref{lemm:cfi} shows that $G\cdot x$ is a complex submanifold of $M$, as required.
	\end{proof}
	Let $G\cdot x$ be a minimal orbit of $M$. For any $v \in \mathfrak{g}$, the fundamental vector field $X_v|_{G\cdot x}$ generated by $v$ restricted to $G\cdot x$ is tangent to $G\cdot x$. It follows from Lemma \ref{lemm:minimal} that the vector field $JX_v|_{G\cdot x}$ is also tangent to $G\cdot x$. This together with the definition of complexified action yields that $G\cdot x$ is invariant under the action of $G^\C$ and hence $G\cdot x$ is invariant under the action of $G^M$. Since $G^M$ acts on $G\cdot x$ transitively, $G\cdot x$ is $G^M$-equivariantly biholomorphic to $G^M/(G^M)_x$. In order to describe the complex structure of $G\cdot x$ and its tubular neighborhood, we investigate the global stabilizers $Z_{G^\C}$ and the isotropy subgroup $(G^M)_x$. 
	\begin{lemm}\Label{lemm:stabilizers}
		Let $M$ and $G$ as above. The global stabilizers
		\begin{equation*}
			Z_{G^\C} = \{ g\in G^\C \mid g\cdot x = x \text{ for all $x \in M$}\}
		\end{equation*}
		are connected and have no element of finite order . 
	\end{lemm}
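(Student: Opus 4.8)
The plan is to work inside the algebraic torus $G^\C = \mathfrak g^\C/\Hom(S^1,G)$ and to exploit two features: the polar decomposition of $G^\C$, and the maximality of the $G$-action through Lemma \ref{lemm:maximal}. Since $\Hom(S^1,G)$ is a full lattice in the real form $\mathfrak g$, the exponential map identifies $G^\C$ with the direct product $G \times A$, where $A := \exp(\sqrt{-1}\mathfrak g)\cong \mathfrak g$ is a simply connected (hence torsion-free) vector group and $G$ is the maximal compact subgroup. Writing $Z := Z_{G^\C}$, effectiveness of the $G$-action gives $Z \cap G = \{1\}$, so that $G$ embeds in $G^M = G^\C/Z$. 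I will also use that $Z$ is a closed subgroup with $\mathrm{Lie}(Z) = \mathfrak z := \{ w \in \mathfrak g^\C \mid X_w \equiv 0 \text{ on } M\}$ and $Z^0 = \exp(\mathfrak z)$, which holds because $\exp(tw)$ acts as the time-$t$ flow of $X_w$.

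The torsion-freeness is the easy half. Every element of finite order of $G^\C$ lies in the maximal compact subgroup $G$: in the decomposition $G^\C = G \times A$ its $A$-component, sitting in the torsion-free group $A$, must be trivial. Hence any finite-order element of $Z$ lies in $Z\cap G=\{1\}$, and $Z$ has no nontrivial element of finite order.

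For connectedness — the substantive part — I would show that every $z_0 \in Z$ already lies in $Z^0 = \exp(\mathfrak z)$. Choose $\xi_0 \in \mathfrak g^\C$ with $\exp(\xi_0) = z_0$. In $G^M$ the homomorphism $t \mapsto \overline{\exp(t\xi_0)}$ satisfies $\overline{\exp(\xi_0)} = \bar z_0 = 1$, so it factors through $\R/\Z$ and its image is a compact circle $\bar S^1 \subseteq G^M$ (a single point exactly when $\xi_0 \in \mathfrak z$, i.e.\ $z_0 \in Z^0$, in which case we are already done). Then $\tilde G := G\cdot \bar S^1$ is a compact connected abelian subgroup of $G^M$, i.e.\ a compact torus containing $G$, and it acts effectively on $M$ because $G^M$ does. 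Since the restriction of the $\tilde G$-action to $G$ is maximal, Lemma \ref{lemm:maximal} forces $\tilde G = G$, whence $\bar S^1 \subseteq G$.

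The conclusion is then Lie-algebra bookkeeping. The inclusion $\bar S^1 \subseteq G$ means that the image of $\xi_0$ in $\mathrm{Lie}(G^M) = \mathfrak g^\C/\mathfrak z$ lies in the image of $\mathfrak g$, so $\xi_0 \in \mathfrak g + \mathfrak z$; writing $\xi_0 = w + \zeta$ with $w\in \mathfrak g$ and $\zeta \in \mathfrak z$ gives $\exp(w) = z_0\exp(-\zeta) \in Z$, hence $\exp(w) \in Z\cap G = \{1\}$ and $z_0 = \exp(\zeta)\in Z^0$. Therefore $Z = Z^0$ is connected. The main obstacle is precisely this connectedness step: torsion-freeness and the final bookkeeping are formal, but ruling out extra components genuinely requires converting a hypothetical stray stabilizer into an honest compact circle in $G^M$ and invoking maximality via Lemma \ref{lemm:maximal}. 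The compactness of $M$ (which guarantees the complexified action exists) and the holomorphicity of that action (so that $\tilde G$ acts by diffeomorphisms, allowing Lemma \ref{lemm:maximal} to apply) are used implicitly here.
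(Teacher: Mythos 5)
Your proof is correct and follows essentially the same route as the paper: torsion-freeness from the fact that torsion in $G^\C$ lies in the maximal compact $G$ together with $Z_{G^\C}\cap G=\{1\}$, and connectedness by converting a stabilizer outside the identity component into an extra circle of torus symmetry and invoking Lemma \ref{lemm:maximal}. The only cosmetic difference is that you realize that circle directly inside $G^M$, where effectiveness is automatic, whereas the paper forms the product $\R/\Z\times G$ acting on $M$ and checks that its global stabilizer is finite.
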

	\begin{proof}
		Any element of finite order in $G^\C = \mathfrak{g}^\C/\Hom (S^1,G)$ is an element of $G$. Since the action of $G$ on $M$ is effective, $Z_{G^\C}$ has no element of finite order. 
		
		Suppose that $Z_{G^\C}$ is not connected. Then, there exists an element $[v] \in Z_{G^\C}$ such that $[v]$ does not sit in the identity component of $Z_{G^\C}$, where $[v]$ denotes the equivalence class of $v \in \mathfrak{g}^\C$. We fix the representative $v \in \mathfrak{g}^\C$ of $[v] \in Z_{G^\C}$. If $v \in \mathfrak{g}$, then the action of $G$ on $M$ is not  effective. Thus $v \notin \mathfrak{g}$. We have a circle action on $M$ via
		\begin{equation*}
			([t],x) \mapsto \exp (tX_v)(x) \quad \text{for $[t] \in \R/\Z$}.
		\end{equation*}
		By definition, the action of $\R/\Z$ on $M$ commute with the action of $G$. So we have the action of the compact torus $\R/\Z \times G$ on $M$. Since $v$ does not sit in $\mathfrak{g}$ and $[v]$ does not sit in the identity component of $Z_{G^\C}$, the action of $\R/\Z \times G$ on $M$ has at most  finite global stabilizers. Thus we have an effective action of a compact torus of dimension $\dim G+1$. This contradicts Lemma \ref{lemm:maximal}. Therefore $Z_{G^\C}$ is connected, proving the lemma.
	\end{proof}
	
	\begin{lemm}\Label{lemm:chart}
		Let $M, G$ and $x$ as above.
		There exists a $G_x$-equivariant biholomorphism $\varphi _x : U \to \widetilde{U}$, where $U$ is a $G_x$-invariant open subset containing $x \in M$ and $\widetilde{U}$ is a $G_x$-invariant open subset of $T_xM$ (with the natural complex structure) containing the origin.
	\end{lemm}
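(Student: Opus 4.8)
The plan is to prove a holomorphic version of the Bochner linearization theorem: since $G_x$ is compact and fixes $x$, each $g \in G_x$ acts biholomorphically, so the isotropy representation $\rho \co G_x \to GL(T_xM)$, $\rho(g) := (dg)_x$, is complex linear, and I can average an arbitrary holomorphic chart to make it equivariant. First I would fix an auxiliary holomorphic chart $\chi \co U_0 \to \C^n$ with $\chi(x)=0$ and, identifying $\C^n$ with $T_xM$ via the complex linear isomorphism $(d\chi)_x$, replace it by $\psi := (d\chi)_x^{-1}\circ \chi \co U_0 \to T_xM$, a holomorphic map with $\psi(x)=0$ and $(d\psi)_x=\id_{T_xM}$. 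Shrinking $U_0$, I may assume there is a $G_x$-invariant open neighborhood $U$ of $x$ with $G_x\cdot U \subseteq U_0$; such a $U$ exists because $G_x$ is compact and fixes $x$.

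Next I would define the averaged map
\[
	\varphi_x(y) := \int_{G_x} \rho(g)^{-1}\cdot \psi(g\cdot y)\, dg \qquad (y \in U),
\]
where $dg$ is the normalized Haar measure on $G_x$. For each fixed $g$ the map $y \mapsto \rho(g)^{-1}\psi(g\cdot y)$ is holomorphic, being the composite of the biholomorphism $y \mapsto g\cdot y$, the holomorphic map $\psi$, and the complex linear map $\rho(g)^{-1}$. The essential point is that $\varphi_x$ is again holomorphic: holomorphicity is the vanishing of $\bar\partial$, and $\bar\partial$ commutes with integration over the compact parameter space $G_x$ (equivalently, a Haar average of functions satisfying the Cauchy integral formula again satisfies it). Differentiating under the integral sign and using $(d\psi)_x = \id$ together with $(dg)_x = \rho(g)$ gives
\[
	(d\varphi_x)_x = \int_{G_x} \rho(g)^{-1}\rho(g)\, dg = \id_{T_xM}.
\]

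Then I would check $G_x$-equivariance. For $h \in G_x$ and $y \in U$, the substitution $g \mapsto gh^{-1}$ and the invariance of the Haar measure yield $\varphi_x(h\cdot y) = \rho(h)\cdot \varphi_x(y)$, so $\varphi_x$ intertwines the given action on $M$ with the linear isotropy action on $T_xM$. Since $(d\varphi_x)_x = \id$ is invertible, the holomorphic inverse function theorem makes $\varphi_x$ a biholomorphism from a neighborhood of $x$ onto an open subset of $T_xM$; because $\varphi_x$ is equivariant and $x$ is a fixed point, the locus where $\varphi_x$ is a local biholomorphism and is injective is $G_x$-invariant, so I can shrink $U$ to a $G_x$-invariant neighborhood mapping biholomorphically onto a $G_x$-invariant open subset $\widetilde{U} \subseteq T_xM$ containing the origin.

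The main obstacle is the holomorphicity of the averaged map $\varphi_x$: one must justify interchanging $\bar\partial$ (and later $d$) with the Haar integral, which rests on the smooth and holomorphic dependence of $g\cdot y$ on both variables and on dominated convergence over the compact group $G_x$. The remaining equivariant-shrinking step is routine once holomorphicity and $(d\varphi_x)_x = \id$ are established.
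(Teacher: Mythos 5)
Your proposal is correct and follows essentially the same route as the paper: normalize a holomorphic chart so its differential at $x$ is the identity, average it over the compact group $G_x$ against Haar measure (your $\int \rho(g)^{-1}\psi(g\cdot y)\,dg$ agrees with the paper's $\int (g\circ\varphi'\circ g^{-1})\,dg$ after the change of variable $g\mapsto g^{-1}$), and then apply the holomorphic inverse function theorem together with an equivariant shrinking of the domain. The only cosmetic difference is that you justify holomorphicity of the average by commuting $\bar\partial$ with the integral, a point the paper leaves implicit, and you produce the invariant neighborhood as a set $U$ with $G_x\cdot U\subseteq U_0$ rather than as the intersection $\bigcap_{g\in G_x}gU$; both are standard and equivalent here.
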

	\begin{proof}
		This can be shown by an almost same argument as \cite[Proof of Lemma 5]{Ishida-Karshon}. For convenience of the reader, we give a proof of the lemma here.
		
		Suppose that the complex dimension of $M$ is $n$. Let $\varphi : U \to \widetilde{U} \subseteq \C^n$ be a holomorphic local chart near $x$ with $\varphi (x)=0$. Identifying $\C^n$ with $T_xM$ via the differential
	\begin{equation*}
		(d\varphi)_x : T_xM \to T_0\C^n \cong \C^n,
	\end{equation*}
	we have a biholomorphism
	\begin{equation*}
		\varphi ' \co U \to \widetilde{U}' \subseteq T_xM
	\end{equation*}
	whose differential at $x$ is the identity map on $T_xM$. We want to obtain such a biholomorphism that is also equivariant. 
	
	Set 
	\begin{equation*}
		U' := \bigcap _{g \in G_x}gU.
	\end{equation*}
	By definition of $U'$, $U'$ contains $x$ and hence is not empty. We now show that $U'$ is open. The complement of $U'$ is the image of the closed subset $G_x \times (M\setminus U)$ by the action map $G_x \times M \to M$. Since $M$ is compact, so is $G_x \times (M \setminus U)$. Therefore, $M\setminus U'$ is compact and hence $U'$ is open.
	
	To obtain an equivariant map, we average $\varphi'|_{U'}$: let
	\begin{equation*}
		\widetilde{\varphi} := \int _{g \in G_x}(g\circ \varphi'\circ g^{-1})dg : U'\to T_xM,
	\end{equation*}
	where $dg$ is a Haar measure on $G_x$. By definition of $\widetilde{\varphi}$, the map $\widetilde{\varphi}$ is holomorphic, $G_x$-equivariant and its differential at $x$ is the identity map on $T_xM$, but no longer biholomorphic in general. However, the implicit function theorem tells us that the restriction of $\widetilde{\varphi}$ to some smaller open subset $U''$ containing $x$ is a biholomorphism onto the subset $\widetilde{\varphi}(U'')$ of $T_xM$. The restriction of $\widetilde{\varphi}$ to the $G_x$-invariant open subset $U_x := \bigcap _{g \in G_x}gU''$ is what we wanted. The lemma is proved.
	\end{proof}
	Let $2n$ be the real dimension of $M$ (that is, $n$ is the complex dimension of $M$) and let $m$ be the dimension of $G$. 
	It follows from Lemma \ref{lemm:minimal} that $T_x(G\cdot x)$ is a complex vector space of complex dimension $m-n$ and invariant under the $G_x$-action. So $T_xM/T_x(G\cdot x)$ is a complex $(2n-m)$-dimensional representation of $G_x$. We decompose the representation $T_xM/T_x(G\cdot x)$ into the direct sum of irreducible representations 
	\begin{equation*}
		\C_{\alpha_1}\oplus \dots \oplus \C_{\alpha _{2n-m}},
	\end{equation*}
	where $\C_{\alpha _j}$ denotes the complex $1$-dimensional representation of $G_x$ whose character is $\alpha _j \in \Hom (G_x,S^1)$. 
	The following follows from Lemma \ref{lemm:chart} immediately.
	\begin{coro}\Label{coro:discs}
		There exists a $G_x$-equivariant local holomorphic chart 
		\begin{equation*}
			\varphi _x : U_x \to \D^{m-n}\times\D^{2n-m}
		\end{equation*}
		from an invariant open neighborhood $U_x$ of $x$ to a polydisc $\D^{m-n}\times \D^{2n-m}$, where the former factor $\D^{m-n}$ is a polydisc in $\C ^{m-n}$ with the trivial $G_x$-action and the latter factor $\D^{2n-m}$ is a polydisc in $\bigoplus _{j=1}^{2n-m}\C _{\alpha _j}$.
	\end{coro}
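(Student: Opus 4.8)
The plan is to start from the $G_x$-equivariant chart supplied by Lemma \ref{lemm:chart} and simply choose linear coordinates on $T_xM$ that are adapted to its structure as a $G_x$-representation. First I would analyze $T_xM$ as a representation of the compact group $G_x$. Since $G\cdot x$ is minimal we have $\dim_\R(G\cdot x)=2m-2n$, so by Lemma \ref{lemm:minimal} the subspace $T_x(G\cdot x)$ is a complex $G_x$-invariant subspace of complex dimension $m-n$. Because $G_x$ is compact, complete reducibility provides a $G_x$-invariant complement, and that complement is $G_x$-equivariantly isomorphic to the quotient representation $T_xM/T_x(G\cdot x)\cong\bigoplus_{j=1}^{2n-m}\C_{\alpha_j}$.

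The key point I would verify is that $G_x$ acts trivially on $T_x(G\cdot x)$. The fundamental vector fields give a $G_x$-equivariant identification $T_x(G\cdot x)\cong\mathfrak{g}/\mathfrak{g}_x$, and since $G$ is abelian the relevant (adjoint) action of $G_x$ on $\mathfrak{g}/\mathfrak{g}_x$ is trivial; hence $T_x(G\cdot x)$ is a trivial complex $(m-n)$-dimensional $G_x$-representation. Combining this with the previous paragraph yields a $G_x$-equivariant complex-linear isomorphism
\begin{equation*}
	T_xM\cong\C^{m-n}\oplus\bigoplus_{j=1}^{2n-m}\C_{\alpha_j},
\end{equation*}
in which the first summand carries the trivial action.

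Finally, I would transport the coordinates. Composing the biholomorphism $\varphi_x\co U\to\widetilde U\subseteq T_xM$ of Lemma \ref{lemm:chart} with the linear isomorphism above identifies a neighborhood of $x$ with an open subset of $\C^{m-n}\times\bigoplus_j\C_{\alpha_j}$ containing the origin. In these coordinates every product polydisc centered at the origin is $G_x$-invariant, because $G_x$ acts trivially on the first factor and through the characters $\alpha_j\in\Hom(G_x,S^1)$, which are norm-preserving, on each line $\C_{\alpha_j}$; thus each coordinate radius is preserved. Choosing such a polydisc $\D^{m-n}\times\D^{2n-m}$ small enough to lie inside the open image and letting $U_x$ be its preimage, the restriction of $\varphi_x$ is the desired equivariant chart. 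I do not expect a genuine obstacle here—as indicated, the corollary follows essentially at once from Lemma \ref{lemm:chart}; the only point demanding care is the triviality of the $G_x$-action along the orbit directions, which is precisely where commutativity of $G$ enters.
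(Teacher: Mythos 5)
Your proof is correct and follows essentially the same route as the paper, which sets up the decomposition $T_xM = T_x(G\cdot x)\oplus\bigoplus_j\C_{\alpha_j}$ in the paragraph preceding the corollary and then declares the statement immediate from Lemma \ref{lemm:chart}. You have merely made explicit the two details the paper leaves tacit — the triviality of the $G_x$-action on $T_x(G\cdot x)$ (via the abelianness of $G$) and the $G_x$-invariance of small product polydiscs — both of which are argued correctly.
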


	Let $z_1,\dots, z_{m-n}, w_1,\dots ,w_{2n-m}$ be the standard coordinate functions of $\D^{m-n} \times \D^{2n-m}\subset \C^{m-n} \times \bigoplus _{j=1}^{2n-m}\C _{\alpha _j}$. 
	Corollary \ref{coro:discs} implies that there exists a basis $(v'_1,\dots, v'_{2n-m})$ of $\mathfrak{g}_x$ such that the fundamental vector fields generated by them can be written as 
	\begin{equation}\Label{eq:vectors}
		X_{v'_j}= \sqrt{-1}w_j\frac{\partial}{\partial w_j} \quad \text{for $j=1,\dots, 2n-m$}
	\end{equation}
	through the coordinates $z_1,\dots ,z_{m-n}, w_1,\dots ,w_{2n-m}$. Here, we identify vector fields whose flows preserve the complex structure with holomorphic vector fields via $X \mapsto \frac{1}{2}(X -\sqrt{-1}JX)$.
	\begin{lemm}\Label{lemm:existence-of-slice}
		There exists a $(G_x)^\C$-equivariant local holomorphic chart
		\begin{equation*}
			\varphi_x \co U_x \to \D ^{m-n} \times \bigoplus _{j=1}^{2n-m}\C_{\alpha _j}
		\end{equation*}
		for an invariant open neighborhood $U_x$ of $x$, where $(G_x)^\C$ acts on the former factor $\D^{m-n}$ trivially and on the latter factor $\bigoplus _{j=1}^{2n-m}\C_{\alpha _j}$ as the complexified action on $\bigoplus_{j=1}^{2n-m}\C_{\alpha _j}$ (see Section \ref{sec:complexified}). 
	\end{lemm}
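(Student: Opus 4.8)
The plan is to show that the chart furnished by Corollary~\ref{coro:discs} is, after enlarging its domain, already equivariant for the complexified group $(G_x)^\C$, so that no genuinely new chart has to be constructed. Write $\varphi^0 \co U^0 \to \D^{m-n}\times \D^{2n-m}$ for the $G_x$-equivariant chart of Corollary~\ref{coro:discs}, with the basis $(v'_1,\dots,v'_{2n-m})$ of $\mathfrak{g}_x$ chosen so that \eqref{eq:vectors} holds. Recall from Section~\ref{sec:complexified} that the complexified action of $(G_x)^\C$ is by definition the one whose infinitesimal generators are the holomorphic vector fields $\frac{1}{2}(X_v-\sqrt{-1}JX_v)$, $v\in\mathfrak{g}_x$.

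First I would record the infinitesimal picture. By \eqref{eq:vectors} the holomorphic vector field attached to $v'_j$ is $\sqrt{-1}w_j\,\frac{\partial}{\partial w_j}$; it annihilates $z_1,\dots,z_{m-n}$ and involves only the $j$-th coordinate of $\bigoplus_{l}\C_{\alpha_l}$. These are exactly the generators of the standard complexified representation on $\D^{m-n}\times\bigoplus_{l}\C_{\alpha_l}$ of Example~\ref{exam:rep}, namely the trivial action on the first factor and the diagonal complexified characters $\alpha_l$ on the second. Hence $\varphi^0$ sends the generating holomorphic vector fields of the complexified action on $U^0$ to those of the model action; since $\varphi^0$ is holomorphic it intertwines $J$, so it also carries each $JX_v$ to the corresponding field, and therefore $\varphi^0$ intertwines the local flows $\exp(X_u+JX_v)$ of the two complexified actions wherever both are defined.

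Next I would globalize. As $M$ is compact every vector field is complete, so the complexified action $(G_x)^\C\times M\to M$ is defined on all of $M$. I then set $U_x:=(G_x)^\C\cdot U^0$, a $(G_x)^\C$-invariant open neighborhood of $x$, and define $\varphi_x$ by $\varphi_x(g\cdot p):=g\cdot\varphi^0(p)$ for $g\in(G_x)^\C$ and $p\in U^0$. A saturation computation identifies the target: since the diagonal $(\C^*)^{2n-m}$-orbit of the open polydisc $\D^{2n-m}$ already fills the whole vector space, while the $\D^{m-n}$-factor is fixed, one gets $(G_x)^\C\cdot(\D^{m-n}\times\D^{2n-m})=\D^{m-n}\times\bigoplus_{l}\C_{\alpha_l}$. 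The map $\varphi_x$ is holomorphic, and its differential is an isomorphism at every point because it is one at $x$ and the group acts by biholomorphisms on both sides, so $\varphi_x$ is a local biholomorphism.

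I expect the main obstacle to be the well-definedness and injectivity of $\varphi_x$. For well-definedness one must check that $\varphi^0(g\cdot p)=g\cdot\varphi^0(p)$ whenever $p$ and $g\cdot p$ both lie in $U^0$; this is the global form of the infinitesimal intertwining established above, and I would prove it by analytic continuation---the two holomorphic maps $(g,p)\mapsto\varphi^0(g\cdot p)$ and $(g,p)\mapsto g\cdot\varphi^0(p)$ agree near $\{e\}\times U^0$, and $(G_x)^\C$ is connected, so they agree on the connected set where $g\cdot p\in U^0$. Injectivity then reduces, via the equivariance of $\varphi_x$ and the injectivity of $\varphi^0$ on $U^0$, to the statement that the isotropy groups and orbit identifications of the $(G_x)^\C$-action on $U_x$ coincide with those of the model action, which is forced by the infinitesimal matching. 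Combining well-definedness, local biholomorphicity, and injectivity yields the desired $(G_x)^\C$-equivariant chart.
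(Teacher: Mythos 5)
Your overall strategy is exactly the paper's: start from the $G_x$-equivariant chart of Corollary~\ref{coro:discs}, observe that holomorphicity plus the matching of the fundamental vector fields \eqref{eq:vectors} makes the chart intertwine the local flows of the complexified action with those of the model action on $\D^{m-n}\times\bigoplus_{j=1}^{2n-m}\C_{\alpha_j}$, and then sweep the chart over the saturation $U_x=(G_x)^\C\cdot U^0$, gluing the translated charts by the identity theorem and checking surjectivity onto $\D^{m-n}\times\bigoplus_j\C_{\alpha_j}$ exactly as you do. The one step that would not survive scrutiny as written is your treatment of injectivity: the claim that ``the isotropy groups and orbit identifications of the $(G_x)^\C$-action on $U_x$ coincide with those of the model action, which is forced by the infinitesimal matching'' is not a proof. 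Infinitesimal matching controls the flows only while they remain inside the original chart domain; whether two points $g\cdot p$ and $g'\cdot p'$ of $U_x$ coincide in $M$ is a global statement about long-time flows that may leave $U^0$, and nothing in the vector-field identification by itself rules out extra identifications on the $M$ side. The paper closes this differently and more cheaply: having shown the glued map is a surjective \emph{local} biholomorphism from the connected open set $\bigcup_{[v]}\exp(X_v)(U_x)$ onto the simply connected space $\D^{m-n}\times\bigoplus_j\C_{\alpha_j}$, it concludes it is a biholomorphism by a covering-space argument, and injectivity (hence faithfulness of the $(G_x)^\C$-action near $x$) falls out as a consequence rather than being an input. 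You should replace your isotropy-matching step with that topological argument; with that substitution your proof coincides with the paper's.
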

	\begin{proof}
		Let $\varphi _x \co U_x \to \D^{m-n} \times \D ^{2n-m}$ be as in Corollary \ref{coro:discs}. We sweep the second factor $\D ^{2n-m}$ by the complexified action of $(G_x)^\C$. Because $\varphi _x$ is $G_x$-equivariant and holomorphic, it intertwines the restriction to $U_x$ of the vector fields that generate the complexified $(G_x)^\C$-action on $M$ with the restriction to $\D^{m-n}\times \D ^{2n-m}$ of the vector fields that generate the complexified $(G_x)^\C$-action on $\D^{m-n}\times \bigoplus_{j=1}^{2n-m}\C_{\alpha _j}$. This together with the fact that $\varphi _x$ is diffeomorphism between $U_x$ and $\D ^{m-n} \times \D^{2n-m}$ implies that $\varphi_x$ also intertwines the partial flows on $U_x$ and on $\D^{m-n}\times \D^{2n-m}$ that are generated by these vector fields; in particular, it intertwines the domains of definition of these partial flows. 
		
		Let $v$ be an element in $(\mathfrak{g}_x)^\C$. Since $\varphi _x$ intertwines the partial flows on $U_x$ and $\D^{m-n}\times \D^{2n-m}$ that are generated by $X_v$, 
		\begin{equation*}
			\varphi _x \circ \exp (X_v) = \exp ((\varphi _x)_*X_v)\circ \varphi _x
		\end{equation*}
		on a sufficiently small neighborhood of $x$. This means that 
		\begin{equation}\Label{eq:intertwine}
			\varphi _x^{[v]} := \exp(X_v)\circ \varphi _x \circ \exp(X_v)^{-1} = \varphi _x
		\end{equation}
		on a sufficiently small neighborhood of $x$, where $[v] \in (G_x)^\C=(\mathfrak{g}_x)^\C/\Hom (S^1,G_x)$ denotes the equivalence class of $v \in (\mathfrak{g}_x)^\C$. Since both $\varphi_x^{[v]}$ and $\varphi _x$ are holomorphic and \eqref{eq:intertwine}, they coincide on the intersection of the domains of definition for $\varphi _x$ and $\varphi ^{[v]}$ because of the identity theorem. Namely, we have that 
		\begin{equation*}\Label{eq:intertwine2}
			\varphi _x^{[v]} = \varphi _x \quad \text{on $U_x \cap \exp(X_v)(U_x)$}.
		\end{equation*}
		Let $u$ be another element in $(\mathfrak{g}_x)^\C$. Applying the identity theorem again, we have that
		\begin{equation*}
			\varphi _x^{[u]}=\varphi _x^{[v]} \quad \text{on $\exp(X_u)(U_x) \cap \exp (X_v)(U_x)$}.
		\end{equation*}
		We glue all $\varphi _x^{[v]}$ for $[v] \in (G_x)^\C$ and get a holomorphic map
		\begin{equation*}
			\bigcup _{[v] \in (G_x)^\C} \varphi_x^{[v]} \co \bigcup _{[v] \in (G_x)^\C}\exp(X_v)(U_x) \to \D^{m-n}\times \bigoplus _{j=1}^{2n-m}\C _{\alpha _j}.
		\end{equation*}
		Since each $\varphi_x^{[v]}$ is biholomorphic, the map $\bigcup_{[v] \in (G_x)^\C}\varphi _x^{[v]}$ is a local biholomorphism. Moreover, $\bigcup_{[v] \in (G_x)^\C}\varphi _x^{[v]}$ is surjective. In fact, for any point $p \in \D^{m-n}\times \bigoplus _{j=1}^{2n-m}\C _{\alpha _j}$, take $\ell \in \R$ to be
		\begin{equation*}
			e^\ell > \max \{ |w_j(p)| \mid j=1,\dots ,2n-m\}
		\end{equation*}
		and put
		\begin{equation*}
			v = \ell \left( \sum _{j=1}^{2n-m}-\sqrt{-1}{v'_j} \right) \quad \text{where $v'_j$ is as \eqref{eq:vectors}},
		\end{equation*}
		then we have $p \in \varphi_x^{[v]}(U_x)$.
		
		Since $\bigcup_{[v] \in (G_x)^\C}\varphi _x^{[v]}$ is a surjective local biholomorphism, the range $\D^{m-n}\times \bigoplus _{j=1}^{2n-m}\C_{\alpha _j}$ is simply connected and the domain $\bigcup_{[v] \in (G_x)^\C}\exp(X_v)(U_x)$ of definition is connected, the map $\bigcup_{\xi \in (\mathfrak{g}_x)^\C}\varphi _x^{(\xi)}$ is a biholomorphism. By the construction of the map $\bigcup_{[v] \in (G_x)^\C}\varphi _x^{[v]}$, it is $(G_x)^\C$-equivariant. The lemma is proved. 
	\end{proof}
	
	We set a $G^\C$-invariant open subset
	\begin{equation*}
		N(G\cdot x) := \bigcup_{[v]\in G^\C}\exp (X_v)(U_x),
	\end{equation*}
	where $U_x$ is as in Lemma \ref{lemm:existence-of-slice}. $N(G\cdot x)$ is the minimal $G^\C$-invariant open subset which contains $G\cdot x$. Thus, $N(G\cdot x)$ does not depend on a choice of $U_x$ and hence $N(G\cdot x)$ is unique. 
	
	\begin{lemm}\Label{lemm:opendense2}
		Let $N(G\cdot x)$ be as above. Then $N(G\cdot x)$ contains an open dense $G^\C$-orbit. In particular, $M$ contains an open dense $G^\C$-orbit. 
	\end{lemm}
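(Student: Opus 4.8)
The plan is to exhibit a single point $y \in U_x$ at which $n$ fundamental vector fields of the complexified action are linearly independent over $\C$; Corollary \ref{coro:opendense} then forces $G^\C\cdot y$ to be open and dense in $M$, and since $y$ lies in $U_x \subseteq N(G\cdot x)$ and $N(G\cdot x)$ is $G^\C$-invariant, this orbit is contained in $N(G\cdot x)$, so both assertions follow at once. To set up, I would fix the $(G_x)^\C$-equivariant chart $\varphi_x \co U_x \to \D^{m-n}\times\bigoplus_{j=1}^{2n-m}\C_{\alpha_j}$ of Lemma \ref{lemm:existence-of-slice}, with coordinates $z_1,\dots,z_{m-n},w_1,\dots,w_{2n-m}$ and $\varphi_x(x)=0$, and recall from \eqref{eq:vectors} that the chosen basis $v'_1,\dots,v'_{2n-m}$ of $\mathfrak{g}_x$ satisfies $X_{v'_j}=\sqrt{-1}\,w_j\,\partial/\partial w_j$. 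The crucial consequence is that each $X_{v'_j}$ has \emph{no} $z$-component, and that at any point where all $w_j\neq 0$ the fields $X_{v'_1},\dots,X_{v'_{2n-m}}$ are independent and span the $w$-directions.

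Next I would identify the $z$-plane with the orbit tangent. The span of $\partial/\partial z_1,\dots,\partial/\partial z_{m-n}$ is the trivial $G_x$-isotypic part of $T_x^{1,0}M$, hence is contained in $(T_x^{1,0}M)^{G_x}$; by Lemma \ref{lemm:cfi}(2), $G\cdot x$ is a connected component of $M^{G_x}$, so $(T_x^{1,0}M)^{G_x}=T_x^{1,0}(G\cdot x)$. Since both the $z$-plane and $T_x^{1,0}(G\cdot x)$ have complex dimension $m-n$, they coincide. Now $G^\C\cdot x=G\cdot x$ (as $G\cdot x$ is $G^\C$-invariant), so the holomorphic fundamental vector fields $X_v(x)$, $v\in\mathfrak{g}^\C$, span $T_x^{1,0}(G\cdot x)$, i.e.\ the $z$-plane; I can therefore choose $a_1,\dots,a_{m-n}\in\mathfrak{g}$ so that $X_{a_1}(x),\dots,X_{a_{m-n}}(x)$ form a $\C$-basis of the $z$-directions.

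Finally I would prove independence at a nearby point. Pick $y\in U_x$ close to $x$ with $w_j(y)\neq 0$ for all $j$, and consider the $n=(m-n)+(2n-m)$ fields $X_{a_1},\dots,X_{a_{m-n}},X_{v'_1},\dots,X_{v'_{2n-m}}$ in the coordinate frame $\partial/\partial z_i,\partial/\partial w_j$. The coefficient matrix is block lower-triangular: the upper-right block vanishes because the $X_{v'_j}$ have no $z$-component; the lower-right block is $\mathrm{diag}(\sqrt{-1}w_1(y),\dots,\sqrt{-1}w_{2n-m}(y))$, invertible since all $w_j(y)\neq 0$; and the upper-left block of $z$-components of the $X_{a_i}$ is invertible at $x$ by the choice of the $a_i$, hence invertible for $y$ near $x$ by continuity. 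Thus the determinant is nonzero and these $n$ fields are $\C$-independent at $y$. Applying Corollary \ref{coro:opendense} with the point $y$ and the vectors $a_1,\dots,a_{m-n},v'_1,\dots,v'_{2n-m}\in\mathfrak{g}^\C$ shows $G^\C\cdot y$ is open and dense in $M$; as $y\in U_x\subseteq N(G\cdot x)$ and $N(G\cdot x)$ is $G^\C$-invariant, we get $G^\C\cdot y\subseteq N(G\cdot x)$, and being dense in $M$ it is dense in the open set $N(G\cdot x)$. The only real content is this block-triangular computation, and its success rests entirely on the two structural facts isolated above — that the isotropy fields $X_{v'_j}$ point purely along the $w$-directions and that $T_x(G\cdot x)$ is exactly the $z$-plane; arranging these cleanly is the main point, everything else being continuity and a direct appeal to Corollary \ref{coro:opendense}.
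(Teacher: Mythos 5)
Your proof is correct and follows essentially the same route as the paper's: both reduce to Corollary \ref{coro:opendense} by exhibiting, at a point of $U_x$ with all $w_j\neq 0$, the $n$ fields $X_{v_1},\dots,X_{v_{m-n}},X_{v'_1},\dots,X_{v'_{2n-m}}$ and checking independence via exactly the block-triangular determinant you describe (the paper phrases it as the wedge identity $dz\wedge dw(\dots)=(\sqrt{-1})^{2n-m}w_1\cdots w_{2n-m}\,dz(\dots)$). Your extra justification that the $z$-plane equals $T_x(G\cdot x)$, so that the $a_i$ can be chosen in $\mathfrak{g}$, is a detail the paper asserts as ``clear'' but is the same fact.
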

	\begin{proof}
		By Corollary \ref{coro:opendense}, it suffices to show the existence of a point $y \in N(G\cdot x)$ and elements $v_1,\dots ,v_n \in \mathfrak{g}^\C$ such that  $X_{v_1}(y), \dots ,X_{v_n}(y)$ are linearly independent over $\C$.

		Let $\varphi _x : U_x \to \D ^{m-n}\times \bigoplus _{j=1}^{2n-m}\C _{\alpha _j}$ be as in Lemma \ref{lemm:existence-of-slice}. Let $z_1,\dots, z_{m-n}, w_1,\dots ,w_{2n-m}$ be the standard coordinate functions of $\D^{m-n} \times \bigoplus _{j=1}^{2n-m}\C _{\alpha _j}$. Clearly, there exist elements $v_1,\dots ,v_{m-n} \in \mathfrak{g} \subset \mathfrak{g}^\C$ such that 
		\begin{equation*}
			(dz_i(X_{v_j}))(x)=\delta _{ij}
		\end{equation*}
		where $\delta _{ij}$ is the Kronecker delta. Hence we have that a function on $U_x$ defined by 
		\begin{equation*}
			dz_1 \wedge \dots \wedge dz_{m-n}(X_{v_1}, \dots ,X_{v_{m-n}})
		\end{equation*}
		is not zero on an open subset $U'_x$ containing $x$. It follows from Lemma \ref{lemm:existence-of-slice} that there exist elements $v'_1,\dots ,v '_{2n-m} \in \mathfrak{g}_x \subset \mathfrak{g}^\C$ such that the fundamental vector fields are locally represented as
		\begin{equation*}
			X_{v'_j} = \sqrt{-1}w_j\frac{\partial}{\partial w_j}
		\end{equation*}
		for $j=1,\dots ,2n-m$. And then, a function 
		\begin{equation*}
			\begin{split}
			& dz_1\wedge \dots \wedge dz_{m-n} \wedge dw_1\wedge \dots \wedge dw_{2n-m} (X_{v_1},\dots , X_{v_{m-n}},X_{v'_1},\dots ,X_{v'_{2n-m}})\\
			& = \frac{(\sqrt{-1}) ^{2n-m}(m-n)!}{n!}w_1\cdots w_{2n-m}dz_1\wedge \dots \wedge dz_{m-n}(X_{v_1},\dots ,X_{v_{m-n}})
			\end{split}
		\end{equation*}
		is not zero on the open subset
		\begin{equation*}
			U''_x = \{ y \in U'_x\mid w_j(y)\neq 0 \text{ for all $j=1,\dots ,2n-m$}\}.
		\end{equation*}
		This shows that $X_{v_1}(y),\dots ,X_{v_{m-n}}(y),X_{v'_1}(y),\dots , X_{v'_{2n-m}}(y)$ are linearly independent over $\C$ for $y \in U''_x$, proving the lemma.  
	\end{proof}
	Let $\mathfrak{h}$ be the Lie algebra of $Z_{G^\C}$. The Lie algebra $\mathfrak{g}^M$ of $G^M = G^\C/Z_{G^\C}$ is $\mathfrak{g}^\C/\mathfrak{h}$ by definition. It follows from Lemma \ref{lemm:opendense2} that the dimension of $\mathfrak{g}^M$ is equal to the dimension of $M$. 
	
	It follows from Lemma \ref{lemm:existence-of-slice} that $(G_x)^\C$ acts on $M$ effectively. Hence we may regard $(G_x)^\C$ as a subgroup of $G^M$. Also, we may regard $(\mathfrak{g}_x)^\C$ as a Lie subalgebra of $\mathfrak{g}^M$. 
	
	\begin{lemm}\Label{lemm:GM}
	 The vector subspaces $\mathfrak{g}$ and $\sqrt{-1}\mathfrak{g}_x$ span the whole space  $\mathfrak{g}^M$ and $\mathfrak{g} \cap \sqrt{-1}\mathfrak{g}_x = \{0\}$. 
	\end{lemm}
	\begin{proof}
		We shall show that there exist elements $v_1,\dots, v_{2m-2n} \in \mathfrak{g}$, $v'_1,\dots, v'_{4n-2m} \in (\mathfrak{g}_x)^\C$ and a point $y \in M$ such that $X_{v_1}(y),\dots ,X_{v_{2m-2n}}(y), X_{v'_1}(y), \dots , X_{v'_{4n-2m}}(y)$ are linearly independent over $\R$.  Because if we got such elements, then we have that $\mathfrak{g} + \sqrt{-1}\mathfrak{g}_x = \mathfrak{g}^M$. Moreover, thanks to the dimensions of $\mathfrak{g}, \mathfrak{g}_x$ and $\mathfrak{g}^M$, we will have that $\mathfrak{g} \cap \sqrt{-1}\mathfrak{g}_x = \{0\}$. 
		
		We will use an argument similar to the proof of Lemma \ref{lemm:opendense2}. Let $\varphi _x \co U_x \to \D^{m-n} \times \bigoplus _{j=1}^{2n-m}\C_{\alpha _j}$ be as in Lemma \ref{lemm:existence-of-slice}. Let $z_1,\dots ,z_{m-n}, w_1,\dots ,w_{2n-m}$ be the standard holomorphic coordinates of $\D ^{m-n} \times \bigoplus_{j=1}^{2n-m}\C_{\alpha _j}$. Then, 
		\begin{equation*}
			x_i := \frac{z_i+\overline{z_i}}{2}, \quad y_i:=-\sqrt{-1}\frac{z_i-\overline{z_i}}{2}
		\end{equation*}
		for $i=1,\dots ,m-n$ form real coordinates of the first factor $\D ^{m-n}$. 
		\begin{equation*}
			x'_j := \frac{w_j+\overline{w_j}}{2}, \quad y'_j:=-\sqrt{-1}\frac{w_j-\overline{w_j}}{2}
		\end{equation*}
		for $j=1,\dots , 2n-m$ form real coordinates of the second factor $\bigoplus _{j=1}^{2n-m}\C _{\alpha _j}$. Since $\varphi _x(G\cdot x \cap U_x)$ is the subset of points such that $x'_j=y'_j=0$ for all $j$, there exist elements $v_1,\dots ,v _{2m-2n} \in \mathfrak{g}$ such that 
		\begin{equation*}
			(dx_i(X_{v_k}))(x)= \begin{cases}
				1 & \text{if $k=i$},\\
				0 & \text{otherwise}
			\end{cases}
		\end{equation*}
		and 
		\begin{equation*}
			(dy_i(X_{v_k}))(x) = \begin{cases}
				1 & \text{if $k=m-n+i$},\\
				0 & \text{otherwise}
			\end{cases}
		\end{equation*}
		for $i=1,\dots ,m-n$ and $k=1,\dots ,2m-2n$. By Lemma \ref{lemm:existence-of-slice}, there exist elements $v'_1,\dots ,v'_{2n-m} \in \mathfrak{g}_x$ such that 
		\begin{equation*}
			X_{v'_j} = -y'_j\frac{\partial}{\partial x'_j}+x'_j\frac{\partial}{\partial y'_j}
		\end{equation*}
		for $j=1,\dots, 2n-m$, where we represent $X_{v'_j}$ with the real coordinates $x_i,y_i,x'_j,y'_j$. 
		We set $v'_{2n-m+j} = -\sqrt{-1}v'_j$ for $j=1,\dots ,2n-m$. Then, 
		\begin{equation*}
			X_{v'_{2n-m+j}}=x'_j\frac{\partial}{\partial x'_j}+y'_j\frac{\partial}{\partial y'_j}
		\end{equation*}
		and hence we have
		\begin{align*}
			 &dx_1\wedge dy_1\wedge \dots \wedge dx_{m-n}\wedge dy_{m-n} \wedge dx'_1\wedge dy'_1 \wedge \dots \wedge dx'_{2n-m}\wedge dy'_{2n-m}\\
			& \qquad \qquad (X_{v_1},X_{v_{m-n+1}},\dots, X_{v_{m-n}}, X_{v_{2m-2n}}, X_{v'_1},X_{v'_{2n-m+1}},\dots ,X_{v'_{2n-m}},X_{v'_{4n-2m}}) \\
			&= \frac{(-1)^{2n-m}(2m-2n)!}{2n!}|w_1|^2\cdots |w_{2n-m}|^2dx_1\wedge dy_1\wedge \dots \wedge dx_{m-n}\wedge dy_{m-n}\\
			&\qquad \qquad \qquad \qquad \qquad \qquad \qquad \qquad (X_{v_1},X_{v_{m-n+1}},\dots, X_{v_{m-n}}, X_{v_{2m-2n}}).
		\end{align*}
		Since the function 
		\begin{equation*}
			dx_1\wedge dy_1\wedge \dots \wedge dx_{m-n}\wedge dy_{m-n}(X_{v_1},X_{v_{m-n+1}},\dots, X_{v_{m-n}}, X_{v_{2m-2n}})
		\end{equation*}
		takes the value $\frac{1}{(2m-2n)!}$ at $x$, it is not zero on an open subset $U'_x$ containing $x$. Thus, the vectors $X_{v_1}(y),\dots , X_{v_{2m-2n}}(y),X_{v'_1}(y),\dots ,X_{v'_{4m-2n}}(y)$ are linearly independent over $\R$ for $y \in \{y\in U'_x \mid w_j(y)\neq 0 \text{ for all j}\}$. The lemma is proved. 
	\end{proof}
	By Lemma \ref{lemm:GM}, we have a decomposition
	\begin{equation}\Label{eq:decomposition}
		\mathfrak{g}^\C = \mathfrak{g}\oplus\sqrt{-1}\mathfrak{g}_x\oplus\mathfrak{h}
	\end{equation}
	and 
	\begin{equation*}
		\mathfrak{g}^M = \mathfrak{g}\oplus \sqrt{-1}\mathfrak{g}_x.
	\end{equation*}
	
	Now we are in a position to clarify the relation between $(G_x)^\C$ and $(G^M)_x$. Remark that $(G_x)^\C$ can be regarded as a subgroup of $(G^M)_x$ because $(G_x)^\C$ acts on $M$ effectively and fixes $x$. 
	\begin{lemm}\Label{lemm:GxM}
		$(G_x)^\C = (G^M)_x$.
	\end{lemm}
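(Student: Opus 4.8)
The plan is to prove the nontrivial inclusion $(G^M)_x \subseteq (G_x)^\C$; the reverse inclusion $(G_x)^\C \subseteq (G^M)_x$ has already been observed in the remark preceding the statement. The whole argument rests on the transversal decomposition $\mathfrak{g}^M = \mathfrak{g} \oplus \sqrt{-1}\mathfrak{g}_x$ of Lemma \ref{lemm:GM} (equation \eqref{eq:decomposition}), upgraded from the level of Lie algebras to the level of groups.

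First I would record the group-level consequence of that decomposition. Since $G^\C = \mathfrak{g}^\C/\Hom(S^1,G)$ is connected and abelian, so is its quotient $G^M = G^\C/Z_{G^\C}$; hence $\exp \co \mathfrak{g}^M \to G^M$ is surjective and is multiplicative on commuting summands. Because $\mathfrak{g}^M = \mathfrak{g} + \sqrt{-1}\mathfrak{g}_x$, this yields $G^M = \exp(\mathfrak{g})\cdot \exp(\sqrt{-1}\mathfrak{g}_x)$. As $G$ is a compact connected subtorus of $G^M$ with Lie algebra $\mathfrak{g}$, the restriction of $\exp$ to $\mathfrak{g}$ is the exponential of $G$, so $\exp(\mathfrak{g}) = G$; and since $\sqrt{-1}\mathfrak{g}_x \subseteq (\mathfrak{g}_x)^\C$, we get $\exp(\sqrt{-1}\mathfrak{g}_x) \subseteq (G_x)^\C$. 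Thus $G^M = G\cdot \exp(\sqrt{-1}\mathfrak{g}_x)$ with $\exp(\sqrt{-1}\mathfrak{g}_x) \subseteq (G_x)^\C \subseteq (G^M)_x$.

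Then I would take an arbitrary $g \in (G^M)_x$ and write $g = ab$ with $a \in G$ and $b \in \exp(\sqrt{-1}\mathfrak{g}_x)$. Since $b \in (G_x)^\C \subseteq (G^M)_x$, the element $a = gb^{-1}$ also lies in $(G^M)_x$; but $a \in G$ and the $G^M$-action restricts to the original $G$-action, so $(G^M)_x \cap G = G_x$ and therefore $a \in G_x$. Hence $g = ab \in G_x \cdot \exp(\sqrt{-1}\mathfrak{g}_x) = (G_x)^\C$, which proves $(G^M)_x \subseteq (G_x)^\C$ and so the lemma.

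The argument is short, so there is no serious obstacle; the one point to handle with care is the group-theoretic bookkeeping behind $G^M = G\cdot\exp(\sqrt{-1}\mathfrak{g}_x)$, namely verifying that $G^M$ is connected and abelian (so that $\exp$ is onto and splits over the direct sum) and that $\exp(\mathfrak{g})$ is exactly the compact subtorus $G$ rather than a larger or denser subgroup. As a consistency check one can also confirm the infinitesimal statement directly: for $\xi = u + \sqrt{-1}w \in \mathfrak{g}^M$ with $u \in \mathfrak{g}$ and $w \in \mathfrak{g}_x$, the fundamental vector field $X_\xi = X_u + JX_w$ vanishes at $x$ if and only if $X_u(x)=0$, i.e.~$u \in \mathfrak{g}_x$, so that $\mathrm{Lie}((G^M)_x) = \mathfrak{g}_x \oplus \sqrt{-1}\mathfrak{g}_x = \mathrm{Lie}((G_x)^\C)$, confirming that the two groups at least share an identity component.
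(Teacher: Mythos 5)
Your argument is correct and is essentially the paper's own proof: both rest on the decomposition $\mathfrak{g}^M=\mathfrak{g}\oplus\sqrt{-1}\mathfrak{g}_x$ of Lemma \ref{lemm:GM}, factor an element of $(G^M)_x$ as (an element of $G$)$\cdot$(an element of $\exp(\sqrt{-1}\mathfrak{g}_x)\subseteq (G_x)^\C$), observe that the second factor fixes $x$ so the first lies in $G_x$, and conclude. Your group-level packaging via $G^M=G\cdot\exp(\sqrt{-1}\mathfrak{g}_x)$ is just a mild rephrasing of the paper's computation with $\exp(X_{v'})\circ\exp(X_{\sqrt{-1}v''})$.
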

	\begin{proof}
		Let $v \in \mathfrak{g}^M$ satisfy $\exp(X_v)(x)=x$. By Lemma \ref{lemm:GM}, there exist unique $v' \in \mathfrak{g}$ and $v'' \in \mathfrak{g}_x$ such that $v=v'+\sqrt{-1}v''$.  Since $\exp(X_v)(x)=x$, $\exp (X_{v'})\circ \exp(X_{\sqrt{-1}v''})(x)=x$. Thus $\exp(X_{v'})(x)=x$. This together with Lemma \ref{lemm:cfi} shows that $v' \in \mathfrak{g}_x$. Therefore $v \in (\mathfrak{g}_x)^\C$. This shows that $(G_x)^\C=(G^M)_x$, proving the lemma.
	\end{proof}
	Now we can show the following which is an analogue of the slice theorem to our situation. 
	\begin{prop}\Label{prop:slicetheorem}
		Let $\varphi _x \co U_x \to \D^{m-n} \times \bigoplus _{j=1}^{2n-m}\C_{\alpha _j}$ be as in Lemma \ref{lemm:existence-of-slice}. Define $\psi \co G^M \times_{(G_x)^\C}\bigoplus _{j=1}^{2n-m}\C_{\alpha _j} \to N(G\cdot x)$ to be  
		\begin{equation*}
			[g, w] \mapsto g \cdot \varphi _x^{-1}(0, w).
		\end{equation*}
		Then, $\psi$ is well-defined and a $G^M$-equivariant biholomorphism. 
	\end{prop}
	\begin{proof}
		Let $\widetilde{\psi} \co G^M \times \bigoplus _{j=1}^{2n-m}\C_{\alpha _j} \to N(G\cdot x)$ be the map defined by 
		\begin{equation*}
			(g,w) \mapsto g\cdot \varphi_x^{-1}(0,w)
		\end{equation*} 
		for $g \in G^M$ and $w \in \bigoplus _{j=1}^{2n-m}\C_{\alpha _j}$. By definition of $\widetilde{\psi}$, $\widetilde{\psi}$ is $G^M$-equivariant. To show the well-definedness of $\psi$, it suffices to show that 
		\begin{equation*}
			\widetilde{\psi}(1,w)=\widetilde{\psi}(h,h^{-1}\cdot w)
		\end{equation*}
		for all $h \in (G_x)^\C$ and $w \in \bigoplus _{j=1}^{2n-m}\C_{\alpha _j}$ because $\widetilde{\psi}$ is $G^M$-equivariant. Since $\varphi _x$ is $(G_x)^\C$-equivariant, 
		\begin{equation*}
			\widetilde{\psi}(h,h^{-1}\cdot w)=h \cdot \varphi^{-1}_x(0,h^{-1}\cdot w)= \varphi_x^{-1}(0,w)=\widetilde{\psi}(1,w).
		\end{equation*}
		and hence $\psi$ is well-defined. 
		
		Now we show that $\widetilde{\psi}$ is surjective. To see this, we define a map
$\pi \co N(G\cdot x) \to G\cdot x$ to be 
		\begin{equation*}
			y \mapsto \lim _{t\to -\infty}\exp(tX_v)(y),
		\end{equation*}
		where $v$ is the element in $\mathfrak{g}^\C$ defined by 
		\begin{equation*}
			v = \sum _{j=1}^{2n-m}-\sqrt{-1}v'_j \quad \text{and $v'_j$'s are as in (\ref{eq:vectors})}.
		\end{equation*}
		Clearly, $\pi$ is surjective and $G^M$-equivariant. In particular, $\pi^{-1}(G \cdot x) = N(G \cdot x)$. We will show that $\pi^{-1}(g \cdot x) = \widetilde{\psi}(g, \bigoplus _{j=1}^{2n-m}\C_{\alpha_j})$ for all $g \in G^M$. 
		Suppose that $\pi \circ \widetilde{\psi}(g,w) =x$. Then by definition of $\widetilde{\psi}$, $\pi (g\cdot \varphi_x^{-1}(0,w)) = x$. Since $\pi$ is $G^M$-equivariant, we have that $x = g \cdot \pi(\varphi_x^{-1}(0,w))$. Since $\varphi_x$ is $(G_x)^\C$-equivariant biholomorphic map, we have that $x = g \cdot x$ and hence $g \in (G_x)^\C$. Since $\widetilde{\psi}(g,w) = \widetilde{\psi}(1,g\cdot w)$ for $g \in (G_x)^\C$, we have that $\pi^{-1}(x) = \widetilde{\psi}(1,\bigoplus _{j=1}^{2n-m}\C_{\alpha_j})$. We shall show that $g\cdot \pi^{-1}(x) = \pi^{-1}(g\cdot x)$ for $g \in G^M$. 
		Take $y \in \pi^{-1}(g\cdot x)$. Since $\pi$ is $G^M$-equivariant, $\pi (g^{-1}\cdot y)=g^{-1}\cdot \pi (y)=g^{-1}g\cdot x=x$. This shows that $g^{-1}\cdot y \in \pi ^{-1}(x)$. Thus, $g^{-1}\cdot \pi^{-1}(g\cdot x) \subseteq \pi ^{-1}(x)$ and hence $\pi^{-1}(g \cdot x) \subseteq g \cdot \pi ^{-1}(x)$. Conversely, for $y \in \pi^{-1}(x)$, $\pi(g \cdot y) = g\cdot \pi(y) = g\cdot x$ and hence $\pi^{-1}(g \cdot x) \supseteq g \cdot \pi^{-1}(x)$. Therefore $\pi^{-1}(g \cdot x ) = g \cdot \pi^{-1}(x)$. This together with that $\pi^{-1}(x) = \widetilde{\psi}(1,\bigoplus_{j=1}^{2n-m}\C_{\alpha_j})$ shows that 
		\begin{equation*}
			\pi^{-1}(g \cdot x) = g \cdot \pi^{-1}(x) = g \cdot \widetilde{\psi}\left(1,\bigoplus _{j=1}^{2n-m}\C_{\alpha_j}\right)=\widetilde{\psi}\left(g,\bigoplus _{j=1}^{2n-m}\C_{\alpha_j}\right)
		\end{equation*}
		for all $g \in G^M$, showing that $\widetilde{\psi}$ is surjective. 
		
		Now we show that if $\widetilde{\psi}(g_1,w_1)=\widetilde{\psi}(g_2,w_2)$ then there exists $h \in (G_x)^\C$ such that $g_2=g_1h$ and $w_2=h^{-1}\cdot w_1$. Suppose that $\widetilde{\psi}(g_1,w_1)=\widetilde{\psi}(g_2,w_2)$. Then, $\pi (\widetilde{\psi}(g_1,w_1))=\pi (\widetilde{\psi}(g_2,w_2))$. Thus, $g_1\cdot x=g_2\cdot x$ and hence $h:=g_1^{-1}g_2 \in (G_x)^\C$. And we have that 
		\begin{equation*}
			\widetilde{\psi}(g_1,w_1)=\widetilde{\psi}(g_1h,w_2).
		\end{equation*}
		Since $\widetilde{\psi}$ is $G^M$-equivariant and $h \in (G_x)^\C$, 
		\begin{equation}\Label{eq:psitilde}
			\widetilde{\psi}(1,w_1)=g_1^{-1}\cdot \widetilde{\psi}(g_1,w_1)=g_1^{-1}\cdot \widetilde{\psi}(g_1h,w_2)=\widetilde{\psi}(h,w_2)=\widetilde{\psi}(1,h\cdot w_2).
		\end{equation}
		By applying $\varphi _x$ to \eqref{eq:psitilde}, we have that $w_1=h\cdot w_2$. Thus, $w_2=h^{-1}\cdot w_1$.  This together with the well-definedness of $\psi$ tells us that $\widetilde{\psi}$ descends to a holomorphic injective map $\psi$. 
		
		Since $\widetilde{\psi}$ is surjective $G^M$-equivariant, so is $\psi$. So we have that $\psi$ is a holomorphic bijective map. Because an injective holomorphic map is a biholomorphic map into its image, $\psi$ is a $G^M$-equivariant biholomorphism, proving the proposition.
	\end{proof}

\section{Obtaining fans and their characterization}\Label{sec:fan}
	Let us recall our setting in Section \ref{sec:minimal}; $M$ is a compact connected manifold of complex dimension $n$ equipped with a maximal action of a compact torus $G$ of real dimension $m$. For such a manifold, we have shown that there exists a ``nice" tubular neighborhood $N(G\cdot x)$ of a minimal orbit $G\cdot x$ (see Proposition \ref{prop:slicetheorem}). 

	We set
	\begin{equation*}
		N := \bigcup_{G\cdot x : \text{ minimal orbit}}N(G\cdot x).
	\end{equation*}
	By definition of $N$, $N$ is a $G^M$-invariant open dense submanifold of $M$ because $N$ contains the open dense orbit of $G^M$ (we will show that $N$ is the whole manifold $M$ in Section \ref{sec:maintheo}). For any subgroup $G'$ of $G \subset G^M$, each connected component of fixed point set $N^{G'}$ is a complex submanifold of $N$ because $G'$ is compact and its action preserves the complex structure on $N$. 

	\begin{defn}
		A \emph{characteristic submanifold} $N'$ of $N$ is a complex $1$-codimensional connected component of fixed points $N^{G_{N'}}$ for some $1$-dimensional subtorus $G_{N'}$ of $G$.
	\end{defn}
	
	\begin{rema}In case when $m=2n$, $M$ has no characteristic submanifold because $G$ acts on $M$ simply transitively and hence $M$ is a compact complex torus. In this case, all assertions in this section are tautological or do not have any sense. From here until the end of this section, we assume that $m<2n$ unless otherwise stated.
	\end{rema}
	
	To clarify the characteristic submanifolds, we shall see the isotropy subgroup at a point in $N(G\cdot x)$. By Proposition \ref{prop:slicetheorem}, each $N(G\cdot x)$ is $G^M$-equivariantly biholomorphic to 
	\begin{equation*}
		G^M \times_{(G_x)^\C} \bigoplus _{j=1}^{2n-m}\C_{\alpha _j}.
	\end{equation*}
	For $[g,(z_1,\dots ,z_{2n-m})] \in G^M\times _{G_x^{\C}}\bigoplus _{j=1}^{2m-n}\C_{\alpha _j}$ and $g' \in G$, if 
	\begin{equation*}
		g'\cdot [g,(z_1,\dots ,z_{2m-n})]=[g,(z_1,\dots ,z_{2m-n})]
	\end{equation*}
	then $g'$ should be an element of $G_x$.
	Suppose $g' \in G_x$. Then, 
	\begin{equation*}
		g'\cdot [g,(z_1,\dots ,z_{2n-m})]= [g,(\alpha _1(g'^{-1})z_1, \dots ,\alpha _{2m-n}(g'^{-1})z_{2n-m})].
	\end{equation*}
	Therefore, the isotropy subgroup at $[g,(z_1,\dots ,z_{2n-m})]$ of $G$ is 
	\begin{equation*}
		\{ g' \in G_x \mid \alpha _j(g')=1 \text{ if $z_j\neq 0$}\}.
	\end{equation*}
	Conversely, for any subgroup $F$ of $G_x$, the fixed point set $N(G\cdot x)^F$ is $G^M$-equivariantly biholomorphic to 
	\begin{equation*}
		\{ [g,(z_1,\dots ,z_{2n-m})] \mid z_j=0 \text{ if there is some $g' \in F$ such that $\alpha _j(g') \neq 1$}\}.
	\end{equation*}
	So $N(G\cdot x)^F$ has complex codimension $1$ if and only if $\alpha _j|_F =1$ for all $j$ except one $j_0$.
	Recall that $\bigoplus _{j=1}^{2m-n} \C _{\alpha _j} \cong T_xM/T_x(G\cdot x)$ is a faithful $G_x$-representation. Since $\bigoplus _{j=1}^{2n-m} \C _{\alpha _j}$ is faithful, we have an isomorphism $(\alpha _1,\dots ,\alpha _{2n-m}) \co G_x \to (S^1)^{2n-m}$ via 
	\begin{equation*}
		(\alpha _1,\dots ,\alpha _{2n-m})(g)=(\alpha _1(g),\dots ,\alpha _{2n-m}(g)).
	\end{equation*}
	Now we suppose that the subgroup $F$ is $1$-dimensional subtorus of $G$ and consider the codimension of $N(G\cdot x)^F$. Let $T_j$ be the $j$-th $1$-dimensional coordinate subtorus of $(S^1)^{2n-m}$. The observation above implies that $N(G\cdot x)^F$ has complex codimention $1$ if and only if $F=(\alpha _1,\dots ,\alpha _{2n-m})^{-1}(T_j) \subset G_x \subset G$ for some $j=1,\dots ,2n-m$. It turns out that $N(G\cdot x)$ intersects exactly $2n-m$ characteristic submanifolds of $N$. It follows from the compactness of $M$ and Lemma \ref{lemm:cfi} that there are at most finitely many characteristic submanifolds of $N$. 
	
	Let $N_1,\dots ,N_k$ be the characteristic submanifolds of $X$ and let $G_i$ be the $1$-dimensional subtorus of $G$ which fixes $N_i$ pointwise for each $i$. We define a simplicial complex $\Sigma$ on the vertex set $\{1,\dots ,k\}$ and vectors $\lambda _i \in \Hom (S^1,G) \subset \mathfrak{g}$ as follows.  
	We set 
	\begin{equation*}
		\Sigma = \left\{ I \subset \{1,\dots ,k\} \mid \bigcap _{i \in I}N_i \neq \emptyset\right\}.
	\end{equation*}
	We assign a homomorphism $\lambda _i \co S^1 \to G_i$ to each characteristic submanifold $N_i$ to be 
	\begin{equation}\Label{eq:lambda}
		(\lambda _i(h))_*(v)=hv,
	\end{equation}
	where $v$ is any normal vector $TN|_{N_i}/T{N_i}$ and $h \in S^1$. 
	
	\begin{lemm}\Label{lemm:unimodular}
		For each $I \in \Sigma$, $(\lambda _i)_{i\in I}$ is a part of a $\Z$-basis of $\Hom (S^1,G)$. 
	\end{lemm}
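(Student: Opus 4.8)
The plan is to reduce the statement to the explicit local model provided by the slice theorem (Lemma \ref{lemm:slicetheorem}) around a minimal orbit, where the characteristic submanifolds become coordinate hyperplanes and the weights $\lambda_i$ become coordinate generators. Fix $I \in \Sigma$. By the definition of $\Sigma$ I may choose a point $y \in \bigcap_{i \in I} N_i$. Since each $N_i$ is a characteristic submanifold of $N$ and $N = \bigcup_{G\cdot x} N(G\cdot x)$, the point $y$ lies in $N(G\cdot x)$ for some minimal orbit $G\cdot x$. Via Lemma \ref{lemm:slicetheorem} I identify $N(G\cdot x)$ with $G^M \times_{(G_x)^\C} \bigoplus_{j=1}^{2n-m} \C_{\alpha_j}$ and write $y = [g, (v_1, \dots, v_{2n-m})]$.

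First I would pin down which characteristic submanifolds pass through $y$ and with what normal data. As recorded just before the statement, the complex codimension-one fixed components inside $N(G\cdot x)$ are exactly the coordinate slices $\{v_j = 0\}$, and the subtorus fixing $\{v_j=0\}$ pointwise is $F_j := (\alpha_1,\dots,\alpha_{2n-m})^{-1}(T_j) \subseteq G_x$, where $T_j$ is the $j$-th coordinate circle of $(S^1)^{2n-m}$. Since $y \in N_i \cap N(G\cdot x)$, for each $i \in I$ the intersection $N_i \cap N(G\cdot x)$ is one of these slices, say $\{v_{j(i)} = 0\}$, and $v_{j(i)}(y) = 0$. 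Two distinct $N_i$ cannot restrict to the same slice (they would then share a nonempty open subset and hence coincide by the identity theorem), so $i \mapsto j(i)$ is injective on $I$. Comparing the pointwise stabilizer of a generic point of the slice shows $G_i = F_{j(i)}$; in particular $G_i \subseteq G_x$, so each $\lambda_i$ lies in $\Hom(S^1, G_x)$.

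Next I would compute the image of $\lambda_i$ under the lattice isomorphism $A_* \co \Hom(S^1, G_x) \to \Z^{2n-m}$ induced by $A := (\alpha_1, \dots, \alpha_{2n-m}) \co G_x \to (S^1)^{2n-m}$. By the defining property \eqref{eq:lambda}, $\lambda_i$ acts on the normal line $TN|_{N_i}/TN_i$ at $y$, namely the factor $\C_{\alpha_{j(i)}}$, as the standard scalar multiplication; hence $\alpha_{j(i)} \circ \lambda_i$ is, up to the sign fixed by the action convention, the identity character of $S^1$, while $\alpha_l \circ \lambda_i$ is trivial for $l \neq j(i)$ because $\lambda_i \in G_i = F_{j(i)}$. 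Therefore $A_*(\lambda_i) = \pm e_{j(i)}$. As the $j(i)$ are distinct, $\{\lambda_i\}_{i \in I}$ maps to a subset of $\{\pm e_1, \dots, \pm e_{2n-m}\}$ with distinct indices, which is part of a $\Z$-basis of $\Z^{2n-m}$; pulling back, $\{\lambda_i\}_{i \in I}$ is part of a $\Z$-basis of $\Hom(S^1, G_x)$.

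Finally I would promote this to $\Hom(S^1, G)$. Because $G_x$ is a subtorus of $G$, its cocharacter lattice $\Hom(S^1, G_x)$ is a saturated (primitive) sublattice of $\Hom(S^1, G)$, i.e.\ a direct summand; choosing a complementary sublattice and adjoining a basis of it to any basis of $\Hom(S^1, G_x)$ extending $\{\lambda_i\}_{i\in I}$ yields the desired $\Z$-basis of $\Hom(S^1,G)$. I expect the main obstacle to be the bookkeeping in the second paragraph: correctly matching each global characteristic submanifold $N_i$ with $i \in I$ to a single coordinate slice through $y$ and verifying $G_i = F_{j(i)}$, since everything else is either supplied by Lemma \ref{lemm:slicetheorem} or is the standard fact that subtori correspond to saturated sublattices.
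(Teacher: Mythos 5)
Your proof is correct and follows essentially the same route as the paper: pass to the slice model of Lemma \ref{lemm:slicetheorem} at a minimal orbit meeting $\bigcap_{i\in I}N_i$, observe that the $\lambda_i$ for $i\in I$ form part of the dual basis of the characters $(\alpha_1,\dots,\alpha_{2n-m})$ (which is a $\Z$-basis of $\Hom(G_x,S^1)$ by faithfulness), and conclude via the fact that $\Hom(S^1,G_x)$ is a direct summand of $\Hom(S^1,G)$. The only difference is that you allow a sign ambiguity $\pm e_{j(i)}$, whereas the normalization \eqref{eq:lambda} actually forces $\alpha_{j(i)}\circ\lambda_i=\id$ exactly; this does not affect the conclusion.
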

	\begin{proof}
		Let $I \in \Sigma$. By definition of $\Sigma$, $\bigcap _{i \in I}N_i$ is nonempty. Let $p \in \bigcap _{i \in I}N_i$. By definition of $N$, there exists a minimal orbit $G\cdot x$ such that $N(G\cdot x) \ni p$. Let $\psi \co G^M \times _{(G_x)^\C}\bigoplus _{j=1}^{2n-m}\C_{\alpha _j} \to N(G\cdot x)$ be as in Proposition \ref{prop:slicetheorem}. By definition of $\lambda _i$, there exists an injection $\rho \co I \to \{1,\dots ,2n-m\}$ such that 
		\begin{equation}\Label{eq:alphalambda}
			\alpha _j\circ \lambda_i (h)=\begin{cases}
				1 & \text{if $\rho (i) \neq j$},\\
				h & \text{if $\rho (i) = j$}.
			\end{cases} 
		\end{equation}
		Since $(\alpha _1,\dots ,\alpha _{2n-m}) \co G_x \to (S^1)^{2n-m}$ is an isomorphism, $(\alpha_1,\dots ,\alpha _{2n-m})$ is a $\Z$-basis of $\Hom (G_x,S^1)$. Thus, $(\lambda _i)_{i\in I}$ is a part of the dual basis of $\alpha _1,\dots ,\alpha _{2n-m}$ and hence it is a part of a $\Z$-basis of $\Hom (S^1,G_x)$. Since $G_x$ is a subtorus of $G$, $\Hom (S^1,G_x)$ is a direct summand of $\Hom (S^1,G)$. Therefore, $(\lambda _i)_{i \in I}$ is a part of $\Z$-basis of $\Hom (S^1,G)$, proving the lemma. 
	\end{proof}
	By Lemma \ref{lemm:unimodular}, for each $I \in \Sigma$, the cone 
	\begin{equation*}
		C_I:= \pos (\lambda _i \mid i \in I) :=\left\{ \sum _{i \in I}a_i \lambda _i\mid a_i \geq 0\right\} \subset \mathfrak{g}
	\end{equation*}
	in $\mathfrak{g}$ spanned by $\lambda _i$ is nonsingular with respect to the lattice $\Hom (S^1,G)$. We will see that  the collection $\Delta =\{C_I \mid {I \in \Sigma}\}$ of nonsingular cones becomes a fan later. 
	\begin{rema}
		In case $m=2n$, there is no characteristic submanifold and hence $\Sigma =\{\emptyset\}$ and $C_\emptyset =\{0\}$. Thus, $\Delta$ is a fan consisting of only the origin $\{0\}$ in $\mathfrak{g}$. 
	\end{rema}

	As well as toric varieties, Hausdorff-ness of $N$ tells us that the collection $\Delta$ of nonsingular cones does not have overlaps, that is, $\Delta$ is a fan. 
	Let $y$ be a point sitting in the free $G^M$-orbit in $N$. For each $v \in \mathfrak{g}$, we consider the curve 
	\begin{equation*}
		c_y^v \co \R \to N
	\end{equation*}
	that is given by 
	\begin{equation*}
		c_y^v (r) := \exp (-rJX_v) (y) \quad \text{for $r \in \R$}.
	\end{equation*}
	For each $I \in \Sigma$, we set 
	\begin{equation*}
		N_I := \bigcap _{i \in I}N_i,
	\end{equation*}
	\begin{equation*}
		N_I^0 := \bigcap _{i\in I}N_i \setminus \bigcup_{j \notin I}N_j
	\end{equation*}
	and 
	\begin{equation*}
		C_I^0 := \left\{ \sum_{i \in I}a_i\lambda _i \mid a_i >0\right\} \subset \mathfrak{g}.
	\end{equation*}
	\begin{lemm}\Label{lemm:limit}
		The curve $c_y^v(r)$ converges to a point in $N_I^0$ as $r$ approaches to $-\infty$ if and only if $v \in C_I^0$. Moreover, in this case the limit point $q'$ belongs to $N(G\cdot x)$ for every minimal orbit $G\cdot x$ such that $N(G\cdot x) \cap N_I \neq \emptyset$. 
	\end{lemm}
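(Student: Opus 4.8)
The plan is to compute everything inside the slice model of Lemma~\ref{lemm:slicetheorem} and to reduce the dynamics of $c_q^v$ to the evolution of the normal coordinates $w_1,\dots,w_{2n-m}$. First I would note that, since $q$ lies on the free open dense $G^M$-orbit and each $N(G\cdot x)$ is a nonempty $G^M$-invariant open set, $q$ belongs to $N(G\cdot x)$ for \emph{every} minimal orbit $G\cdot x$. Fixing one such orbit, I use the $G^M$-equivariant biholomorphism $\psi\colon G^M\times_{(G_x)^\C}\bigoplus_{j}\C_{\alpha_j}\to N(G\cdot x)$ and write $q=[g_0,w^0]$ with all $w_j^0\neq 0$. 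The key elementary remark is that the curve is just the action of a one-parameter subgroup, $c_q^v(r)=\exp(-r\sqrt{-1}v)\cdot q$, because $JX_v=X_{\sqrt{-1}v}$.

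For the direction $(\Leftarrow)$, suppose $v\in C_I^0$ and $N(G\cdot x)\cap N_I\neq\emptyset$. Then each $\lambda_i$ $(i\in I)$ is, by \eqref{eq:alphalambda}, dual to a character $\alpha_{\rho(i)}$ of $G_x$, so $v=\sum_{i\in I}a_i\lambda_i\in\mathfrak{g}_x$ with $a_i>0$. Hence $\sqrt{-1}v\in\sqrt{-1}\mathfrak{g}_x\subset(\mathfrak{g}_x)^\C$ acts purely on the fibre; absorbing it into $(G_x)^\C$ and using \eqref{eq:vectors} gives $c_q^v(r)=[g_0,(e^{\beta_j r}w_j^0)_j]$, where $\beta_j=a_{\rho^{-1}(j)}>0$ for $j\in\rho(I)$ and $\beta_j=0$ otherwise. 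As $r\to-\infty$ the coordinates with $j\in\rho(I)$ tend to $0$ and the remaining ones stay equal to $w_j^0\neq 0$, so $c_q^v(r)\to q'=[g_0,w^*]\in N_I^0$. Since $q$ also lies in every other $N(G\cdot x')$ meeting $N_I$, the identical computation there yields the same limit by uniqueness, which is exactly the ``moreover'' assertion.

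For the direction $(\Rightarrow)$, suppose $c_q^v(r)\to q'\in N_I^0$. The crucial observation is that $q'$, being the limit as $r\to-\infty$ of an integral curve of the complete field $-JX_v$, is necessarily a zero of that field: $\exp(sX_{\sqrt{-1}v})(q')=\lim_r c_q^v(r+s)=q'$ for all $s$ forces $X_v(q')=0$, i.e.\ $v\in\mathfrak{g}_{q'}$. A direct isotropy computation in the bundle model, using that $(\alpha_1,\dots,\alpha_{2n-m})\colon G_x\to(S^1)^{2n-m}$ is an isomorphism with dual basis the through-$x$ vectors $\lambda_{i_j}$, identifies $\mathfrak{g}_{q'}=\mathrm{span}_\R\{\lambda_i\mid i\in I\}\subseteq\mathfrak{g}_x$; in particular $v\in\mathfrak{g}_x$ and every $i\in I$ indexes a characteristic submanifold through $x$. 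Now the fibre computation of $(\Leftarrow)$ applies verbatim: $w_j(r)=e^{\beta_j r}w_j^0$ with $\beta_j=\langle\alpha_j,v\rangle$, and convergence to a point of $N_I^0$ forces $\beta_j>0$ for $j\in\rho(I)$ and $\beta_j=0$ for the remaining through-$x$ indices. Writing $v=\sum_{j}\beta_j\lambda_{i_j}=\sum_{i\in I}\beta_{\rho(i)}\lambda_i$ with positive coefficients gives $v\in C_I^0$.

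The main obstacle is precisely the $(\Rightarrow)$ direction, where a priori $\sqrt{-1}v$ could have a ``drift'' component along the base $G^M$-directions (equivalently, a $\mathfrak{g}$-part modulo $\mathfrak{h}$), which might let the fibre converge while the base point keeps moving inside the compact torus $G$, destroying convergence to a single point. I expect the fixed-point observation above to be the clean way to bypass this: the mere existence of the limit forces $X_v(q')=0$ and hence $v\in\mathfrak{g}_{q'}\subseteq\mathfrak{g}_x$, eliminating any drift before the fibre computation even begins. (One can alternatively see $a=0$ from $-a+\sqrt{-1}(v-b)\in\mathfrak{h}$ together with condition~(1), i.e.\ $\mathfrak{h}\cap\sqrt{-1}\mathfrak{g}=0$, but the dynamical argument is shorter.) The only remaining routine point is the isotropy identification $\mathfrak{g}_{q'}=\mathrm{span}_\R\{\lambda_i\mid i\in I\}$.
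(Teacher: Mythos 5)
Your proposal is correct, and for the $(\Leftarrow)$ direction and the ``moreover'' clause it is essentially the paper's argument: both reduce everything to the slice model of Lemma \ref{lemm:slicetheorem}, where the fibre coordinates evolve as $w_j(r)=e^{2\pi\langle\alpha_j,v\rangle r}w_j^0$ with $w_j^0\neq 0$ because $q$ is free, and both obtain the last assertion by running this computation in $N(G\cdot x)$ for every minimal orbit meeting $N_I$ and invoking uniqueness of limits in a Hausdorff space. Where you genuinely diverge is the key step of the $(\Rightarrow)$ direction, namely excluding a drift of the base point. The paper projects the curve to $G^M/(G_x)^\C\cong G\cdot x$, observes that a nonconstant one-parameter subgroup of this compact orbit cannot converge, deduces $\sqrt{-1}v\in(\mathfrak{g}_x)^\C+\mathfrak{h}$, and only then extracts $v\in\mathfrak{g}_x$ from the decomposition \eqref{eq:decomposition}. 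You instead use the dynamical observation that the limit of an integral curve of the (complete, since $M$ is compact) field $JX_v$ is a zero of that field, so $X_v(q')=0$ and hence $v\in\mathfrak{g}_{q'}=\mathrm{span}_\R\{\lambda_i\mid i\in I\}$ by the isotropy computation already carried out in Section \ref{sec:fan}. This is shorter, bypasses \eqref{eq:decomposition} entirely, and hands you the stronger containment $v\in\mathrm{span}_\R\{\lambda_i\mid i\in I\}$ up front, so the final fibre computation only has to pin down the signs of the coefficients; the paper's version instead isolates exactly which infinitesimal directions fail to converge, at the cost of an extra appeal to the structure of $\mathfrak{g}^\C$. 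Both routes are complete and valid.
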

	\begin{proof}
		Suppose that $v \in C_I^0$. By definition of $\Sigma$, $N_I$ is nonempty. By renumbering characteristic submanifolds, we may assume that $I=\{1,\dots ,\ell\}$ without loss of generality. Let $G\cdot x$ be a minimal orbit such that $N(G\cdot x)$ meets $N_1,\dots ,N_{\ell}$. We also assume that characteristic submanifolds $N_{\ell+1}, \dots, N_{2n-m}$ also intersects with $N(G \cdot x)$. By Lemma \ref{lemm:unimodular}, $\lambda _1,\dots ,\lambda _{2n-m}$ form a basis of $\Hom (S^1,G_x)$ because $\dim G_x=2n-m$. 	Let $\psi \co G^M \times _{(G_x)^\C} \bigoplus _{j=1}^{2n-m}\C_{\alpha _j} \to N(G\cdot x)$ be as in Proposition \ref{prop:slicetheorem}. By renumbering $\alpha_j$'s, we may assume that $(\alpha _1,\dots ,\alpha _{2n-m})$ is the dual basis of $(\lambda _1,\dots ,\lambda _{2n-m})$. Then, 
		\begin{equation*}
			\psi^{-1}(N(G\cdot x)\cap N_j)= \left\{ [g,w_1,\dots ,w_{2n-m}] \in G^M \times _{(G_x)^\C} \bigoplus _{j=1}^{2n-m}\C_{\alpha _j} \mid w_j=0 \right\}
		\end{equation*}
		and hence 
		\begin{equation*}
			\psi^{-1}(N(G\cdot x)\cap N_I^0) =  \left\{ [g,w_1,\dots ,w_{2n-m}] \mid w_j\begin{cases}
				=0 & \text{for $j=1,\dots, \ell$},\\
				\neq 0 & \text{for $j=\ell+1,\dots ,2n-m$}
			\end{cases} \right\}.
		\end{equation*}
		Suppose that $\psi ^{-1}(y)$ is represented as 
		\begin{equation*}
			[g,y_1,\dots ,y_{2n-m}] \in G^M \times _{(G_x)^\C} \bigoplus _{j=1}^{2n-m}\C_{\alpha _j}.
		\end{equation*}
		Then, 
		\begin{equation*}
			\begin{split}
				\psi^{-1}(c_y^v(r)) &= [-r\sqrt{-1}v]\cdot [g,y_1,\dots ,y_{2n-m}]\\
				&= [g-[r\sqrt{-1}v],y_1,\dots ,y_{2n-m}]\\
				&= [g,e^{2\pi r\langle \alpha _1,v\rangle}y_1, \dots ,e^{2\pi r\langle \alpha _{2n-m},v\rangle}y_{2n-m}],
			\end{split}
		\end{equation*}
		where $[-\sqrt{-1}v] \in (G_x)^\C=(\mathfrak{g}_x)^\C/\Hom (S^1,G_x)$ denotes the equivalence class of $-\sqrt{-1}v \in \sqrt{-1}\mathfrak{g}_x$. Since $v \in C_I^0$, the pairing $\langle \alpha_j,v\rangle$ is positive for $j=1,\dots ,\ell$ and $0$ for $j=\ell +1,\dots ,2n-m$. Since $y$ sits in the free $G^M$-orbit, $y_1,\dots ,y_{2n-m}$ are not zero. Therefore,
		\begin{equation*}
			\lim _{r \to -\infty}\psi^{-1}(c_y^v(r)) = [g,0,\dots ,0,y_{\ell +1},\dots ,y_{2n-m}] 
		\end{equation*}
		and hence $\lim_{r\to -\infty}c_y^v(r) \in N_I^0$.
		
		Suppose that the curve $c_y^v(r)$ converges to a point $y'$ in $N_I^0$. Let $G\cdot x$ be a minimal orbit such that $N(G\cdot x)$ contains $y'$. As before, we may assume that $I=\{1,\dots ,\ell\}$ and the characteristic submanifolds that meet $N(G\cdot x)$ are exactly $N_1,\dots ,N_{2n-m}$. We also sort $\alpha_j$'s so that $\alpha _1,\dots ,\alpha_{2n-m}$ form the dual basis of $\lambda_1,\dots ,\lambda_{2n-m}$. We show that $v$ should be in $\mathfrak{g}_x$ first. Suppose that $\psi^{-1}(y')$ is represented as 
		\begin{equation*}
			[g',0,\dots, 0,y'_{\ell+1},\dots ,y'_{2n+m}] \in G^M \times _{(G_x)^\C} \bigoplus _{j=1}^{2n-m}\C_{\alpha _j}.
		\end{equation*}
		We consider the projection $\pi \co G^M \times _{(G_x)^\C} \bigoplus _{j=1}^{2n-m}\C_{\alpha _j} \to G^M/(G_x)^\C$ (this $\pi$ is essentially the same as $\pi$ in the proof of Proposition \ref{prop:slicetheorem}). We have that 
		\begin{equation*}
			\pi \circ \psi^{-1}(\exp(-rX_v)(y')) = [g' -[\sqrt{-1}v]],
		\end{equation*}
		so $\sqrt{-1}v$ should be in $(\mathfrak{g}_x)^\C+\mathfrak{h}$. Otherwise, it contradicts the assumption that $q'$ is the limit point. Since $(\mathfrak{g}_x)^\C +\mathfrak{h}$ is a complex vector space, $\sqrt{-1}v \in (\mathfrak{g}_x)^\C + \mathfrak{h}$ implies that $v \in (\mathfrak{g}_x)^\C+\mathfrak{h}$. For $v \in \mathfrak{g}$, it follows from the decomposition \eqref{eq:decomposition} and $v \in (\mathfrak{g}_x)^\C + \mathfrak{h}$ that $v \in \mathfrak{g}_x$. Now we suppose that $\psi^{-1}(y)$ is represented as 
		\begin{equation*}
			[g,y_1,\dots ,y_{2n-m}] \in G^M \times _{(G_x)^\C} \bigoplus _{j=1}^{2n-m}\C_{\alpha _j}
		\end{equation*}
		as before. Then, as we calculated before, 
		\begin{equation*}
				\psi^{-1}(c_y^v(r)) = [g,e^{2\pi r\langle \alpha _1,v\rangle}y_1, \dots ,e^{2\pi r\langle \alpha _{2n-m},v\rangle}y_{2n-m}].
		\end{equation*}
		Since $c_y^v(r)$ converges to $y'$ as $r$ approaches $-\infty$, 
		\begin{equation*}
			\lim _{r \to -\infty} [g,e^{2\pi r\langle \alpha _1,v\rangle}y_1, \dots ,e^{2\pi r\langle \alpha _{2n-m},v\rangle}y_{2n-m}] = [g',0,\dots, 0,y'_{\ell+1},\dots ,y'_{2n-m}].
		\end{equation*}
		Thus, $\langle \alpha _j,v\rangle$ should be positive for $j=1,\dots, \ell$ and $\langle \alpha _j,v\rangle$ should be $0$ for $j=\ell+1,\dots , 2n-m$.  This shows $v \in C_I^0$, proving the lemma. 
	\end{proof}
	\begin{coro}\Label{coro:fan}
		\begin{enumerate}
			\item For every $I, J \in \Sigma$, if $I \neq J$, then $C_I^0 \cap C_J^0 = \emptyset$. 
			\item For every $I, J \in \Sigma$, 
			\begin{equation*}
				C_I \cap C_J = C_{I\cap J}.
			\end{equation*}
			\item The collection of cones 
			\begin{equation*}
				\Delta = \{ C_I \mid {I \in \Sigma}\}
			\end{equation*}
			which was introduced Lemma \ref{lemm:unimodular} below is a fan, that is, every face of every cone in $\Delta$ is itself in $\Delta$, and the intersection of every two cones in $\Delta$ is a common face. 
		\end{enumerate}
	\end{coro}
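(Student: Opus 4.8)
The plan is to deduce all three parts from Lemma \ref{lemm:limit}, which converts convergence of the flow curves $c_q^v$ into membership in the relative interiors $C_I^0$, together with the Hausdorff property of $N$. The guiding idea is that each open cone $C_I^0$ is detected by where the downward flow line $c_q^v(r)$ lands as $r \to -\infty$, and since a convergent curve in a Hausdorff space has a unique limit, one $v$ cannot detect two different open cones. For Part (1), suppose $I \neq J$ and $v \in C_I^0 \cap C_J^0$. By Lemma \ref{lemm:limit} the curve $c_q^v(r)$ then converges as $r \to -\infty$ both to a point of $N_I^0$ and to a point of $N_J^0$. As $N$ is Hausdorff the limit is unique, forcing $N_I^0 \cap N_J^0 \neq \emptyset$. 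But a point of $N_I^0 = \bigcap_{i \in I}N_i \setminus \bigcup_{j \notin I}N_j$ lies in exactly the characteristic submanifolds $N_i$ with $i \in I$, so $N_I^0 \cap N_J^0 = \emptyset$ whenever $I \neq J$, a contradiction. Hence $C_I^0 \cap C_J^0 = \emptyset$.

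For Part (2), the inclusion $C_{I \cap J} \subseteq C_I \cap C_J$ is immediate from $I \cap J \subseteq I$ and $I \cap J \subseteq J$. For the reverse I would exploit that each $C_I$ is simplicial: by Lemma \ref{lemm:unimodular} the vectors $(\lambda_i)_{i \in I}$ are linearly independent, so every $v \in C_I$ has a unique expression $v = \sum_{i \in I}a_i\lambda_i$ with $a_i \geq 0$, and putting $K := \{i \in I \mid a_i > 0\}$ yields the disjoint face decomposition $C_I = \bigsqcup_{K \subseteq I}C_K^0$. Each such $K$ lies in $\Sigma$ because $\Sigma$ is a simplicial complex. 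Now take $v \in C_I \cap C_J$ and write $v \in C_K^0$ with $K \subseteq I$ and $v \in C_L^0$ with $L \subseteq J$. Part (1) gives $C_K^0 \cap C_L^0 = \emptyset$ unless $K = L$, so $K = L \subseteq I \cap J$, whence $v \in C_K^0 \subseteq C_{I \cap J}$. This establishes $C_I \cap C_J \subseteq C_{I \cap J}$ and hence the equality.

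For Part (3), the linear independence of $(\lambda_i)_{i \in I}$ shows that the faces of $C_I$ are precisely the $C_K$ with $K \subseteq I$; each such $K$ belongs to $\Sigma$, so $C_K \in \Delta$, and $\Delta$ is closed under passage to faces. By Part (2), $C_I \cap C_J = C_{I \cap J}$, which is a face of both $C_I$ and $C_J$ since $I \cap J \subseteq I$ and $I \cap J \subseteq J$; thus the intersection of any two cones of $\Delta$ is a common face, and $\Delta$ is a fan. The main obstacle is Part (1): Parts (2) and (3) are essentially bookkeeping about simplicial cones and the simplicial-complex structure of $\Sigma$, whereas Part (1) is the only place where the Hausdorff property of $M$ is used in an essential way, and it is exactly the analogue of the classical ``no overlaps'' argument for the fan of a toric variety. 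The point to check most carefully there is that the limit furnished by Lemma \ref{lemm:limit} really sits in a single $N_I^0$, so that uniqueness of limits in a Hausdorff space produces the desired contradiction.
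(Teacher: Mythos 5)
Your proof is correct and follows essentially the same route as the paper: Part (1) from Lemma \ref{lemm:limit} plus the disjointness of the sets $N_I^0$ (via uniqueness of limits), Part (2) by locating $v$ in the unique relative interior $C_{I'}^0$ inside each of $C_I$ and $C_J$ and invoking Part (1), and Part (3) as simplicial-cone bookkeeping. The extra detail you supply (the face decomposition $C_I = \bigsqcup_{K\subseteq I}C_K^0$ and the closure of $\Sigma$ under subsets) is exactly what the paper leaves implicit.
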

	\begin{proof}
		Part (1) follows from Lemma \ref{lemm:limit} because the sets $N_I^0$ are disjoint. Part (3) follows from Part (2). 
		
		For Part (2), we only need to show the inclusion $C_I \cap C_J \subseteq C_{I \cap J}$ because the opposite inclusion is trivial by definition of $C_I$. Let $v \in C_I \cap C_J$. Let $I' \subseteq I$ and $J' \subseteq J$ be the subsets such that $v \in C_{I'}^0$ and $v \in C_{J'}^0$. Then $v \in C_{I'}^0 \cap C_{J'}^0$ and hence $I'=J'$ by Part (1). Therefore $v \in C_{I'}^0 =C_{J'}^0 \subseteq C_{I \cap J}$, proving the corollary.
	\end{proof}
	\begin{lemm}\Label{lemm:codim}
		For every $I \in \Sigma$, the set $N_I$ is a complex submanifold of $N$ of complex codimension $|I|$, it is connected, and it contains a minimal orbit.
	\end{lemm}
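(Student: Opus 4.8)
The plan is to deduce all three assertions from the explicit slice model of Lemma \ref{lemm:slicetheorem} together with the limit description of Lemma \ref{lemm:limit}. The submanifold and codimension claims are local, so first I would fix an arbitrary point $p \in N_I$ and choose a minimal orbit $G\cdot x$ with $p \in N(G\cdot x)$. Since $p \in N_i$ for each $i \in I$, every such $N_i$ meets $N(G\cdot x)$, so $I$ is contained in the set of the $2n-m$ characteristic submanifolds meeting $N(G\cdot x)$. Transporting to the coordinates of Lemma \ref{lemm:slicetheorem}, the piece $N_I \cap N(G\cdot x)$ becomes $G^M \times_{(G_x)^\C} \{w \mid w_i = 0 \text{ for } i \in I\}$, which is visibly a complex submanifold of complex codimension $|I|$; since $p$ was arbitrary, this proves the first claim. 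For the minimal-orbit claim I would observe that in the same chart $G\cdot x$ is the zero section $\{[g,0]\}$, which lies in $\{w_i = 0 : i \in I\}$ and hence in $N_I$; thus the minimal orbit $G\cdot x$ chosen above is already contained in $N_I$, giving the third assertion for free.

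The same computation shows that every point of $N_I$ lies in some $N(G\cdot x)\cap N_I$ with $G\cdot x \subseteq N_I$, and that each such piece is connected because $G^M$ is connected and the fibre $\{w_i=0\}$ is a linear subspace. This records $N_I$ as a union of connected open sets, but the nerve of this cover is not obviously connected, and this is exactly where the real work lies.

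The connectedness of $N_I$ is therefore the main obstacle, and I would establish it by exhibiting the top stratum $N_I^0$ as a continuous image of the connected free $G^M$-orbit $O \subseteq N$. Fix any $v \in C_I^0$ and define $\Phi_v \co O \to N$ by $\Phi_v(q) := \lim_{r\to-\infty} c_q^v(r)$. By Lemma \ref{lemm:limit} this limit exists and lies in $N_I^0$, so $\Phi_v$ maps $O$ into $N_I^0$. For surjectivity, given $q' \in N_I^0$ I would write it in the slice coordinates of a chart $N(G\cdot x)$ containing it, where $q'$ has $w_i = 0$ precisely for $i \in I$, and lift it to a point $q \in O$ by replacing the vanishing coordinates with nonzero ones; the computation in Lemma \ref{lemm:limit} then gives $\Phi_v(q) = q'$. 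For continuity, the key point is that $N(G\cdot x)$ is $G^\C$-invariant, hence invariant under the flow of $JX_v$: once the flow of a base point $q_0$ enters $N(G\cdot x)$ at some time $r_0$, all nearby $q$ have $c_q^v(r_0)\in N(G\cdot x)$ as well, and the whole backward flow then stays in the chart, where the limit is given by the continuous map that sends $[g,w]$ to the point with the coordinates $w_i$, $i\in I$, replaced by $0$. Hence $\Phi_v$ is continuous and surjective, so $N_I^0 = \Phi_v(O)$ is connected.

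Finally, since $N_I \setminus N_I^0 = \bigcup_{j \notin I}(N_I \cap N_j)$ is a finite union of positive-codimension complex submanifolds of $N_I$, the stratum $N_I^0$ is open and dense in $N_I$, whence $N_I = \overline{N_I^0}$ is connected as the closure of a connected set. The delicate step throughout is the continuity of $\Phi_v$ (equivalently, the uniform behaviour of the limit as $q$ varies over $O$), which rests precisely on the flow-invariance of the $G^\C$-charts $N(G\cdot x)$; the rest reduces to bookkeeping in the slice coordinates.
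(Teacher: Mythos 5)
Your proposal is correct, and the first and third assertions (submanifold of codimension $|I|$, and containment of a minimal orbit) are handled exactly as in the paper: both reduce to the slice model of Lemma \ref{lemm:slicetheorem}, in which $N(G\cdot x)\cap N_I$ is $G^M\times_{(G_x)^\C}\{w_i=0: i\in I\}$ and the zero section is the minimal orbit. Where you genuinely diverge is the connectedness. The paper's proof is a one-point argument: it fixes a single $v\in C_I^0$ and a single $q$ in the free orbit, and invokes the ``Moreover'' clause of Lemma \ref{lemm:limit}, which says the one limit point $q'$ lies in $N(G\cdot x)$ for \emph{every} minimal orbit with $N(G\cdot x)\cap N_I\neq\emptyset$; since $N_I$ is the union of the connected pieces $N(G\cdot x)\cap N_I$ and each piece contains $q'$, the union is connected. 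You instead exhibit $N_I^0$ as the continuous surjective image of the connected free $G^M$-orbit under the limit map $\Phi_v$, and then recover $N_I$ as the closure of $N_I^0$ inside $N_I$, using that $N_I\setminus N_I^0$ is a finite union of closed positive-codimension submanifolds. Both routes rest on Lemma \ref{lemm:limit}; the paper's is shorter because the ``Moreover'' clause does all the gluing at once, while yours trades that clause for the extra continuity and density bookkeeping (note in passing that the free orbit already lies in every chart $N(G\cdot x)$, being the unique open dense $G^\C$-orbit, so your ``entering the chart at time $r_0$'' step is unnecessary --- $\Phi_v$ restricted to the free orbit is given by the single continuous chart formula $[g,(w_j)]\mapsto[g,(w_j')]$ with $w_i'=0$ for $i\in I$). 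Your version has the mild advantage of identifying $N_I^0$ explicitly as a connected dense stratum, which is more information than the lemma asks for.
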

	\begin{proof}
		Fix $I \in \Sigma$. 
		
		Because each of the sets $N_i$, for $i \in I$ is closed in $N$, so is the intersection $N_I$ of these sets. 
		
		$N$ is the union of open subsets $N(G\cdot x)$ and every intersection $N(G\cdot x) \cap N_I$ is a $G^M$-invariant complex submanifold of codimension $I$ in $N$. It remains to show that $N_I$ is connected and contains a minimal orbit. 
		
		Choose any $v \in C_I^0$ (for example, we may take $v = \sum_{i \in I}\lambda _i$), and choose any $y$ in the free $G^M$-orbit in $N$. By Lemma \ref{lemm:limit}, the curve $c_y^v(r)$ converges as $r \to -\infty$; let $y'$ be its limit. Also by Lemma \ref{lemm:limit}, for every minimal orbit $G\cdot x$ such that $N(G\cdot x) \cap N_I \neq \emptyset$, the limit point $y'$ belongs to $N(G\cdot x)$. Because $N_I$ is the union over such $G\cdot x$ of the subsets $N(G\cdot x)\cap N_I$ by definition of $N$, and because each of these subsets is connected and contains $y'$, the union $N_I$ is connected. Also, every minimal orbit $G\cdot x$ such that $N(G\cdot x)\cap N_I \neq \emptyset$ belongs to $N(G\cdot x) \cap N_I$; because the set of such $G\cdot x$'s is nonempty, $N_I$ contains a minimal orbit. The lemma is proved. 
	\end{proof}
	\begin{coro}\Label{coro:pure}
		In the fan $\Delta$, every cone is contained in a $2n-m$-dimensional cone. 
	\end{coro}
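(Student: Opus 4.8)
The plan is to show that every index set $I \in \Sigma$ is contained in an index set $I_{\max} \in \Sigma$ with $|I_{\max}| = 2n-m$. Since each cone $C_I$ is nonsingular with $\dim C_I = |I|$ by Lemma \ref{lemm:unimodular}, the inclusion $I \subseteq I_{\max}$ gives $C_I = \pos(\lambda_i \mid i \in I) \subseteq \pos(\lambda_i \mid i \in I_{\max}) = C_{I_{\max}}$, and $C_{I_{\max}}$ has dimension $2n-m$; this is exactly the assertion.

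First I would fix $I \in \Sigma$ and apply Lemma \ref{lemm:codim} to produce a minimal orbit $G \cdot x$ contained in $N_I = \bigcap_{i \in I} N_i$. Next I would invoke the slice description of Lemma \ref{lemm:slicetheorem}, under which $N(G \cdot x)$ is $G^M$-equivariantly biholomorphic to $G^M \times_{(G_x)^\C} \bigoplus_{j=1}^{2n-m} \C_{\alpha_j}$, with the minimal orbit $G \cdot x$ appearing as the zero section $\{[g,0]\}$ and the characteristic submanifolds meeting $N(G \cdot x)$ being exactly the $2n-m$ coordinate hyperplanes $\{w_j = 0\}$, as established in the analysis of the isotropy representation preceding Lemma \ref{lemm:unimodular}. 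Let $I_{\max} \subseteq \{1,\dots,k\}$ be the set of indices of these $2n-m$ characteristic submanifolds, so that $|I_{\max}| = 2n-m$.

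The two facts I then need are $I \subseteq I_{\max}$ and $I_{\max} \in \Sigma$. For the inclusion $I \subseteq I_{\max}$: if $i \in I$ then $G \cdot x \subseteq N_I \subseteq N_i$, so $N(G \cdot x) \cap N_i \neq \emptyset$, which by the local model forces $N_i$ to be one of the coordinate hyperplanes, i.e.\ $i \in I_{\max}$. For $I_{\max} \in \Sigma$: the zero section $\{[g,0,\dots,0]\}$ lies in every coordinate hyperplane $\{w_j = 0\}$, so $G \cdot x \subseteq \bigcap_{i \in I_{\max}} N_i$, whence this intersection is nonempty and $I_{\max} \in \Sigma$ by definition of $\Sigma$. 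Combining the two, $C_I$ is contained in the $(2n-m)$-dimensional nonsingular cone $C_{I_{\max}} \in \Delta$, which proves the corollary.

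The only point requiring care is the bookkeeping in the local model: one must check that the characteristic submanifolds meeting $N(G \cdot x)$ are precisely the $2n-m$ hyperplanes $\{w_j = 0\}$ and that the minimal orbit lies in all of them simultaneously. Both statements are already contained in the isotropy computation carried out before Lemma \ref{lemm:unimodular}, so no new estimate is needed; the corollary is a direct packaging of Lemma \ref{lemm:codim} together with that local computation.
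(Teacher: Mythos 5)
Your proof is correct and follows essentially the same route as the paper: produce a minimal orbit $G\cdot x \subseteq N_I$ via Lemma \ref{lemm:codim}, observe that $N(G\cdot x)$ meets exactly $2n-m$ characteristic submanifolds all containing $G\cdot x$, and take their index set as the top-dimensional simplex containing $I$. You merely spell out the two bookkeeping steps ($I \subseteq I_{\max}$ and $I_{\max} \in \Sigma$) that the paper leaves implicit.
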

	\begin{proof}
		Every cone in the fan $\Delta$ has the form $C_I$ for some $I \in \Sigma$. By Lemma \ref{lemm:codim}, the set $N_I$ contains a minimal orbit;  let $G\cdot x$ be such a minimal orbit. As we saw in Lemma \ref{lemm:unimodular} above, $N(G\cdot x)$ intersects exactly $2n-m$ characteristic submanifolds which contain $G\cdot x$, say $N_j$ for $J \subset \{1,\dots ,k\}$ with $|J|=2n-m$. Then $J \in \Sigma$, and $C_J$ is a $2n-m$-dimensional cone in $\Delta$ that contains $C_I$, proving the corollary. 
	\end{proof}
	Now we consider the quotient map $N \to N/G$ and compose it with $c_q^v$. Then, we have a curve 
	\begin{equation*}
		\overline{c}_y^v \co \R \to N/G.
	\end{equation*}
	For $v_1,v_2 \in \mathfrak{g}$, it follows immediately that $\overline{c}_y^{v_1}=\overline{c}_y^{v_2}$ if and only if $v_1-v_2 \in \sqrt{-1}\mathfrak{g}+\mathfrak{h}$. This motivates us to define an $\R$-linear map
	\begin{equation*}
		\overline{J} \co \mathfrak{g} \to \mathfrak{g}^M /\mathfrak{g} \quad \text{by $\overline{J}(v) = [\sqrt{-1}v]$},
	\end{equation*}
	where $[\sqrt{-1}v] \in \mathfrak{g}^M/\mathfrak{g}$ denotes the equivalence class of $\sqrt{-1}v \in \mathfrak{g}^M$. The kernel of $\overline{J}$ is $(\sqrt{-1}\mathfrak{g} + \mathfrak{h}) \cap \mathfrak{g}$. 
	\begin{lemm}\Label{lemm:Jbar}
		Let $\overline{J}$ be as above. 
		\begin{enumerate}
		\item $\overline{J}$ sends a cone $C_I$ to a cone of the same dimension. 
		\item The image of the fan $\Delta$ by $\overline{J}$ is also a fan, that is, the collection of cones $\{ \overline{J}(C_I) \mid {I \in \Sigma}\}$ is a fan in $\mathfrak{g}^M/\mathfrak{g}$. 
		\end{enumerate}
	\end{lemm}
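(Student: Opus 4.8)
The plan is to treat (1) as a piece of linear algebra coming from the local model of Section~\ref{sec:minimal}, and to deduce (2) from the disjointness of the strata $N_I^0$ already established in Lemma~\ref{lemm:limit}.

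For (1), I would fix $I \in \Sigma$. By Lemma~\ref{lemm:codim} the submanifold $N_I$ contains a minimal orbit $G\cdot x$, and (as in the proof of Corollary~\ref{coro:pure}) $N(G\cdot x)$ meets exactly the $2n-m$ characteristic submanifolds indexed by some $J \in \Sigma$ with $I \subseteq J$ and $|J| = 2n-m$. By \eqref{eq:alphalambda} in the proof of Lemma~\ref{lemm:unimodular}, the vectors $\lambda_i$ with $i \in I$ then lie in $\mathfrak{g}_x$ and form part of a $\Z$-basis of $\Hom(S^1,G_x)$; in particular they are $\R$-linearly independent, so $C_I$ is simplicial of dimension $|I|$ and its span lies in $\mathfrak{g}_x$. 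By the decomposition \eqref{eq:decomposition} together with Lemma~\ref{lemm:GM}, the projection $\mathfrak{g}^M \to \mathfrak{g}^M/\mathfrak{g}$ restricts to a linear isomorphism on $\sqrt{-1}\mathfrak{g}_x$; composing with the isomorphism $v \mapsto \sqrt{-1}v$ from $\mathfrak{g}_x$ onto $\sqrt{-1}\mathfrak{g}_x$ shows that $\overline{J}$ is injective on $\mathfrak{g}_x$. Hence the images $\overline{J}(\lambda_i)$, $i \in I$, are linearly independent and $\overline{J}(C_I) = \pos(\overline{J}(\lambda_i) \mid i \in I)$ is a simplicial cone of the same dimension $|I|$ as $C_I$. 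This settles (1). Note that $\overline{J}$ need not be injective on all of $\mathfrak{g}$ (when $m > n$), which is why it is essential that the generators of a single cone lie in one $\mathfrak{g}_x$.

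For (2), the face condition is immediate from (1): since $\overline{J}(C_I)$ is simplicial with linearly independent generators $\overline{J}(\lambda_i)$, each of its faces is $\pos(\overline{J}(\lambda_i) \mid i \in I')$ for a subset $I' \subseteq I$, and $\Sigma$ being a simplicial complex gives $I' \in \Sigma$, so every face equals $\overline{J}(C_{I'})$ and lies in the collection. The substance of (2) is that the relative interiors of distinct image cones are disjoint. To prove this I would suppose $w$ lies in the relative interiors of both $\overline{J}(C_I)$ and $\overline{J}(C_J)$; since $\overline{J}$ is injective on the span of each cone, I may write $w = \overline{J}(v_I) = \overline{J}(v_J)$ with $v_I \in C_I^0$ and $v_J \in C_J^0$. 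By the observation recorded just before the statement (that $\overline{c}_q^{v_1} = \overline{c}_q^{v_2}$ precisely when $\overline{J}(v_1) = \overline{J}(v_2)$), the curves $\overline{c}_q^{v_I}$ and $\overline{c}_q^{v_J}$ in $N/G$ coincide, so they share a limit as $r \to -\infty$. By Lemma~\ref{lemm:limit} this common limit lies in the image of $N_I^0$ and in the image of $N_J^0$ under the $G$-invariant quotient $N \to N/G$. Because the sets $N_I^0$ are $G$-invariant and pairwise disjoint, so are their images, forcing $I = J$.

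Finally I would assemble these. Writing each simplicial cone as the disjoint union of the relative interiors of its faces, $\overline{J}(C_I) = \bigcup_{I' \subseteq I}\overline{J}(C_{I'})^0$, the relative-interior disjointness shows that for $I, J \in \Sigma$ a term $\overline{J}(C_{I'})^0 \cap \overline{J}(C_{J'})^0$ is nonempty only when $I' = J' \subseteq I \cap J$, whence
\[
	\overline{J}(C_I) \cap \overline{J}(C_J) = \bigcup_{K \subseteq I \cap J}\overline{J}(C_K)^0 = \overline{J}(C_{I\cap J}),
\]
which is a common face of both. Together with the face condition, this is exactly the assertion that $\{\overline{J}(C_I) \mid I \in \Sigma\}$ is a fan. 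The main obstacle is the relative-interior disjointness, and the key idea there is to transport the already-proven disjointness of the strata $N_I^0$ through the dynamics of the curves $c_q^v$, since $\overline{J}$ is not injective globally and a purely linear argument is unavailable; it is Lemma~\ref{lemm:limit} that records which stratum a given direction flows into.
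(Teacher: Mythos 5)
Your proposal is correct and follows essentially the same route as the paper: Part (1) is reduced to the injectivity of $\overline{J}$ on $\mathfrak{g}_x$ via Lemma \ref{lemm:codim}, Lemma \ref{lemm:unimodular} and the decomposition \eqref{eq:decomposition}, and Part (2) is obtained by transporting the disjointness of the strata $N_I^0$ through the curves $\overline{c}_q^v$ using Lemma \ref{lemm:limit} and the Hausdorffness of $N/G$, then assembling the fan condition exactly as in Corollary \ref{coro:fan}. No substantive differences from the paper's argument.
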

	\begin{proof}
		For Part (1), by the fact that  $\overline{J}$ is $\R$-linear and Corollary \ref{coro:pure}, we only need to show that the cone $\overline{J}(C_I)$ such that $|I|=2n-m$ has dimension $2n-m$. Suppose $|I| =2n-m$. Then, $N_I$ is a minimal orbit because $N_I$ contains a minimal orbit by Lemma \ref{lemm:codim} and $\dim N_I = 2n-2(2n-m)=2m-2n$. Let $x \in N_I$. Then, $(\lambda _i)_{i \in I}$ is a basis of $\mathfrak{g}_x$ over $\R$ (see Lemma \ref{lemm:unimodular}). Since the cone $C_I$ is spanned by $\lambda _i$ for $i \in I$, it suffices to show that the restriction $\overline{J}|_{\mathfrak{g}_x}$ is injective. Since $\mathfrak{g}^M = \mathfrak{g}\oplus \sqrt{-1}\mathfrak{g}_x$ by the decomposition \eqref{eq:decomposition}, $\overline{J}|_{\mathfrak{g}_x}$ is an isomorphism, proving Part (1).
		
		Suppose that $\overline{J}(C_I^0) \cap \overline{J}(C_J^0) \neq \emptyset$ and $I \neq J$. Then, there exist vectors $u\in C_I^0$ and $v \in C_J^0$ such that $\overline{J}(u)=\overline{J}(v)$, that is, $u-v \in \sqrt{-1}\mathfrak{g}$. Since $u-v \in \sqrt{-1}\mathfrak{g}$, the curves $\overline{c}_y^u(r)$ and $\overline{c}_y^v(r)$ coincide. Since the curves $c_y^u(r)$ and $c_y^v(r)$ converge to points in $N_I^0$ and $N_J^0$ as $r$ approaches $-\infty$ respectively by Lemma \ref{lemm:limit} and $N/G$ is Hausdorff by the compactness of $G$, the curves $\overline{c}_y^u(r)$ and $\overline{c}_y^v(r)$ also converge to points in $N_I^0/G$ and $N_J^0/G$ respectively. This contradicts the fact that the curve $\overline{c}_y^u(r)$ coincides with $\overline{c}_y^v(r)$. Therefore $\overline{J}(C_I^0) \cap \overline{J}(C_J^0) = \emptyset$ for $I \neq J$. Using the same argument as in the proof of Corollary \ref{coro:fan}, Part (2) is proved.
	\end{proof}
	\begin{lemm}\Label{lemm:complete}
		The fan $\overline{J}(\Delta ):=\{\overline{J}(C_I) \mid {I \in \Sigma}\}$ is a complete fan in $\mathfrak{g}^M/\mathfrak{g}$. 
	\end{lemm}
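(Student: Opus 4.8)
The plan is to show that the support $\bigcup_{I\in\Sigma}\overline{J}(C_I)$ of the fan $\overline{J}(\Delta)$ is all of $\mathfrak{g}^M/\mathfrak{g}$. First I record the two structural facts I will use. Since $\mathfrak{g}^M=\mathfrak{g}\oplus\sqrt{-1}\mathfrak{g}_x$ by the decomposition \eqref{eq:decomposition}, the restriction $\overline{J}|_{\mathfrak{g}_x}$ is already an isomorphism onto $\mathfrak{g}^M/\mathfrak{g}$ (this is exactly what was used in the proof of Lemma \ref{lemm:Jbar}(1)); in particular $\overline{J}$ is surjective and $\dim(\mathfrak{g}^M/\mathfrak{g})=2n-m$. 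Moreover, by Corollary \ref{coro:pure} together with Lemma \ref{lemm:Jbar}, every maximal cone of $\overline{J}(\Delta)$ has full dimension $2n-m$. Hence completeness is equivalent to the assertion that each $\overline{J}(v)$, $v\in\mathfrak{g}$, lies in some $\overline{J}(C_I)$.

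The main device is a version of Lemma \ref{lemm:limit} on the orbit space. Fix $q$ in the free $G^M$-orbit and compose $c_q^v$ with $N\to N/G$ to obtain $\overline{c}_q^v$. I claim that if $\overline{c}_q^v(r)$ converges, as $r\to-\infty$, to a point of $N_I^0/G$, then $\overline{J}(v)\in\overline{J}(C_I)$. To see this I write $\sqrt{-1}v=A+\sqrt{-1}B$ with $A\in\mathfrak{g}$ and $B\in\mathfrak{g}_x$ according to \eqref{eq:decomposition}; in a slice model $\psi$ from Lemma \ref{lemm:slicetheorem} the one-parameter group $\exp(-rJX_v)$ acts by $[g,w]\mapsto[\exp(-rA)g,\exp(-r\sqrt{-1}B)\cdot w]$, so after dividing by $G$ the fibre coordinates scale by $e^{2\pi r\langle\alpha_j,B\rangle}$. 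Convergence then forces $\langle\alpha_j,B\rangle\ge 0$ for all $j$ and pins down $B\in C_I^0$, while $\overline{J}(v)=[\sqrt{-1}v]=[\sqrt{-1}B]=\overline{J}(B)\in\overline{J}(C_I^0)$. Thus it suffices to prove that for \emph{every} $v\in\mathfrak{g}$ the curve $\overline{c}_q^v$ converges, as $r\to-\infty$, to a point of some $N_I^0/G$; surjectivity of $\overline{J}$ then gives completeness.

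Since $M$ is compact, so is $M/G$, and the forward trajectory of $\overline{c}_q^v$ has a nonempty, compact, connected $\omega$-limit set that is invariant under the descended one-parameter flow. The crux, and the step I expect to be the \emph{main obstacle}, is to show that this limit set meets the open dense subset $N/G$. Once a single limit point is known to lie in some $N(G\cdot x)/G$, the slice description of Lemma \ref{lemm:slicetheorem} shows that the flow there is the radial, gradient-like flow $|w_j|^2\mapsto e^{4\pi r\langle\alpha_j,B\rangle}|w_j|^2$, which has no recurrence; combined with connectedness of the $\omega$-limit set and the fact that minimal orbits are isolated (Lemma \ref{lemm:cfi}(3)), this forces the limit to be a single point of some $N_I^0/G$ and yields genuine convergence. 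To show the limit meets $N$ I would analyze the frontier, in the compact space $M/G$, of the free-orbit cell $\mathcal{O}/G$ with $\mathcal{O}=G^M\cdot q$, and must rule out the a priori possibility that the trajectory escapes into $(M\setminus N)/G$; note that $N=M$ is only proved later, so it cannot be invoked here.

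As a backup to the limit-set analysis I could instead reduce completeness to the purely combinatorial statement that every wall $\overline{J}(C_{J'})$ with $|J'|=2n-m-1$ is a facet of exactly two maximal cones, and prove this by studying the codimension-one submanifold $N_{J'}$: it carries a maximal action whose slice representation is one complex dimensional, and counting the minimal orbits it contains (equivalently the characteristic submanifolds meeting it) against the two ends of the associated one-real-dimensional reduced space gives the required ``exactly two''. Either way, once every $\overline{J}(v)$ has been placed in some $\overline{J}(C_I)$, the disjointness of relative interiors coming from Corollary \ref{coro:fan} and Lemma \ref{lemm:Jbar} shows that the cones tile $\mathfrak{g}^M/\mathfrak{g}$, so $\overline{J}(\Delta)$ is complete.
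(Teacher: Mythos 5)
Your primary route has a genuine gap, and you have correctly located it yourself: nothing in your argument rules out that the $\omega$-limit set of $\overline{c}_q^v$ in $M/G$ lies entirely in $(M\setminus N)/G$. Since $N=M$ is only established \emph{after} (and using) this completeness lemma, you cannot appeal to it, and the compactness/connectedness properties of the limit set alone do not force it to meet the open dense set $N/G$ --- an invariant trajectory can perfectly well accumulate only on the complement of an open dense subset. The paper avoids this dynamical question altogether and instead proves completeness combinatorially: it first shows (Lemma \ref{lemm:pseudomfd}) that every $(2n-m-1)$-dimensional cone of $\Delta$ is a face of exactly two $(2n-m)$-dimensional cones, and then runs an open-and-closed argument on $\mathfrak{g}^M/\mathfrak{g}\setminus|\overline{J}(\Delta)^{2n-m-2}|$ to conclude that the support of $\overline{J}(\Delta)$ is everything. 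Your ``backup'' is exactly this strategy, so it is the right one to promote to the main argument; but as sketched it too has a hole.

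The hole in the backup is compactness. To count ``exactly two ends'' of the reduced space of $N_{J'}$ you need $N_{J'}$ to be compact, and a priori $N_{J'}$ is only a connected complex submanifold of the open set $N$. The paper's fix is to observe that $N_{J'}$ is a connected component of the fixed point set of $G_{J'}$ in $N$, to pass to the connected component $M_{J'}$ of the fixed point set of $G_{J'}$ in the compact manifold $M$ (which is automatically compact and carries a maximal action of $G/G_{J'}$), to apply the special case (a compact connected complex $2$-codimensional-torus situation has exactly two minimal orbits and one free orbit, via the classification of compact $1$-manifolds with boundary), and only then to deduce $N_{J'}=M_{J'}$ from the orbit count. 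Without this detour through $M_{J'}$ your one-dimensional reduced space could be a half-open interval with a single end, and the ``exactly two'' count fails. Finally, note that the reduction from the wall condition to completeness is not just ``placing every $\overline{J}(v)$ in some cone'': it is the connectedness argument that $|\overline{J}(\Delta)|\setminus|\overline{J}(\Delta)^{2n-m-2}|$ is open, closed and nonempty in the connected set $\mathfrak{g}^M/\mathfrak{g}\setminus|\overline{J}(\Delta)^{2n-m-2}|$; you should state this step rather than fold it into ``either way''.
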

	We will deduce Lemma \ref{lemm:complete} from the compactness of $M$. We begin with proving a special case:
	\begin{lemm}\Label{lemm:specialcase}
		Let $M'$ be a compact connected complex manifold of real dimension $2n$ equipped with a maximal action of a compact torus $G'$ of dimension $2n-1$. Then, $M'$ consists of exactly two minimal orbits and one free ${G'}^{M'}$-orbit. 
	\end{lemm}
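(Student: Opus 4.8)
The plan is to read off all dimensions, build the local model around each minimal orbit, and then analyze the orbit space $Q := M'/G'$, showing it is a closed interval whose two endpoints are the minimal orbits. Since $m = 2n-1$ we have $2n-m = 1$, so a minimal orbit $G'\cdot x$ has $\dim G'_x = 1$ and, by Lemma \ref{lemm:slicetheorem}, a neighborhood $N(G'\cdot x)$ is equivariantly biholomorphic to ${G'}^{M'}\times_{(G'_x)^\C}\C_{\alpha_1}$, with a single slice coordinate $w$. Because the total slice is faithful, $(G'_x)^\C\cong\C^*$ acts on $\C_{\alpha_1}$ with weight $\pm1$, hence transitively and freely on $\C\setminus\{0\}$; thus $\{w\neq 0\}$ is a single free ${G'}^{M'}$-orbit and $\{w=0\}$ is the minimal orbit, so every point of $N(G'\cdot x)$ lies in the open dense ${G'}^{M'}$-orbit $\mathcal O$ of Lemma \ref{lemm:opendense2} or in a minimal orbit. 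Restricting to the compact action, the ordinary slice theorem identifies $N(G'\cdot x)/G'$ with $\C_{\alpha_1}/G'_x\cong\C/S^1\cong[0,\infty)$, so each minimal orbit is a boundary point of $Q$ near which $Q$ looks like the single half-line $[0,\infty)$.

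Next I would describe the free part. By the decomposition $\mathfrak{g}^M = \mathfrak{g}\oplus\sqrt{-1}\mathfrak{g}_x$ of \eqref{eq:decomposition} together with the connectedness of ${G'}^{M'}$, one gets ${G'}^{M'} = G'\cdot A$ where $A := \exp(\sqrt{-1}\mathfrak{g}_x)\cong\R$ and $G'\cap A = \{1\}$ (since $G'$ is compact and $A$ has no nontrivial compact subgroup). As $\mathcal O\cong{G'}^{M'}$ is free, the map $r\mapsto G'\exp(r\sqrt{-1}\xi)$, for $\xi$ spanning $\mathfrak{g}_x$, is a homeomorphism $\R\to\mathcal O/G'$. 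Hence the image of $\mathcal O$ in $Q$ is an open, dense copy of $\R$ with exactly two ends, and it remains to understand how these ends close up in the compact space $Q$.

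Then I would cap the ends using compactness. Given $r_j\to+\infty$, compactness of $Q$ gives a subsequential limit $\bar p = G'\cdot p$ with $p\notin\mathcal O$ (otherwise the homeomorphism above would keep $r_j$ bounded). The orbit ${G'}^{M'}\cdot p$ cannot be open, because $\mathcal O$ is the \emph{unique} open orbit (any open orbit meets the dense $\mathcal O$, hence equals it); since this orbit is a complex submanifold, $\dim_\C {G'}^{M'}\cdot p\le n-1$ and so $\dim G'\cdot p\le 2n-2$. On the other hand the maximality bound \eqref{eq:inequality3} forces $\dim G'_p\le 1$, i.e.\ $\dim G'\cdot p\ge 2n-2$; therefore $\dim G'\cdot p = 2n-2$ and $G'\cdot p$ is a minimal orbit. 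The same argument at $r_j\to-\infty$ caps the other end. Since a single minimal orbit contributes only the one endpoint of its local model $[0,\infty)$, and since Hausdorffness of $Q$ forbids two endpoints from being limits at the same end of $\R$, the two ends are capped by two \emph{distinct} minimal orbits. Thus $Q$ is a compact interval with exactly two boundary points, giving exactly two minimal orbits, while its interior is the single free orbit $\mathcal O$ (every free orbit is open, hence equals the unique open orbit).

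The main obstacle is precisely the capping step: a priori the limit $\bar p$ could be an exotic orbit lying in $M'\setminus N$, and one must rule this out. This is where compactness really does its work, and it is resolved by combining the uniqueness of the open dense ${G'}^{M'}$-orbit (Lemma \ref{lemm:opendense2}) with the orbit-dimension dichotomy $\dim G'\cdot p\in\{2n-2,2n-1\}$ coming from the maximality inequality \eqref{eq:inequality3}. Everything else is bookkeeping with the slice model of Lemma \ref{lemm:slicetheorem} and with the one-dimensional orbit space.
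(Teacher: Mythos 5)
Your proof is correct, and it pursues the same overall strategy as the paper's own argument --- showing that the orbit space $Q=M'/G'$ is a closed segment whose two boundary points are the minimal orbits and whose interior is the free part --- but the two key steps are carried out by genuinely different means. Where the paper first establishes the global dichotomy ``every $G'$-orbit is either minimal of dimension $2n-2$ or free of dimension $2n-1$,'' using the orientability of the complex manifold $M'$ to promote a finite isotropy group acting faithfully on a one-dimensional normal space to the trivial group, you never touch orientability: you observe instead that every ${G'}^{M'}$-orbit is a complex (hence even-real-dimensional) immersed submanifold, so a non-open ${G'}^{M'}$-orbit has real dimension at most $2n-2$, and the inequality \eqref{eq:inequality3} then pins the underlying $G'$-orbit down to a minimal one; the uniqueness of the open dense orbit handles all remaining points. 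Where the paper then simply invokes the classification of compact connected $1$-manifolds with boundary, you assemble the interval by hand: $\mathcal{O}/G'\cong\R$ is open and dense in $Q$, compactness caps each end with a minimal orbit, and the slice model $N(G'\cdot x)/G'\cong[0,\infty)$ together with Hausdorffness of $Q$ shows that each end is capped by exactly one minimal orbit and each minimal orbit caps exactly one end. Your route is a bit longer but more self-contained, and it exploits the complex structure directly (evenness of complex orbit dimensions) rather than the orientability it induces; the paper's route is shorter because the orientability argument disposes of all codimension-one orbits at once and the $1$-manifold classification then finishes the bookkeeping.
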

	\begin{proof}
		$M'$ contains exactly one free ${G'}^{M'}$-orbit by Lemma \ref{lemm:opendense2}. 
	
		Since $M'$ is a complex manifold, $M'$ is orientable. Consider the dimension of each orbit $G'\cdot x$. By the inequality \eqref{eq:inequality3}, 
		\begin{equation*}
			2(2n-1)-2n =2n-2 \leq \dim G'\cdot x.
		\end{equation*}
		Since $\dim G' =2n-1$, we have that $\dim G'\cdot x=2n-2$ or $2n-1$. 
		
		In case when $\dim G'\cdot x=2n-1$, the orbit $G'^{M'} \cdot x$ has dimension $2n = \dim M'$. Therefore, $x$ is contained in the free $G'^{M'}$-orbit. 		
		In case when $\dim G' \cdot x =2n-2$, the orbit $G' \cdot x$ is a minimal orbit. 
		
		The quotient $M'/G'$ is a real $1$-manifold with boundary; its boundary is exactly the image of the minimal orbits by the quotient map. Because $M'$ is compact and contains at least one minimal orbit, and by the classification of $1$-manifolds, the quotient $M'/G'$ must be a closed segment. Because a closed segment has the boundary which consists of exactly $2$ distinct points, $M'$ contains exactly $2$ minimal orbits, proving the lemma. 
	\end{proof}
	\begin{lemm}\Label{lemm:pseudomfd}
		Every $2n-m-1$-dimensional cone in $\Delta$ is contained in exactly two $2n-m$-dimensional cones in $\Delta$. 
	\end{lemm}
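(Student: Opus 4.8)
The plan is to cut $M$ down to the compact complex submanifold attached to the cone $C_I$ and then invoke the special case Lemma~\ref{lemm:specialcase}. First reduce the statement: fix $I\in\Sigma$ with $|I|=2n-m-1$. Since $\Delta$ is a simplicial fan (Corollary~\ref{coro:fan}) whose rays are the distinct primitive vectors $\lambda_j$, one has $C_I\subseteq C_J$ if and only if $I\subseteq J$; hence the $(2n-m)$-dimensional cones containing $C_I$ are exactly the $C_J$ with $J\in\Sigma$, $J\supseteq I$, $|J|=2n-m$, and these are in bijection with the minimal orbits $N_J$ contained in $N_I$. So it suffices to show that $N_I$ contains exactly two minimal orbits.

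Next, build the ambient manifold. Let $G_I\subseteq G$ be the subtorus generated by the circles $G_i$, $i\in I$; by Lemma~\ref{lemm:unimodular} it has dimension $2n-m-1$ and $N_I\subseteq M^{G_I}$. Let $M_I$ be the connected component of $M^{G_I}$ containing $N_I$. As $M$ is compact and $G_I$ preserves the complex structure, $M_I$ is a compact connected complex submanifold, and reading off the $G_I$-fixed directions in the slice of Lemma~\ref{lemm:existence-of-slice} at a minimal orbit of $N_I$ gives $\dim_{\mathbb{C}}M_I=m-n+1=\dim_{\mathbb{C}}N_I$; in particular $N_I$ is open in $M_I$, and, being connected, closed in $N$, and of full dimension, $N_I$ is a connected component of $N^{G_I}$. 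The residual torus $G/G_I$ has dimension $2m-2n+1=2(m-n+1)-1$. At a minimal orbit $G\cdot x\subseteq N_I$ we have $G_I\subseteq G_x$ and $(G/G_I)_x=G_x/G_I$ is one-dimensional, so the $G/G_I$-action on $M_I$ is maximal; it is effective because any element acting trivially fixes $x$, hence lies in $G_x$, and the slice forces it into the codimension-one subtorus $\ker(\alpha_{j_0}|_{G_x})=G_I$ cut out by the single $G_I$-free coordinate $\alpha_{j_0}$.

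Thus $M_I$ is exactly a manifold of the type treated in Lemma~\ref{lemm:specialcase}, with $n'=m-n+1$, so it is the disjoint union of one open dense $(G/G_I)^{M_I}$-orbit $\mathcal{O}$ and exactly two minimal orbits $G\cdot x_1, G\cdot x_2$. By the dimension count these are also minimal orbits of the $G$-action on $M$, hence lie in $N$. Their invariant neighborhoods furnished by Lemma~\ref{lemm:slicetheorem} each contain the open dense orbit, so $M_I=(G\cdot x_1)\cup(G\cdot x_2)\cup\mathcal{O}$ is covered by two subsets of $N$; therefore $M_I\subseteq N$, i.e.\ $M_I\subseteq N^{G_I}$. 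Since $M_I$ is connected and contains the component $N_I$ of $N^{G_I}$, we conclude $M_I=N_I$. Consequently $N_I$ contains exactly the two minimal orbits $G\cdot x_1, G\cdot x_2$, which correspond to the two maximal cones containing $C_I$, proving the lemma.

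I expect the identification $M_I=N_I$ to be the delicate point. Because $N=M$ is not yet available at this stage, $M^{G_I}$ could a priori harbour a minimal orbit $G\cdot x$ with $I\not\subseteq J$ whose facet $C_{J\setminus\{j_0\}}$ merely spans the same subspace as $C_I$; such a stray orbit would corrupt the count, and it is precisely the covering of $M_I$ by the two slice neighborhoods, giving $M_I\subseteq N$, that excludes it. Verifying effectiveness of the $G/G_I$-action, needed before applying Lemma~\ref{lemm:specialcase}, likewise rests on the explicit slice.
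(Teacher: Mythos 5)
Your proposal is correct and follows essentially the same route as the paper: pass to the connected component $M_I$ of $M^{G_I}$ containing $N_I$, check that the $G/G_I$-action on it is maximal, invoke Lemma~\ref{lemm:specialcase} to get exactly two minimal orbits and one dense orbit, and then use the two slice neighborhoods of Lemma~\ref{lemm:slicetheorem} to cover $M_I$ and conclude $M_I=N_I$. The only difference is cosmetic: you spell out the effectiveness of the $G/G_I$-action and the bijection between maximal cones over $C_I$ and minimal orbits in $N_I$, which the paper leaves implicit.
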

	\begin{proof}
		Let $C_I$ be a $2n-m-1$-dimensional cone in $\Delta$. 
		Let $G_I$ be the subtorus of $G$ that is generated by the circles $G_i $ for $i \in I$. By definition of $G_I$, $\dim G_I = 2n-m-1$. By Lemma \ref{lemm:codim}, $N_I$ is a connected complex manifold of complex dimension $m-n+1$, equipped with an effective action of the compact torus $G/G_I$ of dimension $2m-2n+1$, which preserves the complex structure on $N_I$. We will now show that $N_I$ is compact, and will deduce Lemma \ref{lemm:pseudomfd} from Lemma \ref{lemm:specialcase}. 
		
		First note that $N_I$ is a connected component of the fixed point set of $G_I$ in $N$. This follows from the facts that $N_I$ is connected by Lemma \ref{lemm:codim} and that, for each of the subsets $N(G\cdot x)$, if $N(G\cdot x) \cap N_I \neq 0$ then it is a connected component of the fixed point set of $G_I$ in $N(G\cdot x)$. Let $M_I$ denote the connected component of the fixed point set of $G_I$ in $M$ that contains $N_I$. By definition, $M_I$ is a complex manifold equipped with an effective action of the compact torus $G/G_I$ which preserves the complex structure on $M_I$. By Lemma \ref{lemm:codim}, $N_I$ contains a minimal orbit $G\cdot x$ with respect to the action of $G$ on $M$. Since $\dim G\cdot x= 2\dim G -\dim M$, $\dim G\cdot x= 2\dim G/G_I - \dim M_I$ and hence the action of $G/G_I$ on $M_I$ is maximal. By Lemma \ref{lemm:specialcase}, $M_I$ contains exactly two minimal orbits, say $G\cdot x$ and $G\cdot x'$. The intersection $N(G\cdot x) \cap M_I$, being a $G^M$-invariant tubular neighborhood of $G\cdot x$, must be all of $M_I \setminus G\cdot x'$, because $M^I$ consists of three $G^M$-orbits by Lemma \ref{lemm:specialcase}. Similarly, $N(G\cdot x') \cap M_I$ is all of $M_I \setminus G\cdot x$. This implies that $N_I=M_I$. 
		
		$N_I$ contains exactly two minimal orbits $G\cdot x$ and $G\cdot x'$. The minimal orbit $G\cdot x$ (respectively, $G\cdot x'$) is the intersection of exactly $2n-m$ characteristic submanifolds $N_i$ for $i \in I$ and $N_{i_0}$ for some $i_0 \notin I$ (respectively, $i_1 \notin I$). Therefore, $I \cup \{i_0\} \in \Sigma$ and $I \cup \{i_1\} \in \Sigma$ and hence $C_I \subset C_{I \cup \{ i_0\}} \in \Delta$ and $C_I \subset C_{I \cup \{i_1\}} \in \Delta$. The lemma is proved. 
	\end{proof}
	Now we are ready to prove Lemma \ref{lemm:complete}.
	\begin{proof}[Proof of Lemma \ref{lemm:complete}]
		Let $|\overline{J}(\Delta)|$ denote the union of the cones in $\overline{J}(\Delta ):= \{ \overline{J}(C_I)\mid {I \in \Sigma}\}$, and let $|\overline{J}(\Delta )^{2n-m-2}|$ denote the union of the cones in $\overline{J}(\Delta)$ that have codimension $\geq 2$. The complement $\mathfrak{g}^M/\mathfrak{g}\setminus |\overline{J}(\Delta )^{2n-m-2}|$ is connected, open and dense in $\mathfrak{g}^M/\mathfrak{g}$. 
		
		By Lemma \ref{lemm:pseudomfd}, the union of the relative interiors of the cones of $\overline{J}(\Delta)$ of dimension $2n-m$ and of $2n-m-1$ is open in $\mathfrak{g}^M/\mathfrak{g}$. The union is $|\overline{J}(\Delta )| \setminus |\overline{J}(\Delta )^{2n-m-2}|$. Thus, $|\overline{J}(\Delta )| \setminus |\overline{J}(\Delta )^{2n-m-2}|$ is also open in $\mathfrak{g}^M/\mathfrak{g}\setminus  |\overline{J}(\Delta )^{2n-m-2}|$. 
		
		But because $|\overline{J}(\Delta )|$ is closed in $\mathfrak{g}^M/\mathfrak{g}$, we also have that $|\overline{J}(\Delta )| \setminus |\overline{J}(\Delta )^{2n-m-2}|$ is closed in $\mathfrak{g}^M/\mathfrak{g}\setminus  |\overline{J}(\Delta )^{2n-m-2}|$.
		
		Because $|\overline{J}(\Delta )| \setminus |\overline{J}(\Delta )^{2n-m-2}|$ is open and closed in $\mathfrak{g}^M/\mathfrak{g}\setminus  |\overline{J}(\Delta )^{2n-m-2}|$ and $\mathfrak{g}^M/\mathfrak{g}\setminus  |\overline{J}(\Delta )^{2n-m-2}|$ is connected, we have that $|\overline{J}(\Delta )| \setminus |\overline{J}(\Delta )^{2n-m-2}|$ is either empty or is equal to all of $\mathfrak{g}^M/\mathfrak{g}\setminus  |\overline{J}(\Delta )^{2n-m-2}|$. 
		
		Because, by assumption, $M$ has a minimal orbit, $\Delta$ has at least one $2n-m$-dimensional cone, $\overline{J}(\Delta)$ has at least one $2n-m$-dimensional cone, so $|\overline{J}(\Delta)| \setminus |\overline{J}(\Delta)^{2n-m-2}|$ is not empty. So $|\overline{J}(\Delta)| \setminus |\overline{J}(\Delta)^{2n-m-2}|$ is equal to all of $\mathfrak{g}^M/\mathfrak{g}\setminus  |\overline{J}(\Delta )^{2n-m-2}|$. Taking the closures in $\mathfrak{g}^M/\mathfrak{g}$, we deduce that $|\overline{J}(\Delta)|=\mathfrak{g}^M/\mathfrak{g}$, as required. 
	\end{proof}
	Using $\overline{J}$, we can characterize the fan $\Delta$ and the Lie subalgebra $\mathfrak{h}$. We have two short exact sequences and one projection:
	\begin{equation*}
		\xymatrix{
		0 \ar[r] & \mathfrak{h} \ar@{^(->}[r] \ar@{.>}[d] & \mathfrak{g}^\C \ar[r] \ar[d]^p &\mathfrak{g}^M \ar[r] \ar@{.>}[d]& 0\\
		0 \ar[r] & \ker \overline{J} \ar@{^(->}[r] & \mathfrak{g} \ar[r]_{\overline{J}} & \mathfrak{g}^M/\mathfrak{g} \ar[r] & 0,
		}
	\end{equation*}
	where the surjection $\mathfrak{g}^\C \to \mathfrak{g}^M$ is induced by the quotient map $G^\C \to G^M = G^\C/Z_{G^\C}$ and $p \co \mathfrak{g}^\C \to \mathfrak{g}$ is the first projection $\mathfrak{g}^\C = \mathfrak{g}\oplus \sqrt{-1}\mathfrak{g} \to \mathfrak{g}$. In order to make the diagram above commute, we define $p' \co \mathfrak{g}^M \to \mathfrak{g}^M/\mathfrak{g}$ by $p'(v)=[\sqrt{-1}v]$, where $[\sqrt{-1}v] \in \mathfrak{g}^M/\mathfrak{g}$ denotes the equivalence class of $\sqrt{-1}v \in \mathfrak{g}^M$. Then, the  square of the right side of the diagram becomes commutative. So we have that $p(\mathfrak{h}) \subseteq \ker \overline{J}$. Therefore, we have the following commutative diagram
	\begin{equation*}
		\xymatrix{
		0 \ar[r] & \mathfrak{h} \ar@{^(->}[r] \ar[d]^{p|_{\mathfrak{h}}} & \mathfrak{g}^\C \ar[r] \ar[d]^p &\mathfrak{g}^M \ar[r] \ar[d]^{p'}& 0\\
		0 \ar[r] & \ker \overline{J} \ar@{^(->}[r] & \mathfrak{g} \ar[r]_{\overline{J}} & \mathfrak{g}^M/\mathfrak{g} \ar[r] & 0.
		}
	\end{equation*}
	\begin{lemm}\Label{lemm:snake}
		The map $p|_{\mathfrak{h}} \co \mathfrak{h} \to \ker \overline{J}$ is an isomorphism. In particular, $p|_{\mathfrak{h}}$ is injective. 
	\end{lemm}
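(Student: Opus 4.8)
The plan is to read the conclusion off the snake lemma applied to the commutative diagram with exact rows displayed just above the statement, whose vertical maps are $p|_{\mathfrak h}$, $p$ and $p'$. All spaces are finite-dimensional, and a dimension count already gives $\dim_\R\mathfrak h=\dim_\R\mathfrak g^\C-\dim_\R\mathfrak g^M=2m-2n=\dim_\R\mathfrak g-\dim_\R(\mathfrak g^M/\mathfrak g)=\dim_\R\ker\overline J$, so it would in fact suffice to prove that $p|_{\mathfrak h}$ is injective; but the snake lemma produces injectivity and surjectivity together, so I would run it in full.

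First I would record the relevant kernels and cokernels of the middle and right verticals. Since $p\co\mathfrak g^\C=\mathfrak g\oplus\sqrt{-1}\mathfrak g\to\mathfrak g$ is the first projection, it is surjective with $\ker p=\sqrt{-1}\mathfrak g$ and $\coker p=0$. The map $p'\co\mathfrak g^M\to\mathfrak g^M/\mathfrak g$, $p'(v)=[\sqrt{-1}v]$, is surjective because $\mathfrak g^M$ is a complex vector space, so that $\sqrt{-1}\mathfrak g^M=\mathfrak g^M$; hence $\coker p'=0$ and $\dim_\R\ker p'=\dim_\R\mathfrak g=m$. The snake sequence then reads
\begin{equation*}
0\to\ker(p|_{\mathfrak h})\to\ker p\xrightarrow{\ \bar g\ }\ker p'\xrightarrow{\ \delta\ }\coker(p|_{\mathfrak h})\to\coker p\to\coker p'\to 0,
\end{equation*}
where $\bar g$ is the restriction to $\ker p=\sqrt{-1}\mathfrak g$ of the quotient map $g\co\mathfrak g^\C\to\mathfrak g^M$ from the top row.

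The heart of the matter is to show that $\bar g$ is an isomorphism. Its kernel is $\ker g\cap\ker p=\mathfrak h\cap\sqrt{-1}\mathfrak g$, which I claim vanishes: if $\sqrt{-1}w\in\mathfrak h$ with $w\in\mathfrak g$, then, $\mathfrak h$ being a complex subspace, also $\sqrt{-1}(\sqrt{-1}w)=-w\in\mathfrak h$, so $w\in\mathfrak h\cap\mathfrak g=0$ by the decomposition \eqref{eq:decomposition}. Thus $\bar g$ is injective, and since $\dim_\R\ker p=m=\dim_\R\ker p'$ it is an isomorphism. Feeding this back into the snake sequence, exactness forces $\ker(p|_{\mathfrak h})=0$ and $\delta=0$, whence $\coker(p|_{\mathfrak h})$ injects into $\coker p=0$ and so vanishes; therefore $p|_{\mathfrak h}$ is an isomorphism. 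The only genuinely delicate point is the vanishing $\mathfrak h\cap\sqrt{-1}\mathfrak g=0$, and the crucial input there is that $\mathfrak h$ is a complex subspace of $\mathfrak g^\C$ — being the Lie algebra of a subgroup of the complex group $G^\C$ — and not merely a real one; without this, the intersection could be nonzero.
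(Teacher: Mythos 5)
Your argument is correct and follows the paper's own route: apply the snake lemma to the displayed diagram, reduce to showing the middle map $\ker p\to\ker p'$ is an isomorphism, and deduce that from the decomposition \eqref{eq:decomposition} together with the fact that $\mathfrak h$ is a complex subspace. The only (immaterial) difference is that you get injectivity from $\mathfrak h\cap\sqrt{-1}\mathfrak g=0$ plus a dimension count, whereas the paper reads the isomorphism off directly from the $\sqrt{-1}$-rotated decomposition $\mathfrak g^\C=\sqrt{-1}\mathfrak g\oplus\mathfrak g_x\oplus\mathfrak h$.
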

	\begin{proof}
		By the snake lemma, we have an exact sequence
		\begin{equation*}
			0 \to \ker (p|_{\mathfrak{h}}) \to \ker p \to \ker p' \to \coker (p|_\mathfrak{h}) \to \coker p = 0
		\end{equation*}
		because $p$ is surjective. In order to show that $p|_{\mathfrak{h}}$ is an isomorphism, it suffices to show that the map $\ker p \to \ker p'$ is an isomorphism. By definition of $p$ and $p'$,  $\ker p  = \sqrt{-1}\mathfrak{g} \subset \mathfrak{g}^\C$ and $\ker p' = \sqrt{-1}\mathfrak{g} \subset \mathfrak{g}^M$. By the decompositions \eqref{eq:decomposition}, we have that $\mathfrak{g}^\C = \sqrt{-1}\mathfrak{g}\oplus \mathfrak{g}_x \oplus \mathfrak{h}$ and $\mathfrak{g}^M = \sqrt{-1}\mathfrak{g}\oplus \mathfrak{g}_x$. Since the quotient map $\mathfrak{g}^\C \to \mathfrak{g}^M$ kills only the factor of $\mathfrak{h}$, its restriction $\ker p \to \ker p'$ is an isomorphism. The lemma is proved. 
	\end{proof}
	\begin{coro}\Label{coro:complete}
		Let $q \co \mathfrak{g} \to \mathfrak{g}/p(\mathfrak{h})$ be the quotient map. Then, the collection $q(\Delta ):= \{ q(C_I)\mid {I \in \Sigma}\}$ of cones is a complete fan in $\mathfrak{g}/p(\mathfrak{h})$. 
	\end{coro}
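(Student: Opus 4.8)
The plan is to reduce the statement to Lemma \ref{lemm:complete} by transporting the complete fan $\overline{J}(\Delta)$ through a linear isomorphism. First I would upgrade the inclusion $p(\mathfrak{h}) \subseteq \ker \overline{J}$, established just before Lemma \ref{lemm:snake}, to an equality: since $p|_{\mathfrak{h}} \co \mathfrak{h} \to \ker \overline{J}$ is an isomorphism by Lemma \ref{lemm:snake}, its image $p(\mathfrak{h})$ is all of $\ker \overline{J}$. Hence $\mathfrak{g}/p(\mathfrak{h}) = \mathfrak{g}/\ker \overline{J}$, and the quotient map $q$ is exactly the canonical projection $\mathfrak{g} \to \mathfrak{g}/\ker \overline{J}$.

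Next, the bottom row of the commutative diagram preceding Lemma \ref{lemm:snake} is exact, so $\overline{J} \co \mathfrak{g} \to \mathfrak{g}^M/\mathfrak{g}$ is surjective with kernel $\ker \overline{J}$. The first isomorphism theorem then yields a linear isomorphism $\widehat{J} \co \mathfrak{g}/\ker \overline{J} \xrightarrow{\sim} \mathfrak{g}^M/\mathfrak{g}$ with $\widehat{J} \circ q = \overline{J}$. In particular, for each $I \in \Sigma$ we have $\widehat{J}(q(C_I)) = \overline{J}(C_I)$, so $\widehat{J}$ carries the collection $q(\Delta)$ bijectively onto $\overline{J}(\Delta)$.

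Finally I would invoke Lemma \ref{lemm:complete}, which asserts that $\overline{J}(\Delta) = \{\overline{J}(C_I) \mid I \in \Sigma\}$ is a complete fan in $\mathfrak{g}^M/\mathfrak{g}$. Both the fan axioms (every face of a cone in the collection is again in the collection, and the intersection of any two cones is a common face) and completeness (the cones cover the ambient space) are preserved under a linear isomorphism. Applying $\widehat{J}^{-1}$ to $\overline{J}(\Delta)$ therefore shows that $q(\Delta) = \widehat{J}^{-1}(\overline{J}(\Delta))$ is a complete fan in $\mathfrak{g}/p(\mathfrak{h})$, as required.

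Since the corollary follows almost immediately from the two preceding lemmas, I do not expect a genuine obstacle here; all the real work lies in Lemmas \ref{lemm:snake} and \ref{lemm:complete}. The only point deserving a moment's care is the identification $p(\mathfrak{h}) = \ker \overline{J}$, namely promoting the a priori inclusion to an equality via Lemma \ref{lemm:snake}, after which completeness transports verbatim along the isomorphism $\widehat{J}$.
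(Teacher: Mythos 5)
Your proposal is correct and follows essentially the same route as the paper: it uses Lemma \ref{lemm:snake} to identify $p(\mathfrak{h})$ with $\ker\overline{J}$, factors $\overline{J}$ through the induced isomorphism $\mathfrak{g}/\ker\overline{J}\to\mathfrak{g}^M/\mathfrak{g}$ so that $q$ becomes $\overline{J}$ up to this isomorphism, and then transports the completeness of $\overline{J}(\Delta)$ from Lemma \ref{lemm:complete}. No gaps.
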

	\begin{proof}
		By the group homomorphism theorem, the surjective map $\overline{J} \co \mathfrak{g} \to \mathfrak{g}^M/\mathfrak{g}$ descends to the isomorphism $\overline{\overline{J}} \co \mathfrak{g}/\ker \overline{J} \to \mathfrak{g}^M/\mathfrak{g}$. Since $p(\mathfrak{h}) = \ker \overline{J}$ by Lemma \ref{lemm:snake}, the vector space $\mathfrak{g}/\ker \overline{J}$ is exactly $\mathfrak{g}/p(\mathfrak{h})$. By the fact that the quotient map $q \co \mathfrak{g} \to \mathfrak{g}/p(\mathfrak{h})$ coincides with the composition $\left( \overline{\overline{J}}\right)^{-1} \circ \overline{J}$ and by Lemma \ref{lemm:complete}, we have that $q(\Delta )$ is a complete fan in $\mathfrak{g}/p(\mathfrak{h})$, as required. 
	\end{proof}
\section{Transition functions and principal bundles}\Label{sec:bundle}
	Let $M$ be a compact connected complex manifold of complex dimension $n$ equipped with a maximal action of a compact torus $G$ of real dimension $m$, which preserves the complex structure on $M$. For such a manifold, we defined a $G^M$-invariant open dense submanifold 
	\begin{equation*}
		N = \bigcup_{G\cdot x} N(G\cdot x)
	\end{equation*}
	of $M$ in Section \ref{sec:fan}. Let $N_1,\dots ,N_k$ be the characteristic submanifolds of $N$. 
	Since each minimal orbit $G\cdot x$ meets exactly $2n-m$ characteristic submanifold $N_i$ for $i \in I$, $|I| = 2n-m$ and $N_I = \bigcap _{i \in I}N_i$ is connected by Lemma \ref{lemm:codim}, $G\cdot x = N_I$ for some $I \in \Sigma$, $|I|=2n-m$. Conversely, $N_I$ for $I \in \Sigma$, $|I| =2n-m$ is a minimal orbit in $N$. For each $I \in \Sigma$, we set 
	\begin{equation*}
		U_I := \begin{cases}
			N(G\cdot x) & \text{if $N_I = G\cdot x$, that is, $|I|=2n-m$},\\
			\displaystyle \bigcap _{I \subset I' \in \Sigma, |I'|=2n-m} U_{I'} & \text{otherwise}.
		\end{cases}
	\end{equation*}
	By definition of $U_I$'s, $U_I \subseteq U_J$ if and only if $I \subseteq J$. 
	
	By Proposition \ref{prop:slicetheorem}, there exists a $G^M$-equivariantly biholomorphism 
	\begin{equation*}
		\psi  \co G^M \times _{(G_x)^\C}  \bigoplus_{j=1}^{2n-m}\C_{\alpha _j} \to N(G\cdot x), 
	\end{equation*}
	where each $\C_{\alpha _j}$ is a $1$-dimensional representation of $(G_x)^\C$. 
	
	Let $G_I$ be the subtorus of $G$ generated by $G_i$, $i \in I$.
	Let $\lambda _i$ be as \eqref{eq:lambda}. Then, $(\lambda _i)_{i\in I}$ is a $\Z$-basis of $\Hom (S^1,G_I)$ (see Lemma \ref{lemm:unimodular}). 
	Let $(\alpha ^I_i)_{i \in I}$ be the dual basis of $(\lambda _i)_{i \in I}$. By definition of $\lambda _i$, each $\alpha _j$ should be one of $\alpha _i^I$, $i\in I$. So we may replace the domain of definition and the range of $\psi$ by 
	\begin{equation*}
		G^M \times _{(G_I)^C} \bigoplus _{i \in I} \C_{\alpha _i^I} \to U_I
	\end{equation*} 
	if $|I| =2n-m$.
	Let $y$ be a point in the free $G^M$-orbit on $N$. By scaling $\psi$ with an element of $G^M$, we may assume that 
	\begin{equation*}
		\psi ^{-1}(y) = [1, (1)_{i \in I}],
	\end{equation*}
	where $(1)_{i \in I}$ denotes the point in $\bigoplus _{i \in I}\C_{\alpha _i^I}$ whose all coordinates are $1$. 
	
	Now we fix the point $y$ sitting in the free $G^M$-orbit on $N$. For each $I \in \Sigma$ such that $|I|=2n-m$, we have a $G^M$-equivariant biholomorphism 
	\begin{equation*}
		\psi _I \co G^M \times_{(G_I)^\C}\bigoplus _{i \in I}\C_{\alpha _i^I} \to U_I
	\end{equation*}
	such that $\psi ([1,(1)_{i \in I}]) = y$. For arbitrary $J \in \Sigma$, we want to construct such a $G^M$-equivariant biholmorphism $\psi _J$ whose range is $U_J$. We begin with the following lemma.
	\begin{lemm}\Label{lemm:complement}
		Let $I \in \Sigma$, $|I|=2n-m$. Let $I' \subset I$. Then, the preimage of $U_{I'}$ by $\psi _I$ is 
		\begin{equation*}
			\{ [g,(z_i)_{i \in I}] \in G^M\times _{(G_I)^\C}\bigoplus _{i \in I}\C_{\alpha _i^I}\mid z_i \neq 0 \text{ for $i \notin I'$}\}.
		\end{equation*}
	\end{lemm}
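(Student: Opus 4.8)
The plan is to reduce the statement to a clean membership criterion for the neighborhoods $U_J$ of the minimal orbits with $|J|=2n-m$, and then to assemble the intersection defining $U_{I'}$. For $p\in U_I$ write $\psi_I^{-1}(p)=[g,(z_i)_{i\in I}]$ and set $Z(p):=\{i\in I\mid z_i=0\}$. Because $\psi_I^{-1}(N_i\cap U_I)=\{z_i=0\}$ for $i\in I$ and $U_I=N(G\cdot x_I)$ meets no other characteristic submanifold, the set $Z(p)$ is exactly $\{i\mid p\in N_i\}$. The key claim I would prove is: for every $J\in\Sigma$ with $|J|=2n-m$ and every $p\in U_I$,
\[
p\in U_J\quad\Longleftrightarrow\quad Z(p)\subseteq J.
\]

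The forward implication is immediate. As established in Section \ref{sec:fan}, $U_J=N(G\cdot x_J)$ intersects exactly the $2n-m$ characteristic submanifolds $N_i$ with $i\in J$; hence if $p\in U_J$ and $p\in N_i$ then $i\in J$, so $Z(p)\subseteq J$. For the converse I would realize $p$ as a limit of a curve and invoke Lemma \ref{lemm:limit}. Concretely, let $q'$ be the point of the free $G^M$-orbit with representative $[g,(q_i)_{i\in I}]$ where $q_i=z_i$ for $i\notin Z(p)$ and $q_i=1$ for $i\in Z(p)$, and put $v=\sum_{i\in Z(p)}\lambda_i\in C_{Z(p)}^0\subseteq\mathfrak{g}_{x_I}$. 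Since $(\alpha_i^I)_{i\in I}$ is dual to $(\lambda_i)_{i\in I}$, the flow formula in the proof of Lemma \ref{lemm:limit} gives $\psi_I^{-1}(c_{q'}^v(r))=[g,(e^{2\pi r\langle\alpha_i^I,v\rangle}q_i)_{i\in I}]$, whose limit as $r\to-\infty$ is precisely $p$. Now $Z(p)\subseteq J$ forces $N_J\subseteq N_{Z(p)}$, so $N(G\cdot x_J)\cap N_{Z(p)}\supseteq N_J\neq\emptyset$; the ``moreover'' clause of Lemma \ref{lemm:limit} then places the limit point $p$ in $N(G\cdot x_J)=U_J$, which proves the claim.

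With the claim in hand, and since $U_{I'}\subseteq U_I$ while $\psi_I$ is onto $U_I$, the criterion yields $\psi_I^{-1}(U_J)=\{[g,(z_i)]\mid z_i\neq 0\ \forall\, i\in I\setminus J\}$ for each maximal $J\supseteq I'$. Intersecting over all such $J$ gives
\[
\psi_I^{-1}(U_{I'})=\bigcap_{I'\subseteq J,\ |J|=2n-m,\ J\in\Sigma}\psi_I^{-1}(U_J)=\bigl\{[g,(z_i)]\mid z_i\neq 0\ \forall\, i\in\textstyle\bigcup_J(I\setminus J)\bigr\}.
\]
Each such $J$ contains $I'$, so $\bigcup_J(I\setminus J)\subseteq I\setminus I'$; the reverse inclusion is where the real work lies. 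For a fixed $i\in I\setminus I'$, the set $I\setminus\{i\}$ contains $I'$ and indexes the facet $C_{I\setminus\{i\}}$ of $C_I$, a $(2n-m-1)$-dimensional cone. By Lemma \ref{lemm:pseudomfd} this facet lies in exactly two maximal cones of $\Delta$: one is $C_I$, and the other is $C_J$ with $J=(I\setminus\{i\})\cup\{i_1\}$ for some $i_1\neq i$. This $J$ is a maximal element of $\Sigma$ containing $I'$ with $i\notin J$, so $i\in I\setminus J\subseteq\bigcup_J(I\setminus J)$. Hence $\bigcup_J(I\setminus J)=I\setminus I'$ and $\psi_I^{-1}(U_{I'})=\{[g,(z_i)_{i\in I}]\mid z_i\neq 0\ \text{for }i\notin I'\}$, as required.

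I expect the main obstacle to be the converse direction of the membership criterion: one must legitimately present $p$ as the $r\to-\infty$ limit of a curve $c_{q'}^v$ emanating from the free orbit with $v$ in the relative interior of the face $C_{Z(p)}$, so that the flow computation and the ``moreover'' clause of Lemma \ref{lemm:limit} apply without modification. Once this is secured, the combinatorial identity $\bigcup_J(I\setminus J)=I\setminus I'$ is a short consequence of the ``two maximal cones across a wall'' statement of Lemma \ref{lemm:pseudomfd}.
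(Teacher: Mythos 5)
Your proof is correct, and it reaches the statement by a route that differs from the paper's in both halves. For the inclusion $\supseteq$, the paper argues softly: $U_{I'}$ contains a point of $N_{I'}^0$, and since $U_{I'}$ is a $G^M$-invariant \emph{open} subset it must contain every $G^M$-orbit whose closure meets the orbit of that point, i.e.\ all classes with $z_i\neq 0$ for $i\notin I'$. You instead realize each such point $p$ as the $r\to-\infty$ limit of a curve $c_{q'}^v$ with $v\in C_{Z(p)}^0$ and invoke the ``moreover'' clause of Lemma \ref{lemm:limit}; this is more work but equally valid, and it has the advantage of packaging both inclusions into the single criterion $p\in U_J\iff Z(p)\subseteq J$. (One minor point: Lemma \ref{lemm:limit} is stated for a base point $q$ of the free orbit fixed in advance, but its proof works verbatim for your $q'$, so the application is legitimate.) For the inclusion $\subseteq$, both arguments come down to the combinatorial fact that the maximal simplices of $\Sigma$ containing $I'$ intersect, within $I$, in exactly $I'$; the paper deduces this from the assertion that $\Sigma$ and all of its links are sphere triangulations (via completeness of $\overline{J}(\Delta)$, Lemma \ref{lemm:complete}), whereas you obtain it from Lemma \ref{lemm:pseudomfd} by crossing the wall $C_{I\setminus\{i\}}$ into the unique second maximal cone, which necessarily omits $i$. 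Your version of this step is more concrete and in effect makes the paper's rather terse link argument precise, at the cost of leaning on Lemma \ref{lemm:pseudomfd} rather than on the combinatorics of $\Sigma$ alone.
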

	\begin{proof}
		By definition of $U_{I'}$ and $N_{I'}^0$, $N_{I'}^0 \cap U_{I'} \neq \emptyset$. Let $p$ be a point in $N_{I'}^0 \cap U_{I'}$ and let $\psi _I^{-1}(p)$ denote 
		\begin{equation*}
			[g', (z'_i)_{i \in I}] \in G^M\times _{(G_I)^\C}\bigoplus _{i \in I}\C_{\alpha _i^I}.
		\end{equation*}
		Since $p \in N_{I'}^0 \cap U_{I'}$, $z'_i = 0$ for $i \in I'$. Since $U_{I'}$ is  a $G^M$-invariant open subset of $U_I$, we have that 
		\begin{equation*}
			\psi_I^{-1}(U_{I'}) \supseteq \left\{ [g,(z_i)_{i \in I}] \in G^M\times _{(G_I)^\C}\bigoplus _{i \in I}\C_{\alpha _i^I}\mid z_i \neq 0 \text{ for $i \notin I'$}\right\}.
		\end{equation*}
		
		Conversely, for $I \in \Sigma$ with $|I|=2n-m$, $N_J \cap U_I \neq \emptyset $ if and only if $J \subseteq I$. Since $\overline{J}(\Delta )$ is complete by Lemma \ref{lemm:complete}, $\Sigma$ is a sphere triangulation such that the link of each simplex is also a sphere triangulation. Since $\link _{\Sigma }I'$ is a sphere triangulation, there is no vertex $\{j\}$ in $\link _{\Sigma}I'$ such that $\{j\}$ intersects all simplices in $\link _{\Sigma}I'$. Therefore, the intersection
		\begin{equation*}
			\bigcap _{I \in \Sigma, |I|=2n-m, I' \subset I}I
		\end{equation*}
		is exactly $I'$. By definition of $U_{I'}$ and the fact that $N_J \cap U_I \neq \emptyset $ if and only if $J \subseteq I$ for $I \in \Sigma$ with $|I|=2n-m$, we have that $U_{I'} \cap N_J \neq \emptyset$ if and only if $J \subseteq I'$. This means that 
		\begin{equation}\Label{eq:UIUI'}
			U_I \setminus \bigcup _{i \notin I'}N_i \supseteq U_{I'}.
		\end{equation}
		Applying $\psi_I^{-1}$ to \eqref{eq:UIUI'}, we have the opposite inclusion
		\begin{equation*}
			\psi_I^{-1}(U_{I'}) \subseteq \left\{ [g,(z_i)_{i \in I}] \in G^M\times _{(G_I)^\C}\bigoplus _{i \in I}\C_{\alpha _i^I}\mid z_i \neq 0 \text{ for $i \notin I'$}\right\},
		\end{equation*}
		proving the lemma.
	\end{proof}
	\begin{lemm}\Label{lemm:localform}
		Let $I \in \Sigma$, $|I|=2n-m$. Let $I' \subset I$. The open subset 
		\begin{equation*}
			\left\{ [g,(z_i)_{i \in I}] \in G^M\times _{(G_I)^\C}\bigoplus _{i \in I}\C_{\alpha _i^I}\mid z_i \neq 0 \text{ for $i \notin I'$}\right\}
		\end{equation*}
		of $G^M\times _{(G_I)^\C}\bigoplus _{i \in I}\C_{\alpha _i^I}$ is $G^M$-equivariantly biholomorphic to 
		\begin{equation*}
			G^M\times _{(G_{I'})^\C}\bigoplus _{i \in I'}\C_{\alpha _i^{I'}}. 
		\end{equation*}
	\end{lemm}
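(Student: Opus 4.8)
The plan is to construct the biholomorphism explicitly by ``straightening out'' the free directions indexed by $I\setminus I'$, and to organize the verification around the splitting of $(G_I)^\C$ as a direct product. First I would record the relevant torus splitting. By Lemma \ref{lemm:unimodular}, $(\lambda_i)_{i\in I}$ is a $\Z$-basis of $\Hom(S^1,G_I)$, so letting $T$ be the subtorus of $G_I$ with $\Hom(S^1,T)$ spanned by $(\lambda_i)_{i\in I\setminus I'}$ we get internal direct products $G_I=G_{I'}\times T$ and $(G_I)^\C=(G_{I'})^\C\times T^\C$. Dualizing the coweight bases, for $i\in I'$ the character $\alpha_i^I$ restricts on $(G_{I'})^\C$ to $\alpha_i^{I'}$ and restricts trivially on $T^\C$, while for $i\in I\setminus I'$ the character $\alpha_i^I$ restricts trivially on $(G_{I'})^\C$; moreover $(\alpha_i^I)_{i\in I\setminus I'}$ identifies $T^\C$ with $(\C^*)^{I\setminus I'}$. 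Hence $(G_{I'})^\C$ acts trivially on the coordinates $z_i$ with $i\in I\setminus I'$, and $T^\C$ acts trivially on the coordinates $z_i$ with $i\in I'$ and acts freely (indeed simply transitively) on the locus $\{z_i\neq 0 \text{ for } i\in I\setminus I'\}$ inside $\bigoplus_{i\in I\setminus I'}\C_{\alpha_i^I}$.

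Next I would define the candidate map $\Phi$ from the given open subset to $G^M\times_{(G_{I'})^\C}\bigoplus_{i\in I'}\C_{\alpha_i^{I'}}$ as follows: given $[g,(z_i)_{i\in I}]$ with $z_i\neq 0$ for $i\in I\setminus I'$, let $t\in T^\C$ be the unique element with $\alpha_i^I(t)=z_i$ for all $i\in I\setminus I'$, and put $\Phi([g,(z_i)_{i\in I}]):=[gt,(z_i)_{i\in I'}]$. The formula is forced by the identity $[g,(z_i)_{i\in I}]=[gt,\,((z_i)_{i\in I'},(1)_{i\in I\setminus I'})]$ in the source bundle, which holds because $\alpha_i^I(t^{-1})z_i=1$ for $i\in I\setminus I'$ while $\alpha_i^I(t)=1$ for $i\in I'$; thus $\Phi$ merely discards the now-normalized factor $T^\C$. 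The inverse candidate is $\Psi([g,(w_i)_{i\in I'}]):=[g,\,((w_i)_{i\in I'},(1)_{i\in I\setminus I'})]$, which visibly lands in the open subset since its $I\setminus I'$ coordinates equal $1$.

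Finally I would verify the formal properties. Well-definedness of both maps follows from decomposing the $(G_I)^\C$-gluing into the commuting $(G_{I'})^\C$- and $T^\C$-gluings and using that $T^\C$ acts trivially on $\bigoplus_{i\in I'}\C_{\alpha_i^I}$ and commutes with $(G_{I'})^\C$ (both being subgroups of the abelian $(G_I)^\C$); a short computation then shows $\Phi$ and $\Psi$ respect the equivalence relations. Both are holomorphic, since $t$ depends holomorphically on $(z_i)_{i\in I\setminus I'}$ via the identification $T^\C\cong(\C^*)^{I\setminus I'}$, and both are manifestly $G^M$-equivariant, left translation commuting with the whole construction; a direct check gives $\Psi\circ\Phi=\mathrm{id}$ and $\Phi\circ\Psi=\mathrm{id}$. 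A bijective holomorphic map between complex manifolds is a biholomorphism, which finishes the argument. (A cleaner conceptual packaging, which I may substitute, is to note that $W:=\{(z_i)_{i\in I}\mid z_i\neq 0,\ i\in I\setminus I'\}\cong T^\C\times\bigoplus_{i\in I'}\C_{\alpha_i^{I'}}$ as a $(G_I)^\C=(G_{I'})^\C\times T^\C$-space, and that quotienting $G^M\times W$ by the free $T^\C$-action first identifies $G^M\times_{(G_I)^\C}W$ with $G^M\times_{(G_{I'})^\C}\bigoplus_{i\in I'}\C_{\alpha_i^{I'}}$.) I expect the only real work to be the character/coweight bookkeeping in the first step ensuring that the product decomposition $(G_I)^\C=(G_{I'})^\C\times T^\C$ is compatible with the representation $\bigoplus_{i\in I}\C_{\alpha_i^I}$; once that compatibility is in hand, the construction and all verifications are formal.
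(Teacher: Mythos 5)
Your proposal is correct and is essentially the paper's own argument: your element $t\in T^\C$ determined by $\alpha_i^I(t)=z_i$ for $i\in I\setminus I'$ is exactly the factor $\prod_{i\in I\setminus I'}\lambda_i(z_i)$ used in the paper's map $\Theta_{I',I}$, and your inverse and well-definedness checks match the paper's $\Xi_{I',I}$ and its verification. The splitting $(G_I)^\C=(G_{I'})^\C\times T^\C$ is a slightly more conceptual packaging of the same character/coweight bookkeeping, not a different route.
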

	\begin{proof}
		Define a map \begin{equation*}
		\Theta _{I', I}\co \left\{ (g,(z_i)_{i \in I}) \in G^M\times \bigoplus _{i \in I}\C_{\alpha _i^I}\mid z_i \neq 0 \text{ for $i \notin I'$}\right\} \to G^M\times \bigoplus _{i \in I'}\C_{\alpha _i^{I'}}
		\end{equation*}
		by
		\begin{equation*}
			\Theta _{I', I}(g,(z_i)_{i \in I}) := \left( g\prod _{i\in I\setminus I'}\lambda _i(z_i), (z_i)_{i \in I'}\right). 
		\end{equation*}
		The map $\Theta_{I',I}$ descends to a $G^M$-equivariant holomorphic map 
		\begin{equation*}
			\theta _{I',I} \co \left\{ [g,(z_i)_{i \in I}] \in G^M\times _{(G_{I})^\C}\bigoplus _{i \in I}\C_{\alpha _i^I}\mid z_i \neq 0 \text{ for $i \notin I'$}\right\} \to G^M\times _{(G_{I'})^\C}\bigoplus _{i \in I'}\C_{\alpha _i^{I'}}
		\end{equation*}
		defined by $\theta_{I',I}([g,(z_i)_{i \in I}])=[g\prod _{i\in I\setminus I'}\lambda _i(z_i), (z_i)_{i \in I'}]$. $\theta _{I', I}$ is well-defined because $[g,(z_i)_{i\in I}] = [g\prod _{i\in I\setminus I'}\lambda _i(z_i),(w_i)_{i \in I}]$, where 
		\begin{equation*}
			w_i = \begin{cases}
				z_i & \text{if $i \in I'$},\\
				1 & \text{otherwise}.
			\end{cases}
		\end{equation*}
		Define another map
		\begin{equation*}
			\Xi_{I',I} \co G^M\times \bigoplus _{i \in I'}\C_{\alpha _i^{I'}}\to \left\{ (g,(z_i)_{i \in I}) \in G^M\times \bigoplus _{i \in I}\C_{\alpha _i^I}\mid z_i \neq 0 \text{ for $i \notin I'$}\right\}
		\end{equation*}
		by $\Xi_{i',I}(g,(z_i)_{i \in I'}):=(g,(w_i)_{i \in I})$, 
		where $w_i$'s are as above. We claim that the map $\Xi_{I',I}$ also descends to a $G^M$-equivariant holomorphic map
		\begin{equation*}
			\xi_{I',I} \co G^M\times _{(G_{I'})^\C}\bigoplus _{i \in I'}\C_{\alpha _i^{I'}} \to \left\{ [g,(z_i)_{i \in I}] \in G^M\times _{(G_{I})^\C}\bigoplus _{i \in I}\C_{\alpha _i^I}\mid z_i \neq 0 \text{ for $i \notin I'$}\right\}
		\end{equation*}
		defined by $\xi_{I',I}([g,(z_i)_{i \in I'}])=[g,(w_i)_{i \in I}]$. To see the well-definedness, take any element $h \in (G_{I'})^\C$. Since $(\lambda _i)_{i \in I'}$ is the $\Z$-basis of $\Hom (S^1,G_{I'}) \subset \Hom (S^1,G_I)$ and $\alpha _i^{I}$ is the dual basis of $(\lambda _i)_{i \in I}$, 
		\begin{equation*}	
			\alpha _i^I (h)=
			\begin{cases}
				\alpha_i^{I'}(h) &\text{if $i \in I'$},\\
				1 & \text{if $i \in I \setminus I'$}.
			\end{cases}
		\end{equation*}
		Thus, by direct computation, 
		\begin{equation*}
			\Xi _{I',I}(gh,(\alpha _i^{I'}(h)^{-1}z_i)_{i \in I'}) = (gh, \alpha _i^I(h)^{-1}w_i)_{i \in I}.
		\end{equation*}
		So $\xi_{I',I}$ is well-defined. 
		
		It follows from direct computation that both compositions $\theta_{I',I}\circ \xi_{I',I}$ and $\xi_{I',I} \circ \theta_{I',I}$ are the identities. Therefore $\theta _{I',I}$ is a $G^M$-equivariant biholomorphic map, as required. 
	\end{proof}
	Let $I'\subset I \in \Sigma$ such that $|I|=2n-m$. By Lemmas \ref{lemm:complement} and \ref{lemm:localform}, the composition 
	\begin{equation*}
		\psi_{I'} ^{-1}:= \theta _{I',I}\circ\psi _I \co U_{I'} \to G^M \times _{(G_{I'})^\C}\bigoplus _{i \in I'}\C_{\alpha_i^{I'}}
	\end{equation*} is a $G^M$-equivariant biholomorphism such that $\psi _{I'}^{-1}(q)=[1,(1)_{i \in I'}]$, where $q$ is the point in the free $G^M$-orbit which we fixed. $\psi_{I'}^{-1}$ does not depend on the choice of $I \in \Sigma$ with $|I|=2n-m$; because $\psi_{I'}^{-1}(y)$ determines the values of all points in the free $G^M$-orbit and the free $G^M$-orbit is open dense in $N$.  
	
	For every $I \in \Sigma$, now we have a $G^M$-equivariant biholomorphism 
	\begin{equation*}
		\psi _I^{-1} \co U_I \to G^M \times _{(G_I)^\C}\bigoplus _{i \in I}\C_{\alpha _i^I}
	\end{equation*}
	such that $\psi _I^{-1}(y)=[1,(1)_{i \in I}]$. If $J \subseteq I \in \Sigma$, then $U_J \subseteq U_I$ and the composition 
	\begin{equation*}
		\psi _I^{-1}\circ \psi_J \co G^M \times _{(G_J)^\C}\bigoplus _{i \in J}\C_{\alpha _i^{J}} \to  G^M \times _{(G_I)^\C}\bigoplus _{i \in I}\C_{\alpha _i^I}
	\end{equation*}
	is a $G^M$-equivariant open inclusion. We represent this explicitly for later use.
	\begin{lemm}\Label{lemm:transition}
		\begin{equation*}
			\psi_I^{-1}\circ \psi_J([g,(z_i)_{i \in J}])=[g,(w_i)_{i \in I}],
		\end{equation*}
		where 
		\begin{equation*}
			w_i = \begin{cases}
				z_i & \text{if $i \in J$,}\\
				1 & \text{otherwise}.
			\end{cases}
		\end{equation*}
	\end{lemm}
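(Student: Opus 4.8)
The plan is to reduce the transition map to the two explicit local maps $\theta_{I',I}$ and $\xi_{I',I}$ constructed in Lemma \ref{lemm:localform}, and then compose them by hand. First I would use Corollary \ref{coro:pure} to fix a subset $I''\in\Sigma$ with $|I''|=2n-m$ and $I\subseteq I''$. Since $J\subseteq I\subseteq I''$, both $\psi_I$ and $\psi_J$ are, by the recipe preceding the lemma, built from the single biholomorphism $\psi_{I''}$ of Lemma \ref{lemm:slicetheorem}, namely $\psi_I^{-1}=\theta_{I,I''}\circ\psi_{I''}^{-1}$ and $\psi_J^{-1}=\theta_{J,I''}\circ\psi_{I''}^{-1}$ (restricted to the relevant open subsets via Lemma \ref{lemm:complement}). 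When $|I|=2n-m$ one simply takes $I''=I$, in which case $\theta_{I,I}$ is the identity because the product in its definition is over the empty set $I\setminus I$. The choice of $I''$ does not matter, by the choice-independence of the $\psi_{I'}$ recorded just before the lemma.

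Next I would exploit that $\theta_{J,I''}$ and $\xi_{J,I''}$ are mutually inverse, as established in Lemma \ref{lemm:localform}. From $\psi_J^{-1}=\theta_{J,I''}\circ\psi_{I''}^{-1}$ I obtain $\psi_J=\psi_{I''}\circ\xi_{J,I''}$, so that
\begin{equation*}
	\psi_I^{-1}\circ\psi_J=\theta_{I,I''}\circ\psi_{I''}^{-1}\circ\psi_{I''}\circ\xi_{J,I''}=\theta_{I,I''}\circ\xi_{J,I''}.
\end{equation*}
This expresses the transition map purely as a composition of the two explicit maps of Lemma \ref{lemm:localform}, on which I can now compute directly.

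Finally I would evaluate on a point $[g,(z_i)_{i\in J}]$. Applying $\xi_{J,I''}$ gives $[g,(w'_i)_{i\in I''}]$ with $w'_i=z_i$ for $i\in J$ and $w'_i=1$ for $i\in I''\setminus J$; applying $\theta_{I,I''}$ then yields $[g\prod_{i\in I''\setminus I}\lambda_i(w'_i),(w'_i)_{i\in I}]$. The decisive observation is that $J\subseteq I$, so every index $i\in I''\setminus I$ lies outside $J$ and hence $w'_i=1$, whence $\lambda_i(w'_i)=\lambda_i(1)=1$ and the correcting product $\prod_{i\in I''\setminus I}\lambda_i(w'_i)$ collapses to the identity of $G^M$. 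Therefore the value is $[g,(w'_i)_{i\in I}]$, and the restriction of $w'_i$ to $i\in I$ is exactly the $w_i$ in the statement, as required.

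The main obstacle here is purely bookkeeping rather than conceptual: one must keep the three nested index sets $J\subseteq I\subseteq I''$ straight, together with the two distinct dual bases $(\alpha^I_i)$ and $(\alpha^{I''}_i)$, and verify that the extra coordinates that $\xi_{J,I''}$ sets equal to $1$ are precisely those annihilated by the product term in $\theta_{I,I''}$ thanks to $J\subseteq I$. No new geometric input is needed beyond Corollary \ref{coro:pure} and the inverse-pair property already proved in Lemma \ref{lemm:localform}.
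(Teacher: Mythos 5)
Your proof is correct, but it takes a genuinely different route from the paper's. The paper argues by rigidity: it first checks that the displayed formula is well-defined on the quotient, observes that both it and $\psi_I^{-1}\circ\psi_J$ are $G^M$-equivariant maps sending $[1,(1)_{i\in J}]$ to $[1,(1)_{i\in I}]$, and concludes they coincide because a $G^M$-equivariant map is determined by its value at one point of the dense free orbit. You instead compute the transition map explicitly: embedding $J\subseteq I$ in a maximal simplex $I''$ via Corollary \ref{coro:pure}, unwinding the definition $\psi_{I'}^{-1}=\theta_{I',I''}\circ\psi_{I''}^{-1}$ to get $\psi_I^{-1}\circ\psi_J=\theta_{I,I''}\circ\xi_{J,I''}$, and observing that the correcting product $\prod_{i\in I''\setminus I}\lambda_i(w'_i)$ collapses because $I''\setminus I\subseteq I''\setminus J$ forces $w'_i=1$ there. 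Your key identity $\xi_{J,I''}=\theta_{J,I''}^{-1}$ is exactly what Lemma \ref{lemm:localform} provides, the image of $\xi_{J,I''}$ does land in the domain of $\theta_{I,I''}$ for the same containment reason, and the choice-independence of $\psi_J^{-1}$ (recorded in the paper just before the lemma) licenses using the common ambient $I''$ for both $I$ and $J$; so the argument is sound. What your approach buys is that well-definedness of the target formula falls out of the computation rather than needing a separate verification, and no appeal to density is required; what the paper's approach buys is brevity and reuse of the uniqueness principle (Lemma \ref{lemm:equivariant2}-style reasoning) that recurs throughout Sections \ref{sec:bundle} and \ref{sec:category}.
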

	\begin{proof}
		First, we check the well-definedness of the mapping. Let $h \in (G_J)^\C$. Then, as well as the proof of Lemma \ref{lemm:localform},
		\begin{equation*}
			[g, (w_i)_{i \in I}] = [gh,(\alpha_i^I(h)^{-1}w_i)_{i\in I}]=[gh,(w'_i)_{i \in I}],
		\end{equation*}
		where 
		\begin{equation*}
			w'_i= \begin{cases}
				\alpha_i^J(h)^{-1}z_i &\text{if $i \in J$},\\
				1 & \text{otherwise}
			\end{cases}
		\end{equation*}
		because 
		\begin{equation*}
			\alpha _i^I(h) = \begin{cases}
				\alpha _i^J(h) & \text{if $i \in J$},\\
				1 & \text{if $i \in I \setminus J$}. 
			\end{cases}
		\end{equation*}		
		Therefore 
		the mapping is well-defined. 
		
		Clearly, the mapping is $G^M$-equivariant. Since $\psi_I^{-1}\circ \psi_J([1,(1)_{i \in J}])=[1,(1)_{i \in I}]$ and a $G^M$-equivariant map $G^M \times _{(G_J)^\C}\bigoplus _{i \in J}\C_{\alpha _i^{J}} \to  G^M \times _{(G_I)^\C}\bigoplus _{i \in I}\C_{\alpha _i^I}$ such that $[1,(1)_{i \in J}] \mapsto [1,(1)_{i \in I}]$ is unique because $G^M\cdot [1,(1)_{i \in J}]$ is dense, $\psi_I^{-1}\circ \psi_J$ is of the form as in Lemma \ref{lemm:transition}, proving the lemma.
	\end{proof}
	Now we can reconstruct $N$ from $G^M\times _{(G_I)^\C}\bigoplus _{i\in I}\C_{\alpha _i^I}$ for all $I \in \Sigma$. We consider the quotient space
	\begin{equation}\Label{eq:N}
		\bigsqcup_{I \in \Sigma}\left(G^M\times_{(G_I)^\C}\bigoplus _{i\in I}\C_{\alpha _i^I}\right)/\sim
	\end{equation}
	of the disjoint union of $G^M\times _{(G_I)^\C}\bigoplus _{i\in I}\C_{\alpha _i^I}$ for all $I \in \Sigma$, where $\sim$ is the equivalence relation generated by the following:
	\begin{quote}
		For $J \subseteq I$ and $[g,(z_i)_{\in J}] \in G^M \times _{(G_J)^\C}\bigoplus _{i \in J}\C_{\alpha _i^{J}}$, \begin{equation*}
		[g,(z_i)_{i \in J}] \sim [g,(w_i)_{i \in I}] \in G^M\times _{(G_I)^\C}\bigoplus _{i\in I}\C_{\alpha _i^I},
		\end{equation*} where 
		\begin{equation*}
			w_i = \begin{cases}
				z_i & \text{if $i \in J$,}\\
				1 & \text{otherwise}.
			\end{cases}
		\end{equation*}
	\end{quote}
	It follows from Lemma \ref{lemm:transition} that the quotient space $\bigsqcup_{I \in I}\left(G^M\times_{(G_I)^\C}\bigoplus _{i\in I}\C_{\alpha _i^I}\right)/\sim$ is $G^M$-equivariantly biholomorphic to $N$. 
		
	Thanks to this gluing reconstruction of $N$, we can construct a $G^\C$-equivariant principal $Z_{G^\C}$-bundle over $N$ whose total space is a toric variety. Since $G^M = G^\C/Z_{G^\C}$ by definition, we have a $G^\C$-equivariant principal $Z_{G^\C}$-bundle
	\begin{equation*}
		G^\C \times _{(G_I)^\C}\bigoplus _{i\in I}\C_{\alpha _i^I} \to G^M \times _{(G_I)^\C}\bigoplus _{i\in I}\C_{\alpha _i^I}
	\end{equation*}
	for all $I \in \Sigma$. 
	\begin{lemm}\Label{lemm:affine}
		The total space is an affine toric variety corresponding to the cone $C_I \in \Delta$. 
	\end{lemm}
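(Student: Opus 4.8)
The plan is to produce an explicit $G^\C$-equivariant biholomorphism from the total space $G^\C\times_{(G_I)^\C}\bigoplus_{i\in I}\C_{\alpha_i^I}$ onto the affine toric variety attached to the cone $C_I$. Throughout I identify $G^\C$ with the algebraic torus $T_N$ whose cocharacter lattice is $N:=\Hom(S^1,G)\subset\mathfrak g$; with this identification $C_I=\pos(\lambda_i\mid i\in I)$ is a cone in $N$. By Lemma \ref{lemm:unimodular}, $(\lambda_i)_{i\in I}$ is part of a $\Z$-basis of $N$, so $C_I$ is a nonsingular cone and its affine toric variety is abstractly $\C^{|I|}\times(\C^*)^{m-|I|}$, carrying the standard action of $T_N=G^\C$.

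First I would fix a $\Z$-basis of $N$ extending $(\lambda_i)_{i\in I}$, say by adjoining vectors $(\mu_j)_j$. This yields an isomorphism $G^\C\cong(\C^*)^I\times(\C^*)^{m-|I|}$, $g\mapsto((t_i)_{i\in I},(s_j)_j)$, under which the subtorus $(G_I)^\C$ is exactly the first factor and the characters $\alpha_i^I$—being the dual basis of $(\lambda_i)_{i\in I}$—are the coordinate characters $g\mapsto t_i$ of that factor. I would then define
\begin{equation*}
\Phi\co G^\C\times_{(G_I)^\C}\bigoplus_{i\in I}\C_{\alpha_i^I}\to \C^{|I|}\times(\C^*)^{m-|I|},\qquad [g,(z_i)_{i\in I}]\mapsto\bigl((t_iz_i)_{i\in I},(s_j)_j\bigr),
\end{equation*}
and check well-definedness: replacing the representative $(g,(z_i))$ by $(gh^{-1},(\alpha_i^I(h)z_i))$ for $h\in(G_I)^\C$ multiplies $t_i$ by $\alpha_i^I(h)^{-1}$ and $z_i$ by $\alpha_i^I(h)$, so each $t_iz_i$ is unchanged while the $s_j$ are untouched. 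The map $\Phi$ is visibly holomorphic, and it is $G^\C$-equivariant because left translation by $g'=((t_i'),(s_j'))$ sends $[g,(z_i)]$ to $[g'g,(z_i)]$, whose image is $((t_i't_iz_i),(s_j's_j))$—i.e.~the standard torus action on $\C^{|I|}\times(\C^*)^{m-|I|}$.

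To see that $\Phi$ is a biholomorphism I would exhibit its inverse $((a_i)_{i\in I},(s_j)_j)\mapsto[\,\widetilde g,(a_i)_{i\in I}\,]$, where $\widetilde g$ is the element with $t_i=1$ for all $i\in I$ and with the given $s_j$; using the relation one checks that $[\,\widetilde g,(a_i)\,]$ is the unique representative of each class having trivial first torus coordinate, so this inverse is well defined and holomorphic. Finally I would identify the target, equipped with the torus action computed above, with the affine toric variety of $C_I$: for the nonsingular cone $C_I=\pos(\lambda_i\mid i\in I)$ the dual cone is generated by the $\alpha_i^I$ together with $\pm$ the basis dual to the $\mu_j$, so its semigroup algebra is $\C[x_i:i\in I]\otimes\C[y_j^{\pm}]$ and $\mathrm{Spec}$ of it is exactly $\C^{|I|}\times(\C^*)^{m-|I|}$ with the standard $T_N$-action. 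I expect the only real care to be the bookkeeping in this last identification—keeping straight which lattice is the cocharacter lattice $N$ (where $C_I$ and the $\lambda_i$ live) and which is its dual (where the weights $\alpha_i^I$ live), and verifying that the $G^\C$-action produced by left translation is precisely the canonical big-torus action on the affine toric variety rather than a twist of it.
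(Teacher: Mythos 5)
Your proposal is correct and takes essentially the same route as the paper: both split $G^\C$ via a $\Z$-basis of $\Hom(S^1,G)$ extending $(\lambda_i)_{i\in I}$ (the paper phrases this dually, through a basis $\beta_1,\dots,\beta_{m-|I|}$ of the annihilator of $\mathfrak{g}_I$ together with extensions $\widetilde{\alpha}_i^I$ of the $\alpha_i^I$), and your map $\Phi$ and its inverse are exactly the paper's $\widetilde{\psi}_I$ and $\widetilde{\theta}_I$ in these coordinates. The only cosmetic difference is that you spell out the dual-cone/semigroup-algebra identification of $\C^{|I|}\times(\C^*)^{m-|I|}$ with the affine toric variety of $C_I$, which the paper simply asserts.
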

	\begin{proof}
    		Let $\check{C_I}$ be the dual cone of $C_I$ and let $S_I$ denote the semigroup $\check{C_I} \cap \Hom (G,S^1)$. Recall that the affine toric variety corresponding to $C_I$ is the set $\Hom (S_I, (\C, \times))$ of all semigroup homomorphisms. $\Hom (S_I, (\C, \times))$ is regarded as an affine variety whose coordinate ring is $\C [S_I]$. Let $[g,(z_i)_{i \in I}] \in G^\C \times_{(G_I)^\C}\bigoplus_{i \in I}\C_{\alpha_i^I}$. For $\alpha \in S_I \subseteq \Hom (G,S^1) \subseteq \mathfrak{g}^*$, $\langle \alpha, \lambda_i\rangle$ is a nonnegatibe integer. Hence we have a map 
		\begin{equation*}
			\theta \co G^\C \times_{(G_I)^\C}\bigoplus_{i \in I} \C_{\alpha_i^I} \to \Hom (S_I, (\C, \times))
		\end{equation*}
		given by 
		\begin{equation*}
			(\theta ([g,(z_i)_{i\in I}]))(\alpha ) = \alpha (g) \prod_{i \in I}z_i^{\langle \alpha, \lambda_i\rangle}. 
		\end{equation*}
		
		Let $\alpha_i \in S_I$ for $i \in I$ satisfy that $\alpha_i|_{G_I} = \alpha_i^I \in \Hom (G_I,S^1)$. Let $\beta_1,\dots, \beta_{m-|I|}$ be a $\Z$-basis os $S_I \cap (-S_I) \subseteq \Hom (G,S^1)$. Then, $(\alpha_i)_{i \in I}, \beta_1,\dots, \beta_{m-|I|}$ form a $\Z$-basis of $\Hom (G, S^1)$. Since 
		\begin{equation*}
			\langle \alpha_j, \lambda_i \rangle = \begin{cases}
				0 & \text{if $j \neq i$},\\
				1 & \text{if $j =i$}
			\end{cases}
		\end{equation*}
		and $\langle \beta_j,\lambda_i\rangle =0$ for all $i \in I$ and $j=1,\dots, m-|I|$, there exist elements $\mu_1,\dots, \mu_{m-|I|} \in \Hom (S^1,G)$ such that $(\lambda_i)_{i \in I}, \mu_1,\dots, \mu_{m-|I|}$ form the dual basis of $(\alpha_i)_{i \in I}, \beta_1,\dots, \beta_{m-|I|}$. We claim that, for any $\psi \in \Hom (S_I, (\C,\times))$, 
		\begin{equation*}
			\xi (\psi ) :=  \left[ \prod_{j=1}^{m-|I|}\mu_j(\psi(\beta_j)), (\psi(\alpha_i))_{i \in I}\right] \in G^\C \times_{(G_I)^\C} \bigoplus_{i \in I}\C_{\alpha_i^I}
		\end{equation*}
		satisfies that $\theta (\xi (\psi) ) = \psi$ and $\xi (\theta ([g,(z_i)_{i \in I}])) = [g, (z_i)_{i \in I}]$. Since  $(\alpha _i)_{i \in I}, \beta_1,\dots, \beta_{m-|I|}$ are the dual basis of $(\lambda_i)_{i \in I}, \mu_1,\dots, \mu_{m-|I|}$, 
		\begin{equation*}
			\alpha_k\left(\prod _{j =1}^{m-|I|}\mu_j(\psi(\beta_j))\right)\prod_{i \in I}(\psi (\alpha_i))^{\langle \alpha_k, \lambda_i\rangle} = \psi(\alpha_k)
		\end{equation*}
		for $k \in I$ and 
		\begin{equation*}
			\beta_k\left(\prod _{j =1}^{m-|I|}\mu_j(\psi(\beta_j))\right)\prod_{i \in I}(\psi (\alpha_i))^{\langle \beta_k, \lambda_i\rangle} = \psi(\beta_k)
		\end{equation*}
		for $k =1,\dots, m-|I|$, showing that $\theta (\xi (\psi)) = \psi$. For $[g, (z_i)_{i \in i}] \in G^\C \times_{(G_I)^\C}\bigoplus_{i \in I} \C_{\alpha_i^I}$, 
		\begin{equation*}
			\begin{split}
			\xi (\theta ([g, (z_i)_{i \in i}])) &= \left[ \prod_{j=1}^{m-|I|} \mu_j(\beta_j(g)\prod_{i \in I}z_i^{\langle \beta_j,\lambda_i \rangle}), (\alpha_i(g)\prod_{j \in I}z_j^{\langle \alpha_i, \lambda_j\rangle})_{i \in I}\right]\\
			&= \left[ \prod_{j=1}^{m-|I|}\mu_j(\beta_j(g)), (\alpha_i(g)z_i)_{i\in I}\right] \\
			&= \left[ \prod_{i \in I}\lambda_i(\alpha_i(g)) \prod_{j =1}^{m-|I|}\mu_j(\beta_j(g)), (z_i)_{i \in I}\right] \\
			&= \left[g, (z_i)_{i \in I}\right]
			\end{split}
		\end{equation*}
		because $(\alpha_i)_{i \in I},\beta_1,\dots, \beta_{m-|I|}$ are the dual basis of $(\lambda_i)_{i \in I}, \mu_1,\dots, \mu_{m-|I|}$, showing that $\xi(\theta ([g, (z_i)_{i \in I}])) = [g, (z_i)_{i \in I}]$. Clearly, both $\theta$ and $\xi$ are holomorphic and $G^\C$-equivariant. Therefore, these computations show that $\xi$ and $\theta$ are equivariant biholomorphic maps and hence $G^\C \times _{(G_I)^\C}\bigoplus _{i\in I}\C_{\alpha _i^I}$ is an affine toric variety corresponding to the cone $C_I$, proving the lemma. 
	\end{proof}
	If a cone $C_J \in \Delta$ is a face of a cone $C_I \in \Delta$, the inclusion $C_J \subset C_I$ induces an equivariant open embedding of the affine toric variety corresponding to $C_J$ into the affine toric variety corresponding to $C_I$. By Lemma \ref{lemm:affine}, both toric varieties are biholomorphic to 
	\begin{equation*}
		G^\C \times_{(G_J)^\C}\bigoplus _{j \in J}\C_{\alpha _j^J} \quad \text{and}\quad G^\C \times_{(G_J)^\C}\bigoplus _{i \in I}\C_{\alpha _i^I}. 
	\end{equation*}
	\begin{lemm}\Label{lemm:transition2}
		The open embedding 
		\begin{equation*}
			\widetilde{\psi}_I^{-1}\circ \widetilde{\psi}_J \co G^\C \times_{(G_J)^\C}\bigoplus _{j \in J}\C_{\alpha _j^J} \to G^\C \times_{(G_I)^\C}\bigoplus _{i \in I}\C_{\alpha _i^I}
		\end{equation*}
		is given by 
		\begin{equation*}
			\widetilde{\psi}_{I}^{-1}\circ \widetilde{\psi}_J ([g,(z_j)_{j\in J}]) = [g,(w_j)_{j \in I}],
		\end{equation*}
		where 
		\begin{equation*}
			w_i = \begin{cases}
				z_i & \text{if $i \in J$,}\\
				1 & \text{otherwise}.
			\end{cases}
		\end{equation*}
	\end{lemm}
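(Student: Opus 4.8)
The statement is the $G^\C$-counterpart of Lemma~\ref{lemm:transition}, and the plan is to prove it by the same template: produce the claimed formula as a well-defined $G^\C$-equivariant holomorphic map, check that it agrees with $\widetilde{\psi}_I^{-1}\circ\widetilde{\psi}_J$ at a single point whose $G^\C$-orbit is dense, and then invoke uniqueness. Here $\widetilde{\psi}_I^{-1}\circ\widetilde{\psi}_J$ is understood with the open embedding built in: $\widetilde{\psi}_J$ identifies $G^\C\times_{(G_J)^\C}\bigoplus_{j\in J}\C_{\alpha_j^J}$ with the affine toric variety for $C_J$, which by the face relation $C_J\subset C_I$ sits as a $G^\C$-invariant open subset of the affine toric variety for $C_I$, and $\widetilde{\psi}_I^{-1}$ then identifies the latter with $G^\C\times_{(G_I)^\C}\bigoplus_{i\in I}\C_{\alpha_i^I}$.

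First I record the coordinate bookkeeping that drives the well-definedness and equivariance. Since $G_J$ is a subtorus of $G_I$, the sublattice $\Hom(S^1,G_J)$ spanned by $(\lambda_i)_{i\in J}$ is a direct summand of $\Hom(S^1,G_I)$, and from $\langle\alpha_i^I,\lambda_k\rangle=\delta_{ik}$ one reads off that, for every $h\in(G_J)^\C$,
\[
  \alpha_i^I(h)=\begin{cases}\alpha_i^J(h)&\text{if }i\in J,\\ 1&\text{if }i\in I\setminus J.\end{cases}
\]
With these identities in hand, the well-definedness and $G^\C$-equivariance of the assignment $[g,(z_j)_{j\in J}]\mapsto[g,(w_i)_{i\in I}]$ follow by exactly the computation already carried out for the $G^M$-version in the proof of Lemma~\ref{lemm:transition} (equivalently Lemma~\ref{lemm:localform}); holomorphicity is clear.

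It remains to match the two maps at one point of a dense orbit. As a composition of the $G^\C$-equivariant biholomorphisms $\widetilde{\psi}_J$ and $\widetilde{\psi}_I^{-1}$ of Lemma~\ref{lemm:affine} with the $G^\C$-equivariant open embedding of the affine toric variety for $C_J$ into that for $C_I$ induced by $C_J\subset C_I$, the map $\widetilde{\psi}_I^{-1}\circ\widetilde{\psi}_J$ is $G^\C$-equivariant. Evaluating the explicit formula for $\widetilde{\psi}_J$ of Lemma~\ref{lemm:affine} at $g=1$ with every coordinate equal to $1$ yields the all-ones point, which is the identity element of the dense torus $G^\C$ that both affine toric varieties carry as an open dense orbit; the analogous evaluation of $\widetilde{\psi}_I$ shows that this same identity equals $\widetilde{\psi}_I([1,(1)_{i\in I}])$. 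Since the toric open embedding restricts to the identity on $G^\C$, we get $\widetilde{\psi}_I^{-1}\circ\widetilde{\psi}_J([1,(1)_{j\in J}])=[1,(1)_{i\in I}]$, which is exactly the value prescribed by the formula of the statement. Finally, $[1,(1)_{j\in J}]$ has all coordinates nonzero, so it lies in the open dense $G^\C$-orbit; hence two $G^\C$-equivariant continuous maps agreeing there agree on that orbit and, by continuity, everywhere. Both maps are of this kind and coincide at $[1,(1)_{j\in J}]$, so they are equal. The one step that is not purely formal is the identification of the transported open embedding with the identity on the shared torus $G^\C$, where the two character coordinate systems on the two charts must be reconciled; the well-definedness and the density argument merely repeat the proof of Lemma~\ref{lemm:transition}.
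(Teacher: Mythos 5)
Your proposal is correct and follows essentially the same route as the paper: the paper's proof simply says to repeat the argument of Lemma~\ref{lemm:transition} (well-definedness via the character identity $\alpha_i^I|_{(G_J)^\C}=\alpha_i^J$ for $i\in J$ and $=1$ for $i\in I\setminus J$, then uniqueness of a $G^\C$-equivariant map determined by its value at the all-ones point of the dense orbit), which is exactly what you do. Your extra care in tracing the all-ones point through the explicit formulas of Lemma~\ref{lemm:affine} and the toric open embedding is a reasonable way to fill in the detail the paper omits.
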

	\begin{proof}
		An almost same argument as the proof of Lemma \ref{lemm:transition} works. We omit the detail.
	\end{proof}
	By the construction of toric variety associated with $\Delta$ and by Lemma \ref{lemm:transition2}, we have a toric variety as a quotient space
	\begin{equation}\Label{eq:toric}
		X(\Delta ) := \bigsqcup _{I \in \Sigma} G^\C \times_{(G_I)^\C}\bigoplus _{i \in I}\C_{\alpha _i^I} / \approx, 
	\end{equation}
	where $\approx$ is the equivalence relation generated by the following: 
	\begin{quote}
		For $J \subseteq I$ and $[g,(z_i)_{\in J}] \in G^\C \times _{(G_J)^\C}\bigoplus _{i \in J}\C_{\alpha _i^{J}}$, \begin{equation*}
		[g,(z_i)_{i \in J}] \approx [g,(w_i)_{i \in I}] \in G^\C \times _{(G_I)^\C}\bigoplus _{i\in I}\C_{\alpha _i^I},
		\end{equation*} where 
		\begin{equation*}
			w_i = \begin{cases}
				z_i & \text{if $i \in J$,}\\
				1 & \text{otherwise}.
			\end{cases}
		\end{equation*}
	\end{quote}
	By comparing the equivalence relation $\sim$ (see \eqref{eq:N}) with $\approx$ (see \eqref{eq:toric}), we have the following lemma. 
	\begin{lemm}\Label{lemm:bundle}
		There exists a $G^\C$-equivariant principal $Z_{G^\C}$-bundle
		\begin{equation*}
			X(\Delta ) \to N,
		\end{equation*}
		where $X(\Delta)$ denotes the toric variety associated with $\Delta$. 
	\end{lemm}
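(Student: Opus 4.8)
The plan is to assemble the bundle chart by chart from the local principal $Z_{G^\C}$-bundles
\[
\pi_I \co G^\C \times_{(G_I)^\C} \bigoplus_{i \in I}\C_{\alpha_i^I} \to G^M \times_{(G_I)^\C} \bigoplus_{i \in I}\C_{\alpha_i^I}
\]
recorded just before Lemma \ref{lemm:affine}, using the two gluing presentations \eqref{eq:N} of $N$ and \eqref{eq:toric} of $X(\Delta)$. Since these $\pi_I$ are already established to be $G^\C$-equivariant principal $Z_{G^\C}$-bundles (the freeness of the central $Z_{G^\C}$-action being guaranteed by the splitting \eqref{eq:decomposition} together with the connectedness and torsion-freeness of $Z_{G^\C}$ from Lemma \ref{lemm:stabilizers}), the substance of the proof is purely the gluing.

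First I would record the decisive coincidence: the transition map $\widetilde{\psi}_I^{-1}\circ\widetilde{\psi}_J$ for $X(\Delta)$ (Lemma \ref{lemm:transition2}) and the transition map $\psi_I^{-1}\circ\psi_J$ for $N$ (Lemma \ref{lemm:transition}) are given by the \emph{identical} formula $[g,(z_j)_{j\in J}]\mapsto[g,(w_i)_{i\in I}]$ with $w_i=z_i$ for $i\in J$ and $w_i=1$ otherwise. Because each $\pi_I$ alters only the $G^\C$-component, through $G^\C\to G^M$, and leaves the fiber coordinates $(z_i)$ untouched, the square
\[
\xymatrix{
G^\C\times_{(G_J)^\C}\bigoplus_{j\in J}\C_{\alpha_j^J} \ar[r]^{\widetilde{\psi}_I^{-1}\circ\widetilde{\psi}_J}\ar[d]^{\pi_J} & G^\C\times_{(G_I)^\C}\bigoplus_{i\in I}\C_{\alpha_i^I} \ar[d]^{\pi_I}\\
G^M\times_{(G_J)^\C}\bigoplus_{j\in J}\C_{\alpha_j^J}\ar[r]^{\psi_I^{-1}\circ\psi_J} & G^M\times_{(G_I)^\C}\bigoplus_{i\in I}\C_{\alpha_i^I}
}
\]
commutes for every pair $J\subseteq I$ in $\Sigma$.

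Consequently the family $\{\pi_I\}_{I\in\Sigma}$ is compatible with the equivalence relations $\approx$ and $\sim$ generating \eqref{eq:toric} and \eqref{eq:N}, so the $\pi_I$ descend to a single well-defined $G^\C$-equivariant holomorphic map $\pi\co X(\Delta)\to N$. To see that $\pi$ is a principal $Z_{G^\C}$-bundle it suffices to produce local trivialisations with structure group $Z_{G^\C}$: over the image in $N$ of each chart $G^M\times_{(G_I)^\C}\bigoplus_{i\in I}\C_{\alpha_i^I}$ the map $\pi$ restricts to $\pi_I$, which is already such a bundle, and over overlaps the identifications are the transition biholomorphisms $\widetilde{\psi}_I^{-1}\circ\widetilde{\psi}_J$, which by the commuting square lie over the transition maps of $N$ and intertwine the left $Z_{G^\C}$-actions on the $G^\C$-factors. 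Hence the local trivialisations patch consistently and $\pi$ is a global $G^\C$-equivariant principal $Z_{G^\C}$-bundle.

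I expect no genuine obstacle once the coincidence of transition formulas is observed; the remaining points are bookkeeping. Specifically, I would check that $\sim$ and $\approx$ are each generated by exactly the pairs $J\subseteq I$ over which the square above commutes (so that compatibility on generators propagates to the full relations), and that the charts $\{U_I\}_{I\in\Sigma}$ genuinely cover $N$. Both are immediate from the constructions in \eqref{eq:N} and \eqref{eq:toric} and from the density of the free $G^M$-orbit, so the argument reduces entirely to the already-proven identity of the gluing data for $N$ and $X(\Delta)$.
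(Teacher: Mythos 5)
Your proposal is correct and follows essentially the same route as the paper: the paper's proof likewise takes the local principal $Z_{G^\C}$-bundles $G^\C \times_{(G_I)^\C}\bigoplus_{i\in I}\C_{\alpha_i^I} \to G^M \times_{(G_I)^\C}\bigoplus_{i\in I}\C_{\alpha_i^I}$ and glues them using exactly the commuting square built from Lemmas \ref{lemm:transition} and \ref{lemm:transition2}. Your extra remarks on the freeness of the $Z_{G^\C}$-action and on the generators of $\sim$ and $\approx$ are details the paper leaves implicit, but they do not change the argument.
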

	\begin{coro}\Label{coro:quotient}
		$N$ is $G^M$-equivariantly biholomorphic to $X(\Delta)/Z_{G^\C}$.
	\end{coro}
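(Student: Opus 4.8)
The plan is to read the corollary off directly from Lemma \ref{lemm:bundle}, since that lemma already does all the substantive work. First I would recall that, by Lemma \ref{lemm:bundle}, the projection $\pi \co X(\Delta) \to N$ is a $G^\C$-equivariant principal $Z_{G^\C}$-bundle. In particular $Z_{G^\C}$ acts freely and properly on $X(\Delta)$, the fibers of $\pi$ are exactly the $Z_{G^\C}$-orbits, and $\pi$ is a holomorphic submersion. Consequently $\pi$ is constant on $Z_{G^\C}$-orbits and separates distinct orbits, so it factors through the orbit space and induces a holomorphic bijection $\overline{\pi} \co X(\Delta)/Z_{G^\C} \to N$.

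Next I would upgrade $\overline{\pi}$ to a biholomorphism. Because the $Z_{G^\C}$-action defining the principal bundle is free and proper in the holomorphic category, $X(\Delta)/Z_{G^\C}$ is a complex manifold and the quotient map $X(\Delta) \to X(\Delta)/Z_{G^\C}$ is a holomorphic submersion. Since $\pi$ is also a holomorphic submersion with the same fibers, the induced map $\overline{\pi}$ is a local biholomorphism; being bijective, it is a biholomorphism.

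Finally I would address the $G^M$-equivariance. The group $G^\C$ is abelian, so $Z_{G^\C}$ is a subgroup and the $G^\C$-action on $X(\Delta)$ descends to an action of $G^M = G^\C/Z_{G^\C}$ on $X(\Delta)/Z_{G^\C}$. On the base $N$ the subgroup $Z_{G^\C}$ acts trivially, since by definition it consists of the global stabilizers, so the $G^\C$-action on $N$ already factors through $G^M$. As $\pi$ is $G^\C$-equivariant by Lemma \ref{lemm:bundle}, the descended map $\overline{\pi}$ is $G^M$-equivariant, which is exactly the claimed identification.

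The argument is essentially a formal consequence of the principal-bundle statement, so there is no real obstacle; the only point that deserves a moment's care is the existence of the holomorphic quotient $X(\Delta)/Z_{G^\C}$ as a complex manifold with holomorphic submersion projection, and this is precisely what the word ``principal $Z_{G^\C}$-bundle'' in Lemma \ref{lemm:bundle} supplies.
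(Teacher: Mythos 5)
Your proposal is correct and takes exactly the paper's route: the paper's own proof of this corollary is the single sentence that it ``follows from Lemma \ref{lemm:bundle} immediately,'' and what you have written is precisely the standard unwinding of why the base of a $G^\C$-equivariant principal $Z_{G^\C}$-bundle is canonically $G^M$-equivariantly biholomorphic to the quotient of the total space. Your extra care about the quotient being a complex manifold and the induced map being a local biholomorphism is a reasonable elaboration of what the paper leaves implicit.
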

	\begin{proof}
		The corollary follows from Lemma \ref{lemm:bundle} immediately. 
	\end{proof}
	
\section{Quotients of nonsingular toric varieties}\Label{sec:maintheo}
	Let $M$ be a compact connected complex manifold of complex dimension $n$, equipped with a maximal action of a compact torus $G$ of dimension $m$ which preserves the complex structure on $M$. So far, we defined a $G^M$-invariant open submanifold $N$ and assigned a nonsingular fan $\Delta$ to $N$ in Section \ref{sec:fan}. The fan $\Delta$ and the Lie algebra $\mathfrak{h}$ of the global stabilizers $Z_{G^\C}$ of the complexified action of $G^\C$ on $M$ were characterized by Lemma \ref{lemm:snake} and Corollary \ref{coro:complete}. We saw that $N$ is $G^M$-equivariantly biholomorphic to $X(\Delta )/Z_{G^\C}$ in Corollary \ref{coro:quotient}. 
	
	In this section, we give an inverse correspondence, that is, for a triple $(\Delta, \mathfrak{h}, G)$ of a nonsingular fan $\Delta$ in the Lie algebra $\mathfrak{g}$ of $G$, a Lie subalgebra $\mathfrak{h}$ of $\mathfrak{g}^\C$ and a compact torus $G$, which satisfies certain conditions, we construct a compact complex manifold equipped with a maximal action of a compact torus which preserves the complex structure of the manifold. 
	
	As before, we denote by $G$ a compact torus, by $\mathfrak{g}$ the Lie algebra of $G$, by $\mathfrak{g}^\C$ the complexification $\mathfrak{g}\otimes _{\R}\C \cong \mathfrak{g}\oplus \sqrt{-1}\mathfrak{g}$, and by $G^\C$ the complex Lie group $\mathfrak{g}^\C /\Hom (S^1, G)$. 
	\begin{lemm}\Label{lemm:closed}
		Let $\mathfrak{h}$ be a complex subspace of $\mathfrak{g}^\C$ and let $p \co \mathfrak{g}^\C\to \mathfrak{g}$ be the projection. Suppose that $p|_{\mathfrak{h}}$ is injective. Then, the image $\exp (\mathfrak{h})$ of $\mathfrak{h}$ by the exponential map $\exp \co \mathfrak{g}^\C \to G^\C$ is closed in $G^\C$. 
	\end{lemm}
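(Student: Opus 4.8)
The plan is to translate the statement into a question about subgroups of the vector space $\mathfrak{g}^\C$ and then to exploit the fact that $\mathfrak{h}$ is \emph{complex}-linear. Recall from Section \ref{sec:complexified} that $G^\C = \mathfrak{g}^\C/\Hom(S^1,G)$ and that $\exp \co \mathfrak{g}^\C \to G^\C$ is precisely the quotient homomorphism, whose kernel is the full-rank lattice $\Lambda := \Hom(S^1,G) \subset \mathfrak{g} \subset \mathfrak{g}^\C$. Since $\exp$ is a continuous open surjection and $\exp(\mathfrak{h})$ is a saturated set with $\exp^{-1}(\exp(\mathfrak{h})) = \mathfrak{h}+\Lambda$, the image $\exp(\mathfrak{h})$ is closed in $G^\C$ if and only if $\mathfrak{h}+\Lambda$ is closed in $\mathfrak{g}^\C$. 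Applying the same reasoning to the quotient map $\pi \co \mathfrak{g}^\C \to \mathfrak{g}^\C/\mathfrak{h}$, and noting that $\mathfrak{h}+\Lambda = \pi^{-1}(\pi(\Lambda))$ is $\pi$-saturated, it suffices to prove that $\pi(\Lambda)$ is closed, equivalently discrete, in $\mathfrak{g}^\C/\mathfrak{h}$.

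First I would establish that $\mathfrak{g}\cap\mathfrak{h} = \{0\}$. Indeed, if $w \in \mathfrak{g}\cap\mathfrak{h}$, then $\sqrt{-1}w \in \mathfrak{h}$ because $\mathfrak{h}$ is a complex subspace, while $\sqrt{-1}w \in \sqrt{-1}\mathfrak{g} = \ker p$ because $w \in \mathfrak{g}$; hence $\sqrt{-1}w \in \mathfrak{h}\cap\ker p = \ker(p|_{\mathfrak{h}}) = \{0\}$, forcing $w = 0$. Consequently the restriction $\pi|_{\mathfrak{g}} \co \mathfrak{g} \to \mathfrak{g}^\C/\mathfrak{h}$ is an injective $\R$-linear map, hence a linear isomorphism onto the (closed, finite-dimensional) subspace $\pi(\mathfrak{g})$ with continuous inverse. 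Since $\Lambda$ is a discrete subgroup of $\mathfrak{g}$ and $\pi(\Lambda) = \pi|_{\mathfrak{g}}(\Lambda)$, the image $\pi(\Lambda)$ is a discrete subgroup of $\pi(\mathfrak{g})$, and therefore a discrete subgroup of $\mathfrak{g}^\C/\mathfrak{h}$. As every discrete subgroup of a finite-dimensional real vector space is closed, $\pi(\Lambda)$ is closed, which by the previous paragraph completes the argument.

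The substantive step, and the only place the hypothesis enters, is the identity $\mathfrak{g}\cap\mathfrak{h} = \{0\}$, which upgrades the injectivity of the real projection $p|_{\mathfrak{h}}$ into injectivity of $\pi$ on the real form $\mathfrak{g}$. This is exactly what rules out the pathology that would otherwise obstruct closedness: if $\pi|_{\mathfrak{g}}$ had a nonzero kernel, the finitely generated group $\pi(\Lambda)$ could be a non-discrete (dense) subgroup of a proper subspace of $\mathfrak{g}^\C/\mathfrak{h}$, and then $\mathfrak{h}+\Lambda$ would fail to be closed. Beyond this identity, the remaining ingredients — the two saturation arguments identifying closedness upstairs and downstairs, and the standard fact that an injective linear image of a lattice is a discrete, hence closed, subgroup — are routine, so I expect no serious obstacle once $\mathfrak{g}\cap\mathfrak{h}=\{0\}$ is in hand.
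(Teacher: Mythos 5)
Your proof is correct, and it reaches the conclusion by a genuinely different route from the paper's, although both arguments pivot on the same key fact. The identity $\mathfrak{g}\cap\mathfrak{h}=\{0\}$ that you isolate is exactly what the paper establishes when it shows that the second projection $p'|_{\mathfrak{h}}\co\mathfrak{h}\to\sqrt{-1}\mathfrak{g}$ is injective (note $\ker p'=\mathfrak{g}$), and your derivation of it from the complex-linearity of $\mathfrak{h}$ is the same two-line trick. From there the routes diverge: the paper writes $\mathfrak{h}$ as the graph of the linear map $p\circ s$ over the closed subspace $p'(\mathfrak{h})\subseteq\sqrt{-1}\mathfrak{g}$ and, using the splitting $G^\C\cong G\times\sqrt{-1}\mathfrak{g}$, exhibits $\exp(\mathfrak{h})$ explicitly as the graph of a continuous map over a closed domain, hence closed. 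You instead pass to the quotient $\pi\co\mathfrak{g}^\C\to\mathfrak{g}^\C/\mathfrak{h}$ and reduce, via two saturation arguments, to the discreteness of the projected lattice $\pi(\Hom(S^1,G))$, which follows because $\pi|_{\mathfrak{g}}$ is injective. Your version makes the potential failure mode more visible --- if $\mathfrak{h}$ met $\mathfrak{g}$ nontrivially the projected lattice could fail to be discrete and $\exp(\mathfrak{h})$ could be dense in a subtorus, the irrational-winding phenomenon --- at the price of some point-set bookkeeping about open quotient maps and saturated sets that the paper's graph description avoids; the paper's version has the side benefit of an explicit parametrization of $H$ inside $G\times\sqrt{-1}\mathfrak{g}$. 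One small remark: your parenthetical ``closed, equivalently discrete'' for $\pi(\Lambda)$ is only valid because $\pi(\Lambda)$ is countable, but this is harmless since the only implication you actually use is that a discrete subgroup of a finite-dimensional real vector space is closed.
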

	\begin{proof}
		Let $p' \co \mathfrak{g}^\C \cong \mathfrak{g} \oplus \sqrt{-1}\mathfrak{g} \to \sqrt{-1}\mathfrak{g}$ be the $2$nd projection. We  see the injectivity of $p'|_\mathfrak{h}$ first. Let $u+\sqrt{-1}v \in \mathfrak{h}$ such that $p'(u+\sqrt{-1}v)=0$, where we represent the element $u+\sqrt{-1}v \in \mathfrak{h}$ with $u,v \in \mathfrak{g}$. Since $p'(u+\sqrt{-1}v)=0$, $v=0$ and hence $u \in \mathfrak{h}$. Since $\mathfrak{h}$ is a complex subspace, $\sqrt{-1}u$ is also an element in $\mathfrak{h}$. Applying the first projection $p$, we have $u=0$ because $p|_{\mathfrak{h}}$ is injective. Therefore, $u+\sqrt{-1}v \in \mathfrak{h}$ should be $0$. So $p'|_{\mathfrak{h}}$ is injective.

		Thus, there exists an $\R$-linear isomorphism 
		\begin{equation*}
			s \co p'(\mathfrak{h}) \to \mathfrak{h}
		\end{equation*}
		such that the composition $p' \circ s$ is the identity. 
		Since $\mathfrak{g}^\C = \mathfrak{g} \oplus \sqrt{-1}\mathfrak{g}$, we have
		\begin{equation*}
			\mathfrak{h} = \{ p\circ s(\sqrt{-1}v) + \sqrt{-1}v \in \mathfrak{g} \oplus \sqrt{-1}\mathfrak{g} \mid \sqrt{-1}v \in p'(\mathfrak{h})\}. 
		\end{equation*}
		By definition of $G^\C$, $G^\C$ can be identified with $G \times \sqrt{-1}\mathfrak{g}$. Under this identification, 
		\begin{equation*}
			\exp (\mathfrak{h}) = \{ (g,\sqrt{-1}v)\in G \times \sqrt{-1}\mathfrak{g} \mid g = \exp (p\circ s(\sqrt{-1}v)),  \sqrt{-1}v \in p'(\mathfrak{h}) \}.
		\end{equation*}
		Since $p'(\mathfrak{h})$ is closed in $\sqrt{-1}\mathfrak{g}$ and hence in $G^\C$, and the mapping $(g,\sqrt{-1}v) \mapsto g\cdot \exp (p\circ s(\sqrt{-1}v))^{-1}$ is continuous, $\exp(\mathfrak{h})$ is closed in $G^\C$, as required.
	\end{proof}
	Let $\mathfrak{h}$ and $p \co \mathfrak{g}^\C \to \mathfrak{g}$ be as Lemma \ref{lemm:closed}. Assume that $p|_{\mathfrak{h}}$ is injective. By Lemma \ref{lemm:closed}, the image of $\mathfrak{h}$ by the exponential map is closed. We denote it by $H$. $H$ is a complex connected Lie subgroup of $G^\C$. 
	
	Let $\Delta$ be a nonsingular fan in $\mathfrak{g}$ and let $\lambda _1,\dots ,\lambda _k \in \Hom (S^1,G)$ be the primitive elements such that   each $\lambda _i$ generates a $1$-dimensional cone in $\Delta$. Define an abstract simplicial complex 
	\begin{equation*}
		\Sigma := \{ I \subseteq \{ 1,\dots ,k\} \mid \text{$C_I=\pos (\lambda _i\mid i \in I) \in \Delta $}\}.
	\end{equation*}
	For each $I \in \Sigma$, we set the subspace 
	\begin{equation*}
		\mathfrak{g}_I := \left\{ \sum _{i \in I}a_i\lambda _i \mid a_i \in \R\right\}
	\end{equation*}
	of $\mathfrak{g}$ which is generated by $\lambda _i$ for $i \in I$. 
	Let $G_I$ be the subtorus of $G$ whose Lie algebra is $\mathfrak{g}_I$.
	Since $\Delta$ is nonsingular, $\lambda _i$ for $i \in I$ form a basis of $\Hom (S^1, G_I)$. Let $\alpha _i^I \in \Hom (G_I,S^1)$ denote the dual basis of $\lambda _i$ for $i \in I$. Then, by Lemma \ref{lemm:affine}, the affine toric variety associated with $C_I$ is $G^\C$-equivariantly biholomorphic to 
	\begin{equation*}
		G^\C \times _{(G_I)^\C}\bigoplus _{i \in I}\C _{\alpha _i^I}.
	\end{equation*}
	\begin{lemm}\Label{lemm:free}
		Let $q \co \mathfrak{g} \to \mathfrak{g}/p(\mathfrak{h})$ be the quotient map. Let $I \in \Sigma$. Suppose that $q(\lambda _i)$ for $i \in I$ are linearly independent. Then, the action of $G^\C$ restricted to $H$ on the affine toric variety associated with the cone $C_I$ is free. 
	\end{lemm}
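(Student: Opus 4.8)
The plan is to reduce the freeness of the $H$-action to the single group-theoretic statement $H \cap (G_I)^\C = \{1\}$, and then to prove that this intersection is trivial using the injectivity of $p|_{\mathfrak{h}}$ together with the linear independence of the $q(\lambda_i)$.

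First I would work in the model provided by Lemma \ref{lemm:affine}, in which the affine toric variety associated with $C_I$ is $G^\C \times_{(G_I)^\C}\bigoplus_{i\in I}\C_{\alpha_i^I}$ with $G^\C$ acting by left translation on the first factor. A direct computation of stabilizers shows that the isotropy subgroup of $G^\C$ at a point $[g,(z_i)_{i\in I}]$ is
\[
\{k \in (G_I)^\C \mid \alpha_i^I(k)=1 \text{ for every } i \text{ with } z_i \neq 0\},
\]
which is contained in $(G_I)^\C$ for every point (and equals all of $(G_I)^\C$ at $[g,0]$). Since $H \subseteq G^\C$, the isotropy subgroup of $H$ at any point is contained in $H \cap (G_I)^\C$, so it suffices to prove $H \cap (G_I)^\C = \{1\}$.

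Next I would take $h \in H \cap (G_I)^\C$ and write $h = \exp(\zeta)$ with $\zeta \in \mathfrak{h}$ and $h = \exp(\eta)$ with $\eta \in (\mathfrak{g}_I)^\C$. Since the kernel of $\exp \co \mathfrak{g}^\C \to G^\C$ is $\Hom(S^1,G) \subseteq \mathfrak{g}$, the difference $\zeta - \eta$ is a real vector; hence $\zeta$ and $\eta$ have the same imaginary part, and writing $\zeta = u + \sqrt{-1}v$ with $u,v \in \mathfrak{g}$ I obtain $v \in \mathfrak{g}_I$. Now I would exploit that $\mathfrak{h}$ is a complex subspace: $\sqrt{-1}\zeta = -v + \sqrt{-1}u$ also lies in $\mathfrak{h}$, so $-v = p(\sqrt{-1}\zeta) \in p(\mathfrak{h})$ and therefore $q(v)=0$. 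Because $\Delta$ is nonsingular the $\lambda_i$ for $i\in I$ form a basis of $\mathfrak{g}_I$, so the hypothesis that the $q(\lambda_i)$ are linearly independent means exactly that $q|_{\mathfrak{g}_I}$ is injective; hence $v=0$. Then $\zeta = u$ is real, and applying the complex-subspace property once more, $\sqrt{-1}u \in \mathfrak{h}$ satisfies $p(\sqrt{-1}u)=0$, so injectivity of $p|_{\mathfrak{h}}$ forces $u = 0$. Thus $\zeta = 0$ and $h = 1$.

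The hard part will be the passage from the infinitesimal picture to the group picture. Linear independence of the $q(\lambda_i)$ together with injectivity of $p|_{\mathfrak{h}}$ gives, at face value, only that the Lie algebra intersection $\mathfrak{h}\cap(\mathfrak{g}_I)^\C$ vanishes, which alone merely makes $H\cap(G_I)^\C$ discrete rather than trivial. The device that removes the remaining lattice ambiguity is the observation that $\ker\exp$ is the real lattice $\Hom(S^1,G)\subseteq\mathfrak{g}$, so matching the imaginary parts of two exponential expressions for the same element already pins down $v\in\mathfrak{g}_I$; the two applications of multiplication by $\sqrt{-1}$, legitimate because $\mathfrak{h}$ is a complex subspace, then finish the argument without ever having to invoke discreteness.
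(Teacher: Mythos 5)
Your proof is correct and follows essentially the same route as the paper's: reduce to showing $H \cap (G_I)^\C = \{1\}$ via the model of Lemma \ref{lemm:affine}, compare two exponential expressions modulo the real lattice $\Hom(S^1,G)$ to pin down the imaginary part in $\mathfrak{g}_I$, then use the complex-subspace property of $\mathfrak{h}$ together with the linear independence of the $q(\lambda_i)$ and the injectivity of $p|_{\mathfrak{h}}$ to conclude. The only difference is presentational: you spell out the stabilizer computation on $G^\C \times_{(G_I)^\C}\bigoplus_{i\in I}\C_{\alpha_i^I}$ that the paper leaves implicit.
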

	\begin{proof}
		It suffices to show that $H \cap (G_I)^\C = \{1\}$ by Lemma \ref{lemm:affine}. 
		Since $H$ and $(G_I)^\C$ are connected and abelian, the exponential maps are surjective. 
		Let $u +\sqrt{-1}v \in \mathfrak{h}$, where $u \in p(\mathfrak{h})$ and $\sqrt{-1}v \in p'(\mathfrak{h})$. By definition of $(\mathfrak{g}_I)^\C$, any element in $(\mathfrak{g}_I)^\C$ can be written as 
		\begin{equation*}
			\sum _{i \in I}(a_i \lambda _i +\sqrt{-1}b_i \lambda _i)
		\end{equation*}
		for some $a_i,b_i \in \R$.
		Suppose that 
		\begin{equation*}
			\exp (u +\sqrt{-1}v) = \exp \left( \sum _{i \in I}(a_i \lambda _i +\sqrt{-1}b_i \lambda _i) \right).
		\end{equation*}
		Then, 
		\begin{equation*}
			u-\sum_{i \in I}a_i\lambda _i + \sqrt{-1}\left( v-\sum_{i\in I}b_i\lambda i\right) \in \Hom (S^1, G).
		\end{equation*}
		Therefore, 
		\begin{equation*}
			v = \sum_{i\in I}b_i\lambda i. 
		\end{equation*}
		Since $\mathfrak{h}$ is a complex subspace of $\mathfrak{g}^\C$, $v - \sqrt{-1}u$ is also an element in $\mathfrak{h}$. Thus, 
		\begin{equation}\Label{eq:blambda}
			\sum _{i\in I}b_i \lambda _i \in p(\mathfrak{h}).
		\end{equation}
		Applying $q$ to \eqref{eq:blambda} and by the assumption that $q(\lambda _i)$ for $i \in I$ are linearly independent, we have that $b_i=0$ for all $i \in I$. Therefore $v=0$. So $u \in \mathfrak{h}$ and $\sqrt{-1}u\in \mathfrak{h}$. Since $p|_{\mathfrak{h}}$ is injective, $u=0$. Therefore $\exp (\sum_{i \in I}a_i\lambda_i+\sqrt{-1}b_i\lambda_i) \in H$ implies that $\exp (\sum_{i \in I}a_i\lambda_i+\sqrt{-1}b_i\lambda_i)=1$. This shows that $H \cap (G_I)^\C = \{1\}$, proving the lemma.  
	\end{proof}
	Lemmas \ref{lemm:closed} and \ref{lemm:free} tell us that the quotient of the affine toric variety associated with $C_I$ by $H$ is biholomorphic to a complex manifold 
	\begin{equation*}
		(G^\C /H) \times _{(G_I)^\C} \bigoplus _{i \in I}\C_{\alpha _i^I}
	\end{equation*} 
	if $q(\lambda _i)$ for $i \in I$ are linearly independent.
	Now we consider the quotient space $X(\Delta )/H$ of the nonsingular toric variety associated with $\Delta$ by $H$. 
	In order to deduce the Hausdorff-ness and the compactness of $X(\Delta )/H$ from conditions on $q(\Delta)$, we prepare a locally defined continuous function. 
	\begin{lemm}\Label{lemm:function}
		Let $\xi \in (\mathfrak{g}/p(\mathfrak{h}))^*$ such that $\langle \xi , q(\lambda _i)\rangle \geq 0$ for all $i \in I$. The function $f _\xi^I \co \mathfrak{g}^\C \times \bigoplus _{i \in I}\C_{\alpha _i^I} \to [0, \infty)$ defined by
		\begin{equation*}
			f_\xi^I (u+\sqrt{-1}v, (z_i)_{i\in I}) := e^{-2\pi\langle \xi, q(v)\rangle}\prod _{i\in I}|z_i|^{\langle \xi, q(\lambda _i)\rangle} 
		\end{equation*}
		descends to a continuous function $\overline{f_\xi ^I} \co (G^\C/H) \times _{(G_I)^\C} \bigoplus _{i \in I}\C_{\alpha _i^I} \to [0, \infty )$.
	\end{lemm}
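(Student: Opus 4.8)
The plan is to realize the target space $(G^\C/H)\times_{(G_I)^\C}\bigoplus_{i\in I}\C_{\alpha_i^I}$ as a quotient of the source $\mathfrak{g}^\C\times\bigoplus_{i\in I}\C_{\alpha_i^I}$ and to check that $f_\xi^I$ is constant along the fibers of the quotient map. Composing the exponential $\exp\colon\mathfrak{g}^\C\to G^\C$, the projection $G^\C\to G^\C/H$, and the associated-bundle projection (using the description of the affine pieces from Lemma \ref{lemm:affine}), one sees that the fibers are generated by three elementary moves acting on a representative $(u+\sqrt{-1}v,(z_i))$: (i) translation of $u+\sqrt{-1}v$ by a lattice vector $w\in\Hom(S^1,G)\subset\mathfrak{g}$ (the kernel of $\exp$); (ii) translation of $u+\sqrt{-1}v$ by an element $u_0+\sqrt{-1}v_0\in\mathfrak{h}$ coming from $H$, leaving the $z_i$ fixed; and (iii) translation of $u+\sqrt{-1}v$ by an element of $(\mathfrak{g}_I)^\C$ together with the corresponding twist of the $z_i$ by the characters $\alpha_i^I$, coming from $(G_I)^\C$. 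It therefore suffices to verify invariance of $f_\xi^I$ under each of (i)--(iii); then $f_\xi^I$ descends to a map $\overline{f_\xi^I}$, and continuity of the latter will follow from continuity of the former by the universal property of the quotient topology.

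Invariance under (i) is immediate, since adding a real vector $w$ to $u+\sqrt{-1}v$ changes only the component $u$, whereas $f_\xi^I$ depends on its argument only through $v$ and the $z_i$. For (ii), the decisive point is that $v_0\in p(\mathfrak{h})$. Indeed, since $\mathfrak{h}$ is a complex subspace, $\sqrt{-1}(u_0+\sqrt{-1}v_0)=-v_0+\sqrt{-1}u_0\in\mathfrak{h}$, and applying the projection $p$ yields $-v_0\in p(\mathfrak{h})$. Hence $q(v_0)=0$, so the prefactor $e^{-2\pi\langle\xi,q(v_0)\rangle}=1$, and as the $z_i$ are untouched, $f_\xi^I$ is unchanged.

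The substantive step is (iii). Writing $h=\exp\bigl(\sum_{i\in I}(a_i+\sqrt{-1}b_i)\lambda_i\bigr)$ with $a_i,b_i\in\R$, the move sends $v\mapsto v+\sum_{i\in I}b_i\lambda_i$ and $z_i\mapsto\alpha_i^I(h)^{-1}z_i$. Since $(\alpha_i^I)_{i\in I}$ is the basis of $\Hom(G_I,S^1)$ dual to $(\lambda_i)_{i\in I}$, and the exponential is normalized by $t\mapsto e^{2\pi\sqrt{-1}t}$, one computes $\alpha_i^I(h)=e^{2\pi\sqrt{-1}a_i}e^{-2\pi b_i}$, so $|\alpha_i^I(h)|=e^{-2\pi b_i}$. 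Thus the product $\prod_{i\in I}|z_i|^{\langle\xi,q(\lambda_i)\rangle}$ picks up the extra factor $\prod_{i\in I}e^{2\pi b_i\langle\xi,q(\lambda_i)\rangle}$, while the exponential prefactor picks up $e^{-2\pi\sum_{i\in I}b_i\langle\xi,q(\lambda_i)\rangle}$; these cancel exactly, giving the required invariance. I expect this cancellation---and in particular getting the sign and normalization of $\alpha_i^I(h)$ consistent with the conventions already fixed in the paper---to be the one delicate point, although the cancellation is ultimately forced by the duality between $(\alpha_i^I)$ and $(\lambda_i)$.

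Finally, continuity is precisely where the hypothesis $\langle\xi,q(\lambda_i)\rangle\ge 0$ enters. For each $i$ the real exponent $c_i:=\langle\xi,q(\lambda_i)\rangle\ge 0$ makes $z_i\mapsto|z_i|^{c_i}$ continuous on all of $\C$ (equal to $0$ at $z_i=0$ when $c_i>0$, and constantly $1$ when $c_i=0$), whereas a negative exponent would blow up along the coordinate hyperplane $\{z_i=0\}$; the exponential prefactor is smooth in $v$. Hence $f_\xi^I$ is continuous on $\mathfrak{g}^\C\times\bigoplus_{i\in I}\C_{\alpha_i^I}$, and being constant on the fibers of the open quotient map it descends to the desired continuous function $\overline{f_\xi^I}$ on $(G^\C/H)\times_{(G_I)^\C}\bigoplus_{i\in I}\C_{\alpha_i^I}$.
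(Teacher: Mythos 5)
Your argument is correct and follows essentially the same route as the paper: invariance under $\mathfrak{g}$-translation (hence under the lattice $\Hom(S^1,G)$), invariance under $\mathfrak{h}$ via the observation that multiplying by $\sqrt{-1}$ shows $v_0\in p(\mathfrak{h})$, and the cancellation for the $(\mathfrak{g}_I)^\C$-twist forced by the duality between $(\alpha_i^I)_{i\in I}$ and $(\lambda_i)_{i\in I}$. Your explicit remark on why $\langle\xi,q(\lambda_i)\rangle\ge 0$ guarantees continuity of $|z_i|^{\langle\xi,q(\lambda_i)\rangle}$ across $z_i=0$ is a point the paper dismisses as obvious, but it is a welcome addition rather than a deviation.
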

	\begin{proof}
		It suffices to show the well-definedness of $\overline{f_\xi ^I}$ because the continuity of $f_\xi^I$ and $\overline{f_\xi^I}$ are obvious. 
		
		Clearly, $f^I_\xi$ is invariant under the translation by elements in $\mathfrak{g}$. In particular, it is invariant under the translation by elements in $\Hom (S^1,G)$. 
		Let $u'+\sqrt{-1}v' \in \mathfrak{h}$, where $u', v' \in \mathfrak{g}$. Then, $v' \in p(\mathfrak{h})$ because $v'-\sqrt{-1}u' \in \mathfrak{h}$. Thus, 
		\begin{equation*}
			\begin{split}
			f_\xi^I (u+u'+\sqrt{-1}(v+v'), (z_i)_{i\in I}) &= e^{-2\pi\langle \xi, q(v+v')\rangle}\prod _{i\in I}|z_i|^{\langle \xi, q(\lambda _i)\rangle} \\
			 &= e^{-2\pi\langle \xi, q(v)\rangle}\prod _{i\in I}|z_i|^{\langle \xi, q(\lambda _i)\rangle} \\
			 &= f_\xi^I (u+\sqrt{-1}v, (z_i)_{i\in I}).
			\end{split}
		\end{equation*}
		Let $u'' + \sqrt{-1}v'' \in (\mathfrak{g}_I)^\C$, where $u'', v'' \in \mathfrak{g}_I$. Then, 
		\begin{equation*}
			\begin{split}
			& f^I_\xi (u+u''+\sqrt{-1}(v+v''), (\alpha _i^I(\exp (u''+\sqrt{-1}v''))^{-1}z_i)_{i \in I})\\
			&= e^{-2\pi\langle \xi, q(v+v'')\rangle}\prod _{i \in I}|e^{2\pi\langle \alpha _i^I, v''\rangle}z_i|^{\langle \xi, q(\lambda _i)\rangle}\\
			&= e^{2\pi \langle \xi, -q(v'')+\sum _{i \in I}\langle \alpha _i^I,v''\rangle q(\lambda _i)\rangle }f_i^I(u+\sqrt{-1}v, (z_i)_{i\in I}).
			\end{split} 
		\end{equation*}
		Since $\lambda _i$ for $i \in I$ form a basis of $\Hom (S^1, G_I) \subset \mathfrak{g}_I$ and $\alpha _i^I$ for $i \in I$ are the dual basis, $v''= \sum _{i \in I}\langle \alpha _i^I,v''\rangle \lambda _i$. So the exponent $2\pi \langle \xi , -q(v'')+\sum _{i \in I}\langle \alpha _i^I, v''\rangle q(\lambda _i)\rangle =0$ and hence 
		\begin{equation*}
			\begin{split}
			& f^I_\xi (u+u''+\sqrt{-1}(v+v''), (\alpha _i^I(\exp (u''+\sqrt{-1}v''))^{-1}z_i)_{i \in I})\\
			&= f_i^I(u+\sqrt{-1}v, (z_i)_{i\in I}).
			\end{split} 
		\end{equation*}
		Therefore $f_\xi^I$ descends to $\overline{f_\xi^I}$, as required.
	\end{proof}
	For $\xi \in (\mathfrak{g}/p(\mathfrak{h}))^*$, we define a subfan $\Delta_\xi^+$ of $\Delta$ to be 
	\begin{equation*}
		\Delta _{\xi}^+ := \{ C_I =\pos (\lambda _i \mid i \in I) \in \Delta \mid \langle \xi, q(\lambda _i) \rangle \geq 0 \text{ for all $i \in I$}\}.
	\end{equation*}
	We also define $\Delta _\xi ^-$ to be $\Delta _{-\xi}^+$. Then, the union $\Delta _\xi$ of these subfans $\Delta _\xi^+$ and $\Delta _\xi^-$, and the intersection $\Delta _\xi^0$ of $\Delta _\xi^+$ and $\Delta _\xi^-$ are also  subfans of $\Delta$. 
	
	The toric variety $X(\Delta )$ associated with the fan $\Delta$ is $G^\C$-equivariantly biholomorphic to the quotient space as \eqref{eq:toric}. Hence the quotient space $X(\Delta )/H$ is 
		\begin{equation*}
			X(\Delta )/H = \bigsqcup _{I \in \Sigma }(G^\C/H) \times_{(G^I)^\C} \bigoplus _{i \in I}\C_{\alpha _i^I}/ \sim, 
		\end{equation*}
		where $\sim$ is the equivalence relation generated by the following:
		\begin{quote}
			For $J \subseteq I$ and $[g,(z_i)_{i \in J}] \in (G^\C/H) \times_{(G_J)^\C} \bigoplus _{i \in J} \C _{\alpha _i^J}$, 
			\begin{equation*}
				[g,(z_i)_{i \in J}] \sim [g,(w_i)_{i \in I}] \in (G^\C/H) \times_{(G_I)^\C} \bigoplus _{i \in I} \C _{\alpha _i^I},
			\end{equation*}
			where 
			\begin{equation*}
				w_i = \begin{cases}	
					z_i & \text{if $i \in J$}, \\
					1 & \text{otherwise}.
				\end{cases}
			\end{equation*}
		\end{quote}
		The subfans $\Delta _\xi^+$, $\Delta _\xi^-$, $\Delta_\xi$ and $\Delta _\xi^0$ determine open subsets $X(\Delta _\xi^+)/H$, $X(\Delta _\xi^-)/H$, $X(\Delta_\xi)/H$ and $X(\Delta _\xi^0)/H$ of $X(\Delta )/H$, respectively.  
	\begin{lemm}\Label{lemm:fxi}
		Let $\overline{f_\xi^I}$ be the function as in Lemma \ref{lemm:function} for $I \in \Sigma$ such that $C_I \in \Delta _\xi ^+$. 
		\begin{enumerate}
			\item For $I' \subseteq I$ and $[g,(z_i)_{i \in I'}] \in (G^\C/H)\times _{(G_{I'})^\C} \bigoplus _{i \in I'}\C_{\alpha _i^{I'}}$, 
			\begin{equation*}
				\overline{f_\xi^{I'}}([g,(z_i)_{i \in I'}]) = \overline{f_\xi^I}([g,(w_i)_{i \in I}]).
			\end{equation*}
			Therefore, there exists a continuous function $\overline{f_\xi} \co X(\Delta _\xi^+)/H \to [0, \infty)$ such that 
			\begin{equation*}
				\overline{f_\xi}|_{(G^\C/H) \times _{(G_I)^\C}\bigoplus _{i \in I}\C_{\alpha _i^I}} = \overline{f_\xi^I}
			\end{equation*}
			for all $I$ such that $C_I \in \Delta _\xi^+$. 
			\item $\overline{f_\xi}$ is nowhere zero on $X(\Delta _\xi^0)/H$ and 
			\begin{equation*}
				(\overline{f_\xi})^{-1} = \overline{f_{-\xi}}
			\end{equation*}
			on $X(\Delta _\xi^0)/H$. 
			\item There exists a continuous function $\overline{f_\xi}\cup (\overline{f_{-\xi}})^{-1} \co X(\Delta _\xi)/H \to [0,\infty]$ such that 
			\begin{equation*}
				\overline{f_\xi}\cup (\overline{f_{-\xi}})^{-1}|_{X(\Delta _\xi^+)/H} = \overline{f_\xi} \quad \text{and} \quad \overline{f_\xi}\cup (\overline{f_{-\xi}})^{-1}|_{X(\Delta _\xi^-)/H} = (\overline{f_{-\xi}})^{-1}, 
			\end{equation*}
			here we may regard $0^{-1}$ as $\infty$ and the topology of $[0,\infty]$ is the order topology. 
		\end{enumerate}
	\end{lemm}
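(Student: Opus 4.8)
The plan is to reduce all three parts to the explicit formula for $f_\xi^I$ supplied by Lemma \ref{lemm:function}, combined with the gluing relation $\sim$ that defines $X(\Delta)/H$, and then to invoke the standard patching lemma for continuous functions on an open cover. The guiding observation is that the gluing keeps the $G^\C/H$-component fixed and only inserts the value $1$ in the new coordinates, so the product $\prod_{i}|z_i|^{\langle\xi,q(\lambda_i)\rangle}$ transforms transparently.

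For Part (1), I would fix $I'\subseteq I$ with $C_I\in\Delta_\xi^+$ and choose a representative $g=\exp(u+\sqrt{-1}v)$ with $u,v\in\mathfrak{g}$. By Lemma \ref{lemm:function} the descended values are read off the lifts, so
\[
	\overline{f_\xi^{I'}}([g,(z_i)_{i\in I'}]) = e^{-2\pi\langle\xi,q(v)\rangle}\prod_{i\in I'}|z_i|^{\langle\xi,q(\lambda_i)\rangle},
\]
while $\overline{f_\xi^I}([g,(w_i)_{i\in I}])$ carries the same exponential prefactor and the product $\prod_{i\in I}|w_i|^{\langle\xi,q(\lambda_i)\rangle}$. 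Since $w_i=1$ for $i\in I\setminus I'$, each extra factor is $1$ and the two expressions agree. Hence the family $\{\overline{f_\xi^I}\}$ is compatible with $\sim$, so it descends to a single $\overline{f_\xi}$ on $X(\Delta_\xi^+)/H$; continuity is inherited because each $\overline{f_\xi^I}$ is continuous and $X(\Delta_\xi^+)/H$ is obtained by gluing the pieces along open subsets.

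For Part (2), I would note that $C_I\in\Delta_\xi^0=\Delta_\xi^+\cap\Delta_\xi^-$ forces $\langle\xi,q(\lambda_i)\rangle\geq 0$ and $\langle-\xi,q(\lambda_i)\rangle\geq 0$ simultaneously, hence $\langle\xi,q(\lambda_i)\rangle=0$ for every $i\in I$. Every product term $|z_i|^{\langle\xi,q(\lambda_i)\rangle}$ is therefore $1$, so $\overline{f_\xi^I}([g,(z_i)])=e^{-2\pi\langle\xi,q(v)\rangle}$, which is strictly positive; the identical computation for $-\xi$ gives $\overline{f_{-\xi}^I}=e^{2\pi\langle\xi,q(v)\rangle}$, whence $(\overline{f_\xi})^{-1}=\overline{f_{-\xi}}$ on $X(\Delta_\xi^0)/H$. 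For Part (3) I would regard $X(\Delta_\xi)/H$ as the union of the open sets $X(\Delta_\xi^+)/H$ and $X(\Delta_\xi^-)/H$, whose intersection is $X(\Delta_\xi^0)/H$. On the first, $\overline{f_\xi}$ is continuous into $[0,\infty)$; on the second, $\overline{f_{-\xi}}$ is continuous into $[0,\infty)$ and $t\mapsto t^{-1}$ is a homeomorphism of $[0,\infty)$ onto $(0,\infty]$ in the order topology, so $(\overline{f_{-\xi}})^{-1}$ is continuous into $[0,\infty]$. By Part (2) the two functions coincide on the overlap, and the open-cover patching lemma then yields the desired continuous $\overline{f_\xi}\cup(\overline{f_{-\xi}})^{-1}$.

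The computations are mechanical once Lemma \ref{lemm:function} is in hand, so there is no deep obstacle; the only genuinely delicate point is the continuity at the boundary values $0$ and $\infty$ in Part (3), where one must use that inversion is a homeomorphism of the order topology on $[0,\infty]$ rather than a mere pointwise reciprocal. Keeping the representative $u+\sqrt{-1}v$ fixed across the gluing, so that the prefactors $e^{\mp 2\pi\langle\xi,q(v)\rangle}$ match, is the other bookkeeping point, and these two observations carry the entire argument.
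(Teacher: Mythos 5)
Your proof is correct and follows essentially the same route as the paper, which disposes of Parts (1) and (2) by the same direct computation from the formula in Lemma \ref{lemm:function} together with the gluing relation $\sim$, and deduces Part (3) from Part (2) by pasting over the open cover. The paper merely omits the details that you spell out (the vanishing of the exponents $\langle\xi,q(\lambda_i)\rangle$ on $\Delta_\xi^0$ and the fact that inversion is a homeomorphism of $[0,\infty]$), so there is nothing to add.
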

	\begin{proof}
		Part (1) and Part (2) follows from the definition of $\overline{f_\xi^I}$ and of the equivalence relation $\sim$. Part (3) follows from Part (2).
	\end{proof}
	Now we are ready to deduce the Hausdorff-ness of $X(\Delta)/H$ from the condition which is $q(\Delta )$ being a fan.
	
	\begin{lemm}\Label{lemm:Hausdorff}
		Suppose that $q(\Delta ) =\{ q(C_I) \mid {I \in \Sigma}\}$ is a fan and $q(\lambda _i)$ for $i \in I$ is linearly independent for each $I \in \Sigma$. Then, the quotient space $X(\Delta)/H$ is Hausdorff. 
	\end{lemm}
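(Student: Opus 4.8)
The plan is to separate any two distinct points of $X(\Delta)/H$ by open sets, using the continuous $[0,\infty]$-valued functions produced in Lemma \ref{lemm:fxi} as the main separating device. First I would record that each chart
\[
	U_I/H := (G^\C/H)\times_{(G_I)^\C}\bigoplus_{i\in I}\C_{\alpha_i^I}, \qquad I\in\Sigma,
\]
is itself Hausdorff: by Lemmas \ref{lemm:closed} and \ref{lemm:free} the group $H$ is closed and acts freely on the affine toric variety attached to $C_I$, so $U_I/H$ is a complex manifold, and $G^\C/H$ is a Hausdorff homogeneous space over which it is an associated bundle. Hence any two distinct points lying in a common chart can be separated. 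Using the orbit--cone correspondence I would then arrange, for distinct classes $[x]\neq[y]$, representatives of the form $x=[g,(0)_{i\in I}]$ and $y=[g',(0)_{j\in J}]$, where $C_I$ and $C_J$ are the unique cones whose distinguished orbits contain $x$ and $y$. If $C_I$ and $C_J$ are both faces of a single cone $C_K\in\Delta$, then $x,y\in U_K/H$ and the manifold structure separates them, so the real work is the case in which no such $C_K$ exists.

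In that case I would invoke the hypothesis that $q(\Delta)$ is a fan: the intersection $q(C_I)\cap q(C_J)$ is a common face, and therefore there is a functional $\xi\in(\mathfrak g/p(\mathfrak h))^*$ with $\langle\xi,\cdot\rangle\ge 0$ on $q(C_I)$, $\langle\xi,\cdot\rangle\le 0$ on $q(C_J)$, and whose zero locus meets each of $q(C_I),q(C_J)$ exactly in the common face. This puts $C_I\in\Delta_\xi^+$ and $C_J\in\Delta_\xi^-$, so both points lie in the open set $X(\Delta_\xi)/H$, on which Lemma \ref{lemm:fxi}(3) furnishes the continuous function $F=\overline{f_\xi}\cup(\overline{f_{-\xi}})^{-1}\colon X(\Delta_\xi)/H\to[0,\infty]$. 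Evaluating at $x=[g,(0)_{i\in I}]$, the presence of at least one index $i\in I$ with $\langle\xi,q(\lambda_i)\rangle>0$ forces the factor $|z_i|^{\langle\xi,q(\lambda_i)\rangle}=0$, so $F([x])=\overline{f_\xi}([x])=0$; symmetrically $\overline{f_{-\xi}}([y])=0$, whence $F([y])=\infty$. Since $[0,\infty]$ is Hausdorff and $0\neq\infty$, the $F$-preimages of disjoint neighbourhoods of $0$ and $\infty$ separate $[x]$ and $[y]$.

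The step I expect to be the main obstacle is the existence of the separating functional $\xi$ with the required \emph{strict} inequalities, i.e.\ guaranteeing that neither $q(C_I)$ nor $q(C_J)$ is swallowed by the common face $q(C_I)\cap q(C_J)$. This is exactly where the fan hypothesis on $q(\Delta)$ together with the linear independence of $\{q(\lambda_i)\}_{i\in I}$ for each $I\in\Sigma$ must be used: linear independence makes each $q(C_I)$ a simplicial cone whose rays are precisely the $q(\lambda_i)$, so if a stratum cone mapped into the common face one could exhibit $C_I$ and $C_J$ as faces of a common cone of $\Delta$, returning to the already-settled case. I would therefore set up the case split so that ``$C_I,C_J$ are faces of a common cone of $\Delta$'' versus ``$q(C_I)\not\subseteq q(C_J)$ and $q(C_J)\not\subseteq q(C_I)$'' is genuinely exhaustive; this is the only place where some care with the combinatorics of $\Sigma$ and $q$ is needed.
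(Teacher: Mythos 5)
Your proposal is correct and follows essentially the same route as the paper: separate points lying in a common chart using the manifold structure, and otherwise pick minimal $I,J$ (equivalently, representatives with all coordinates zero), use the fan hypothesis on $q(\Delta)$ to produce $\xi$ with $\langle\xi,q(\lambda_i)\rangle>0$ on $I\setminus J$, $<0$ on $J\setminus I$, $=0$ on $I\cap J$, and separate via the $[0,\infty]$-valued function $\overline{f_\xi}\cup(\overline{f_{-\xi}})^{-1}$ of Lemma \ref{lemm:fxi}, which takes the values $0$ and $\infty$ at the two points. Your extra care about the strictness of the separating functional (using simpliciality of the cones $q(C_I)$) is a point the paper passes over silently, but it does not change the argument.
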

			
	\begin{proof}[Proof of Lemma \ref{lemm:Hausdorff}]
		Let $x, y \in X(\Delta )/H$ such that $x \neq y$. If $x$ and $y$ belong to the same open subset 
		\begin{equation*}
			(G^\C/H) \times _{(G_I)^\C}\bigoplus _{i \in I} \C_{\alpha _i^I}, 
		\end{equation*}
		then there exists open subsets $U_x$ and $U_y$ such that $U_x\ni x$, $U_y \ni y$ and $U_x \cap U_y =\emptyset$. 
		
		Now we assume that $x$ and  $y$ do not belong to the same open subset 
		$(G^\C/H)\times _{(G_I)^\C} \bigoplus_{i \in I}\C_{\alpha _i^I}$.
		Suppose that $I$ is a simplex in $\Sigma$ such that 
		\begin{equation*}
			x \in (G^\C/H) \times _{(G_I)^\C}\bigoplus _{i \in I} \C_{\alpha _i^I}
		\end{equation*}
		but 
		\begin{equation*}
			x \notin (G^\C/H) \times _{(G_I')^\C}\bigoplus _{i \in I'} \C_{\alpha _i^{I'}} \quad \text{for all $I' \subsetneq I$}.
		\end{equation*}
		Similarly, suppose that $J$ is a simplex in $\Sigma$ such that 
		\begin{equation*}
			y \in (G^\C/H) \times _{(G_J)^\C}\bigoplus _{j \in J} \C_{\alpha _j^J}
		\end{equation*}
		but 
		\begin{equation*}
			y \notin (G^\C/H) \times _{(G_J')^\C}\bigoplus _{j \in J'} \C_{\alpha _j^{J'}} \quad \text{for all $J' \subsetneq J$}.
		\end{equation*}
		Since $q(\Delta)$ is a fan, there exists an element $\xi \in (\mathfrak{g}/p(\mathfrak{h}))^*$ such that $\langle \xi, q(\lambda _i) \rangle > 0$ for all $i \in I \setminus J$, $\langle \xi, q(\lambda _j) \rangle <0$ for all $j \in J \setminus I$ and $\langle \xi, q(\lambda _i) \rangle =0$ for $i \in I\cap J$. Then,  $C_I \in \Delta _\xi^+$ and $C_J \in \Delta _\xi^-$.
		Let $\overline{f_\xi} \cup (\overline{f_{-\xi}})^{-1}$ be the continuous function as Lemma \ref{lemm:fxi}. By the choice of $\xi$, we have that 
		\begin{equation*}
			\overline{f_\xi} \cup (\overline{f_{-\xi}})^{-1}(x) = 0 \quad \text{and} \quad \overline{f_\xi} \cup (\overline{f_{-\xi}})^{-1}(y) = \infty.
		\end{equation*}
		Since the interval $[0,\infty]$ is Hausdorff, and $x$ and $y$ are distinguished by the continuous function $\overline{f_\xi} \cup (\overline{f_{-\xi}})^{-1}$, there exist open subsets $U_x$ and $U_y$ such that $U_x \ni x$, $U_y \ni y$ and $U_x \cap U_y = \emptyset$. Therefore, $X(\Delta )/H$ is Hausdorff, as required.
	\end{proof}
	By Lemmas \ref{lemm:closed}, \ref{lemm:free} and \ref{lemm:Hausdorff}, if the nonsingular fan $\Delta$ in $\mathfrak{g}$ and the complex subspace $\mathfrak{h} \subset \mathfrak{g}^\C$ satisfies the following conditions
	\begin{enumerate}
		\item the restriction $p|_{\mathfrak{h}}$ of the projection $p \co \mathfrak{g}^\C \to \mathfrak{g}$ is injective, 
		\item the quotient map $q \co \mathfrak{g} \to \mathfrak{g}/p(\mathfrak{h})$ sends\footnote{Here, ``sends" means that $q$ gives a bijection between cones in $\Delta$ and cones in $q(\Delta) $.} a fan $\Delta$ to a fan $q(\Delta)$ in $\mathfrak{g}/p(\mathfrak{h})$,
	\end{enumerate}
	then, the quotient space $X(\Delta)/H$ of the toric variety $X(\Delta )$ associated with $\Delta$ by $H = \exp (\mathfrak{h})$ is a complex manifold\footnote{
		In \cite{Battisti}, it has been shown that $X(\Delta)/H$ is a complex manifold by showing that the action of $H$ on $X(\Delta)$ is proper and holomorphic under the conditions (1) and (2) above. Our argument in this paper is slightly different from \cite{Battisti}.}.
	
	As well as toric varieties, we will deduce the compactness of $X(\Delta)/H$ from the completeness of $q(\Delta)$. $X(\Delta)/H$ is compact if and only if $X(\Delta)/G\cdot H$ is compact because $G$ is a compact torus. So we will show the compactness of $X(\Delta)/G\cdot H$.
	\begin{lemm}\Label{lemm:quotient}
		Let the nonsingular fan $\Delta$ and the complex subspace $\mathfrak{h}$ of $\mathfrak{g}^\C$ satisfy conditions (1) and (2) above. 
		Let $C_I = \pos (\lambda _i \mid i \in I)$ be a cone in $\Delta$ such that $\dim C_I = \dim \mathfrak{g}/p(\mathfrak{h})$. Let $\overline{\alpha _i^I}$ for $i \in I$ be the dual basis of $q(\lambda _i)$. Then, the continuous map 
		\begin{equation*}
			(\overline{f_{\overline{\alpha _j^I}}^I})_{j \in I} \co (G^\C/H) \times _{(G_I)^\C}\bigoplus _{i \in I}\C_{\alpha _i^I} \to \prod _{j \in I}(\R _{\geq 0})_j
		\end{equation*}
		descends to a homeomorphism 
		\begin{equation*}
			F_I \co \left((G^\C/H) \times _{(G_I)^\C}\bigoplus _{i \in I}\C_{\alpha _i^I}\right) /G \to \prod _{i \in I}(\R _{\geq 0})_i.
		\end{equation*}
	\end{lemm}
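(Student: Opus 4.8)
The plan is to exhibit an explicit continuous inverse of $F_I$, namely the section $s \co \prod_{i\in I}(\R_{\geq 0})_i \to \left((G^\C/H)\times_{(G_I)^\C}\bigoplus_{i\in I}\C_{\alpha_i^I}\right)/G$ sending $(t_i)_{i\in I}$ to the class of $[1,(t_i)_{i\in I}]$. First I would record the routine points. Each $\overline{\alpha_j^I}$ satisfies $\langle\overline{\alpha_j^I},q(\lambda_i)\rangle=\delta_{ij}\geq 0$, so $\overline{f_{\overline{\alpha_j^I}}^I}$ is defined and continuous by Lemma \ref{lemm:function}, and hence so is the product map. Writing a representative $g=\exp(u+\sqrt{-1}v)$ with $u,v\in\mathfrak{g}$, the group $G$ acts on the $g$-slot by left translation, changing only $u$; since $\overline{f_{\overline{\alpha_j^I}}^I}([g,(z_i)])=e^{-2\pi\langle\overline{\alpha_j^I},q(v)\rangle}|z_j|$ depends on $g$ only through $q(v)$, the map is $G$-invariant and descends to the asserted continuous map $F_I$. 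The identity $F_I\circ s=\id$ is immediate, because at $g=1$ one has $v=0$ and $z_j=t_j$.

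The heart of the argument is the identity $s\circ F_I=\id$, i.e.\ that every point is $G$-equivalent to $[1,(t_j)_{j\in I}]$ with $t_j$ the corresponding value of $F_I$. The structural fact I would isolate first is that $\dim C_I=\dim\mathfrak{g}/p(\mathfrak{h})$ forces $q(\lambda_i)$, $i\in I$, to be a basis of $\mathfrak{g}/p(\mathfrak{h})$, whence $\mathfrak{g}=p(\mathfrak{h})\oplus\mathfrak{g}_I$ as real vector spaces (the dimensions add up, and $\mathfrak{g}_I\cap p(\mathfrak{h})=\ker q|_{\mathfrak{g}_I}=0$). Starting from $[g,(z_i)]$, I would act by $\exp(-u)\in G$ to reduce to $[\exp(\sqrt{-1}v),(z_i)]$, and split $v=v_H+v_I$ along this decomposition. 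The factor $\exp(\sqrt{-1}v_I)\in(G_I)^\C$ is pushed through the associated-bundle relation into the fibre, scaling $|z_i|$ by $e^{-2\pi\langle\alpha_i^I,v_I\rangle}$. For the factor $\exp(\sqrt{-1}v_H)$ I would use that $\mathfrak{h}$ is a complex subspace: if $w\in\mathfrak{h}$ is the unique vector with $p(w)=v_H$, then $\sqrt{-1}w\in\mathfrak{h}$, so $\exp(\sqrt{-1}v_H)$ agrees modulo $H$ with an element of the real torus $G$, which is removed by the $G$-action. Finally the residual phases of the coordinates are killed by the subtorus $G_I\subset G$, landing at $[1,(t_j)_{j\in I}]$ with $t_j\geq 0$.

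It then remains to check that $t_j$ equals the $j$-th component of $F_I$ of the original point. Using $v_H\in p(\mathfrak{h})=\ker q$ and that $\overline{\alpha_i^I}$ is dual to $q(\lambda_i)$, one computes $\langle\overline{\alpha_j^I},q(v)\rangle=\langle\overline{\alpha_j^I},q(v_I)\rangle$, which is exactly the coefficient of $\lambda_j$ in $v_I$ and hence matches the scaling exponent produced in the previous step; this is a short bookkeeping verification. With $F_I$ and $s$ both continuous and mutually inverse, $F_I$ is a homeomorphism. The main obstacle is the normal-form step, and within it the crucial use of the complex-linearity of $\mathfrak{h}$ to convert the ``imaginary'' translation $\exp(\sqrt{-1}v_H)$ into a genuine element of $G$ modulo $H$; the direct-sum decomposition $\mathfrak{g}=p(\mathfrak{h})\oplus\mathfrak{g}_I$ coming from the dimension hypothesis is what makes the separation of $v$ into absorbable pieces possible.
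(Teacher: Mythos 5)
Your proof is correct and follows essentially the same route as the paper: both rest on the decomposition $\mathfrak{g}^\C=\mathfrak{g}\oplus\sqrt{-1}\mathfrak{g}_I\oplus\mathfrak{h}$ forced by $\dim C_I=\dim\mathfrak{g}/p(\mathfrak{h})$, and both verify that the explicit section $(t_i)\mapsto[1,(t_i)]$ is a continuous two-sided inverse of $F_I$. The only difference is presentational — the paper fixes the identification $G^\C/H\cong G\times\sqrt{-1}\mathfrak{g}_I$ up front, while you normalize a general representative step by step (absorbing $\exp(\sqrt{-1}v_H)$ into $H$ modulo $G$ via the complex-linearity of $\mathfrak{h}$) — but the content is identical.
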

	\begin{proof}
		For any $\xi \in (\mathfrak{g}/p(\mathfrak{h}))^*$, by definition of $\overline{f_\xi^I}$, the function $\overline{f_\xi^I}$ is $G$-invariant and hence $F_I$ is well-defined. Since $\mathfrak{h} \cap \mathfrak{g} = \{0\}$ and $(G_I)^\C \cap H =\{1\}$ (see the proof of Lemma \ref{lemm:free}), $\mathfrak{g}^\C$ can be decomposed into $\mathfrak{g}\oplus \sqrt{-1}\mathfrak{g}_I\oplus \mathfrak{h}$ because $\dim C_I = \dim \mathfrak{g}/p(\mathfrak{h})$. Therefore $G^\C/H$ can be identified with $G \times \sqrt{-1}\mathfrak{g}_I$ through the exponential map. Using this identification, we have
		\begin{equation*}
			\begin{split}
				\overline{f_{\overline{\alpha _j^I}}^I}([(g,\sqrt{-1}v),(z_i)_{i \in I}]) &= e^{-2\pi\langle \overline{\alpha _j^I}, q(v)\rangle}\prod_{i \in I}|z_i|^{\langle \overline{\alpha _j^I}, q(\lambda _i) \rangle} \\
				&= e^{-2\pi\langle \overline{\alpha _j^I},q(v)\rangle}|z_j| 
			\end{split}
		\end{equation*}
		for $[(g,\sqrt{-1}v), (z_i)_{i \in I}] \in (G\times \sqrt{-1}\mathfrak{g}_I )\times _{(G_I)^\C}\bigoplus _{i \in I}\C_{\alpha _i^I}$. Define 
		\begin{equation*}
			E_I \co \prod _{j \in I}(\R_{\geq 0})_j \to\left((G\times \sqrt{-1}\mathfrak{g}_I) \times _{(G_I)^\C}\bigoplus _{i \in I}\C_{\alpha _i^I}\right) /G
		\end{equation*}
		to be 
		\begin{equation*}
			E_I ((r_j)_{j \in I}) := [(1,0), (r_j)_{j \in I}].
		\end{equation*}
		Clearly, $E_I$ is continuous and the composition $F_I \circ E_I$ is identity. It remains to show that $E_I \circ F_I$ is identity. By direct computation, 
		\begin{equation*}
			\begin{split}
				E_I \circ F_I ([[(g,\sqrt{-1}v), (z_i)_{i \in I}]]) &= E_I ((e^{-2\pi \langle \overline{\alpha _j^I}, q(v)\rangle}|z_j|)_{j \in I})\\
				&=[[(1,0), (e^{-2\pi \langle \overline{\alpha _j^I}, q(v)\rangle}|z_j|)_{j \in I})]],
			\end{split}
		\end{equation*}
		where $[[(g,\sqrt{-1}v), (z_i)_{i \in I}]]\in \left((G\times \sqrt{-1}\mathfrak{g}_I) \times _{(G_I)^\C}\bigoplus _{i \in I}\C_{\alpha _i^I}\right) /G$ denotes the equivalence class of $[(g,\sqrt{-1}v),(z_i)_{i \in I}] \in (G\times \sqrt{-1}\mathfrak{g}_I) \times _{(G_I)^\C}\bigoplus _{i \in I}\C_{\alpha _i^I}$. Since 
		\begin{equation*}
			[[(g,\sqrt{-1}v),(z_i)_{i \in I}] = [[(1,0),(e^{-2\pi\langle \alpha _i^I,v\rangle}|z_i|)_{i \in I}]],
		\end{equation*}
		it suffices to show that $\langle \alpha _i^I,v\rangle = \langle \overline{\alpha _i^I} ,q(v)\rangle$ for $v \in \mathfrak{g}_I$. Since $\lambda _i$ for $i \in I$ form a basis of $\mathfrak{g}_I$, $v = \sum _{i \in I}\langle \alpha _i^I,v\rangle \lambda _i$. Applying $q$, we have that $q(v)=\sum_{i \in I}\langle \alpha _i^I,v\rangle q(\lambda _i)$. Since $q(\lambda _i)$ for $i \in I$ are a basis of $\mathfrak{g}/p(\mathfrak{h})$ and $\overline{\alpha _i^I}$ are its dual, $q(v) = \sum _{i \in I}\langle \overline{\alpha _i^I},q(v)\rangle q(\lambda_i)$. Therefore,
		\begin{equation*}
			\sum_{i \in I}(\langle \alpha_i^I,v\rangle -\langle \overline{\alpha _i^I},q(v)\rangle )q(\lambda _i)=0.
		\end{equation*}
		Since $q(\lambda _i)$ for $i \in I$ are linearly independent, $\langle \alpha_i^I,v\rangle -\langle \overline{\alpha _i^I},q(v)\rangle = 0$. Hence the composition $E_I \circ F_I$ is the identity. Since $F_I$ is continuous and its inverse $E_I$ is also continuous, $F_I$ is a homeomorphism, as required. 
	\end{proof}
	
	\begin{lemm}\Label{lemm:compact}
		Let $\Delta$ and $\mathfrak{h}$ satisfy conditions (1) and (2) above. Suppose that the fan $q(\Delta)$ in $\mathfrak{g}/p(\mathfrak{h})$ is complete. Then, the quotient space $X(\Delta)/H$ is compact. 
	\end{lemm}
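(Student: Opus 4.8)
The plan is to follow the reduction already recorded before the statement: since $G$ is a compact torus, the orbit map $X(\Delta)/H \to X(\Delta)/(G\cdot H)$ has compact fibres, hence is proper, so $X(\Delta)/H$ is compact as soon as $X(\Delta)/(G\cdot H)$ is. Thus I would prove that $Y := X(\Delta)/(G\cdot H)$ is compact. Note that $Y$ is Hausdorff by Lemma \ref{lemm:Hausdorff}, and that it carries a dense open subset $Y^\circ$, the image of the open $G^\C$-orbit of $X(\Delta)$, which is $G^\C$-equivariantly identified with $G^\C/(G\cdot H)$. Using that $\mathfrak{h}$ is a complex subspace with $p|_{\mathfrak{h}}$ injective (condition (1)), I would first observe $p'(\mathfrak{h}) = \sqrt{-1}\,p(\mathfrak{h})$ and deduce a homeomorphism $\mu \co Y^\circ \to \mathfrak{g}/p(\mathfrak{h})$ sending the class of $\exp(\sqrt{-1}v)$ to $q(v)$.

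Next I would use Lemma \ref{lemm:quotient}. For every cone $C_I \in \Delta$ with $|I| = d := \dim \mathfrak{g}/p(\mathfrak{h})$ — equivalently $\dim C_I = \dim\mathfrak{g}/p(\mathfrak{h})$, and, since $q$ is a dimension-preserving bijection on cones by condition (2), exactly the cones whose image $q(C_I)$ is top-dimensional in $q(\Delta)$ — that lemma gives a homeomorphism $F_I \co V_I \to \prod_{i\in I}(\R_{\geq 0})_i$ from the $G$-quotient $V_I$ of the affine chart attached to $C_I$, with $i$-th coordinate the descended function $\overline{f^I_{\overline{\alpha_i^I}}}$, where $(\overline{\alpha_i^I})_{i\in I}$ is the basis of $(\mathfrak{g}/p(\mathfrak{h}))^*$ dual to $(q(\lambda_i))_{i\in I}$. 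I would then set $K_I := F_I^{-1}([0,1]^I)$, a compact subset of $Y$, and compute its trace on $Y^\circ$. From the explicit formula in the proof of Lemma \ref{lemm:quotient}, a point $x \in Y^\circ$ with $\mu(x)=\bar v$ satisfies $\overline{f^I_{\overline{\alpha_i^I}}}(x) = e^{-2\pi\langle \overline{\alpha_i^I},\bar v\rangle}$, so $x \in K_I$ precisely when $\langle \overline{\alpha_i^I},\bar v\rangle \geq 0$ for all $i\in I$. Since the $\overline{\alpha_i^I}$ generate the dual cone $q(C_I)^\vee$ and $q(C_I)$ is full-dimensional, this condition is exactly $\bar v \in (q(C_I)^\vee)^\vee = q(C_I)$, so $K_I \cap Y^\circ = \mu^{-1}(q(C_I))$.

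Finally I would invoke completeness: because $q(\Delta)$ is complete, its top-dimensional cones cover, $\bigcup_{|I|=d} q(C_I) = \mathfrak{g}/p(\mathfrak{h})$, whence $\bigcup_{|I|=d} K_I \supseteq \mu^{-1}(\mathfrak{g}/p(\mathfrak{h})) = Y^\circ$. As there are only finitely many such $I$ and each $K_I$ is compact, $\bigcup_{|I|=d} K_I$ is compact, hence closed in the Hausdorff space $Y$; containing the dense subset $Y^\circ$ it must equal all of $Y$. Therefore $Y = X(\Delta)/(G\cdot H)$ is compact, and the reduction above finishes the proof.

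The step I expect to require the most care is the middle one: pinning down $\mu$ and checking that the coordinates $\overline{f^I_{\overline{\alpha_i^I}}}$ transform consistently across overlapping charts, so that the value $\mu(x)=\bar v$ is chart-independent, and then converting the inequalities $\langle \overline{\alpha_i^I},\bar v\rangle\geq 0$ into membership in $q(C_I)$ through cone duality. Once that identification is secured, the remaining ingredients — the reduction using compactness of $G$ and the closed-plus-dense argument — are entirely formal.
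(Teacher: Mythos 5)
Your proposal is correct and follows essentially the same route as the paper: reduce to compactness of $X(\Delta)/(G\cdot H)$ using compactness of $G$, identify the dense open subset $G^\C/(G\cdot H)$ with $\mathfrak{g}/p(\mathfrak{h})$, use the homeomorphisms $F_I$ of Lemma \ref{lemm:quotient} to produce compact sets $K_I$ whose traces on the dense orbit are the preimages of the top-dimensional cones $q(C_I)$, and conclude by completeness of $q(\Delta)$ together with a finite-union-of-compacts / closure argument. The only cosmetic difference is that you define $K_I$ directly as $F_I^{-1}([0,1]^I)$ whereas the paper takes the closure of $\iota^{-1}(q(C_I))$ and observes its image is the closed cube; these give the same conclusion.
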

	\begin{proof}
		Since $X(\Delta)$ has the open dense orbit $G^\C$ corresponding to the empty set $\emptyset \in \Sigma$, $X(\Delta)/H$ also has the open dense orbit $G^\C/H$. First, we identify the quotient $G^\C/G\cdot H$ of the open dense subset $G^\C/H$ by $G$ with the vector space $\mathfrak{g}/p(\mathfrak{h})$ as follows. To each $[u+\sqrt{-1}v] \in (\mathfrak{g}\oplus \sqrt{-1}\mathfrak{g})/(\mathfrak{g}+\mathfrak{h})$ where $u,v \in \mathfrak{g}$ and $[u+\sqrt{-1}v]$ denotes the equivalence class, we assign $[v] \in \mathfrak{g}/p(\mathfrak{h})$ where $[v]$ denotes the equivalence class of $v \in \mathfrak{g}$. This is well-defined because if $u'+\sqrt{-1}v' \in \mathfrak{h}$ then $v' \in p(\mathfrak{h})$. Moreover, this is a bijective linear map. Since $G^\C/H = \mathfrak{g}^\C/(\mathfrak{h}+\Hom (S^1,G))$, the correspondence $[u+\sqrt{-1}v] \mapsto [v]$ is nothing but an isomorphism $\iota \co G^\C/G\cdot H \to \mathfrak{g}/p(\mathfrak{h})$. 
		
		Since $q$ sends the fan $\Delta$ to a complete fan $q(\Delta)$ in $\mathfrak{g}/p(\mathfrak{h})$, every cone $C_J \in \Delta$ is contained in a cone $C_I \in \Delta$ such that $\dim C_I = \dim \mathfrak{g}/p(\mathfrak{h})$. Hence the open subsets
		\begin{equation*}
			\left((G^\C/H) \times _{(G_I)^\C}\bigoplus _{i \in I}\C_{\alpha _i^I}\right) /G \subset X(\Delta)/G\cdot H
		\end{equation*}
		for $I \in \Sigma$ such that $\dim C_I = \dim \mathfrak{g}/p(\mathfrak{h})$ cover the quotient space $X(\Delta )/G\cdot H$. Remark that each open subset $\left((G^\C/H) \times _{(G_I)^\C}\bigoplus _{i \in I}\C_{\alpha _i^I}\right) /G$ contains $G^\C/G\cdot H$ as an open dense subset. Now we consider the preimage of the cone $q(C_I)$ by $\iota$. Suppose that $\dim C_I  = \dim \mathfrak{g}/p(\mathfrak{h})$ and let $F_I$ be the homeomorphism as Lemma \ref{lemm:quotient}. Then, 
		\begin{equation*}
			F_I (\iota^{-1}(q(v))) = (e^{-2\pi \langle \overline{\alpha_i^I} ,q(v)\rangle})_{i \in I}. 
		\end{equation*}
		Therefore, 
		\begin{equation*}
			F_I (\iota^{-1}(q(C_I)))  = \left\{ (r_i)_{i\in I} \in \prod _{i \in I}(\R_{\geq 0})_i\mid 0<r_i\leq 1\right\} 
		\end{equation*}
		because $\langle \overline{\alpha_i^I},q(v)\rangle \geq 0$ for $q(v) \in q(C_I)$. The closure of right hand side is a cube. In particular, it is compact. Therefore, closure $K_I$ of $\iota ^{-1}(q(C_I))$ in $\left((G^\C/H) \times _{(G_I)^\C}\bigoplus _{i \in I}\C_{\alpha _i^I}\right) /G$ is also compact and hence $K_I$ coincides with the closure of $\iota ^{-1}(q(C_I))$ in $X(\Delta )/G\cdot H$. 
		
		Since $G^\C/G\cdot H$ is the union of $\iota ^{-1}(q(C_I))$ such that $\dim C_I = \dim \mathfrak{g}/p(\mathfrak{h})$ because $q(\Delta)$ is complete, $X(\Delta)/G\cdot H$ is the union of compact subsets $K_I$'s. Since the number of cones in $\Delta$ is finite, $X(\Delta)/G\cdot H$ is a finite union of compact subsets and hence it is compact. The lemma is proved. 
	\end{proof}
	
	\begin{prop}\Label{prop:construction}
		Let $\Delta$ be a nonsingular fan in $\mathfrak{g}$ and let $\mathfrak{h}$ be a complex subspace of $\mathfrak{g}^\C$. Suppose that $\Delta$ and $\mathfrak{h}$ satisfy the following conditions:
		\begin{enumerate}
			\item the restriction $p|_{\mathfrak{h}}$ of the projection $p \co \mathfrak{g}^\C \to \mathfrak{g}$ is injective and 
			\item the quotient map $q \co \mathfrak{g} \to \mathfrak{g}/p(\mathfrak{h})$ sends a fan $\Delta$ to a complete fan $q(\Delta )$ in $\mathfrak{g}/p(\mathfrak{h})$. 
		\end{enumerate}
		Then, the quotient space $X(\Delta)/H$ of the toric variety associated with $\Delta$ by $H= \exp(\mathfrak{h})$ is a compact complex manifold equipped with the action of $G$ which is maximal and preserves the complex structure. 
	\end{prop}
	\begin{proof}
		By Lemmas \ref{lemm:closed}, \ref{lemm:free}, \ref{lemm:Hausdorff} and \ref{lemm:compact}, $X(\Delta)/H$ is a compact complex manifold. It remains to show that the action of $G$ on $X(\Delta)/H$ is maximal. Since $q(\Delta)$ is complete, there exists a cone $C_I \in \Delta$ such that $\dim C_I = \dim \mathfrak{g}/p(\mathfrak{h})$. Therefore there exists a point $\widetilde{x} \in X(\Delta)$ such that $\dim G_{\widetilde{x}} = \dim \mathfrak{g}/p(\mathfrak{h})$. Let $x$ be the point in $X(\Delta)/H$ to which the quotient map $X(\Delta) \to X(\Delta)/H$ sends $\widetilde{x}$. Since $G \cap H = \{1\}$ by condition (1), $G_x = G_{\widetilde{x}}$. Therefore, 
		\begin{equation*}
			\dim G + \dim G_x = 2\dim G-\dim H = \dim X(\Delta)/H,
		\end{equation*}
		that is, the action of $G$ on $X(\Delta)/H$ is maximal, proving the lemma. 
	\end{proof}Now we are in a position to show the first theorem in this paper. 
	\begin{theo}\Label{theo:maintheo}
		Let $M$ be a compact connected complex manifold equipped with a maximal action of a compact torus $G$ which preserves the complex structure on $M$. Let $\Delta$ and $\mathfrak{h}$ be the fan and complex subspace of $\mathfrak{g}^\C$ defined as in Section \ref{sec:fan}. Then, $M$ is $G$-equivariantly biholomorphic to $X(\Delta)/H$. 
	\end{theo}
	\begin{proof}
		Let $N$ be the open submanifold of $M$ defined as in Section \ref{sec:fan}. Let $Z_{G^\C}$ be the global stabilizers of the action of $G^\C$ on $M$. By Corollary \ref{coro:quotient}, $N$ is $G$-equivariantly biholomorphic to the quotient space $X(\Delta)/Z_{G^\C}$. Since $\mathfrak{h}$ is the Lie algebra of the global stabilizer $Z_{G^\C}$ and $Z_{G^\C}$ is connected by Lemma \ref{lemm:stabilizers}, $Z_{G^\C} = \exp (\mathfrak{h}) = H$. By Proposition \ref{prop:construction}, the quotient space $X(\Delta)/H$ is compact. Therefore, $N$ is open and closed in $M$. Since $M$ is connected, $N$ is the whole manifold $M$ and hence $M$ is $G$-equivariantly biholomorphic to the  quotient $X(\Delta)/H$. The theorem is proved. 
	\end{proof}
	\begin{rema}\Label{rema:N}
		As we saw in the proof of Theorem \ref{theo:maintheo}, $N$ coincides with the whole manifold $M$. So the characteristic submanifolds of $N$ are closed submanifolds of $M$. Therefore we can call each of them a characteristic submanifold of $M$. As a result, we can deduce the fan $\Delta$ without passing the open submanifold $N$. 
	\end{rema}

\section{Category equivalence}\Label{sec:category}
Let $\mathcal{C}_1$ denote the class of all $(M,G,y)$ of a compact connected complex manifold $M$ equipped with a maximal action of a compact torus $G$ and a point $y \in M$ such that $G_y = \{1\}$. For $(M_1,G_1,y_1), (M_2,G_2,y_2) \in \mathcal{C}_1$, we say that a holomorphic map $\psi \co M_1 \to M_2$ is a morphism $\psi \co (M_1,G_1,y_1) \to (M_2,G_2,y_2)$ if 
\begin{enumerate}
	\item $\psi(y_1) = y_2$ and 
	\item there exists a group homomorphism $\alpha \co G_1 \to G_2$ such that $\psi$ is $\alpha$-equivariant. 
\end{enumerate}
Obviously $\mathcal{C}_1$ with the hom-sets $\Hom _{\mathcal{C}_1}((M_1,G_1,y_1),(M_2,G_2,y_2))$ forms a category. 

Let $\mathcal{C}_2$ denote the class of all triples $(\Delta, \mathfrak{h}, G)$ of a nonsingular fan $\Delta$ in $\mathfrak{g}$, a complex subspace $\mathfrak{h}$ of $\mathfrak{g}^\C$ such that 
\begin{enumerate}
	\item the restriction of $p|_{\mathfrak{h}}$ of the projection $p \co \mathfrak{g}^\C \to \mathfrak{g}$ is injective and 
	\item the quotient map $q \co \mathfrak{g}  \to \mathfrak{g}/p(\mathfrak{h})$ sends $\Delta$ to a complete fan $q(\Delta)$ in $\mathfrak{g}/p(\mathfrak{h})$, 
\end{enumerate}
and a compact torus $G$ whose Lie algebra is $\mathfrak{g}$. For $(\Delta_1,\mathfrak{h}_1, G_1), (\Delta_2,\mathfrak{h}_2, G_2) \in \mathcal{C}_2$, a morphism $\alpha \co (\Delta_1,\mathfrak{h}_1,G_1) \to (\Delta_2,\mathfrak{h}_2,G_2)$ is a group homomorphism $\alpha \co G_1 \to G_2$ such that 
\begin{enumerate}
	\item the differential $(d\alpha)_1 \co \mathfrak{g}_1 \to \mathfrak{g}_2$ at the unit of $G_1$ induces a morphism $\Delta_1 \to \Delta_2$ of fans and 
	\item $(d\alpha)_1^\C := (d\alpha)_1 \otimes \id_{\C} \co \mathfrak{g}_1^\C \to \mathfrak{g}_2^\C$ satisfies $(d\alpha_1)^\C(\mathfrak{h}_1) \subseteq \mathfrak{h}_2$. 
\end{enumerate}
$\mathcal{C}_2$ with the hom-sets $\Hom_{\mathcal{C}_2}((\Delta_1,\mathfrak{h}_1,G_1),(\Delta_2,\mathfrak{h}_2,G_2))$ also forms a category. In this section, we show that the categories $\mathcal{C}_1$ and $\mathcal{C}_2$ are equivalent. 

In Section \ref{sec:fan}, we assigned a nonsingular fan $\Delta$ in $\mathfrak{g}$ and a complex subspace $\mathfrak{h}$ of $\mathfrak{g}^\C$ to each compact connected complex manifold $M$ equipped with a maximal action of a compact torus $G$ which preserves the complex structure on $M$. This assignment defines the object function $\mathcal{F}_1 \co \mathcal{C}_1 \to \mathcal{C}_2$ by Lemma \ref{lemm:snake} and Corollary \ref{coro:complete}. For simplicity, we often denote by $\mathcal{F}_1(M,G) \in \mathcal{C}_2$ the value $\mathcal{F}_1(M,G,y)$ for $y \in M$ such that $G_y$ is trivial in the sequel. Remark that $\mathcal{F}_1(M,G,y)$ does not depend on the choice of $y$. 

Conversely, by Proposition \ref{prop:construction}, we have the object function $\mathcal{F}_2 \co \mathcal{C}_2 \to \mathcal{C}_1$ via $\mathcal{F}_2(\Delta, \mathfrak{h}, G) = (X(\Delta)/H, G, [1_{X(\Delta)}])$, where $1_{X(\Delta)}$ is the point $[1_{G^\C}, (1)_{i \in I}] \in G^\C \times _{(G_I)^\C} \bigoplus_{i \in I}\C_{\alpha_i^I} \subset X(\Delta)$ (see \eqref{eq:toric}). 

We would like to define a mapping function and use the same notation $\mathcal{F}_1$. To a morphism $\psi \in \Hom _{\mathcal{C}_1}((M_1,G_1,y_1),(M_2,G_2,y_2))$, we define $\mathcal{F}_1(\psi)$ to be the  group homomorphism $\alpha \co G_1 \to G_2$ if $\psi$ is $\alpha$-equivariant. Since $\psi(y_1) = y_2$, and $(G_1)_{y_1}$ and $(G_2)_{y_2}$ both are trivial, $\alpha$ is uniquely determined. 

\begin{lemm}\Label{lemm:F1}
	$\mathcal{F}_1$ satisfies the following: 
	\begin{enumerate}
		\item $\mathcal{F}_1(\id _{M_1}) = \id _G$ for $(M,G,y) \in \mathcal{C}_1$, 
		\item $\mathcal{F}_1(\psi_2 \circ \psi_1) = \mathcal{F}_1(\psi_2) \circ \mathcal{F}_1(\psi_1)$ for morphisms $\psi_1 \in \Hom _{\mathcal{C}_1}((M_1,G_1,y_1),(M_2,G_2,y_2))$ and $\psi_2 \in \Hom _{\mathcal{C}_2}((M_2,G_2,y_2),(M_3,G_3,y_3))$, 
		\item $\mathcal{F}_1(\psi) = \Hom _{\mathcal{C}_2}(\mathcal{F}_1(M_1,G_1),\mathcal{F}_1(M_2,G_2)$ for $\psi \in \Hom _{\mathcal{C}_1}((M_1,G_1,y_1),(M_2,G_2,y_2))$. 
	\end{enumerate}
	In particular, $\mathcal{F}_1 \co \mathcal{C}_1 \to \mathcal{C}_2$ is a covariant functor. 
\end{lemm}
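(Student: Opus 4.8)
The plan is to check the three properties in turn; (1) and (2) are formal consequences of the definitions of $\mathcal{F}_1$ on morphisms and of composition, while (3)---that the homomorphism attached to $[\psi]$ is an honest morphism of the combinatorial data---is where the geometry of Sections \ref{sec:fan}--\ref{sec:maintheo} is needed. For (1), the identity $\id_M$ is tautologically $\id_G$-equivariant and sends the free $G$-orbit to itself, so the homomorphism attached to $[\id_M]$ by the defining property of $\mathcal{F}_1$ is $\id_G$. For (2), I would choose representatives $\psi_1$ of $[\psi_1]$ and $\psi_2$ of $[\psi_2]$ together with the equivariant biholomorphism $\varphi_2$ appearing in the definition of composition (Lemma \ref{lemm:composition}); writing $\alpha_i := \mathcal{F}_1([\psi_i])$, the map $\varphi_2$ is $\id_{G_2}$-equivariant, so $\psi_2\circ\varphi_2\circ\psi_1$ is $(\alpha_2\circ\alpha_1)$-equivariant and still carries a free $G_1$-orbit into a free $G_3$-orbit. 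By uniqueness of the attached homomorphism (Lemma \ref{lemm:equivariant2}) this yields $\mathcal{F}_1([\psi_2]\circ[\psi_1])=\alpha_2\circ\alpha_1$.

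For (3), set $\alpha := \mathcal{F}_1([\psi])$. The preliminary step is that, by the remark in Section \ref{sec:complexified}, a holomorphic $\alpha$-equivariant $\psi$ is automatically $\alpha$-equivariant for the complexified actions of $G_i^\C$, where $\alpha$ now also denotes the complexified homomorphism $G_1^\C\to G_2^\C$. To obtain $(d\alpha)_1^\C(\mathfrak{h}_1)\subseteq\mathfrak{h}_2$, I would fix a point $q$ in the free $G_1$-orbit. As the isotropy computation of Section \ref{sec:fan} shows, a free $G$-orbit is exactly the open dense orbit, so $\psi(q)$ sits in the open dense $G_2^\C$-orbit, whose stabilizer is $Z_{G_2^\C}=H_2$ (connected by Lemma \ref{lemm:stabilizers}). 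For any $g\in H_1 = Z_{G_1^\C}$ we then have $\alpha(g)\cdot\psi(q)=\psi(g\cdot q)=\psi(q)$, whence $\alpha(H_1)\subseteq H_2$; differentiating gives $(d\alpha)_1^\C(\mathfrak{h}_1)\subseteq\mathfrak{h}_2$, and in particular $\alpha$ descends to $\bar\alpha\colon G_1^M\to G_2^M$.

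The fan condition I would deduce from the degeneration curves $c_q^v(r)=\exp(-rJX_v)(q)=\exp(-\sqrt{-1}\,rv)\cdot q$. Equivariance for the complexified action gives $\psi(c_q^v(r))=c_{\psi(q)}^{(d\alpha)_1(v)}(r)$ in $M_2$. Fix $I\in\Sigma_1$ and $v\in C_I^0$: by Lemma \ref{lemm:limit} the curve $c_q^v$ converges as $r\to-\infty$ to a point $q'(v)\in N_I^0$, so by continuity $c_{\psi(q)}^{(d\alpha)_1(v)}$ converges to $\psi(q'(v))$, and since $M_2=N_2$ (Theorem \ref{theo:maintheo}, Remark \ref{rema:N}) Lemma \ref{lemm:limit} forces $(d\alpha)_1(v)$ into the relative interior of the cone of $\Delta_2$ indexing the stratum that contains $\psi(q'(v))$. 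The crucial point---and the step I expect to be the main obstacle---is that this cone does not depend on $v$. I would establish this by noting that $N_I^0$ is a single $G_1^M$-orbit, which can be read off from the slice description of Lemma \ref{lemm:slicetheorem}; consequently $\psi(N_I^0)=\bar\alpha(G_1^M)\cdot\psi(q'(v))$ lies in a single $G_2^M$-orbit, i.e.\ in one stratum $(N_2)_J^0$. Hence $(d\alpha)_1(C_I^0)\subseteq C_J^0$ for a fixed $J\in\Sigma_2$, and taking closures (using $C_I=\overline{C_I^0}$ and continuity) gives $(d\alpha)_1(C_I)\subseteq C_J$.

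This shows $(d\alpha)_1$ is a morphism of fans $\Delta_1\to\Delta_2$, and together with $(d\alpha)_1^\C(\mathfrak{h}_1)\subseteq\mathfrak{h}_2$ it gives $\mathcal{F}_1([\psi])\in\Hom_{\mathcal{C}_2}(\mathcal{F}_1([M_1,G_1]),\mathcal{F}_1([M_2,G_2]))$, proving (3). Finally, properties (1)--(3) are precisely the statements that $\mathcal{F}_1$ preserves identities, preserves composition, and sends morphisms to morphisms, so $\mathcal{F}_1\colon\mathcal{C}_1\to\mathcal{C}_2$ is a covariant functor, as asserted.
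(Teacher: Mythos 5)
Your proposal is correct and follows essentially the same route as the paper: parts (1) and (2) are formal consequences of the uniqueness of the attached homomorphism, and for part (3) the paper likewise uses the complexified equivariance of $\psi$ together with the degeneration curves $c_q^v$ and Lemma \ref{lemm:limit} to get the fan condition, and the characterization of $\mathfrak{h}_i$ as the Lie algebra of the global stabilizers to get $(d\alpha)_1^\C(\mathfrak{h}_1)\subseteq\mathfrak{h}_2$. The only (harmless) deviation is in showing the target cone is independent of $v\in C_I^0$: the paper observes that the limit point $x_\sigma$ itself does not vary with $v$, whereas you let it vary and instead use that $N_I^0$ is a single $G_1^M$-orbit, so its image lies in a single stratum of $M_2$.
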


\begin{proof}
		Parts (1) and (2) are obvious. 
		
		For $i=1,2$, let $(M_i,G_i,y_i) \in \mathcal{C}_1$ and suppose that $\mathcal{F}_1(M_i,G_i,y_i)=(\Delta_i,\mathfrak{h}_i,G_i)$. Let $\psi \in \Hom _{\mathcal{C}_1}((M_1,G_1,y_1),(M_2,G_2,y_2))$  and let $\mathcal{F}_1(\psi) = \alpha$. Let $\sigma$ be a cone in $\Delta$ and let $v \in \mathfrak{g}_1$ be an element which belongs to the relative interior of $\sigma$. Let $c_{y_1}^v \co \R \to M_1$ be the curve as Lemma \ref{lemm:limit} above. By Lemma \ref{lemm:limit}, the curve converges to a point in $M_1$ as $r$ approaches to $-\infty$. Moreover,  the limit point, say $x_\sigma$, does not vary with $v$ as long as $v$ belongs to the relative interior of $\sigma$.  Since $\psi \co M_1 \to M_2$ is an $\alpha$-equivariant holomorphic map such that $\psi (y_1)=y_2$, we have that $\psi \circ c_{y_1}^v (r)= c_{y_2}^{(d\alpha)_1(v)}(r)$. Therefore the curve $c_{y_2}^{(d\alpha)_1(v)}(r)$ converges to $\psi (x_\sigma)$ as $r$ approaches to $-\infty$. By Lemma \ref{lemm:limit}, $(d\alpha)_1(v) \in \mathfrak{g}_2$ belongs to the relative interior of a cone in $\Delta_2$ for all $v$ which belongs to the relative interior of $\sigma$. This shows that $(d\alpha)_1$ induces a morphism of fans from $\Delta_1$ to $\Delta_2$. 
		
		Since $\psi$ is $\alpha$-equivariant (with respect to $\alpha \co G_1 \to G_2$) and holomorphic, $\psi$ is an $\alpha$-equivariant holomorphic map with respect to the complexified homomorphism $\alpha \co G_1^\C \to G_2^\C$. Since $\mathfrak{h}_i$ is the Lie algebra of the global stabilizers of the action of $G_i^\C$ on $M_i$,  we have that $(d\alpha)_1^\C(\mathfrak{h}_1) \subseteq\mathfrak{h}_2$. Therefore $\mathcal{F}_1(\psi) =\alpha$ is a morphism in $\Hom _{\mathcal{C}_2}((\Delta_1,\mathfrak{h}_1,G_1),(\Delta_2,\mathfrak{h}_2,G_2))$, proving Part (3). 
	\end{proof}
Let $(\Delta_1,\mathfrak{h}_1,G_1), (\Delta_2,\mathfrak{h}_2,G_2) \in \mathcal{C}_2$ and $\alpha \in \Hom _{\mathcal{C}_2}((\Delta_1,\mathfrak{h}_1,G_1), (\Delta_2,\mathfrak{h}_2,G_2))$. Since $(d\alpha)_1 \co \mathfrak{g}_1 \to \mathfrak{g}_2$ induces a morphism of fans from $\Delta_1$ to $\Delta_2$, it also induces a toric morphism $\widetilde{\psi_\alpha} \co X(\Delta_1) \to X(\Delta_2)$ such that $\widetilde{\psi_\alpha}(1_{X(\Delta_1)}) = 1_{X(\Delta_2)}$. Since $(d\alpha_1)^\C (\mathfrak{h}_1) \subseteq \mathfrak{h}_2$, the toric morphism $\widetilde{\psi_\alpha}$ descends to an $\alpha$-equivariant holomorphic map $\psi_\alpha \co X(\Delta_1)/H_1 \to X(\Delta_2)/H_2$, where $H_i = \exp (\mathfrak{h}_i)$, $i =1,2$. 

Therefore , the morphism $\alpha$ induces a morphism
\begin{equation*}
	\psi_\alpha \in \Hom_{\mathcal{C}_1}((X(\Delta_1)/H_1,G_1,[1_{X(\Delta_1)}]),(X(\Delta_2)/H_2, G_2,[1_{X(\Delta_2)}])). 
\end{equation*}
One can see that the object function $\mathcal{F}_2 \co \mathcal{C}_2 \to \mathcal{C}_1$ together with the mapping function $\mathcal{F}_2$ (use the same notation) which assigns $\psi_\alpha$ to $\alpha$ is a covariant functor. 

\begin{theo}\Label{theo:equivalence}	
	The covariant functors $\mathcal{F}_1$ and $\mathcal{F}_2$ are weak inverse to each other. In particular, $\mathcal{C}_1$ and $\mathcal{C}_2$ are equivalent. 
\end{theo} 
Before the proof of Theorem \ref{theo:equivalence}, we prepare $2$ lemmas. 

\begin{lemm}\Label{lemm:equivariant}
	Let $(M,G,y_1), (M,G,y_2) \in \mathcal{C}_1$. Then, there exists a unique $G$-equivariant biholomorphism $\psi \co M \to M$ such that $\psi(y_1) =y_2$. 
\end{lemm}
\begin{proof}
	It follows from Theorem \ref{theo:maintheo} that the union of free $G$-orbits of $M$ coincides with the open dense $G^\C$-orbit of $M$. Since $y_1$ and $y_2$ sits in the same $G^\C$-orbit, there exists an element $g \in G^\C$ such that $g \cdot y_1 = y_2$. The biholomorphic map $\psi \co p \mapsto g\cdot p$ is what we wanted, proving the lemma. 
\end{proof}
\begin{lemm}\Label{lemm:equivariant2}
	Let $(M_1,G_1,y_1), (M_2,G_2,y_2) \in \mathcal{C}_1$ and let $\psi, \psi' \in \Hom _{\mathcal{C}_1}((M_1,G_1,y_1), (M_2,G_2,y_2))$ be $\alpha$-equivariant for the same $\alpha \co G_1 \to G_2$. Then, $\psi = \psi'$.
\end{lemm}
\begin{proof}
	As in the proof of Lemma \ref{lemm:equivariant}, $\psi$ coincides with $\psi'$ on the open dense $G_1^\C$-orbit of $M_1$ because $\psi(y_1) = \psi'(y_1)$. So $\psi = \psi'$, as required. 
\end{proof}

\begin{proof}[Proof of Theorem \ref{theo:equivalence}]
	Let $(\Delta, \mathfrak{h}, G) \in \mathcal{C}_2$. By Theorem \ref{theo:maintheo}, $\mathcal{F}_1 \circ \mathcal{F}_2 (\Delta, \mathfrak{h}, G) = (\Delta, \mathfrak{h}, G)$. For a morphism $\alpha$ in $\mathcal{C}_2$, the map $\mathcal{F}_2(\alpha) = \psi_\alpha$ is an $\alpha$-equivariant map. Therefore $\mathcal{F}_1\circ \mathcal{F}_2 (\alpha) = \alpha$, showing that $\mathcal{F}_2 \circ \mathcal{F}_1 = \id _{\mathcal{C}_2}$. 
	
	It remains to show the existence of the natural isomorphism $\eta$ between $\mathcal{F}_2 \circ \mathcal{F}_1$ and $\id_{\mathcal{C}_1}$. Let $(M, G, y) \in \mathcal{C}_1$ and let $(\Delta, \mathfrak{h}, G) =\mathcal{F}_1 (M, G,y) \in \mathcal{C}_2$. By Theorem \ref{theo:maintheo}, there exists a $G$-equivariant biholomorphic map $\psi \co M \to X(\Delta)/H$. Then, $\psi (y)$ sits in a free $G$-orbit. By Lemma \ref{lemm:equivariant}, there exists a $G$-equivariant biholomorphic map $\psi ' \co X(\Delta)/H \to X(\Delta)/H$ such that $\psi' \circ \psi(y) = [1_{X(\Delta)}]$. Therefore there exists an isomorphism $\eta_{(M,G,y)} := \psi' \circ \psi \co (M,G,y) \to (X(\Delta)/H, G, [1_{X(\Delta)}])$. By Lemma \ref{lemm:equivariant2}, such an isomorphism $\eta_{(M,G,y)}$ is unique. Let $(M_i,G_i,y_i) \in \mathcal{C}_1$ for $i=1,2$. Let $\psi \in \Hom _{\mathcal{C}_2}((M_1,G_1,y_1),(M_2,G_2,y_2))$ be a morphism and suppose that $\alpha = \mathcal{F}_1(\psi)$. Let $(\Delta_i, \mathfrak{h}_i, G_i) = \mathcal{F}_1(M_i,G_i,y_i)$ for $i=1,2$. Then, the morphism $\eta_{(M_2,G_2,y_2)} \circ \psi \circ \eta_{(M_1,G_1,y_1)}^{-1} \co (X(\Delta_1)/H_1,G_1,[1_{X(\Delta_1)}]) \to (X(\Delta_2)/H_2,G_2,[1_{X(\Delta_2)}])$ coincides with $\psi_\alpha$ by Lemma \ref{lemm:equivariant2}. It turns out that the collection $\eta$ of $\eta_{(M,G,y)}$ for all $(M,G,y) \in \mathcal{C}_1$ is a natural isomorphism between $\mathcal{F}_2 \circ \mathcal{F}_1$ and $\id _{\mathcal{C}_1}$. The theorem is proved. 
\end{proof}

Let $G_1$ and $G_2$ be groups. We say that a map $f \co X_1 \to X_2$ between a $G_1$-set $X_1$ and a $G_2$-set $X_2$ is \emph{weakly equivariant} if there exists a group isomorphism $\alpha \co G_1 \to G_2$ such that $f\circ g = \alpha (g) \circ f$ for all $g \in G_1$. Let $(M_i,G_i,y_i) \in \mathcal{C}_1$ for $i=1,2$. It immediately follows from Theorem \ref{theo:equivalence} that $M_1$ and $M_2$ are weakly equivariantly biholomorphic to each other if and only if $\mathcal{F}_1(M_1,G_1,y_1)$ and $\mathcal{F}_1(M_2,G_2,y_2)$ are isomorphic. 

We can also classify the (non-equivariant) biholomorphism types of compact connected complex manifolds which admit maximal actions of tori in terms of $\mathcal{C}_2$ by using an argument similar to \cite[Lemma 4.1]{Battisti}. 

\begin{theo}\Label{theo:biholo}
	Let $M_1$ and $M_2$ be compact connected complex manifolds, equipped with maximal actions of compact tori $G_1$ and $G_2$ which preserve the complex structures on $M_1$ and $M_2$, respectively. Then, $M_1$ and $M_2$ are biholomorphic to each other if and only if $\mathcal{F}_1(M_1,G_1)$ and $\mathcal{F}_1(M_2,G_2)$ are isomorphic. 
\end{theo}
\begin{proof}
	Since $M_1$ is weakly equivariantly biholomorphic to $M_2$ if and only if $\mathcal{F}_1(M_1,G_1)$ is isomorphic to $\mathcal{F}_1(M_2,G_2)$, the ``if" part is obvious. 
	
	We show the ``only if" part. The group $\Aut (M_i)$ of all biholomorphisms from $M_i$ to $M_i$ itself is a complex Lie group (see \cite[Section 9]{Bochner-Montgomery}). Let $\psi \co M_1 \to M_2$ be a biholomorphism. The biholomorphism $\psi \co M_1 \to M_2$ induces a complex Lie group isomorphism $\psi_* \co \Aut (M_1) \to \Aut (M_2)$. Lert $K_i$ be a maximal connected compact Lie subgroups of $\Aut (M_i)$ which contains $G_i$. Since maximal connected compact Lie subgroups $\psi_*(K_1)$ and $K_2$ of $\Aut (M_2)$ are conjugate to each other (see \cite[Chapter XV, Section 3]{Hochschild}), there exists an element $g \in \Aut(M_2)$ such that $g\psi_*(K_1)g^{-1} = K_2$. Moreover, since $g\psi_*(G_1)g^{-1}$ and $G_2$ are maximal tori in $K_2$, there exists an element $g' \in K_2$ such that $g'g\psi_*(G_1)g^{-1}g'^{-1} = G_2$. Put $g'' := \psi_*^{-1}(g'g)$ and then $g'' \circ \psi \circ g''^{-1} \co M_1 \to M_2$ is a biholomorphism which induces a group isomorphism $(g'' \circ \psi \circ g''^{-1})_*$ whose restriction to $G_1$ is an isomorphism onto $G_2$. Therefore $g'' \circ \psi \circ g''^{-1}$ is an $\alpha$-equivariant biholomorphism for some group isomorphism $\alpha \co G_1 \to G_2$. Let $y_1 \in M_1$ such that $(G_1)_{y_1}$ is trivial. Put $y_2 := g'' \circ \psi \circ g''^{-1}(y_1)$. Then, $(G_2)_{y_2}$ is trivial and $g'' \circ \psi \circ g''^{-1} \co (M_1,G_1,y_1) \to (M_2,G_2,y_2)$ is an isomorphism. Applying $\mathcal{F}_1$, we have that $\mathcal{F}_1(M_1,G_1,y_1)$ is isomorphic to $\mathcal{F}_1(M_2,G_2,y_2)$, proving the theorem. 
\end{proof}

\section{LVMB manifolds and moment-angle manifolds}\Label{sec:LVMBmam}
	In this section, we elaborate on the relationships between complex manifolds with maximal torus actions, LVMB manifolds (introduced by Bosio in \cite{Bosio}) and moment-angle manifolds with invariant complex structures (given by Tambour and Panov-Ustinovsky in \cite{Tambour} and \cite{Panov-Ustinovsky} independently). 
	
	First, we recall the notion of moment-angle complexes briefly. Let $\Sigma$ be an abstract simplicial complex on $\{1,\dots, n\}$ (we do not require each singleton $\{j\}$ to be an element of $\Sigma$). For each simplex $I \in \Sigma$, we set
	\begin{equation*}
		(\overline{\D},S^1)^I := \{ (z_1,\dots, z_n) \in \overline{\D}^n \mid z_j \in S^1 \text{ if $j \notin I$}\} \subset \C^n
	\end{equation*}
	and 
	\begin{equation*}
		\mathcal{Z}_\Sigma := \bigcup _{I \in \Sigma}(\overline{\D},S^1)^I.
	\end{equation*}
	Since each $(\overline{\D},S^1)^I$ is invariant under the natural $(S^1)^n$-action on $\C^n$, so is $\mathcal{Z}_\Sigma$. The topological space $\mathcal{Z}_\Sigma$ is called a \emph{moment-angle complex}. Moment-angle complexes play an important role in toric topology (see \cite[Chapter 6]{Buchstaber-Panov} for details). For examples, $\mathcal{Z}_\Sigma$ is a topological $(n+d)$-manifold when the geometric realization $|\Sigma|$ of $\Sigma$ is homeomorphic to a $(d-1)$-sphere, and becomes a total space of a principal torus bundle over a complete nonsingular toric variety $X(\Delta)$ when $\Sigma$ is isomorphic to the underlying simplicial complex of $\Delta$.  In the case when $\mathcal{Z}_\Sigma$ is a topological manifold, we call $\mathcal{Z}_\Sigma$ \emph{the moment-angle manifold}. In \cite[Construction 3.1 and Theorem 3.3]{Panov-Ustinovsky} and \cite[COROLLARY 5.6]{Tambour}, complex structures invariant under the natural torus actions are constructed under some conditions on $\Sigma$.

	We recall the construction of LVMB manifolds. Let $m, n$ be positive integers such that $2m \leq n$ and let $\mathcal{L} := (\ell_0,\dots, \ell_n)$ be $n+1$ linear forms of $\C ^m$ such that any subfamily of $2m+1$ elements of $\mathcal{L}$ spans $(\C^m)^\vee$ $\R$-affinely, where $(\C^m)^\vee$ denotes the dual space of $\C^m$. Let $\mathcal{E}_{m,n}$ denote a family of $(2m+1)$-subsets of $\{0,\dots, n\}$. For every $P \in \mathcal{E}_{m,n}$, let $\mathcal{L}_P$ denote the corresponding subfamily of $\mathcal{L}$. An \emph{LVMB datum} is a pair $(\mathcal{L}, \mathcal{E}_{m,n})$ which satisfies the following two conditions:
	\begin{enumerate}
		\item For all $P \in \mathcal{E}_{m,n}$ and $i \in \{ 0,\dots, n\}$, there exists $j \in P$ such that $(P \setminus \{j\}) \cup \{i\} \in \mathcal{E}_{m,n}$. 
		\item For all $P, Q \in \mathcal{E}_{m,n}$, the interiors of the convex hulls of $\mathcal{L}_P$ and $\mathcal{L}_Q$ have non-empty intersection. 
	\end{enumerate}
	For each $P \in \mathcal{E}_{m,n}$, we set an open subset 
	\begin{equation*}
		U_P := \{ z =[z_0,\dots, z_n] \in \C P^n \mid z_i \neq 0 \text{ if $i \in P$} \}
	\end{equation*}
	and the union $U := \bigcup_{ P \in \mathcal{E}_{m,n}}U_P$.

	Define an action of $H = \C ^m$ on $U$ by 
	\begin{equation*}
		v \cdot [z_0,\dots, z_n] := [e^{\ell_0(v)}z_0,\dots, e^{\ell _n(v)}z_n] \quad \text{for $v \in \C^m$}. 
	\end{equation*}
	Bosio showed that the action of $H$ on $U$ is free, proper and cocompact. Hence the quotient space $X_{n,m} := U/H$ is a compact complex manifold of complex dimension $n-m$ and called an \emph{LVMB manifold}. 	
	
	Let $G = (S^1)^n$ act on $\C P^n$ by 
	\begin{equation*}
		(g_1,\dots, g_n) \cdot [z_0,\dots, z_n] := [z_0,g_1z_1,\dots, g_nz_n]
	\end{equation*}
	for $(g_1,\dots ,g_n) \in G$ and $[z_0,\dots, z_n] \in \C P^n$. 
	By definition of $U$, $U$ is invariant under the action of $G$. Since the action of $G$ commutes with the action of $H$, the action of $G$ on $U$ descends to an action on the LVMB manifold $X_{n,m} = U/H$. 	
	\begin{prop}\Label{prop:LVMBmax}
		Any LVMB manifold carries a maximal torus action which preserves the complex structure.
	\end{prop}
	\begin{proof}
		We use the same notations as above. Obviously, the $G$-action on $X_{n,m}=U/H$ preserves the complex structure on $X_{n,m}$. We need to show that there exists a point $x \in X=U/H$ such that $n+\dim G_x = 2(n-m)$. Since the quotient map $\pi : U \to X$ is isovariant (that is, $G_z =G_{\pi(z)}$ for any $z \in U$), it suffices to find a point $z \in U$ such that $\dim G_z =n-2m$. Let $P \in \mathcal{E}_{n,m}$. Put 
		\begin{equation*}
			z_i := \begin{cases}
				0 & \text{if $i \notin P$,}\\
				1 & \text{if $i \in P$.}
			\end{cases}
		\end{equation*}
		Then, the point $z := [z_0, \dots, z_n] \in \C P^n$ sits in $U_P$. In particular, $z \in U$. 
		If $0 \in P$, then $z_0 = 1$. In this case, the isotropy subgroup $G_z$ is the subgroup of elements $g \in G$ such that $g_i =1$ for all $i \in P \setminus \{0\}$, and hence $\dim G_z = n-2m$. If $0 \notin P$, then $z_0=0$. In this case, the isotropy subgroup $G_z$ is generated by the elements $g \in G$ such that $g_i=1$ for all $i \in P$ and the diagonal elements $(g_0,\dots, g_0) \in G$. Therefore, in the case when $0 \notin P$, $\dim G_z = n-2m$, too. This together with the case when $0 \in P$ shows that $\dim G_z =n-2m$, proving the proposition. 
		\end{proof}
		The following theorem characterizes LVMB manifolds in complex manifolds with maximal torus actions.
		\begin{theo}\Label{theo:LVMBchar}
			\begin{enumerate}
				\item Let $X_{n,m}$ be an LVMB manifold and let $(\Delta, \mathfrak{h},G) = \mathcal{F}_1(X_{n,m},G)$. Then, the fan $\Delta$ is a subfan of the fan of $\C P^n$. 
				\item Conversely, for $(\Delta, \mathfrak{h}, G) \in \mathcal{C}_2$ such that $\Delta$ is isomorphic to a subfan of the fan of $\C P^n$, the quotient space $X(\Delta)/H$ is an LVMB manifold. Namely, there exists an LVMB datum such that the corresponding LVMB manifold is the same as $X(\Delta)/H$. 
			\end{enumerate}
		\end{theo}
		\begin{proof}
			The Part (1) follows from the construction of the LVMB manifolds immediately. The Part (2) is due to \cite[Theorem 2.2]{Battisti}.
		\end{proof}
		
		For an LVMB datum $(\mathcal{L}, \mathcal{E}_{m,n})$, we say that an integer $i \in \{0,\dots, n\}$ is \emph{indispensable} if $i \in P$ for all $P \in \mathcal{E}_{m,n}$ (cf.~\cite{Bosio}, \cite{Meersseman} and \cite{Meersseman-Verjovsky}). There are $2$ types of LVMB data; one is the case with an indispensable integer and the other one is the case with no indispensable integer. 
		We shall clarify the relationships between indispensable integers and characteristic submanifolds. Let $(\mathcal{L}, \mathcal{E}_{m,n})$ be an LVMB datum and let $X_{n,m}$ be the corresponding LVMB manifold. Let $G=(S^1)^n$ act on $X_{n,m}$ as Proposition \ref{prop:LVMBmax} above. Let $(\Delta, \mathfrak{h}, G) = \mathcal{F}_1(X_{m,n},G)$. Suppose that $i$ is an indispensable integer. Then, each $U_P \subset \C P^n$ for $P \in \mathcal{E}_{m,n}$ does not contain points $z=[z_0,\dots, z_n]$ such that $z_i =0$. In particular, the fixed point set $U^{G_i}$ of $G_i$ in $U$ is empty, where
		\begin{equation*}
			G_i := \begin{cases}
				\{ (g,\dots, g) \in G \mid g \in S^1\} & \text{if $i =0$,}\\
				\{ (\underbrace{1,\dots,1}_{i-1}, g, \underbrace{1,\dots, 1}_{n-i}) \in G \mid g \in S^1\} & \text{otherwise}. 
			\end{cases}
		\end{equation*}
		Therefore, $X_{n,m}^{G_i} = \emptyset$. In particular, the fan $\Delta$ does not contain the $1$-cone $\tau_i$, where 
		\begin{equation*}
			\tau_i := \begin{cases}
				\pos (-e_1-\dots -e_n) & \text{if $i=0$,}\\
				\pos (e_i) & \text{otherwise,}
			\end{cases}
		\end{equation*}
		where $e_i$ denotes the $i$-th standard basis vector of $\R^n$ and $\pos (A)$ denotes the cone spanned by elements in $A$.
		The converse is also true. Namely, $i$ is indispensable if and only if $\Delta$ does not contain $\tau_i$. In particular, the number of indispensable points tells us the fundamental group of the LVMB manifold. Let $\mathcal{F}_1(X_{n,m}, G) = (\Delta,\mathfrak{h},G)$. Since $X_{n,m} \cong X(\Delta)/H$ and $H$ is contractible, $X_{n,m}$ is homotopy equivalent to $X(\Delta)$. Therefore, the fundamental group $\pi_1(X_{n,m})$ is isomorphic to $\Z^{k-1}$ if the LVMB datum has $k$ indispensable integers and $k>0$. In case when $k=0$, $X_{n,m}$ is simply connected. 
		
		Let $(\mathcal{L}, \mathcal{E}_{m,n})$ be an LVMB datum such that $0$ is indispensable. 
		Then, it follows from the observation above that the fan $\Delta$ is a subfan of the fan of $\C^n$. In this case, we can regard $U$ as an open subset of $\C^n$, not only of $\C P^n$. In fact, for any $z =[z_0,\dots, z_n] \in U$, $z_0  \neq 0$ and hence we have a biholomorphic map $f$ from $U$ onto an open subset of $\C^n$ via 
		\begin{equation*}
			f([z_0,\dots, z_n]) := \left( \frac{z_1}{z_0},\dots, \frac{z_n}{z_0}\right).
		\end{equation*}
		We shall see the image $f(U)$ of $U$ by $f$ for later use. 
		We define an abstract simplicial complex $\Sigma$ on $\{1,\dots, n\}$ given by
		\begin{equation*}
			\Sigma := \{ I \mid I \subseteq \{0,\dots, n\} \setminus P,\ ^\exists P \in \mathcal{E}_{m,n}\}. 
		\end{equation*}
		For each $I \in \Sigma$, we set 
		\begin{equation*}
			U(I) := \{ w =(w_1,\dots, w_n) \in \C ^n \mid w_i \neq 0 \text{ if $i \notin I$}\} 
		\end{equation*}
		and 
		\begin{equation*}	
			U(\Sigma) := \bigcup_{I \in \Sigma} U(I). 
		\end{equation*}
		Then, by definition of $f$, we have that $f(U_P) = U(\{0,\dots, n\} \setminus P)$ for each $P \in \mathcal{E}_{m,n}$. Therefore $f(U)$ coincides with $U(\Sigma)$. It turns out that the LVMB manifold $X_{n,m}$ is a quotient of the open subset $U(\Sigma)$ by an action of $\C^m$. 

		We shall state the following Lemma for later use.
		\begin{lemm}\Label{lemm:LVMB0indis}
			\begin{enumerate}
				\item Let $(\mathcal{L}, \mathcal{E}_{m,n})$ be an LVMB datum such that $0$ is indispensable and let $X_{n,m}$ be the LVMB manifold corresponding to $(\mathcal{L},\mathcal{E}_{m,n})$. Let $(\Delta, \mathfrak{h},G) = \mathcal{F}_1(X_{n,m},G)$. Then, $\Delta$ is a subfan of the fan of $\C^n$.
				\item Conversely, for $(\Delta, \mathfrak{h}, G) \in \mathcal{C}_2$ such that $\Delta$ is isomorphic to a subfan of the fan of $\C^n$, the quotient space $X(\Delta)/H$ is an LVMB manifold which comes from an LVMB datum such that $0$ is indispensable. 
			\end{enumerate}
		\end{lemm}
		
		By comparing Lemma \ref{lemm:LVMB0indis} with \cite[COROLLARY 5.6]{Tambour} and \cite[Construction 3.1 and Theorem 3.3]{Panov-Ustinovsky}, it follows that the LVMB manifold which comes from an LVMB datum such that $0$ is indispensable is equivariantly homeomorphic to a moment-angle manifold $\mathcal{Z}_\Sigma$. The following is an application of our classification theorem Theorem \ref{theo:maintheo}.

		\begin{theo}\Label{theo:mam}
			An even dimensional moment-angle manifold $\mathcal{Z}_\Sigma$ admits an invariant complex structure if and only if $\Sigma$ is star-shaped\footnote{A $(d-1)$-simplicial complex whose geometric realization is homeomorphic to $(d-1)$-sphere is said to be  \emph{star-shaped} if $\Sigma$ is isomorphic to the underlying simplicial complex of a simplicial complete fan in $\R ^d$.} and if and only if it is biholomorphic to an LVMB manifold which comes from an LVMB datum such that $0$ is indispensable. 
		\end{theo}
		\begin{proof}
			The ``if" part follows from \cite[COROLLARY 5.6]{Tambour} and \cite[Construction 3.1 and Theorem 3.3]{Panov-Ustinovsky}. We will show the ``only if" part. Let $\Sigma$ be an abstract simplicial complex on $\{1,\dots, n\}$ such that $|\Sigma|$ is a $(d-1)$-sphere. Suppose that the moment-angle manifold $\mathcal{Z}_\Sigma$ has an invariant complex structure and fix it. First, we claim that the $(S^1)^n$-action on $\mathcal{Z}_\Sigma$ is maximal. Since $\Sigma$ is homeomorphic to an $(d-1)$-sphere, there exists $I \in \Sigma$ such that the cardinarity $|I|$ of $I$ is equal to $d$. Put 
			\begin{equation*}
				x_i := \begin{cases}
					0 & \text{if $i \in I$}\\
					1 & \text{if $i \notin I$}.
				\end{cases}
			\end{equation*}
			Then, $x:=(x_1,\dots, x_n) \in (\D,S^1)^I \subset \mathcal{Z}_\Sigma$. The isotropy subgroup of $(S^1)^n$ at $x$ is the subgroup of the elements $g=(g_1,\dots, g_n)$ such that $g_i =1$ if $i \notin I$. Therefore $\dim G_x = d$. Since $\dim \mathcal{Z}_\Sigma = n+d$, the action of $(S^1)^n$ on $\mathcal{Z}_\Sigma$ is maximal. 
			
			Now we identify the Lie algebra of  $S^1$ with $\R$ through the differential of the exponential map $t \mapsto e^{2\pi\sqrt{-1}t}$. Then, the nonsingular fan $\Delta$ for $\mathcal{F}_1(\mathcal{Z}_\Sigma,(S^1)^n) = (\Delta, \mathfrak{h}, (S^1)^n) \in \mathcal{C}_2$ is as follows. The fan $\Delta$ in $\R^m$ is the collection of cones 
	\begin{equation*}
		\{ \pos (s_ie_i \mid i \in I) \mid I \in \Sigma\}
	\end{equation*}
	where $s_i =\pm 1$. Clearly, $\Delta$ is isomorphic to a subfan of the fan of $\C^n$. 
	The Lie algebra of $((S^1)^n)^\C = (\C^*)^n$ is identified with $\C^m$ throuth the exponential map. The complex subspace $\mathfrak{h}$ of $\R^m \otimes _\R \C = \C^m$ satisfies the conditions
	\begin{enumerate}
		\item the restriction $p|_{\mathfrak{h}}$ of the projection to the real part $p \co \C^m \to \R^m$ is injective,
		\item the quotient map $q \co \R^m \to \R^m/p(\mathfrak{h})$ sends $\Delta$ to a complete fan in $\R^m/p(\mathfrak{h})$
	\end{enumerate}
	by Lemma \ref{lemm:snake} and Corollary \ref{coro:complete}. By condition (2), the geometric realization $|\Sigma|$ of the simplicial complex $\Sigma$ should be homeomorphic to a sphere and $\Sigma$ should be star-shaped. 
	
	Since the fan $\Delta$ is isomorphic to a subfan of a fan of $\C^n$, it follows from Lemma \ref{lemm:LVMB0indis} that $\mathcal{Z}_\Sigma$ is an LVMB manifold which comes from an LVMB datum such that $0$ is indispensable. The theorem is proved. 
		\end{proof}
		\begin{rema}
		In \cite{Lu-Panov}, a moment-angle complex $\mathcal{Z}_\Sigma$ for a simplicial poset $\Sigma$ is introduced and has nice properties as well as the case when $\Sigma$ is a simplicial complex. For example, $\mathcal{Z}_\Sigma$ is also a manifold when the geometric realization of $\Sigma$ is homeomorphic to a sphere. One can see that  the action of $(S^1)^m$ on $\mathcal{Z}_\Sigma$ is maximal. However, our classification Theorem \ref{theo:maintheo} yields that $\Sigma$ should be a simplicial complex when $\mathcal{Z}_\Sigma$ admits a complex structure invariant under the action of $(S^1)^m$. 
	\end{rema}
\section{Equivariant principal bundles}\Label{sec:epb}
	In this section, we study on the equivariant principal bundles in complex manifolds with maximal torus actions. 
	It has been shown that certain LVM manifolds carry principal bundles over projective quasi-regular toric varieties. Conversely, starting from any quasi-regular projective toric variety, a principal bundle over it whose total space is an LVM manifold can be constructed (see \cite{Meersseman-Verjovsky} for details). It turns out that any projective smooth toric variety can be obtained from an LVMB manifold as a quotient space. 
	
	The main purpose in this section is to show that LVMB manifolds appear as building blocks of complex manifolds with maximal torus actions. We begin with the following general consequence. 

	\begin{theo}\Label{theo:principal}
		Let $(\Delta_i,\mathfrak{h}_i,G_i) \in \mathcal{C}_2$ for $i=1,2$  and let $\alpha \in \Hom _{\mathcal{C}_2}((\Delta_1,\mathfrak{h}_1,G_1), (\Delta_2,\mathfrak{h}_2,G_2))$. Let $G' \subset G_1$ denote the kernel of $\alpha$ (as a group homomorphism). Let $H_i$ denote the image of $\mathfrak{h}_i$ by the exponential map for $i=1,2$. Then, 
		\begin{enumerate}
			\item the induced $\alpha$-equivariant map $\mathcal{F}_2(\alpha) = \psi_\alpha \co X(\Delta_1)/H_1 \to X(\Delta_2)/H_2$ is $G'$-invariant.
			\item $\psi_\alpha$ is a principal $G'$-bundle if and only if $\alpha$ is surjective, and $(d\alpha)_1$ gives a one-to-one correspondence from the primitive generators of $1$-cones in $\Delta_1$ to the primitive generators of $1$-cones in $\Delta_2$. 
		\end{enumerate}
	\end{theo}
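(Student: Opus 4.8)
The plan is to dispose of part (1) immediately and then to devote the work to the equivalence in part (2), which I would prove by separating the combinatorial ``if'' direction from the more delicate ``only if'' direction. For part (1), recall that $\psi_\alpha$ is by construction $\alpha$-equivariant, so for $g' \in G' = \ker\alpha$ and any $y \in X(\Delta_1)/H_1$ we have $\psi_\alpha(g'\cdot y) = \alpha(g')\cdot\psi_\alpha(y) = \psi_\alpha(y)$; hence $\psi_\alpha$ factors through the $G'$-orbit space. The content of part (2) is then to decide exactly when this $G'$-invariant map is a principal $G'$-bundle.

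Before either direction I would fix notation and record two bookkeeping facts. Write $\phi := (d\alpha)_1\co\mathfrak{g}_1\to\mathfrak{g}_2$, let $\lambda_1,\dots,\lambda_{k_1}$ and $\mu_1,\dots,\mu_{k_2}$ be the primitive generators of the $1$-cones of $\Delta_1$ and $\Delta_2$, and set $d_i:=\dim_\C\mathfrak{h}_i$, so that $X(\Delta_i)/H_i$ has complex dimension $m_i-d_i$ and $\dim_\R G' = \dim\ker\phi$. Using the gluing description \eqref{eq:toric} and Lemma \ref{lemm:affine}, on the affine chart indexed by $C_I\in\Delta_1$ the toric morphism $\widetilde{\psi_\alpha}$, and hence $\psi_\alpha$, is the descent of a map $[g,z]\mapsto[\alpha(g),w]$ where $w$ is the monomial assignment in the slice coordinates determined by $\phi$ on the cone lattices. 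Since $p|_{\mathfrak{h}_1}$ is injective one has $\mathfrak{g}_1\cap\mathfrak{h}_1=0$, hence $\ker\phi\cap\mathfrak{h}_1=0$; and since $\phi^\C(\mathfrak{h}_1)\subseteq\mathfrak{h}_2$ with $p\circ\phi^\C=\phi\circ p$, the map $\phi$ descends to $\bar\phi\co\mathfrak{g}_1/p(\mathfrak{h}_1)\to\mathfrak{g}_2/p(\mathfrak{h}_2)$.

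For the ``if'' direction I would assume the ray bijection together with surjectivity of $\alpha$. First the fan-morphism property plus the ray bijection force each maximal cone $C_I$ to be carried by a \emph{lattice isomorphism} $\mathfrak{g}_{1,I}\to\mathfrak{g}_{2,\sigma(I)}$ onto a cone of $\Delta_2$, and surjectivity of $\alpha$ guarantees every maximal cone of $\Delta_2$ is reached, so $\sigma$ is an isomorphism $\Sigma_1\cong\Sigma_2$ and $\bar\phi$ is an isomorphism of the complete fans $q_1(\Delta_1)\cong q_2(\Delta_2)$. In particular the two base spaces have equal dimension, which yields $m_1-m_2=2(d_1-d_2)$. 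Combining this equality with the inclusions $\phi^\C(\ker\phi)=0$ and $\phi^\C(\mathfrak{h}_1)\subseteq\mathfrak{h}_2$ and $\ker\phi\cap\mathfrak{h}_1=0$, a dimension count on Lie algebras shows that the identity component of $\alpha^{-1}(H_2)$ is exactly $G'\cdot H_1$; thus on the open dense orbit the fibres of $\psi_\alpha$ are precisely the free $G'$-orbits. Reading this back through the charts, the lattice isomorphism identifies the slice factors so that $\psi_\alpha$ becomes $[g,z]\mapsto[\alpha(g),z]$, from which freeness of $G'$ (here $G'\cap G_{1,I}=\{1\}$ because $\{\lambda_i\}_{i\in I}$ maps to a saturated set of primitive generators, as in the argument of Lemma \ref{lemm:free}) and the existence of a local holomorphic section are manifest, giving the principal $G'$-bundle structure.

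For the converse I would assume $\psi_\alpha$ is a principal $G'$-bundle and extract both conditions. Surjectivity of a bundle projection, restricted to the open orbits, forces $\alpha(G_1^\C)\cdot H_2=G_2^\C$, whence $\phi$ and therefore $\alpha\co G_1\to G_2$ is surjective. The ray correspondence I would obtain from the slice charts over maximal cones: a principal bundle projection is in particular a submersion whose restriction to the slice directions $\C^I\to\C^J$ transverse to the orbits must be a biholomorphism, and this slice map is exactly the monomial map whose exponents are given by $\phi$ on the cone lattices, so biholomorphicity forces $\phi$ to carry $\{\lambda_i\}_{i\in I}$ bijectively onto $\{\mu_j\}_{j\in J}$ as primitive generators; letting $C_I$ range over all maximal cones assembles these into the global bijection. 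I expect the main obstacle to be precisely this slice analysis in the ``only if'' direction: one must rule out, via the local monomial models, the three ways the hypothesis can fail — two rays of $\Delta_1$ lying over a single ray of $\Delta_2$ (fibre jump), a ray mapping to a non-primitive multiple (ramification of type $z\mapsto z^c$, destroying the submersion property), and a ray mapping into the interior of a higher-dimensional cone (so that a characteristic submanifold, i.e.\ a real-codimension-$2$ stratum, maps to higher codimension) — each of which contradicts constancy and freeness of the $G'$-fibre. Organizing these exclusions uniformly through the explicit charts of Section \ref{sec:bundle} is the technical heart of the proof.
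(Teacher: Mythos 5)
Your part (1) and your treatment of the ``if'' direction of part (2) follow essentially the same route as the paper: establish that the ray bijection plus surjectivity force the quotient fans $q_1(\Delta_1)$ and $q_2(\Delta_2)$ to be isomorphic via $\overline{(d\alpha)_1}$ (the paper does this by observing that the image of the complete fan $q_1(\Delta_1)$ is a refinement of $q_2(\Delta_2)$ with the same number of rays), index both toric varieties by the same simplicial complex $\Sigma$, and read off $\psi_\alpha$ in the charts as $[[g],(z_i)]\mapsto[[\alpha(g)],(z_i)]$. One caveat: your dimension count only identifies the \emph{identity component} of the fibre group $\alpha^{-1}(H_2)/H_1$ with $G'\cdot H_1/H_1$, and since $G'=\ker\alpha$ may be disconnected this does not yet show the fibres are exactly $G'$-orbits. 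The paper closes this by noting that the kernel $\widetilde{G'}$ of $G_1^\C/H_1\to G_2^\C/H_2$ acts simply transitively on fibres, is compact because $X(\Delta_1)/H_1$ is, hence lies in $G_1$ and therefore equals $G'$ on the nose; you should add such an argument.

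Where you genuinely diverge is the ``only if'' direction, and there the divergence is also where your proposal stops short. You propose to extract the ray bijection from a chart-by-chart analysis of the monomial slice maps, ruling out three failure modes (fibre jumps, ramification $z\mapsto z^c$, rays falling into higher-dimensional cones); you correctly identify this as the technical heart but do not carry it out, and setting it up requires first locating which cone of $\Delta_2$ receives each maximal cone of $\Delta_1$, which is itself part of what must be proved. The paper avoids all of this with one observation: a principal $G'$-bundle carries characteristic submanifolds bijectively to characteristic submanifolds and induces isomorphisms of their normal lines, so differentiating the defining relation $(d\lambda_i(g))_x(\xi)=g\xi$ through the $\alpha$-equivariant map $\psi_\alpha$ gives $(d(\alpha\circ\lambda_i(g)))((d\psi_\alpha)_x\xi)=g\,(d\psi_\alpha)_x\xi$, i.e.\ $(d\alpha)_1(\lambda_i)$ \emph{is} the primitive normal weight of $\psi_\alpha(N_i)$. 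This disposes of primitivity, injectivity and surjectivity of the ray correspondence in one stroke. Your route is probably viable, but as written it is a plan rather than a proof, and the normal-weight argument is the missing idea that makes the direction short.
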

	\begin{proof}
		Part (1) follows from the fact that the toric morphism $\widetilde{\psi_\alpha} \co X(\Delta_1) \to X(\Delta_2)$ is $G'$-invariant.
		
		For Part (2), suppose that $\psi_\alpha$ is a principal $G'$-bundle. Then, $\alpha \co G_1 \to G_2$ is surjective; otherwise, $\psi_\alpha$ is not surjective. For each characteristic submanifold of $X(\Delta_1)/H_1$, the image of it by $\psi_\alpha$ is also a characteristic submanifold of $X(\Delta_2)/H_2$ and vice versa. Let $N_1,\dots, N_k$ be the characteristic submanifolds of $X(\Delta_1)/H_1$. For each point $x \in N_i$, the normal vector space $T_x(X(\Delta_1)/H_1)/T_xN_i$ is isomorphic to $T_{\psi_\alpha (x)}(X(\Delta_2)/H_2)/T_{\psi_\alpha(x)}(\psi_\alpha(N_i))$ through the differential $(d\psi_\alpha)_x$. Let $\lambda_i \in (S^1,G_1)$ be the homomorphism such that 
		\begin{equation*}
			(d\lambda_i(g))_x(\xi ) = g \xi \quad \text{for $g \in S^1$, $\xi \in T_x(X(\Delta_1)/H_1)/T_xN_i$},
		\end{equation*}
		that is, the primitive generator of the $1$-cone corresponding to $N_i$. Applying $(d\psi_\alpha)_x$, we have 
		\begin{equation*}
			(d\psi_\alpha)_x((d\lambda_i(g))_x(\xi))=g(d\psi_\alpha)_x(\xi).
		\end{equation*}
		Since $\psi_\alpha$ is $\alpha$-equivariant, we have
		\begin{equation*}
			(d(\alpha\circ \lambda_i(g)))_x((d\psi_\alpha)_x(\xi)) = g(d\psi_\alpha)_x(\xi).
		\end{equation*}
		Therefore, the primitive generator of the $1$-cone in $\Delta_2$ corresponding to $\psi_\alpha(N_i)$ is $\alpha\circ\lambda_i = (d\alpha)_1(\lambda_i)$, proving the ``only if" part. 
		
		To show the ``if" part, suppose that $\alpha$ is surjective and $(d\alpha)_1$ gives a one-to-one correspondence from the primitive generators of $1$-cones in $\Delta_1$ to the primitive generators of $1$-cones in $\Delta_2$. Let $p_i \co \mathfrak{g}_i^\C \to \mathfrak{g}_i$ be the projection and let $q_i \co \mathfrak{g}_i \to \mathfrak{g}_i/p_i(\mathfrak{h}_i)$ be the quotient map for $i=1,2$. Since $(\Delta_i,\mathfrak{h}_i, G_i) \in \mathcal{C}_2$, $q_i(\Delta_i)$ is a complete fan in $\mathfrak{g}_i/p_i(\mathfrak{h}_i)$. Since $\alpha \in \Hom _{\mathcal{C}_2}((\Delta_1,\mathfrak{h}_1,G_1),(\Delta_2,\mathfrak{h}_2,G_2))$ and $\alpha \co G_1\to G_2$ is surjective as a group homomorphism, $(d\alpha)_1 \co \mathfrak{g}_1 \to \mathfrak{g}_2$ is surjective and $(d\alpha)_1^\C (\mathfrak{h}_1) \subseteq \mathfrak{h}_2$. Therefore $(d\alpha)_1$ induces a surjective linear map $\overline{(d\alpha)_1} \co \mathfrak{g}_1/p_1(\mathfrak{h}_1) \to \mathfrak{g}_2/p_2(\mathfrak{h}_2)$. Let $\lambda_1,\dots, \lambda_k$ be the primitive generators of $1$-cones in $\Delta_1$. Then, $\nu_1=(d\alpha)_1(\lambda_1),\dots, \nu_k=(d\alpha)_1(\lambda_k)$ are the primitive generators of $1$-cones in $\Delta_2$. Put 
		\begin{equation*}
			\Sigma := \left\{ I \subseteq \{1,\dots, k\} \mid \pos(\lambda_i \mid i \in I) \in \Delta_1\right\}
		\end{equation*}
		Then, $\Delta_1 = \{ C_I := \pos (\lambda_i \mid i \in I) \mid {I \in \Sigma}\}$ and $q_1(\Delta_1) = \{ q_1(C_I)=\pos(q_1(\lambda_i)\mid i \in I) \mid {I \in \Sigma}\}$. Since $\alpha$ is a morphism, $(d\alpha)_1(C_I)$ is contained in a cone in $\Delta_2$ for each $I \in \Sigma$. Therefore, $\overline{(d\alpha)_1}(q_1(C_I))$ is contained in a cone in $q_2(\Delta_2)$. This together with the fact that $\overline{(d\alpha)_1}$ is surjective shows that the image $\overline{(d\alpha)_1}(q_1(\Delta_1))$ of the complete fan $q_1(\Delta_1)$ by $\overline{(d\alpha)_1}$ is a refinement of the complete fan $q_2(\Delta_2)$, in particular, $\overline{(d\alpha)_1}$ is an isomorphism. However, the number of $1$-cones in $\Delta_1$ is same as $1$-cones in $\Delta_2$ and hence $\overline{(d\alpha)_1}(q_1(\Delta_1))=q_2(\Delta_2)$. Therefore, $\Delta_2 =\{ \pos(\nu_i \mid i \in I)\}_{I \in \Sigma}$. 
		
		Let us think of the toric varieties $X(\Delta_1)$ and $X(\Delta_2)$ as the quotient spaces (see \eqref{eq:toric})
		\begin{equation*}
			X(\Delta_1) = \bigsqcup_{I \in \Sigma} G_1^\C \times_{G_{1,I}^\C} \bigoplus_{i \in I} \C_{\alpha_i^I} / \approx 
		\end{equation*}
		and 
		\begin{equation*}
			X(\Delta_2) = \bigsqcup_{I \in \Sigma} G_2^\C \times_{G_{2,I}^\C} \bigoplus_{i \in I} \C_{\beta_i^I} / \approx,
		\end{equation*}
		where $\alpha_i^I$ (respectively, $\beta_i^I$) for $i \in I$ denote the dual basis of $\lambda_i$ (respectively, $\nu_i$), $i \in I$. Then, the restriction of the toric morphism $\widetilde{\psi_\alpha}$ to each affine toric variety $G_1^\C \times_{G_{1,I}^\C} \bigoplus_{i \in I} \C_{\alpha_i^I}$ is given by 
		\begin{equation*}
			\widetilde{\psi_\alpha}([g,(z_i)_{i\in I}]) = [\alpha(g),(z_i)_{i \in I}] \in G_2^\C \times_{G_{2,I}^\C} \bigoplus_{i \in I} \C_{\beta_i^I}. 
		\end{equation*}
		Therefore the $\alpha$-equivariant holomorphic map $\psi_\alpha \co X(\Delta_1)/H_1 \to X(\Delta_2)/H_2$ is given by 
		\begin{equation*}
			{\psi_\alpha}([[g],(z_i)_{i\in I}]) = [[\alpha(g)],(z_i)_{i \in I}] \in (G_2^\C/H_2) \times_{G_{2,I}^\C} \bigoplus_{i \in I} \C_{\beta_i^I}
		\end{equation*}
		for each $[[g],(z_i)_{i \in I}]$ in the open subset $(G_1^\C/H_1) \times_{G_{1,I}^\C} \bigoplus_{i \in I} \C_{\alpha_i^I}$ of $X(\Delta_1)/H_1$. 
		Since the homomorphism $G_1^\C/H_1 \to G_2^\C/H_2$, induced by the complexified homomorphism $\alpha \co G_1^\C \to G_2^\C$, induces an isomorphism from $G_{1,I}$ to $G_{2,I}$ for each $I \in \Sigma$, 
		the kernel of the induced homomorphism $G_1^\C/H_1 \to G_2^\C/H_2$, say $\widetilde{G'}$, acts 
		on the preimage of each point $y \in X(\Delta)_2/H_2$ by $\psi_\alpha$ simply transitively. Therefore the $\alpha$-equivariant holomorphic map ${\psi_\alpha} \co X(\Delta_1)/H_1 \to X(\Delta_2)/H_2$ is a principal $\widetilde{G'}$-bundle. Since $X(\Delta_1)/H_1$ is compact, $\widetilde{G'}$ is also compact. Therefore $\widetilde{G'}$ is contained in $G_1$ (regarded as a subgroup of $G_1^\C /H_1$) and hence $\widetilde{G'}$ is the kernel $G'$ of $\alpha \co G_1 \to G_2$, proving the ``if" part. 
	\end{proof}
	\begin{theo}\Label{theo:universal}
		For any compact connected complex manifold $M$ equipped with a maximal action of a compact torus $G$ which preserves the complex structure on $M$, there exist an LVMB manifold $X_{n,m}$ and an equivariant principal holomorphic bundle $\psi \co X_{n,m} \to M$. In particular, every compact complex manifold with a maximal torus action is a quotient of an LVMB manifold. 
	\end{theo}
	\begin{proof}
		Let $(\Delta, \mathfrak{h}, G) = \mathcal{F}_1(M,G,y)$ and let $\lambda_1,\dots, \lambda_k$ be the primitive generators of $1$-cones in $\Delta$. Let $n$ be a positive integer such that $n-\dim G$ is even and there exists a surjective $\R$-linear map $f \co \R^n \to \mathfrak{g}$ such that $f(e_i) = \lambda_i$ for $i=1,\dots, k$ and $f(\Z^n) \subseteq \Hom (S^1,G)$, where $e_i$ denotes the $i$-th standard basis vector in $\R^m$. Since $f(e_i) \in \Hom (S^1,G)$ for all $i$ and $f$ is surjective, $f$ descends to a surjective group homomorphism $\alpha \co (S^1)^n \to G$ such that $(d\alpha)_1 =f$. Let $\Sigma$ be the simplicial complex on $\{1,\dots, n\}$ defined as 
		\begin{equation*}
			\Sigma := \{ I \subset \{1,\dots, n\} \mid \pos (\lambda_i \mid i \in I) \in \Delta\}.
		\end{equation*}
		Define a subfan $\Delta'$ of the fan of $\C^n$ as 
		\begin{equation*}
			\Delta' := \{ \pos (e_i \mid i\in I)\mid I \in \Sigma\}.
		\end{equation*}
		We want to find a complex vector subspace $\mathfrak{h}'$ of $\C^n$ such that $p'|_{\mathfrak{h}'}$ is injective, $(d\alpha)_1^\C(\mathfrak{h}') \subseteq \mathfrak{h}$ and the quotient map $q' \co \R^n \to \R^n/p'(\mathfrak{h}')$ sends $\Delta'$ to a complete fan in $\R^n/p'(\mathfrak{h}')$, where $p'$ denotes the projection from $\C^n$ to its real part. If such a vector subspace $\mathfrak{h}'$ exists, then $(\Delta', \mathfrak{h}', (S^1)^n) \in \mathcal{C}_2$ and the manifold $X(\Delta')/H'$ is an LVMB manifold $X_{n,m}$ by Lemma \ref{lemm:LVMB0indis}, where $H'$ is the image of $\mathfrak{h}'$ by the exponential map. By Theorem \ref{theo:principal}, the induced $\alpha$-equivariant map $\psi_\alpha \co X(\Delta')/H' \to X(\Delta)/H$ is a holomorphic principal equivariant $\ker \alpha$-bundle. Since $X(\Delta)/H$ is equivariantly biholomorphic to $M$, we have that there exists a principal holomorphic bundle $\psi \co X_{n,m} \to M$. 
		
		Now we show the existence of such a vector subspace $\mathfrak{h}'$. Let $p \co \mathfrak{g}^\C \to \mathfrak{g}$ be the projection and let $q \co \mathfrak{g} \to \mathfrak{g}/p(\mathfrak{h})$ be the quotient map. Let $w_1,\dots, w_\ell$ be a complex basis of $\mathfrak{h}$. Put 
		\begin{equation*}
			u_i := p(w_i) \quad \text{for $i =1,\dots, \ell$}
		\end{equation*}
		and 
		\begin{equation*}
			v_i := p(\sqrt{-1}w_i) \quad \text{for $i=1,\dots, \ell$}. 
		\end{equation*}
		Then, $u_1,\dots, u_\ell, v_1,\dots, v_\ell$ form a basis of $p(\mathfrak{h})$ because $p|_{\mathfrak{h}}$ is injective. 
		Since $f$ is surjective, there exist $u_i'$ and $v_i'$ in $\R^n$ such that $f(u_i') =u_i$ and $f(v_i') = v_i$ for $i=1,\dots, \ell$. Each of $u_i'$ and $v_i'$ belongs to $\ker q \circ f$ because $q(u_i) = q(v_i) =0$. Since $n-\dim G$ is even and $f$ is surjective, the dimension of $\ker f$ is even, say $2m-2\ell$. Let $u_{\ell+1}', \dots, u_m',v_{\ell+1}',\dots, v_m'$ be a basis of $\ker f$ and put 
		\begin{equation*}
			w_i' := u_i' +\sqrt{-1}v_i' \in \C^n
		\end{equation*}  
		for $i=1,\dots, m$. Let $\mathfrak{h}'$ be the complex vector subspace of $\C^n$ spanned by $w_i'$ for $i=1,\dots, m$. 
		
		To see the injectivity of $p'|_{\mathfrak{h}'}$, consider the linear combination 
		\begin{equation*}
			\sum_{i=1}^m a_iw_i', \quad a_i \in \C. 
		\end{equation*}
		Let $b_i$ and $\sqrt{-1}c_i$ be the real and imaginary parts of $a_i$, respectively. Suppose that $p'(\sum _{i=1}^ma_iw_i') =0$. Then,
		\begin{equation*}
			\sum _{i=1}^mb_iu_i'-c_iv_i' = 0.
		\end{equation*}
		By applying $f$, we have that 
		\begin{equation*}
			\sum _{i=1}^\ell b_iu_i -c_iv_i =0. 
		\end{equation*}
		Since $u_1,\dots, u_\ell, v_1,\dots, v_\ell$ are a basis of $p(\mathfrak{h})$, we have that $a_i =b_i+\sqrt{-1}c_i=0$ for $i=1,\dots, \ell$. Therefore
		\begin{equation*}
			\sum _{i=\ell+1}^mb_iu_i'-c_iv_i' = 0. 
		\end{equation*}
		Since $u_{\ell+1}',\dots, u_m',v_{\ell+1}',\dots, v_m'$ are a basis of $\ker f$, we have that $a_i =b_i+\sqrt{-1}c_i=0$ for $i=\ell+1,\dots, m$. This together with $a_i =b_i+\sqrt{-1}c_i=0$ for $i=1,\dots, \ell$ shows that $p'|_{\mathfrak{h}'}$ is injective. 
		
		To see that $(d\alpha)_1^\C(\mathfrak{h}') \subseteq \mathfrak{h}$, we shall see that $(d\alpha)_1^\C(w_i') \in \mathfrak{h}$ for $i=1,\dots, m$. Since $f= (d\alpha)_1$, we have that 
		\begin{equation*}
			(d\alpha)_1^\C (w_i') = \begin{cases}
				u_i+\sqrt{-1}v_i = w_i & \text{for $i=1,\dots, \ell$},\\
				0 & \text{for $i=\ell+1,\dots, m$}. 
			\end{cases}
		\end{equation*}
		Therefore $(d\alpha)_1^\C (\mathfrak{h}') = \mathfrak{h}$. 
		
		It remains to show that the quotient map $q' \co \R^n \to \R^n /p'(\mathfrak{h}')$ sends $\Delta'$ to a complete fan in $\R^n /p'(\mathfrak{h}')$.  Since $p\circ (d\alpha)_1^\C=(d\alpha)_1\circ p'$, $f=(d\alpha)_1$ induces an $\R$-linear map $\overline{f} \co \R^n/p'(\mathfrak{h}') \to \mathfrak{g}/p(\mathfrak{h})$ and we have a commutative diagram 
		\begin{equation*}
			\xymatrix{
				\R^n  \ar[r]^{f} \ar[d]^{q'} & \mathfrak{g} \ar[d]^{q}\\
				\R^n/p'(\mathfrak{h}') \ar[r]^{\overline{f}} & \mathfrak{g}/p(\mathfrak{h}).
			}
		\end{equation*}
		Since $q\circ f$ sends $\Delta'$ to a complete fan $q(\Delta)$ in $\mathfrak{g}/p(\mathfrak{h})$, it suffices to show that $\overline{f}$ is an isomorphism. Since $f$ is surjective, so is $\overline{f}$. Let $u \in \R^n$ and suppose that $f(u) \in p(\mathfrak{h})$. Then, $u \in \ker q \circ f$. Therefore, $u$ is a linear combination of $u_1',\dots, u_m', v_1',\dots, v_m'$, because $f(u_1'),\dots, f(u_\ell'), f(v_1'),\dots, f(v_\ell')$ are a basis of $p(\mathfrak{h})$ and $u_{\ell+1}',\dots, u_m',v_{\ell+1}',\dots, v_m'$ are a basis of $\ker f$. Suppose that
		\begin{equation*}
			u = \sum _{i=1}^m b_iu_i'-c_iv_i',
		\end{equation*}
		where $b_i$ and $c_i$ are some real number for $i=1,\dots, m$. Put $a_i := b_i +\sqrt{-1}c_i$. Then, 
		\begin{equation*}
			p'\left( \sum_{i=1}^m a_iw_i'\right) = \sum _{i=1}^m b_iu_i'-c_iv_i' = u. 
		\end{equation*}
		Since $\mathfrak{h}'$ is spanned by $w_i'$ for $i=1,\dots, m$, we have that $u \in p'(\mathfrak{h}')$, showing that $\overline{f}$ is injective. The theorem is proved. 
	\end{proof}
	\begin{rema}
		We may assume that the corresponding LVMB datum of the LVMB manifold $X_{n,m}$ appeared in Theorem \ref{theo:universal} has the indispensable integer $0$ because the fan $\Delta'$ is a subfan of the fan of $\C^n$ in the proof. As we saw in Lemma \ref{lemm:LVMB0indis} below, the LVMB manifold $X_{n,m}$ is homeomorphic to a moment-angle manifold coming from a star-shaped sphere. 
	\end{rema}
	
\section{Nondegenerate $\C^n$-actions on complex $n$-manifolds}\Label{sec:nondeg}
	The notion of nondegenerate $\R^n$-actions on real $n$-manifolds is introduced by Zung and Minh (see \cite{Zung-Minh} for details). 
	In this section, we will briefly review a part of their works and give a complete classification of complex manifolds of complex dimension $n$ equipped with holomorphic nondegenerate actions of $\C^n$ as an application of Theorem \ref{theo:maintheo}.

	Let $M$ be an $n$-dimensional smooth manifold, equipped with an $\R^n$-action $\rho \co \R^n \times M \to M$. For each $v \in \R^n$, let $X_v$ denote the fundamental vector field generated by $v$, that is, 
	\begin{equation*}
		X_v(x) = \frac{d}{dt}\rho (tv \cdot x)|_{t=0}
	\end{equation*}
	for $x \in M$. A point $x \in M$ is said to be a \emph{singular point} if the dimension of the vector space 
	\begin{equation*}
		\mathfrak{k}_x := \{ v \in \R^n \mid X_v(x)=0\}
	\end{equation*}
	is greater than $0$. At a singular point $x$, we can consider the isotropy representation $\pi _x \co \mathfrak{k}_x \to GL (T_xM/T_x(\R^n\cdot x))$. The singular point $x$ is said to be \emph{nondegenerate} if the differential $(d\pi_x)_0$ of $\pi_x$ at $0 \in \mathfrak{k}_x$ is injective, that is, the whole image by $(d\pi_x)_0$ is a Cartan subalgebra of $\mathfrak{gl} (T_xM/T_x(\R^n\cdot x))$. The action $\rho$ is called \emph{nondegenerate} if any singular point is nondegenerate. 
	
	Let $\rho \co \R ^n \times M \to M$ be a nondegenerate action. According to \cite[Theorem 2.6]{Zung-Minh}, at a singular point $x$, there exist unique nonnegative integers $h,e$, a basis $(v_1,\dots, v_n)$ of $\R^n$ and a local coordinate system $(x_1,\dots, x_n)$ in a neighborhood  of $x$ such that the vector fields are represented as 
	\begin{equation*}
		\begin{cases}
			X_{v_i} = x_i \frac{\partial}{\partial x_i} & \text{for $i=1,\dots, h$},\\
			X_{v_{h+2j-1}} =x_{h+2j-1}\frac{\partial}{\partial x_{h+2j-1}}+ x_{h+2j}\frac{\partial}{\partial x_{h+2j}}, \\
			X_{v_{h+2j}} = x_{h+2j-1}\frac{\partial}{\partial x_{h+2j}}- x_{h+2j}\frac{\partial}{\partial x_{h+2j-1}}  & \text{for $j=1,\dots, e$}, \\
			X_{v_k} = \frac{\partial}{\partial x_k}& \text{for $k=h+2e+1,\dots, n$}.
		\end{cases}
	\end{equation*}
	$h$ is called the \emph{number of hyperbolic component} and $e$ is called the \emph{number of elbolic component} at $x$. The pair $(h,e)$ of nonnegative integers is called the \emph{HE-invariant} at $x$. 
	
	When the HE-invariant at $x$ is $(h,e)$, the dimension of $\R^n$-orbit through $x$ is $n-h-2e$. Therefore, $\R^n \cdot x$ is diffeomorphic to $\R^r \times (S^1)^t$ for some $r,t$ with $r+t=n-h-2e$. The quadruple $(h,e,r,t)$ of nonnegative integers is called the \emph{HERT-invariant} at $x$.

	Let $Z_\rho$ be the global stabilizers, that is, 
	\begin{equation*}
		Z_\rho = \{ v \in \R^n \mid \rho (v,x) =x \text{ for all $x \in M$}\}. 
	\end{equation*}
	Since $\rho$ is nondegenerate, $Z_\rho$ is a discrete subgroup of $\R^n$. Therefore $G := \R Z_\rho/Z_\rho$, a compact torus, acts on $M$ effectively. The dimension of $G$ is said to be the \emph{toric degree} of the action $\rho$. Suppose that $M$ is connected. Then, for any point $x$, the HERT-invariant at $x$ tells us the toric degree; the toric degree of $\rho$ is equal to $e+t$, where $(h,e,r,t)$ is the HERT-invariant at arbitrary point $x \in M$ (see \cite[Theorem 3.4]{Zung-Minh}). In case when $M$ is compact, there exists a compact $\R^n$-orbit (see \cite[Proposition 2.22]{Zung-Minh}). Namely, there exists a point $x$ such that the HERT-invariant $(h,e,r,t)$ at $x$ satisfies $r=0$. Moreover if $x$ does not admit a hyperbolic component, that is, $h=0$, then $2e+t=n$.

	The action $\rho \co \R^n \times M \to M$ is called \emph{elbolic} if each singular point $x \in M$ does not admit hyperbolic components. The relation between maximal torus actions and elbolic actions is as follows:
	\begin{lemm}\Label{lemm:elbolic-maximal}
		Let $M$ be a compact connected manifold of dimension $n$, equipped with an elbolic $\R^n$-action. Let $G$ be the compact torus as above. Then, the action of $G$ on $M$ is maximal. 
	\end{lemm}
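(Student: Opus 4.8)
The plan is to produce a \emph{minimal} $G$-orbit: by the definition of maximality (and the remark that the existence of a minimal orbit implies the action is maximal), this suffices. The guiding idea is that the compact torus $G$ acts only through the ambient $\R^n$-action, so each $G$-orbit is trapped inside an $\R^n$-orbit; at a point whose $\R^n$-orbit is as small as compactness allows, this containment pins the $G$-orbit down to the minimal dimension.

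First I would choose a good point. Since $M$ is compact there is a compact $\R^n$-orbit, i.e.\ a point $x$ whose HERT-invariant $(h,e,r,t)$ has $r=0$; because the action is elbolic we also have $h=0$, so the invariant at $x$ is $(0,e,0,t)$ and, as recalled above, $2e+t=n=\dim M$. Moreover the toric degree of $\rho$ equals $e+t$, and the toric degree is by definition $\dim G$, so $\dim G=e+t$. Next I would read off the two relevant dimensions at $x$. The orbit $\R^n\cdot x$ has dimension $n-h-2e=t$ and is compact. By construction $G=\R Z_\rho/Z_\rho$ acts on $M$ by letting a class $[v]$, with $v\in\R Z_\rho\subseteq\R^n$, act as $\rho(v,\cdot)$; consequently every $G$-orbit is contained in the corresponding $\R^n$-orbit. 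In particular $G\cdot x\subseteq \R^n\cdot x$, whence $\dim G\cdot x\le t$. On the other hand the general inequality \eqref{eq:inequality3} gives
\begin{equation*}
	\dim G\cdot x \;\ge\; 2\dim G-\dim M \;=\; 2(e+t)-(2e+t) \;=\; t .
\end{equation*}
Combining the two estimates forces $\dim G\cdot x = t = 2\dim G-\dim M$, so $G\cdot x$ is a minimal orbit and the $G$-action is maximal.

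The only step that deserves care is the containment $G\cdot x\subseteq\R^n\cdot x$, which rests on the fact that the $G$-action is induced from $\rho$ by restricting the acting vectors to the subspace $\R Z_\rho$ and then dividing out the global stabilizer $Z_\rho$ (the action of $[v]$ is well defined precisely because $\rho(w,\cdot)=\mathrm{id}_M$ for $w\in Z_\rho$). Once this is made explicit, the argument is pure dimension bookkeeping, and I do not expect a genuine obstacle: the existence of a compact orbit, the vanishing of hyperbolic components in the elbolic case, the relation $2e+t=n$, and the value $e+t$ of the toric degree have all already been recorded, so they can be invoked directly at the chosen point $x$.
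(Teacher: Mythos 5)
Your proof is correct, and for the decisive step it takes a genuinely different route from the paper. Both arguments start from the same special point: a compact $\R^n$-orbit, which by ellipticity has HERT-invariant $(0,e,0,t)$ with $2e+t=n$, and both invoke the quoted fact that the toric degree $\dim G$ equals $e+t$. From there the paper works inside the local normal form: it identifies $\R Z_\rho$ with the span of the vectors $v_{2j}$ and $v_k$ by an almost-periodicity argument (the coefficients of the ``radial'' fields $x_{2j-1}\partial_{x_{2j-1}}+x_{2j}\partial_{x_{2j}}$ must vanish for a vector field generating a torus action), reads off $\dim G_x=e$ as the span of the $v_{2j}$'s, and checks $\dim G+\dim G_x=2e+t=\dim M$ directly. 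You instead avoid identifying $\R Z_\rho$ altogether: you bound $\dim G\cdot x$ from above by $t$ via the containment $G\cdot x\subseteq\R^n\cdot x$ (which is immediate since $T_x(G\cdot x)$ is spanned by the $X_v(x)$ with $v\in\R Z_\rho\subseteq\R^n$), and from below by $2\dim G-\dim M=t$ via the general inequality \eqref{eq:inequality3}, which applies because the effectiveness of the $G$-action is already recorded in the setup. The sandwich forces $G\cdot x$ to be a minimal orbit, which by definition implies maximality. Your argument is softer and shorter, trading the explicit description of the isotropy algebra (which the paper's computation incidentally provides) for pure dimension bookkeeping; both are complete proofs.
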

	\begin{proof}
		As we mentioned above, there exists a point $x \in M$ such that the HERT-invariant at $x$ is $(0,e,0,t)$, $2e+t= n$. So there exist a basis $(v_1,\dots, v_n)$ of $\R^n$ and a local coordinate system $(x_1,\dots, x_n)$ in a neighborhood at $x$ such that the vector fields are represented as 
		\begin{equation*}
			\begin{cases}
				X_{v_{2j-1}}= x_{2j-1}\frac{\partial}{\partial x_{2j-1}} + x_{2j}\frac{\partial}{\partial x_{2j}},\\
				X_{v_{2j}} = x_{2j-1}\frac{\partial}{\partial x_{2j}} - x_{2j-1}\frac{\partial}{\partial x_{2j}}& \text{for $j=1,\dots, e$},\\
				X_{v_{k}}= \frac{\partial}{\partial x_k} & \text{for $k=2e+1,\dots, n$}. 
			\end{cases}
		\end{equation*}
		Let $v = \sum _{i =1}^n a_iv_i$, $a_i \in \R$. Suppose that $v \in \R Z_\rho$. Then, the vector field $X_v$ is almost periodic. So $a_{2j-1} = 0$ for all $j=1,\dots, e$ (otherwise, $X_v$ is not almost periodic). The vector subspace 
		\begin{equation*}
			V := \left\{ \sum_{i=1}^n a_iv_i \mid a_i = 0 \text{ for $i=2j-1$, $j=1,\dots, e$}\right\}
		\end{equation*}
		has dimension $e+t$ and contains $\R Z_\rho$. Since the toric degree, the dimension of $\R Z_\rho$, is given by $e+t$, we have that $V=\R Z_\rho$. Therefore for $v \in \R Z_\rho$, $X_v(x)=0$ if and only if $v$ is a linear combination of $v_{2j}$, $j=1,\dots, e$. This shows that the dimension of the isotropy subgroup $G_x$ at $x$ is equal to $e$. It follows from $2e+t=n$ and $\dim G=e+t$ that $\dim G+ \dim G_x = \dim M$. Since the action of $G$ on $M$ is effective, the equality $\dim G+\dim G_x = \dim M$ shows that the $G$-action on $M$ is maximal, as required. 
	\end{proof}
	Since $\C^n$ is isomorphic to $\R^{2n}$ as real Lie groups, we can consider holomorphic nondegenerate $\C^n$-actions on complex manifolds of complex dimension $n$. In the case of holomorphic nondegenerate $\C^n$-action, each point $x$ does not admit hyperbolic components.
	\begin{lemm}\Label{lemm:holomorphic-elbolic}
		Let $M$ be a complex manifold of complex dimension $n$, equipped with a holomorphic nondegenerate $\C^n$-action $\rho \co \C^n \times M \to M$. Then, the action $\rho$ is elbolic. 
	\end{lemm}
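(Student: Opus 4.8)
The plan is to fix a singular point $x \in M$ and prove that its number of hyperbolic components $h$ vanishes; since elbolicity is by definition the condition $h=0$ at every singular point (and regular points trivially have $h=0$), this suffices. Regarding the holomorphic $\C^n$-action as a smooth nondegenerate $\R^{2n}$-action under $\C^n \cong \R^{2n}$, I would invoke the normal form of Tien Zung and Van Minh \cite[Theorem 2.6]{Tien-Van} at $x$: there is a basis $(v_1,\dots,v_{2n})$ of $\R^{2n}$ and local real coordinates in which the fundamental vector fields take the stated hyperbolic/elliptic/regular form. The only feature I will actually use is that a \emph{single} hyperbolic component, say $X_{v_1}=x_1\tfrac{\partial}{\partial x_1}$, forces the linearized isotropy operator $A := (d\pi_x)_0(v_1)$ on the normal space $W := T_xM/T_x(\C^n\cdot x)$ to possess a one-dimensional (real) eigenspace $L$. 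Thus it is enough to derive a contradiction from the existence of even one hyperbolic component.

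The key input is holomorphicity, which enters through two observations. First, because the action is holomorphic we have $JX_v = X_{\sqrt{-1}v}$ for the fundamental vector fields (as in the construction of Section \ref{sec:complexified}), so the orbit tangent space $T_x(\C^n\cdot x) = \{X_v(x) \mid v \in \C^n\}$ is $J$-invariant; hence the quotient $W$ carries an induced complex structure $J_W$, i.e.~$W$ is a genuine complex vector space. Second, for $v \in \mathfrak{k}_x$ the fundamental vector field $X_v$ is holomorphic and vanishes at $x$, and the linearization of a holomorphic vector field at a zero is complex linear; passing to the quotient by the ($J$-invariant) orbit directions, the operator $A=(d\pi_x)_0(v)$ commutes with $J_W$. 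Equivalently, one may note that each $\pi_x(\exp v)$ is the differential at $x$ of a biholomorphism fixing $x$, hence $\C$-linear on $W$, and then differentiate at the identity.

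With these in hand the conclusion is immediate: $A$ commutes with $J_W$, so $J_W$ preserves every eigenspace of $A$, in particular the one-dimensional real eigenspace $L$ produced by the hyperbolic component. But $J_W$ cannot preserve a real line, since $J_W^2=-\id$ would then force a real scalar squaring to $-1$. This contradiction shows $h=0$, so the action is elbolic. The main point requiring care — essentially the only obstacle — is the compatibility between the purely real normal-form coordinates of \cite{Tien-Van}, which need not be adapted to $J$, and the complex structure: I must phrase the argument intrinsically, using that the dimension of the eigenspace $L$ and the relation $A J_W = J_W A$ are coordinate-free, rather than attempting to read $J_W$ off inside the Tien--Van chart. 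A secondary routine check is that $x$ being singular means $\mathfrak{k}_x \neq 0$ and that nondegeneracy is precisely the injectivity of $(d\pi_x)_0$ needed to apply \cite[Theorem 2.6]{Tien-Van}; both are built into the definition of a holomorphic nondegenerate action.
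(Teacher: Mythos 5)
Your proof is correct, but it takes a genuinely different route from the paper's. Both arguments begin the same way: assume $h>0$ at a singular point $x$ and use the Tien Zung--Van Minh normal form to produce $v_1\in\mathfrak{k}_x$ with $X_{v_1}=x_1\frac{\partial}{\partial x_1}$ near $x$. The paper then finishes in one stroke by observing that $\frac{1}{2}(X_{v_1}-\sqrt{-1}JX_{v_1})$ is a holomorphic vector field, whose zero locus is a complex analytic set and hence has even real codimension, contradicting the codimension-$1$ zero set $\{x_1=0\}$ visible in the normal form. You instead work infinitesimally at the single point $x$: the orbit tangent space is $J$-invariant (since $X_{\sqrt{-1}v}=JX_v$ by holomorphicity of the orbit maps), so the normal space $W$ is a complex vector space, the linearized isotropy operator $A=(d\pi_x)_0(v_1)$ commutes with $J_W$, and the hyperbolic normal form forces $A$ to have a one-dimensional real eigenspace for the eigenvalue $1$ -- impossible for an operator commuting with $J_W$, since eigenspaces of a $J_W$-commuting operator for real eigenvalues are $J_W$-invariant and hence even-dimensional. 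All three ingredients check out ($T_x(\C^n\cdot x)$ is preserved by the flows of $X_v$ for $v\in\mathfrak{k}_x$, so $A$ genuinely descends to $W$; and in the normal-form chart the eigenvalue-$1$ eigenspace of $A$ on $W$ is exactly the image of $\partial/\partial x_1$). What the paper's argument buys is brevity, at the cost of invoking the even-codimension property of zero loci of holomorphic vector fields; what yours buys is that it is purely linear-algebraic and stays closer to the definition of nondegeneracy via the isotropy representation, and you are right that the essential care needed is to keep the eigenspace/commutation statements coordinate-free rather than trying to locate $J$ inside the real normal-form chart.
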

	\begin{proof}
		Let $x$ be a singular point in $M$ with respect to the action $\rho \co \C^n \times M \to M$. Let $(h,e)$ be the HE-invariant at $x$. Suppose that $h>0$. Then, there exist an element $v \in \C^n$ and a local coordinate system $(x_1,\dots, x_{2n})$ in a neighborhood at $x$ such that the vector field $X_v$ is represented as 
		\begin{equation}\Label{eq:hyperbolic}
			X_v = x_1 \frac{\partial}{\partial x_1}.
		\end{equation}
		Let $J$ be the complex structure on $M$. Since the action $\rho \co \C^n \times M \to M$ is holomorphic, the section 
		\begin{equation*}
			\frac{1}{2}(X_v - \sqrt{-1}JX_v) \co M \to TM \otimes \C
		\end{equation*}
		is a holomorphic vector field. So the zero locus of $X_v$ has even codimension. This contradicts the fact that the zero locus of $X_v$ has codimension $1$ by \eqref{eq:hyperbolic}. Therefore $h=0$. Since each singular point $x$ does not admit hyperbolic components, the action $\rho \co \C ^n \times M \to M$ is elbolic, as required. 
	\end{proof}
	Combining Lemma \ref{lemm:holomorphic-elbolic} with Lemma \ref{lemm:elbolic-maximal}, we have that if a compact connected complex manifold $M$ of complex dimension $n$ admits a holomorphic nondegenerate $\C^n$-action, then $M$ admits a maximal action of a compact torus $G$ which preserves the complex structure on $M$. This allows us to classify compact connected complex manifolds equipped with holomorphic nondegenerate $\C^n$-actions with Theorems \ref{theo:maintheo} and \ref{theo:biholo}.
	\begin{theo}\Label{theo:nondeg}
		Let $M$ be a compact connected complex manifold of complex dimension $n$. $M$ admits a holomorphic nondegenerate $\C^n$-action if and only if $M$ admits an action of a compact torus $G$ which is maximal and preserves the complex structure on $M$. 
	\end{theo}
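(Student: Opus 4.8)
The plan is to prove the two implications separately, with the forward (``only if'') direction essentially already assembled from the preceding lemmas and the reverse (``if'') direction being the substantive one. For the ``only if'' direction, suppose $M$ carries a holomorphic nondegenerate $\C^n$-action $\rho$. By Lemma \ref{lemm:holomorphic-elbolic} the action $\rho$ is elbolic, and then by Lemma \ref{lemm:elbolic-maximal} the associated compact torus $G = \R Z_\rho/Z_\rho$ acts maximally on $M$. I would only need to add that this $G$-action preserves the complex structure: its fundamental vector fields are the $X_v$ with $v \in \R Z_\rho$, whose flows are restrictions of the holomorphic $\C^n$-action and hence biholomorphisms, so they preserve $J$.

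For the ``if'' direction, suppose $M$ admits a maximal action of a compact torus $G$ preserving $J$. First I would use the constructions of Sections \ref{sec:complexified}--\ref{sec:minimal} to put at my disposal the complexified action of $G^M = G^\C/Z_{G^\C}$ on $M$, recalling that $\dim_\C \mathfrak{g}^M = n$ (established following Lemma \ref{lemm:opendense2}). Since $G^M$ is a connected abelian complex Lie group, its exponential map $\exp \co \mathfrak{g}^M \to G^M$ is a surjective holomorphic homomorphism; composing the $G^M$-action with $\exp$ and fixing a $\C$-linear isomorphism $\mathfrak{g}^M \cong \C^n$ yields a holomorphic action $\rho \co \C^n \times M \to M$, the holomorphicity being inherited from that of the complexified action.

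The main work is to verify that $\rho$ is nondegenerate, which I would carry out with the local slice description. By Theorem \ref{theo:maintheo} (via Remark \ref{rema:N}) we have $N = M$, so every point lies in some $N(G\cdot x_0)$ for a minimal orbit $G\cdot x_0$; on such a neighborhood Lemma \ref{lemm:slicetheorem} provides a $G^M$-equivariant biholomorphism with $G^M \times_{(G_{x_0})^\C}\bigoplus_{j=1}^{2n-m}\C_{\alpha_j}$. At a singular point the isotropy for $\rho$ is governed by the characters $\alpha_j$, and in the holomorphic coordinates of Lemma \ref{lemm:existence-of-slice} the generating vector fields take the form $w_j\frac{\partial}{\partial w_j}$ and $\sqrt{-1}\,w_j\frac{\partial}{\partial w_j}$. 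Writing $w_j = x_{2j-1}+\sqrt{-1}x_{2j}$, these are exactly the radial and rotational fields $x_{2j-1}\frac{\partial}{\partial x_{2j-1}}+x_{2j}\frac{\partial}{\partial x_{2j}}$ and $x_{2j-1}\frac{\partial}{\partial x_{2j}}-x_{2j}\frac{\partial}{\partial x_{2j-1}}$ of an elbolic component in the Tien Zung--Van Minh normal form. Hence each singular point is of purely elbolic type, its isotropy representation is a restriction of the faithful representation $\bigoplus_j \C_{\alpha_j}$, and faithfulness forces the isotropy differential to be injective with image the elliptic Cartan subalgebra; this is precisely the nondegeneracy condition.

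The step I expect to be the main obstacle is the bookkeeping at non-minimal singular points. At a point $[g,(v_j)]$ with some coordinates nonzero, I must check that the isotropy subgroup---the common kernel of those $\alpha_j$ with $v_j\neq 0$---still acts faithfully on the remaining normal directions $\{w_j : v_j = 0\}$, so that the dimension of the isotropy matches the rank of the normal representation and the elbolic normal form genuinely applies. This is where the faithfulness of $\bigoplus_j \C_{\alpha_j}$ and the identification $(G^M)_x = (G_x)^\C$ of Lemma \ref{lemm:GxM} must be combined carefully; once this is in place, nondegeneracy holds at every singular point, and hence $\rho$ is a holomorphic nondegenerate $\C^n$-action, completing the proof.
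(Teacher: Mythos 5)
Your proposal is correct and follows essentially the same route as the paper: the ``only if'' direction is Lemma \ref{lemm:holomorphic-elbolic} followed by Lemma \ref{lemm:elbolic-maximal}, and the ``if'' direction passes to the $\mathfrak{g}^M\cong\C^n$ action via the exponential map and verifies nondegeneracy pointwise using the slice model of Lemma \ref{lemm:slicetheorem} together with $N=M$ (Remark \ref{rema:N}). You simply spell out the local normal-form computation and the isotropy bookkeeping at non-minimal singular points in more detail than the paper, which compresses this into the remark that nondegeneracy ``follows from the definition of $N(G\cdot x)$.''
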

	\begin{proof}
		The ``only if" part follows from Lemmas \ref{lemm:elbolic-maximal} and \ref{lemm:holomorphic-elbolic} immediately. 
		
		In order to show the ``if" part, suppose that $M$ admits an action of a compact torus $G$ which is maximal and preserves the complex structure on $M$. Then, we have the complexified action $G^\C \times M \to M$ of the action of $G$. 
		
		Let $G^M$ be as in Section \ref{sec:complexified}. Through the exponential map, the Lie algebra $\mathfrak{g}^M$ of $G^M$ acts on $G^M$ holomorphically and effectively. Since $M$ has an open dense $G^\C$-orbit, it also has an open dense $G^M$-orbit. Therefore $\mathfrak{g}^M$ is isomorphic to $\C^n$. Let $N$ be as in Section \ref{sec:fan}. For any point $p \in N$, it follows from  Proposition \ref{prop:slicetheorem} that $p$ is nondegenerate with respect to the action $\mathfrak{g}^M \times M \to M$. As we saw in Remark \ref{rema:N}, $N$ coincides with the whole manifold $M$ and hence any singular point with respect to the action $\mathfrak{g}^M \times M \to M$ is nondegenerate. This shows that the $\mathfrak{g}^M$-action is nondegenerate, proving the ``if" part. 
		
	\end{proof}
\section{Possibility of being K\"{a}hler}\Label{sec:kaehler}
	Let $M$ be a compact connected complex manifold equipped with a maximal action of a compact torus $G$ which preserves the complex structure on $M$. In this section, we give a necessary condition for a compact connected complex manifold $M$ to admit a K\"{a}hler metric in terms of the nonsingular fan $\Delta$, where $\mathcal{F}_1(M,G) = (\Delta,\mathfrak{h},G)$. 
	\begin{theo}\Label{theo:kaehler}
		Let $(M,G,y) \in \mathcal{C}_1$ and let $(\Delta, \mathfrak{h}, G) = \mathcal{F}_1(M,G,y)$. Let $\mathfrak{f}$ be the subspace of the Lie algebra $\mathfrak{g}$ of $G$ which is spanned by generators of all $1$-cones in $\Delta$. Suppose that $M$ admits a K\"{a}hler metric. Then, $\dim \mathfrak{f} = \dim M - \dim G$. 
	\end{theo}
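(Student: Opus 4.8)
The plan is to recast the asserted equality as a transversality statement and then to obstruct the ``bad'' directions by a period computation against orbit circles that turn out to be null-homologous. First I would note that $\overline{J}|_{\mathfrak{f}}\colon \mathfrak{f}\to \mathfrak{g}^M/\mathfrak{g}$ is surjective: by Lemma \ref{lemm:complete} the vectors $\overline{J}(\lambda_i)$ are precisely the ray generators of the complete fan $\overline{J}(\Delta)$, hence span $\mathfrak{g}^M/\mathfrak{g}$. Its kernel is $\mathfrak{f}\cap \ker \overline{J}=\mathfrak{f}\cap p(\mathfrak{h})$ by Lemma \ref{lemm:snake}, and $\dim(\mathfrak{g}^M/\mathfrak{g})=2n-m=\dim M-\dim G$. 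Therefore $\dim \mathfrak{f}=(\dim M-\dim G)+\dim_\R(\mathfrak{f}\cap p(\mathfrak{h}))$, and it suffices to prove that $\mathfrak{f}\cap p(\mathfrak{h})=\{0\}$ whenever $M$ admits a K\"ahler metric. I would argue by contradiction: suppose $v\in \mathfrak{f}\cap p(\mathfrak{h})$ with $v\neq 0$, so that $X_v\not\equiv 0$ by effectiveness.

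The key construction is a closed $1$-form attached to $v$. Averaging a K\"ahler form over the compact group $G$ yields a $G$-invariant K\"ahler form $\omega$, with associated metric $g$ (so $\omega(\cdot,\cdot)=g(J\cdot,\cdot)$) and $G$-invariant volume $\mu$. Since $v\in p(\mathfrak{h})$, there is $w\in \mathfrak{g}$ with $v+\sqrt{-1}w\in \mathfrak{h}$; as $\mathfrak{h}$ is the Lie algebra of the global stabilizers, the generator $X_v+JX_w$ of its complexified flow vanishes identically, i.e.\ $JX_v=X_w$. Setting $\xi_v:=g(X_v,\cdot)$ and using $JX_w=-X_v$ gives $\iota_{X_w}\omega=g(JX_w,\cdot)=-\xi_v$; because $X_w$ is a fundamental vector field of the $G$-action and $\omega$ is closed and $G$-invariant, $\iota_{X_w}\omega$ is closed, hence $\xi_v$ is a closed $1$-form. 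This is exactly where the hypothesis $v\in p(\mathfrak{h})$ (rather than merely $v\in \mathfrak{g}$) enters: it realizes $JX_v$ as a second Killing field $X_w$, which is what forces $\xi_v$ to be closed.

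Next I would exploit that the relevant orbit circles bound. For $u\in \Hom(S^1,G)\subset\mathfrak{g}$ the orbit loop $t\mapsto \exp(tu)x_0$ defines a homomorphism to $H_1(M;\Z)$; extend it $\R$-linearly to $\ell\colon \mathfrak{g}\to H_1(M;\R)$. In the slice model of Lemma \ref{lemm:slicetheorem} around a minimal orbit contained in $N_i$ (which exists by Lemma \ref{lemm:codim}), the subgroup generated by $\lambda_i$ rotates only the normal coordinate $w_i$, so the orbit loop of $\lambda_i$ is the boundary of a normal disc lying in $M$; hence $\ell(\lambda_i)=0$, giving $\ell(\mathfrak{f})=0$ and in particular $\ell(v)=0$. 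On the other hand, since $\xi_v$ is closed its periods are homological invariants, and averaging the period over the basepoint with the invariant measure gives, for integral $u$, $\langle [\xi_v],\ell(u)\rangle=\oint \xi_v=\tfrac{1}{\mu(M)}\int_M g(X_v,X_u)\,d\mu$; both sides are $\R$-linear in $u$ and agree on the lattice, hence for all $u\in \mathfrak{g}$. Taking $u=v$, the left-hand side is $\langle [\xi_v],\ell(v)\rangle=0$ because $\ell(v)=0$, while the right-hand side equals $\lVert X_v\rVert_{L^2}^2>0$. This contradiction shows $\mathfrak{f}\cap p(\mathfrak{h})=\{0\}$, whence $\dim \mathfrak{f}=\dim M-\dim G$.

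The main obstacle is the verification that $\xi_v$ is genuinely closed together with the clean period identity $\langle [\xi_v],\ell(u)\rangle=\tfrac{1}{\mu(M)}\int_M g(X_v,X_u)\,d\mu$; everything else is formal linear algebra (the reduction via $\overline{J}$) or a local slice computation (the bounding disc). The closedness step is where the K\"ahler structure and the stabilizer condition $v\in p(\mathfrak{h})$ must be combined, and the period identity requires justifying basepoint-independence and the invariance of $\mu$ under $\exp(tu)$ so that the orbit-average collapses to an $L^2$ inner product; I expect these two points to carry essentially all of the real content. As a consistency check, the argument correctly certifies the Hopf and Calabi--Eckmann manifolds as non-K\"ahler, since in those cases $\mathfrak{f}\cap p(\mathfrak{h})\neq\{0\}$ produces precisely such a contradictory pair $(v,w)$.
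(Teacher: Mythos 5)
Your argument is correct, but it takes a genuinely different route from the paper's. The paper averages the K\"ahler metric so that $G$ acts by isometries, notes that each circle $G_i$ has nonempty fixed point set (it fixes the characteristic submanifold $N_i$ pointwise), invokes Frankel's theorem to conclude that each $G_i$ --- and hence the subtorus $F=\exp(\mathfrak{f})$ they generate --- acts in Hamiltonian fashion, and then uses a fixed point $x\in M^F$ together with the basic inequality $\dim G+\dim G_x\le\dim M$ to get $\dim\mathfrak{f}\le\dim M-\dim G$, the reverse inequality being Corollary \ref{coro:pure}. You instead reduce, via surjectivity of $\overline{J}|_{\mathfrak{f}}$ and Lemma \ref{lemm:snake}, to showing $\mathfrak{f}\cap p(\mathfrak{h})=\{0\}$, and you kill a putative nonzero $v$ there by pairing the closed $1$-form $\xi_v=-\iota_{X_w}\omega$ against the orbit class $\ell(v)$, which vanishes because each $\lambda_i$-orbit bounds a normal disc in the slice model. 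Both proofs are complete. Your route avoids Frankel's theorem and moment maps altogether, using only Cartan's formula, invariance of the averaged form, and elementary de Rham theory; it also isolates the precise defect, $\dim\mathfrak{f}-(\dim M-\dim G)=\dim(\mathfrak{f}\cap p(\mathfrak{h}))$, and in fact shows that on a K\"ahler $M$ the map $\ell$ is injective on $p(\mathfrak{h})$, which makes the non-K\"ahlerness of the Hopf and Calabi-Eckmann examples completely explicit. The paper's route is shorter and feeds directly into the subsequent structure results (Theorems \ref{theo:projective} and \ref{theo:trivial}). The two points in your write-up that carry the weight are exactly the ones you flag, and both go through: the period identity is first proved for $u$ in the full-rank lattice $\Hom(S^1,G)$ and then extended by linearity of both sides, and the positivity $\int_M g(X_v,X_v)\,d\mu>0$ follows from effectiveness, since a nonzero $v$ with $X_v\equiv 0$ would force a positive-dimensional subtorus to act trivially.
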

	\begin{proof}
		Let $h$ be a K\"{a}hler metric on $M$. By taking average of $h$ with the action of $G$, we may assume that $h$ is $G$-invariant without loss of generality. In this case, all elements of $G$ act on $M$ as isometries.
		
		Let $\lambda_1,\dots, \lambda_k \in \Hom(S^1,G)$ be the primitive generators of $1$-cones in $\Delta$. Denote by $G_i$ the circle subgroup $\lambda_i(S^1)$. By definition, $G_i$ fixes a characteristic submanifold of $M$ pointwise. In particular, the fixed point set $M^{G_i}$ of the action of $G_i$ is not empty. By Frankel's theorem (see \cite[LEMMA 2]{Frankel}), it follows from $M^{G_i} \neq \emptyset$ that $G_i$ acts on $M$ in Hamiltonian fashion, that is, the interior product of the K\"{a}hler form and the fundamental vector field of $G_i$ is an exact $1$-form. Therefore, the subtorus $F$ generated by $G_i$ for all $i$ acts on $M$ in Hamiltonian fashion, too. By definition of $F$ and $\mathfrak{f}$, the Lie algebra of $F$ is exactly $\mathfrak{f}$. Let $v$ be a vector in $\mathfrak{f}$ such that $\{\exp (tv) \mid t \in \R\}$ is dense in $F$. Let $\omega$ be the K\"{a}hler form. Since the action of $F$ on $M$ is Hamiltonian, there exists a smooth function $f \co M \to \R$ such that $df = \iota _{X_v}\omega$.
		Since $M$ is compact, there exists a point $x \in M$ which maximizes $f$. At the point $x$, $(df)_x = 0$. Since the K\"{a}hler form $\omega$ is nondegenerate, $(df)_x = 0$ implies that $X_v$ vanishes at $x$. It turns out that $x$ is a fixed point of the action restricted to $\{\exp (tv) \mid t \in \R\}$. By definition of $v$, $x$ is a fixed point of the action of $F$. 
		Since the action of $G$ on $M$ is effective, so is the action of $F$. Since $F \subseteq G_x$, we have that 
		\begin{equation*}
			\dim F \leq \dim G_x \leq \dim M - \dim G
		\end{equation*}
		by \eqref{eq:inequality2}. 
		Therefore we have that $\dim \mathfrak{f} \leq \dim M - \dim G$. 
		
		The opposite inequality follows from Corollary \ref{coro:pure}. Therefore, $\dim \mathfrak{f} = \dim M - \dim G$, as required. 
	\end{proof}
	\begin{theo}\Label{theo:projective}
		Let $M$ be a compact connected complex manifold equipped with a maximal action of a compact torus $G$ which preserves the complex structure on $M$. Suppose that $M$ admits a K\"{a}hler metric. Then, $M$ is a total space of a holomorphic fiber bundle over a compact complex torus whose fibers are projective nonsingular toric varieties. 
	\end{theo}
	\begin{proof}
		Let $(\Delta,\mathfrak{h},G) = \mathcal{F}_1(M,G)$. By Theorem \ref{theo:maintheo}, $M$ is $G$-equivariantly biholomorphic to $X(\Delta)/H$, where $H = \exp(\mathfrak{h})$. We will show that $X(\Delta)/H$ is a total space of a holomorphic fiber bundle over a compact complex torus whose fibers are projective nonsingular toric varieties. 
		
		Let $p \co \mathfrak{g}^\C \to \mathfrak{g}$ be the projection and let $q \co \mathfrak{g} \to \mathfrak{g}/p(\mathfrak{h})$ be the quotient map. Since $(\Delta,\mathfrak{h},G) \in \mathcal{C}_2$, the restriction $p|_\mathfrak{h}$ is injective and $q$ sends $\Delta$ to a complete fan in $\mathfrak{g}/p(\mathfrak{h})$. By Theorem \ref{theo:kaehler}, all cones in $\Delta$ are contained in a subspace $\mathfrak{f}$ of dimension $\dim M - \dim G$. We claim that the restriction $q|_\mathfrak{f} \co \mathfrak{f} \to \mathfrak{g}/p(\mathfrak{h})$ is an isomorphism. The surjectivity of $q|_\mathfrak{f}$ follows from the fact that $\mathfrak{f}$ contains all cones in $\Delta$ and the fact that $q(\Delta)$ is complete. To show the injectivity of $q|_\mathfrak{f}$, it suffices to show that $\dim \mathfrak{g}/p(\mathfrak{h}) = \dim \mathfrak{f}$ because $q|_\mathfrak{f}$ is surjective. It follows from the injectivity of $p|_\mathfrak{h}$ that $\dim \mathfrak{h} = \dim p(\mathfrak{h})$. Since $M$ is biholomorphic to $X(\Delta)/H$, we have
		\begin{equation*}
			\begin{split}
				\dim \mathfrak{g}/p(\mathfrak{h}) &=  \dim \mathfrak{g}- \dim \mathfrak{h}\\
				&= \dim M - \dim G\\
				&= \dim \mathfrak{f} .
			\end{split}
		\end{equation*}
		This together with the surjectivity of $q|_\mathfrak{f}$ shows that $q|_\mathfrak{f}$ is an isomorphism. Since $q(\Delta)$ is complete in $\mathfrak{g}/p(\mathfrak{h})$, $\Delta$ is also complete in $\mathfrak{f} \subseteq \mathfrak{g}$. Let $\Delta_\mathfrak{f}$ denote the restriction of all cones in $\Delta$ to $\mathfrak{f}$. Let $E$ be a subtorus of $G$ which is a complement of $F = \exp (\mathfrak{f})$ in $G$ and let $\mathfrak{e}$ denote its Lie algebra. Under this notation, $\Delta$ is the product of the complete fan $\Delta_\mathfrak{f}$ and the fan $\Delta_0$ consisting of only the origin $\{0\}$ in $\mathfrak{e}$. So the toric variety $X(\Delta)$ associated with $\Delta$ is the product of the complete nonsingular toric variety $X(\Delta_\mathfrak{f})$ and the algebraic torus $X(\Delta_0) = E^\C$. 
		
		Since $E$ is a complement of $F$, we have decompositions $\mathfrak{g} = \mathfrak{e}\oplus \mathfrak{f}$ and $\mathfrak{g}^\C = \mathfrak{e}^\C \oplus \mathfrak{f}^\C$. Let $p_{\mathfrak{e}^\C} \co \mathfrak{g}^\C \to \mathfrak{e}^\C$ be the projection. We claim that $H_{\mathfrak{e}^\C} := \exp (p_{\mathfrak{e}^\C}(\mathfrak{h}))$ is a closed subgroup of $E^\C$. By Lemma \ref{lemm:closed}, it suffices to show that the restriction $p|_{p_{\mathfrak{e}^\C}(\mathfrak{h})} \co p_{\mathfrak{e}^\C}(\mathfrak{h}) \to \mathfrak{e}$ is injective. To show this, we will show that the composition $p \circ p_{\mathfrak{e}^\C} |_{\mathfrak{h}} \co \mathfrak{h} \to e$ is injective. Since $\mathfrak{g}^\C = \mathfrak{g} \oplus \sqrt{-1}\mathfrak{g}$ and $\mathfrak{g} = \mathfrak{e}\oplus \mathfrak{f}$, we may represent each element $w$ of $\mathfrak{h}$ as $w= u_\mathfrak{e}+u_\mathfrak{f}+\sqrt{-1}v_\mathfrak{e}+\sqrt{-1}v_\mathfrak{f}$ for some $u_\mathfrak{e},v_\mathfrak{e} \in \mathfrak{e}$ and $u_\mathfrak{f},v_\mathfrak{f} \in \mathfrak{f}$. Suppose that $p \circ p_{\mathfrak{e}^\C}(w) =0$. Then, $u_\mathfrak{e} = 0$ and hence $w = u_{\mathfrak{f}}+\sqrt{-1}v_{\mathfrak{e}}+\sqrt{-1}v_{\mathfrak{f}}$. Since the restriction $p|_\mathfrak{h} \co \mathfrak{h} \to \mathfrak{g}$ of $p \co \mathfrak{g}^\C \to \mathfrak{g}$ to $\mathfrak{h}$ is injective, $w=0$ if and only if $p(w)=u_\mathfrak{f}=0$. Since $q|_\mathfrak{f} \co \mathfrak{f} \to \mathfrak{g}/p(\mathfrak{h})$ is an isomorphism, it follows from $p(w)=u_f$ and $w\in \mathfrak{h}$ that $u_f=0$. Therefore $w=0$ and hence $p \circ p_{\mathfrak{e}^\C} |_{\mathfrak{h}} \co \mathfrak{h} \to e$ is injective. This shows that  $H_{\mathfrak{e}^\C}$ is a closed subgroup of $E^\C$. 
		It also follows from the injectivity of $p_{\mathfrak{e}^\C}|_{\mathfrak{h}}$ that $H_{\mathfrak{e}^\C}$ does not contain a circle subgroup. Moreover, $\dim H_{\mathfrak{e}^\C}=\dim \mathfrak{h}=2\dim G-\dim M=2\dim G - (\dim F + \dim G)=\dim E$. In particular, $\dim H_{\mathfrak{e}^\C} = \frac{1}{2}\dim E^\C$. Therefore, the quotient $E^\C/H_{\mathfrak{e}^\C}$ is a compact complex torus. 
		
		The first projection $X(\Delta)=E^\C \times X(\Delta_\mathfrak{f}) \to E^\C$ and $p_{\mathfrak{e}^\C}$ induce a holomorphic fiber bundle 
		\begin{equation*}
			\psi \co X(\Delta)/H= (E^\C \times X(\Delta_\mathfrak{f}))/H \to E^\C/H_{\mathfrak{e}^\C}
		\end{equation*}
		whose typical fiber is the complete nonsingular toric variety $X(\Delta_\mathfrak{f})$. Since $X(\Delta)/H$ admits a K\"{a}hler metric, the fiber $X(\Delta_\mathfrak{f})$ also admits a K\"{a}hler metric because each fiber is a complex submanifold of $X(\Delta)/H$. For any complete nonsingular toric variety, it being projective is equivalent to it admitting  a K\"{a}hler metric. Therefore the typical fiber $X(\Delta_\mathfrak{f})$ is a projective nonsingular toric variety, proving the theorem. 
	\end{proof}
	Now we study the topology of $M$ which admits a K\"{a}hler metric. 
	\begin{theo}\Label{theo:trivial}
		Let $M$ be a compact connected complex manifold equipped with a maximal action of a compact torus $G$ which preserves the complex structure on $M$. Suppose that $M$ admits a K\"{a}hler metric. Then, $M$ is equivariantly diffeomorphic to the product of a nonsingular projective toric variety and a compact torus.
	\end{theo}
	\begin{proof}
		We use the same notations as the proof of Theorem \ref{theo:projective} and will show that $X(\Delta)/H$ is diffeomorphic to $E \times X(\Delta_\mathfrak{f})$. Since $E$ and $H_{\mathfrak{e}^\C}$ generates $E^\C$ and $H_{\mathfrak{e}^\C} \cap E =\{ 1_{E^\C}\}$, there exist unique elements $g_E \in E$ and $g_{H_{\mathfrak{e}^\C}} \in H_{\mathfrak{e}^\C}$ such that $g_{E^\C} = g_Eg_{H_{\mathfrak{e}^\C}}$ for any element $g_{E^\C} \in E^\C$. 
		Let $p_{\mathfrak{f}^\C} \co \mathfrak{g}^\C=\mathfrak{e}^\C \oplus \mathfrak{f}^\C \to \mathfrak{f}^\C$ be the projection. Let $H_{\mathfrak{f}^\C} := \exp (p_{\mathfrak{f}^\C}(\mathfrak{h})) \subseteq F^\C$. 
		Since the exponential map $p_{\mathfrak{e}^\C}(\mathfrak{h}) \to H_{\mathfrak{e}^\C}$ is an isomorphism, the composition $p_{\mathfrak{f}^\C} \circ (p_{\mathfrak{e}^\C}|_{p_{{\mathfrak{e}^\C}(\mathfrak{h})}})^{-1} \co p_{\mathfrak{e}^\C}(\mathfrak{h}) \to p_{\mathfrak{f}^\C}(\mathfrak{h})$ induces a group homomorphism $\theta \co H_{\mathfrak{e}^\C} \to H_{\mathfrak{f}^\C}$. 
		We define $\phi \co (E^\C \times X(\Delta_{\mathfrak{f}}))/H \to E \times X(\Delta_\mathfrak{f})$ to be $\phi ([g_{E^\C}, x]) := (g_E, \theta(g_{H_{\mathfrak{e}^\C}})^{-1}\cdot x)$, where $[g_{E^\C},x] \in (E^\C \times X(\Delta_\mathfrak{f}))/H$ denotes the equivalence class of $(g_{E^\C}, x) \in E^\C \times X(\Delta_{\mathfrak{f}})$. Clearly, $\phi$ is well-defined. $\phi$ has the inverse $\phi ^{-1} \co E \times X(\Delta_{\mathfrak{f}}) \to (E^\C \times X(\Delta_\mathfrak{f}))/H$ defined as $\phi^{-1}(g_E,x) = [g_E,x]$. Both $\phi$ and $\phi^{-1}$ are smooth, so $\phi$ is a diffeomorphism and $X(\Delta)/H$ is diffeomorphic to the product $X(\Delta_\mathfrak{f}) \times E$, as required.  
	\end{proof}
	Related with Theorems \ref{theo:kaehler} and \ref{theo:trivial} , we may consider  generalizations. One direction is symplectic version. Let $M$ be a symplectic manifold equipped with an action of a compact torus $G$ which is maximal and preserves the symplectic form on $M$. If $G$ acts on $M$ freely, then $M$ is diffeomorphic to a compact torus. If the fixed point set $M^G$ is nonempty, then the $G$-action on $M$ is Hamiltonian and $M$ is a toric manifold (see \cite[Remark 2.~(4)]{Ishida-Karshon} for details). We already know these extreme cases, but do not know the other cases. 
	
	\begin{prob}
		Let $M$ be a compact connected symplectic manifold equipped with an action of a compact torus $G$, which is maximal and preserves the symplectic form on $M$. Is it true that $M$ is diffeomorphic  to a product of a projective nonsingular toric variety and a compact torus?
	\end{prob}
	The other direction is locally conformal K\"{a}hler version. A Hermitian manifold $(M,h)$ is said to be a \emph{locally conformal K\"{a}hler} manifold if the hermitian metric $h$ is locally conformal to a K\"{a}hler metric, that is, there exists an open cover $\{U_\lambda\}$ of $M$ and positive valued functions $f_\lambda \co U_\lambda \to \R_{>0}$ such that $f_\lambda h$ is a K\"{a}hler metric on $U_\lambda$. A typical example of locally conformal K\"{a}hler manifold is a diagonal Hopf manifold. Let $\widetilde{\alpha} \in \C \setminus \R$ and put $\alpha := e^{\sqrt{-1}\widetilde{\alpha}}$. Define an equivalence relation $\sim$ on $\C^n\setminus \{ 0\}$ to be
	\begin{equation*}
		(z_1,\dots, z_n) \sim (\alpha^kz_1,\dots, \alpha^kz_n) \quad \text{for all $k \in \Z, (z_1,\dots, z_n) \in \C^n\setminus \{0 \}$}.
	\end{equation*} 
	The quotient space $M := \C^n\setminus \{0\}/\sim$ which is called a diagonal Hopf manifold carries the Hermitian metric 
	\begin{equation*}
		h = \frac{1}{\sum_{j=1}^n|z_j |^2}\sum _{i =1}^n dz_i \otimes d\overline{z_i}
	\end{equation*}
	which is locally conformal to a K\"{a}hler metric. Moreover, $h$ is invariant under the action of $(S^1)^n$ on $M$ to which the standard $(S^1)^n$-action on $\C^n\setminus \{0\}$ descends. Also, $h$ is invariant under the action of $S^1 = \R /\Z$ given by $[z_1,\dots, z_n]\mapsto [e^{\sqrt{-1}\widetilde{\alpha}t}z_1,\dots, e^{\sqrt{-1}\widetilde{\alpha}t}z_n]$ for $t \in \R$, where $[z_1,\dots,z_n]\in M$ denotes the equivalence class of $(z_1,\dots, z_n) \in \C ^n \setminus \{0\}$. The action of $G=(S^1)^n \times S^1$ on $M$ is maximal. In fact, the orbit through $[1,0,\dots,0]$ is a real $2$-dimensional torus which is minimal. 
	\begin{prob}\Label{prob:lck}
		Characterize a compact connected complex manifold $M$ equipped with a maximal action of a compact torus $G$ which admits a ($G$-invariant) locally conformal K\"{a}hler metric in terms of $\mathcal{F}_1(M,G)$.
	\end{prob}
	We may expect that a solution of Problem \ref{prob:lck} will provide a lot of concrete examples of locally conformal K\"{a}hler manifolds, as well as the correspondence between K\"{a}hler toric manifolds and Delzant polytopes (see \cite{Delzant}).

\section*{Acknowledgements} The author would like to thank Yael Karshon for fruitful and stimulating discussions. He also would like to thank Laurent Battisti, Mikiya Masuda and Taras Panov for valuable comments. 

\begin{bibdiv}

\begin{biblist}

\bib{Battisti}{article}{
   author={Battisti, L.},
   title={LVMB manifolds and quotients of toric varieties},
   journal={Math. Z.},
   volume={275},
   date={2013},
   number={1-2},
   pages={549--568},
   issn={0025-5874},
   review={\MR{3101820}},
   doi={10.1007/s00209-013-1147-8},
}

\bib{Bochner-Montgomery}{article}{
   author={Bochner, Salomon},
   author={Montgomery, Deane},
   title={Locally compact groups of differentiable transformations},
   journal={Ann. of Math. (2)},
   volume={47},
   date={1946},
   pages={639--653},
   issn={0003-486X},
   review={\MR{0018187 (8,253c)}},
}

\bib{Bosio}{article}{
   author={Bosio, Fr{\'e}d{\'e}ric},
   title={Vari\'et\'es complexes compactes: une g\'en\'eralisation de la
   construction de Meersseman et L\'opez de Medrano-Verjovsky},
   language={French, with English and French summaries},
   journal={Ann. Inst. Fourier (Grenoble)},
   volume={51},
   date={2001},
   number={5},
   pages={1259--1297},
   issn={0373-0956},
   review={\MR{1860666 (2002i:32015)}},
}

\bib{Bosio-Meersseman}{article}{
   author={Bosio, Fr{\'e}d{\'e}ric},
   author={Meersseman, Laurent},
   title={Real quadrics in $\mathbf C^n$, complex manifolds and convex
   polytopes},
   journal={Acta Math.},
   volume={197},
   date={2006},
   number={1},
   pages={53--127},
   issn={0001-5962},
   review={\MR{2285318 (2007j:32037)}},
   doi={10.1007/s11511-006-0008-2},
}
\bib{Buchstaber-Panov}{book}{
   author={Buchstaber, Victor M.},
   author={Panov, Taras E.},
   title={Torus actions and their applications in topology and
   combinatorics},
   series={University Lecture Series},
   volume={24},
   publisher={American Mathematical Society},
   place={Providence, RI},
   date={2002},
   pages={viii+144},
   isbn={0-8218-3186-0},
   review={\MR{1897064 (2003e:57039)}},
}

\bib{Calabi-Eckmann}{article}{
   author={Calabi, Eugenio},
   author={Eckmann, Beno},
   title={A class of compact, complex manifolds which are not algebraic},
   journal={Ann. of Math. (2)},
   volume={58},
   date={1953},
   pages={494--500},
   issn={0003-486X},
   review={\MR{0057539 (15,244f)}},
}

\bib{Cox}{article}{
	author={Cox, David A.},
	author={Little, John B.},
	author={Schenck, Hal},
	title={Toric Varieties},
	publisher={American Mathematical Society},
	series={Graduate Studies in Mathematics},
	volume={124},
	date={2011},
	pages={xxiv+841},
	isbn={0-8218-4819-4},
}

\bib{Delzant}{article}{
   author={Delzant, Thomas},
   title={Hamiltoniens p\'eriodiques et images convexes de l'application
   moment},
   language={French, with English summary},
   journal={Bull. Soc. Math. France},
   volume={116},
   date={1988},
   number={3},
   pages={315--339},
   issn={0037-9484},
   review={\MR{984900 (90b:58069)}},
}

\bib{Frankel}{article}{
   author={Frankel, Theodore},
   title={Fixed points and torsion on K\"ahler manifolds},
   journal={Ann. of Math. (2)},
   volume={70},
   date={1959},
   pages={1--8},
   issn={0003-486X},
   review={\MR{0131883 (24 \#A1730)}},
}
		
\bib{Fulton}{book}{
   author={Fulton, William},
   title={Introduction to toric varieties},
   series={Annals of Mathematics Studies},
   volume={131},
   note={The William H. Roever Lectures in Geometry},
   publisher={Princeton University Press},
   place={Princeton, NJ},
   date={1993},
   pages={xii+157},
   isbn={0-691-00049-2},
   review={\MR{1234037 (94g:14028)}},
}

\bib{Hochschild}{book}{
   author={Hochschild, G.},
   title={The structure of Lie groups},
   publisher={Holden-Day Inc.},
   place={San Francisco},
   date={1965},
   pages={ix+230},
   review={\MR{0207883 (34 \#7696)}},
}

\bib{Ishida-Karshon}{article}{
   author={Ishida, Hiroaki},
   author={Karshon, Yael},
   title={Completely integrable torus actions on complex manifolds with fixed points},
   journal={Math. Res. Lett.},
   volume={19},
   date={2012},
   number={6},
   pages={1283--1295},
   issn={1073-2780},
   review={\MR{3091608}},
   doi={10.4310/MRL.2012.v19.n6.a9},
}

\bib{Kobayashi-Nomizu}{book}{
   author={Kobayashi, Shoshichi},
   author={Nomizu, Katsumi},
   title={Foundations of differential geometry. Vol. II},
   series={Interscience Tracts in Pure and Applied Mathematics, No. 15 Vol.
   II },
   publisher={Interscience Publishers John Wiley \& Sons, Inc., New
   York-London-Sydney},
   date={1969},
   pages={xv+470},
   review={\MR{0238225 (38 \#6501)}},
}

\bib{LV}{article}{
   author={L{\'o}pez de Medrano, Santiago},
   author={Verjovsky, Alberto},
   title={A new family of complex, compact, non-symplectic manifolds},
   journal={Bol. Soc. Brasil. Mat. (N.S.)},
   volume={28},
   date={1997},
   number={2},
   pages={253--269},
   issn={0100-3569},
   review={\MR{1479504 (98g:32047)}},
   doi={10.1007/BF01233394},
}

\bib{Lu-Panov}{article}{
   author={L{\"u}, Zhi},
   author={Panov, Taras},
   title={Moment-angle complexes from simplicial posets},
   journal={Cent. Eur. J. Math.},
   volume={9},
   date={2011},
   number={4},
   pages={715--730},
   issn={1895-1074},
   review={\MR{2805305 (2012e:57060)}},
   doi={10.2478/s11533-011-0041-z},
}

\bib{Masuda}{article}{
   author={Masuda, Mikiya},
   title={Unitary toric manifolds, multi-fans and equivariant index},
   journal={Tohoku Math. J. (2)},
   volume={51},
   date={1999},
   number={2},
   pages={237--265},
   issn={0040-8735},
   review={\MR{1689995 (2000e:57058)}},
   doi={10.2748/tmj/1178224815},
}

\bib{Meersseman}{article}{
   author={Meersseman, Laurent},
   title={A new geometric construction of compact complex manifolds in any
   dimension},
   journal={Math. Ann.},
   volume={317},
   date={2000},
   number={1},
   pages={79--115},
   issn={0025-5831},
   review={\MR{1760670 (2001i:32029)}},
   doi={10.1007/s002080050360},
}

\bib{Meersseman-Verjovsky}{article}{
   author={Meersseman, Laurent},
   author={Verjovsky, Alberto},
   title={Holomorphic principal bundles over projective toric varieties},
   journal={J. Reine Angew. Math.},
   volume={572},
   date={2004},
   pages={57--96},
   issn={0075-4102},
   review={\MR{2076120 (2005e:14080)}},
   doi={10.1515/crll.2004.054},
}

\bib{Oda}{book}{
   author={Oda, Tadao},
   title={Convex bodies and algebraic geometry},
   series={Ergebnisse der Mathematik und ihrer Grenzgebiete (3) [Results in
   Mathematics and Related Areas (3)]},
   volume={15},
   note={An introduction to the theory of toric varieties;
   Translated from the Japanese},
   publisher={Springer-Verlag},
   place={Berlin},
   date={1988},
   pages={viii+212},
   isbn={3-540-17600-4},
   review={\MR{922894 (88m:14038)}},
}

\bib{Panov-Ustinovsky}{article}{
   author={Panov, Taras},
   author={Ustinovsky, Yuri},
   title={Complex-analytic structures on moment-angle manifolds},
   language={English, with English and Russian summaries},
   journal={Mosc. Math. J.},
   volume={12},
   date={2012},
   number={1},
   pages={149--172, 216},
   issn={1609-3321},
   review={\MR{2952429}},
}

\bib{Tambour}{article}{
   author={Tambour, J{\'e}r{\^o}me},
   title={LVMB manifolds and simplicial spheres},
   language={English, with English and French summaries},
   journal={Ann. Inst. Fourier (Grenoble)},
   volume={62},
   date={2012},
   number={4},
   pages={1289--1317},
   issn={0373-0956},
   review={\MR{3025744}},
}

\bib{Zung-Minh}{article}{
   author={Zung, Nguyen Tien},
   author={Van Minh, Nguyen},
   title={Geometry of nondegenerate ${\mathbb R}^n$-actions on
   $n$-manifolds},
   journal={J. Math. Soc. Japan},
   volume={66},
   date={2014},
   number={3},
   pages={839--894},
   issn={0025-5645},
   review={\MR{3238320}},
   doi={10.2969/jmsj/06630839},
}

\end{biblist}

\end{bibdiv}

\end{document}